\newtheorem{theorem}{Theorem}[section]
\newtheorem{lemma}[theorem]{Lemma}
\newtheorem{proposition}[theorem]{Proposition}
\theoremstyle{definition}
\newtheorem{example}[theorem]{Example}
\theoremstyle{remark}
\newtheorem{remark}[theorem]{Remark}
\numberwithin{equation}{section}
\definecolor{references}{rgb}{0,0,1}
\begin{document}

\title{An Extension of the $sl(n)$ Polynomial to Knotted 4-Valent Graphs}

\author{Carmen Caprau}
\address{Department of Mathematics, California State University, Fresno}
\email{ccaprau@mail.fresnostate.edu}
\thanks{This work was partially supported by NSF-RUI grant DMS No. 2204386.}

\author{Victoria Wiest}
\address{Department of Mathematics, California State University, Fresno}
\email{wiest\_victoria@mail.fresnostate.edu}

\subjclass[2020]{57K12, 57K14}



\keywords{knots; generating sets; knotted graphs; invariants of knotted graphs}

\begin{abstract}
We use planar 4-valent graphs and a graphical calculus involving such graphs to construct an invariant for balanced-oriented, knotted 4-valent graphs. Our invariant is an extension of the $sl(n)$ polynomial for classical knots and links. We also provide a minimal generating set of Reidemeister-type moves for diagrams of balanced-oriented, knotted 4-valent graphs.
\end{abstract}

\maketitle
\par

\section{Introduction}
\label{Intro}
Knotted graphs are embeddings of graphs in 3-space and topologists are interested in distinguishing knotted graphs up to ambient isotopy, similar to the goal of knot theory. Isotopy of knotted graphs can be described combinatorially by working with diagrams of knotted graphs. A knotted graph diagram is a projection of a knotted graph into a plane, such that the projection contains only transverse double points; we also require that in such a projection, no interior point of an edge of the original graph  projects to the same point where a vertex projects. Specifically, a knotted graph diagram contains crossings (similar to knot or link diagrams) and the vertices of the graph.

In this paper, we work with 4-valent knotted graphs with rigid vertices. 
Equivalently, we will distinguish 4-valent knotted graphs up to \textit{rigid-vertex isotopy}.  It is known~\cite{Kauffman} that two diagrams of knotted 4-valent graphs represent rigid-vertex isotopic knotted graphs if and only if there is a finite sequence of planar isotopies and the classical Reidemeister moves $R1, R2$ and $R3$, together with the additional moves $R4$ and $R5$ involving a vertex - as depicted in Figure~\ref{fig:Reidemeister moves} - taking one diagram to another.

\begin{figure}[ht]
    \[\raisebox{-13pt}{\includegraphics[height=0.4in]{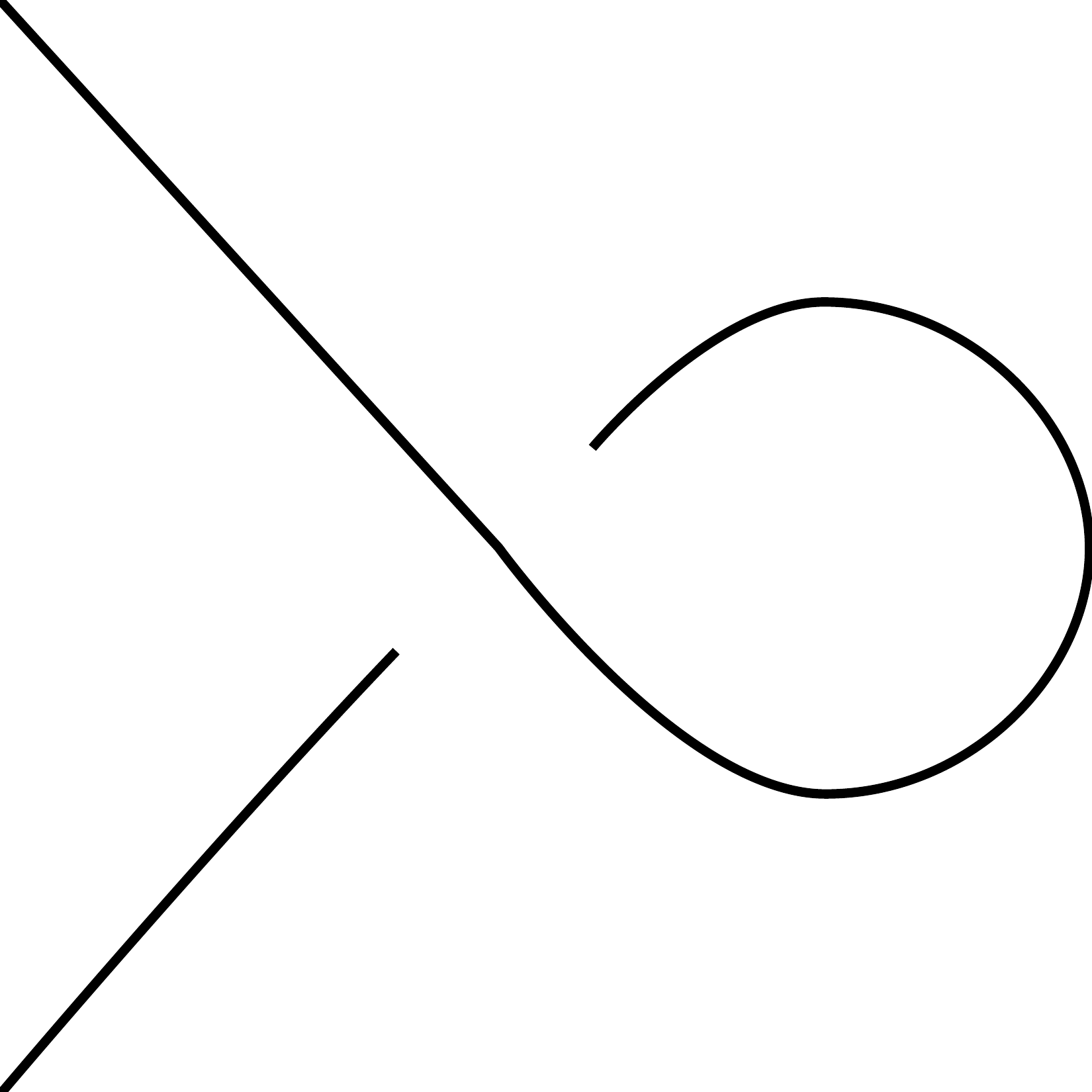}}\stackrel{R1}{\longleftrightarrow}\raisebox{-13pt}{\includegraphics[height =0.4in]{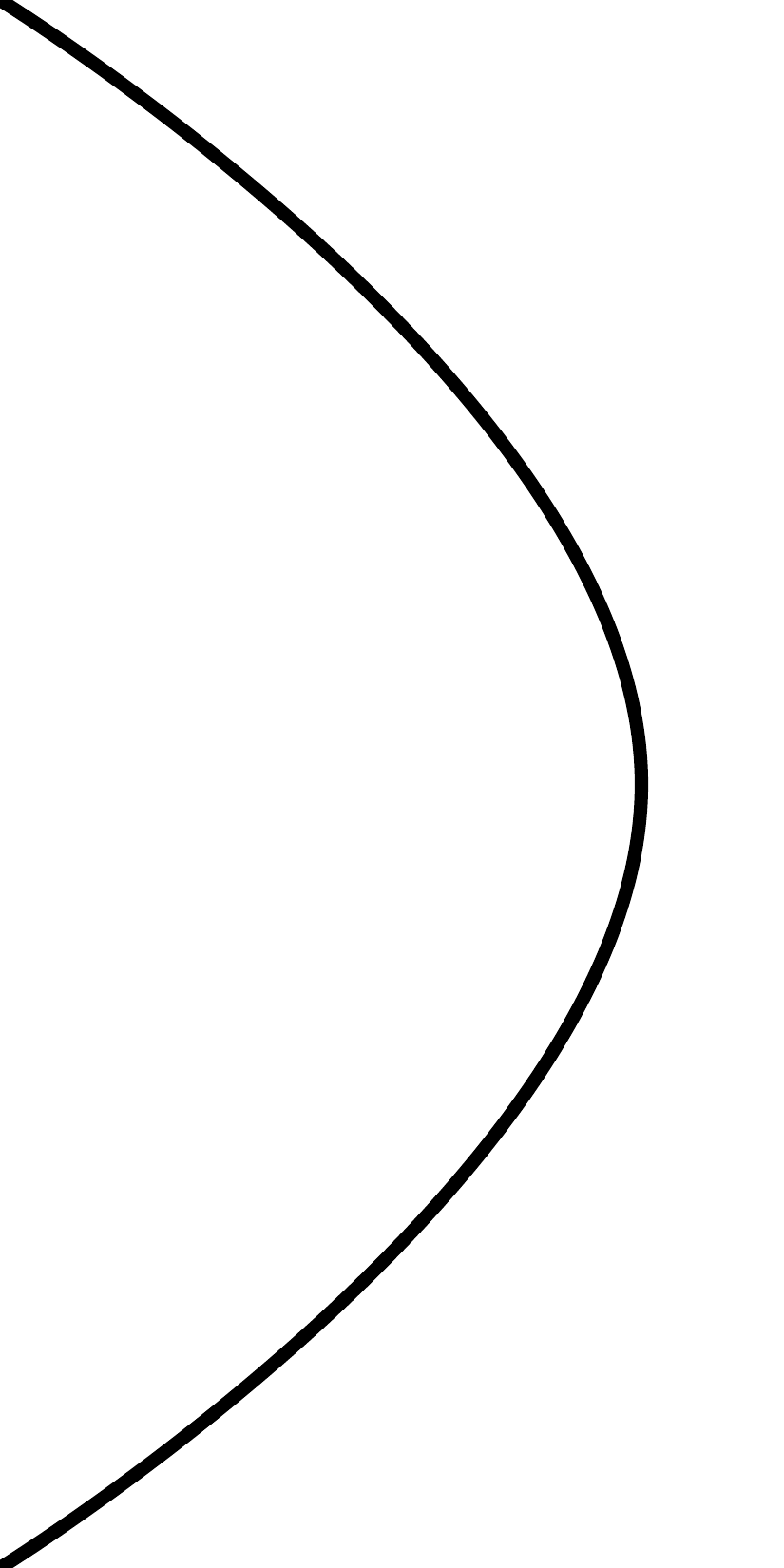}}\stackrel{R1}{\longleftrightarrow}\raisebox{-13pt}{\includegraphics[height=0.4in]{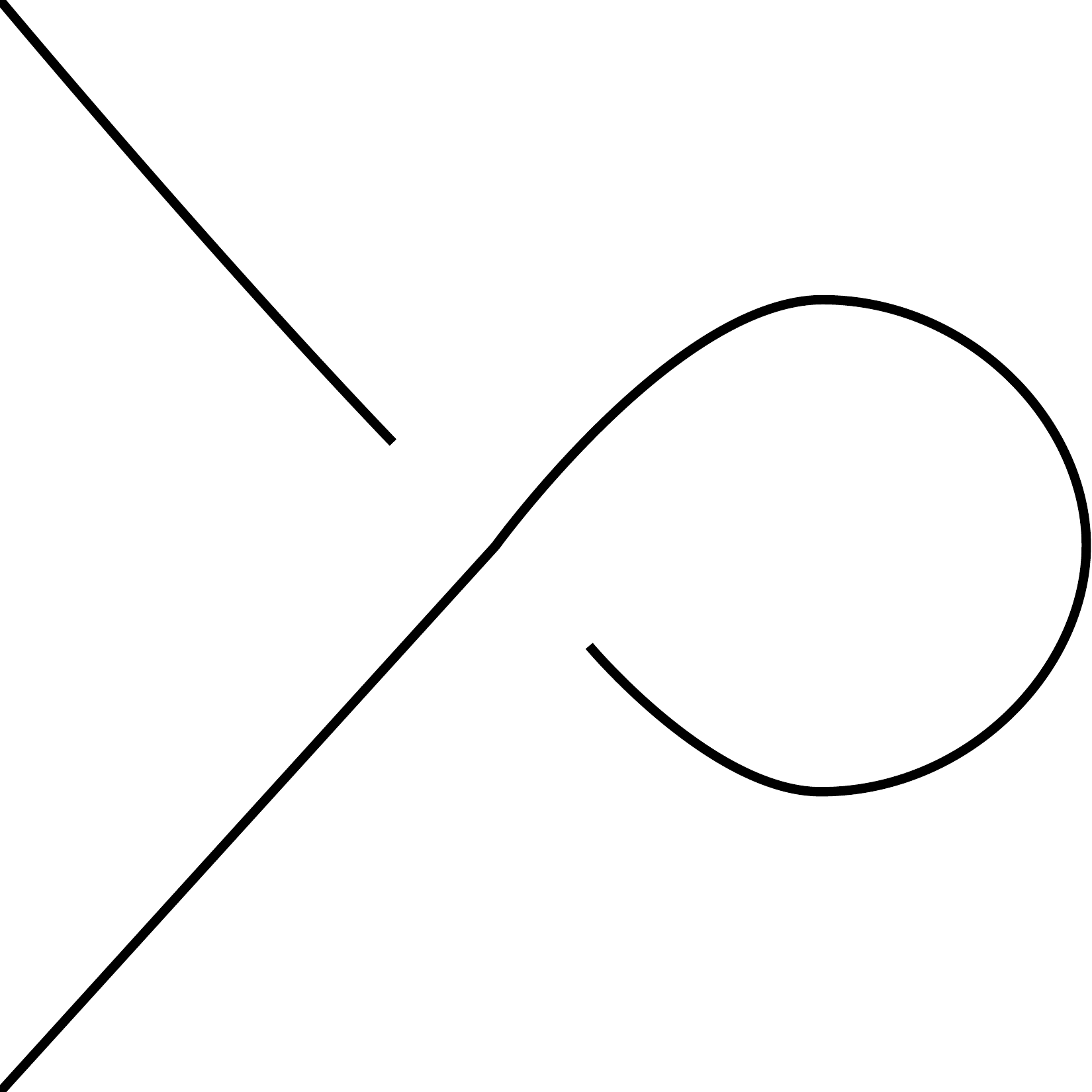}}\]  
    \[\raisebox{-13pt}{\includegraphics[height=0.4in]{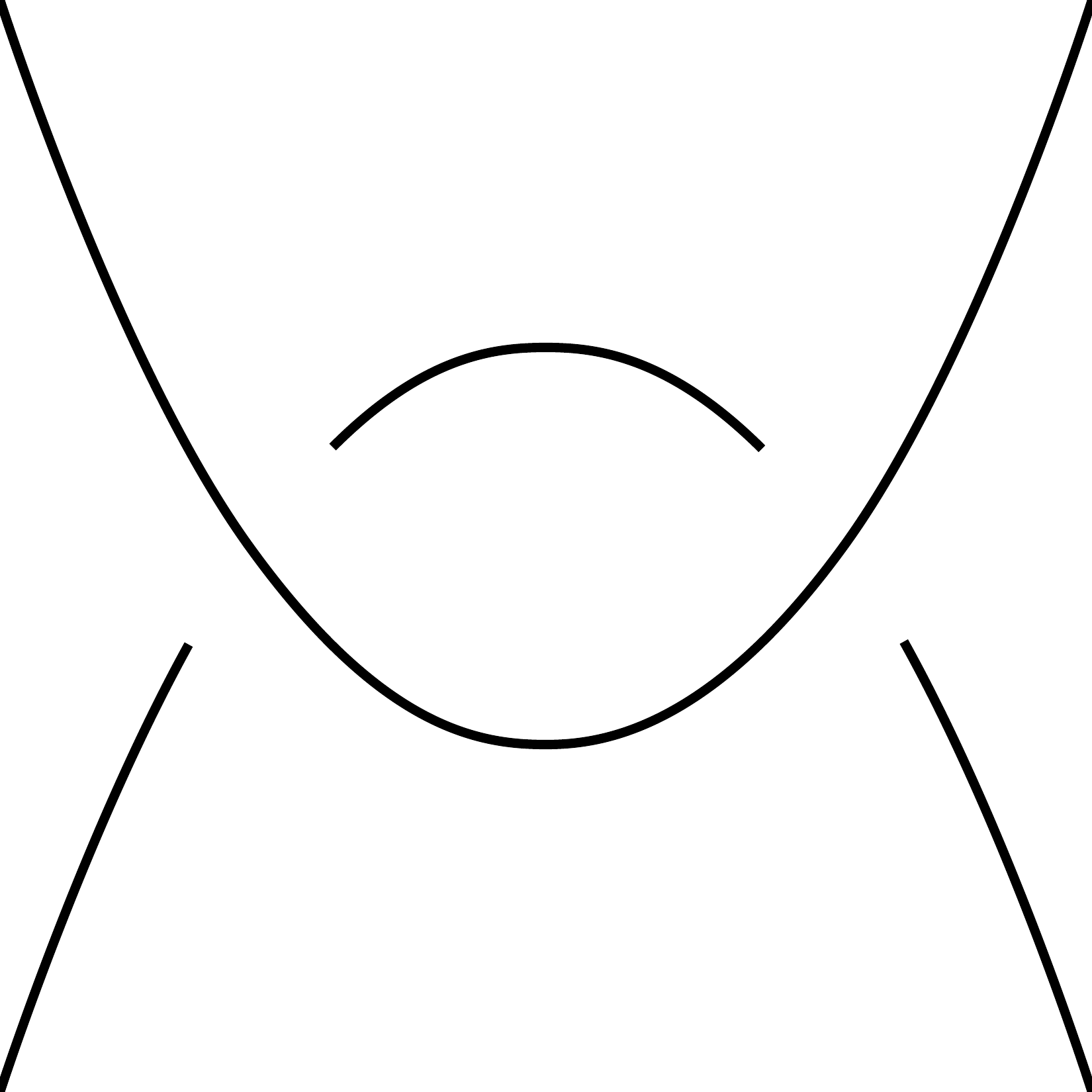}}\stackrel{R2}{\longleftrightarrow}\raisebox{-13pt}{\includegraphics[height =0.4in]{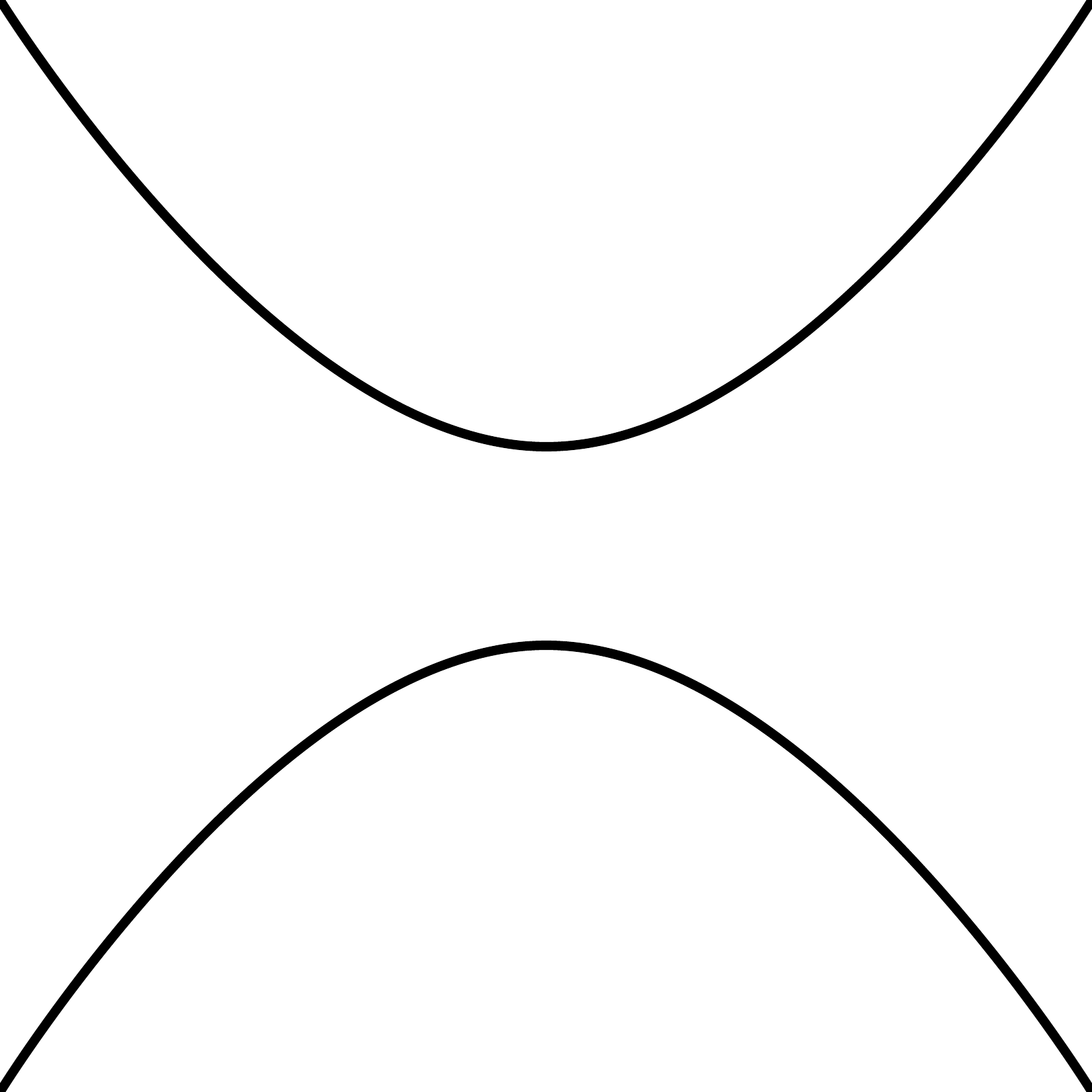}}\stackrel{R2}{\longleftrightarrow}\raisebox{-13pt}{\includegraphics[height=0.4in]{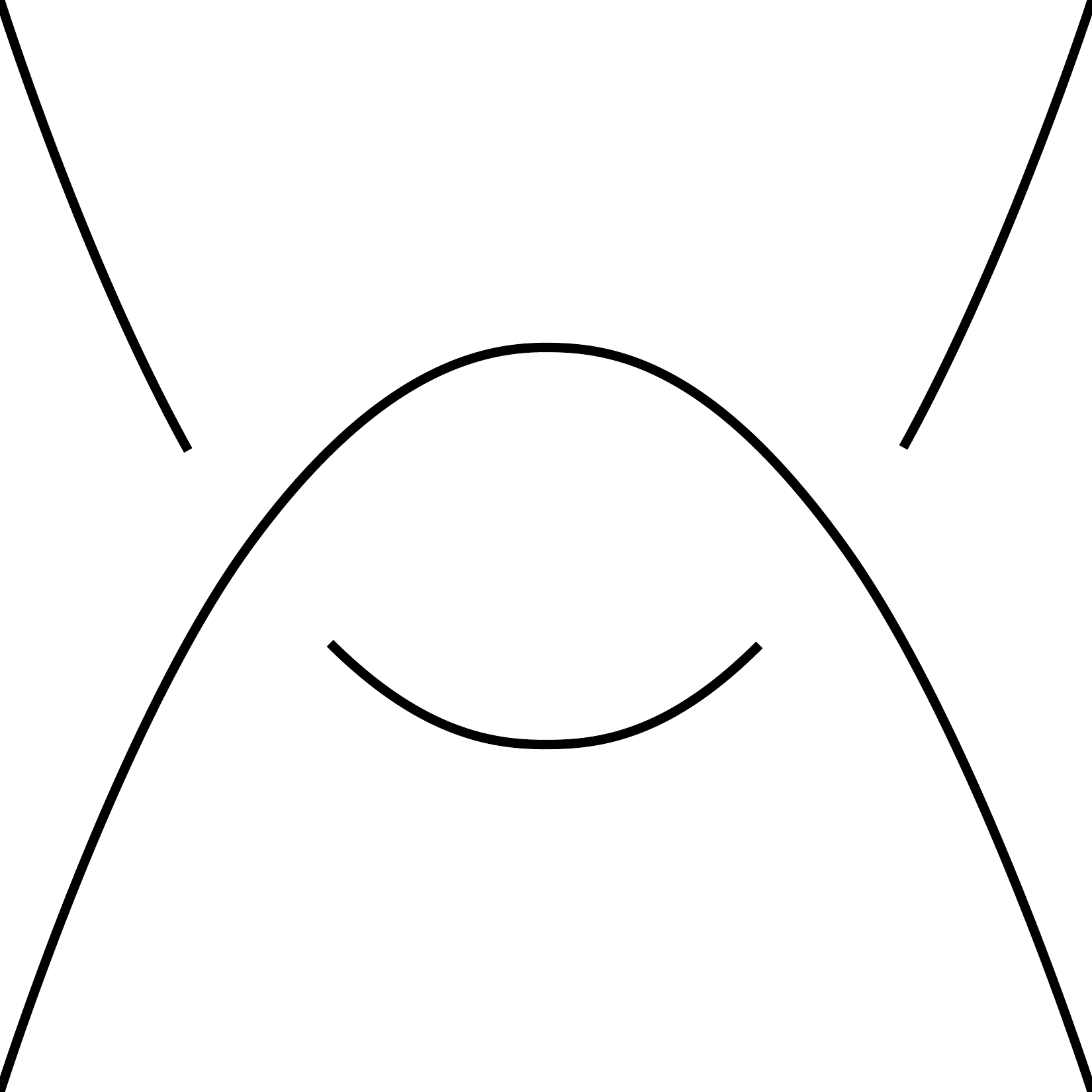}}
    \hspace{1cm}
    \raisebox{-13pt}{\includegraphics[height=0.4in]{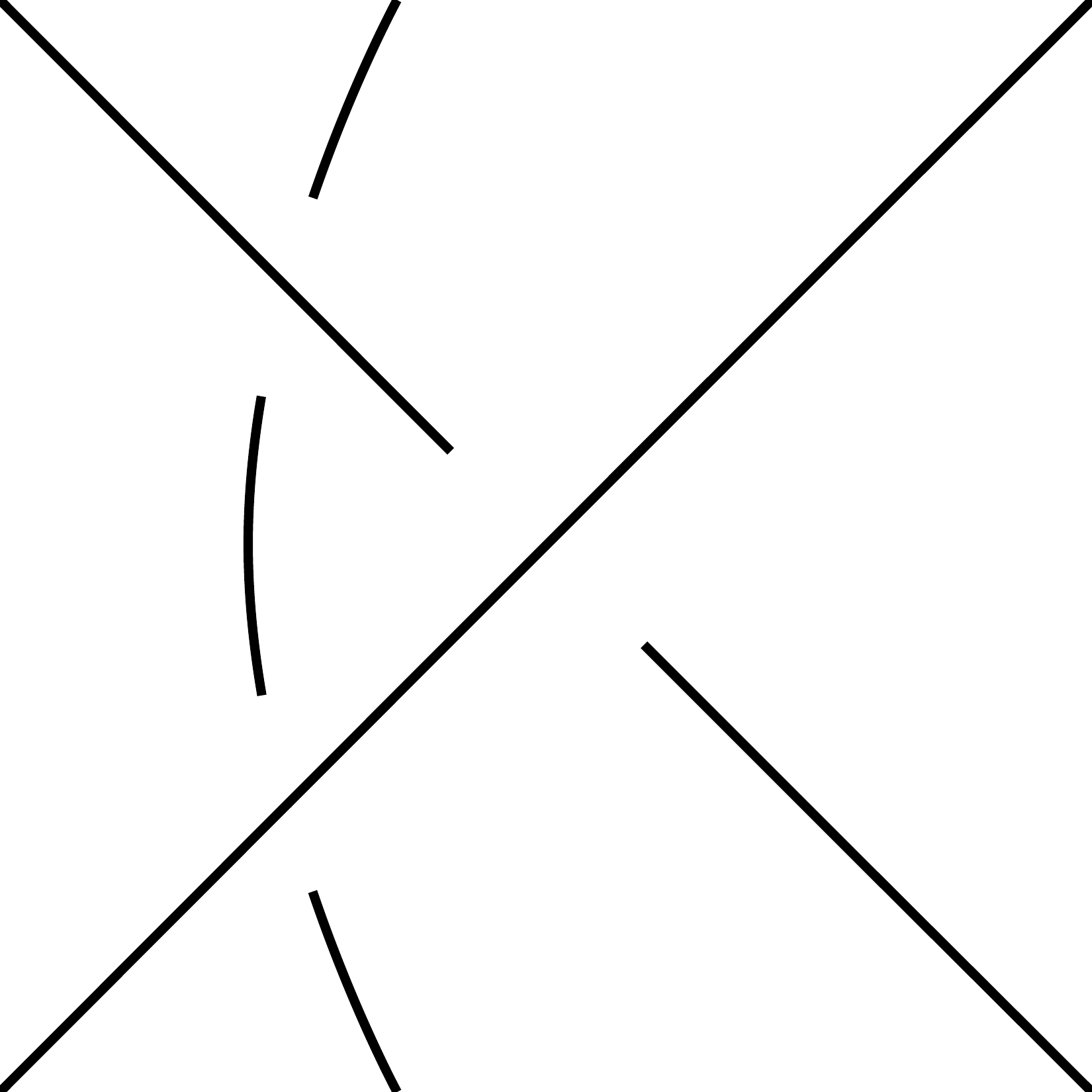}}\stackrel{R3}{\longleftrightarrow}\raisebox{-13pt}{\includegraphics[height=0.4in]{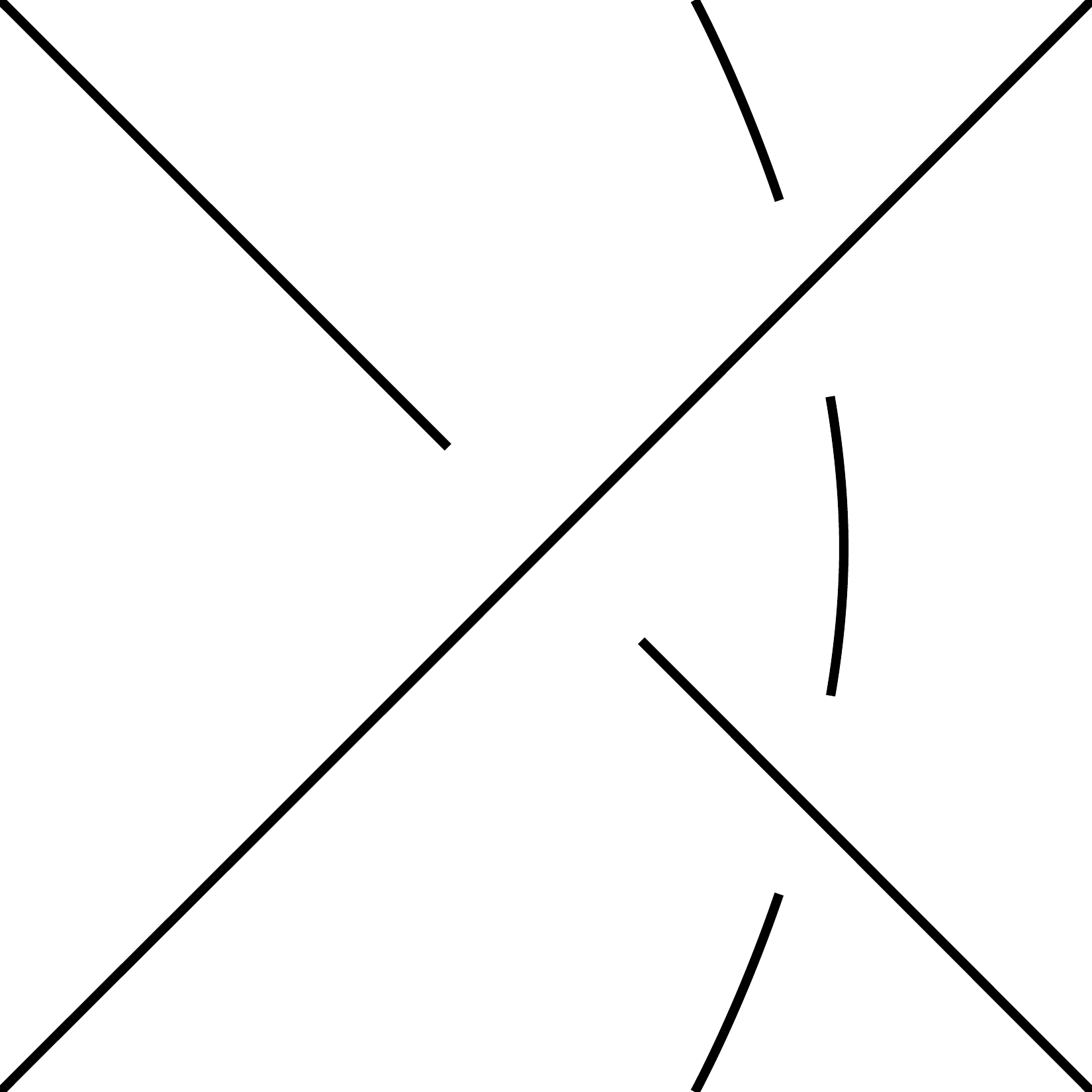}}\]
    \vspace{0.10cm}
     \[\raisebox{-12pt}{\includegraphics[height=0.4in]{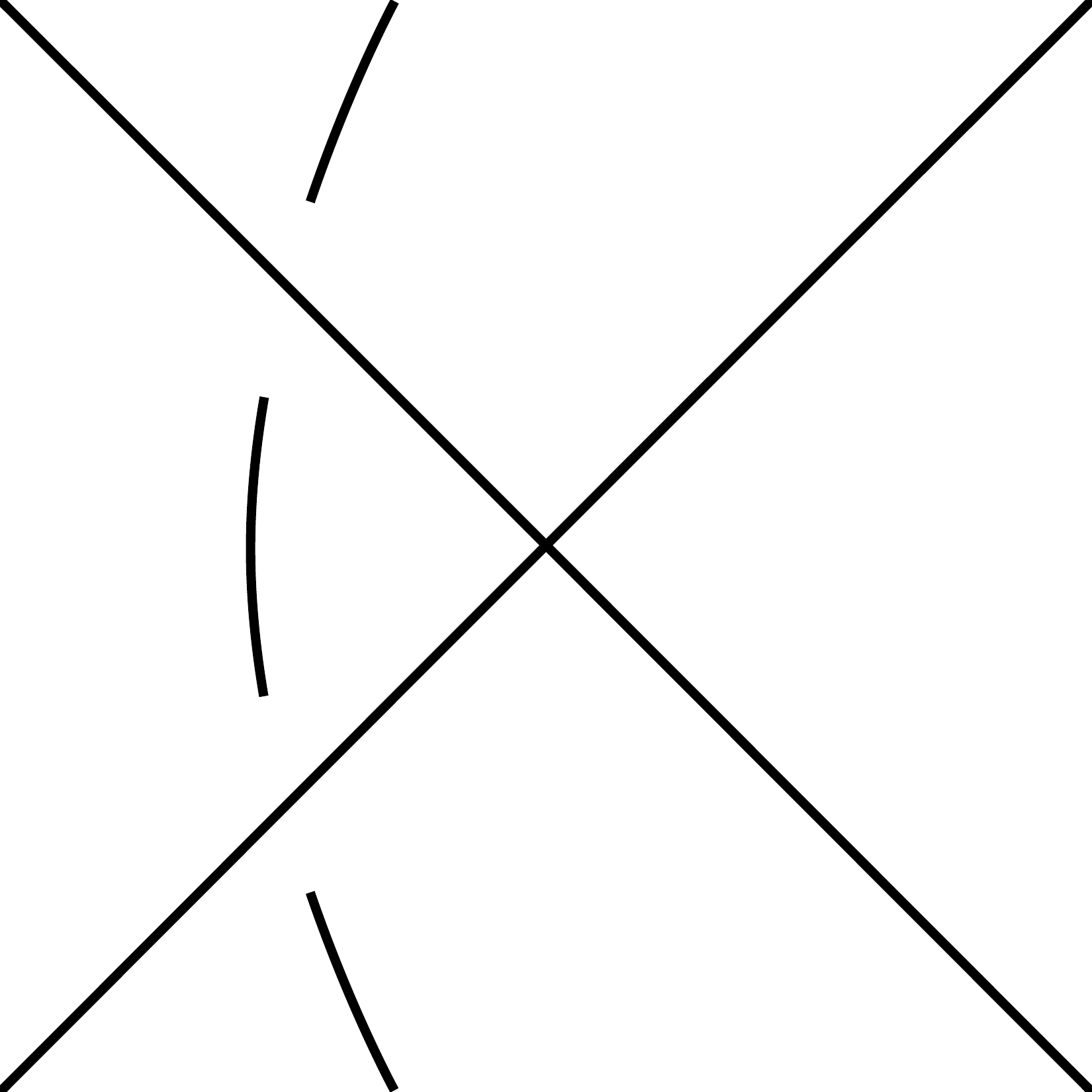}}\stackrel{R4}{\longleftrightarrow}\raisebox{-12pt}{\includegraphics[height=0.4in]{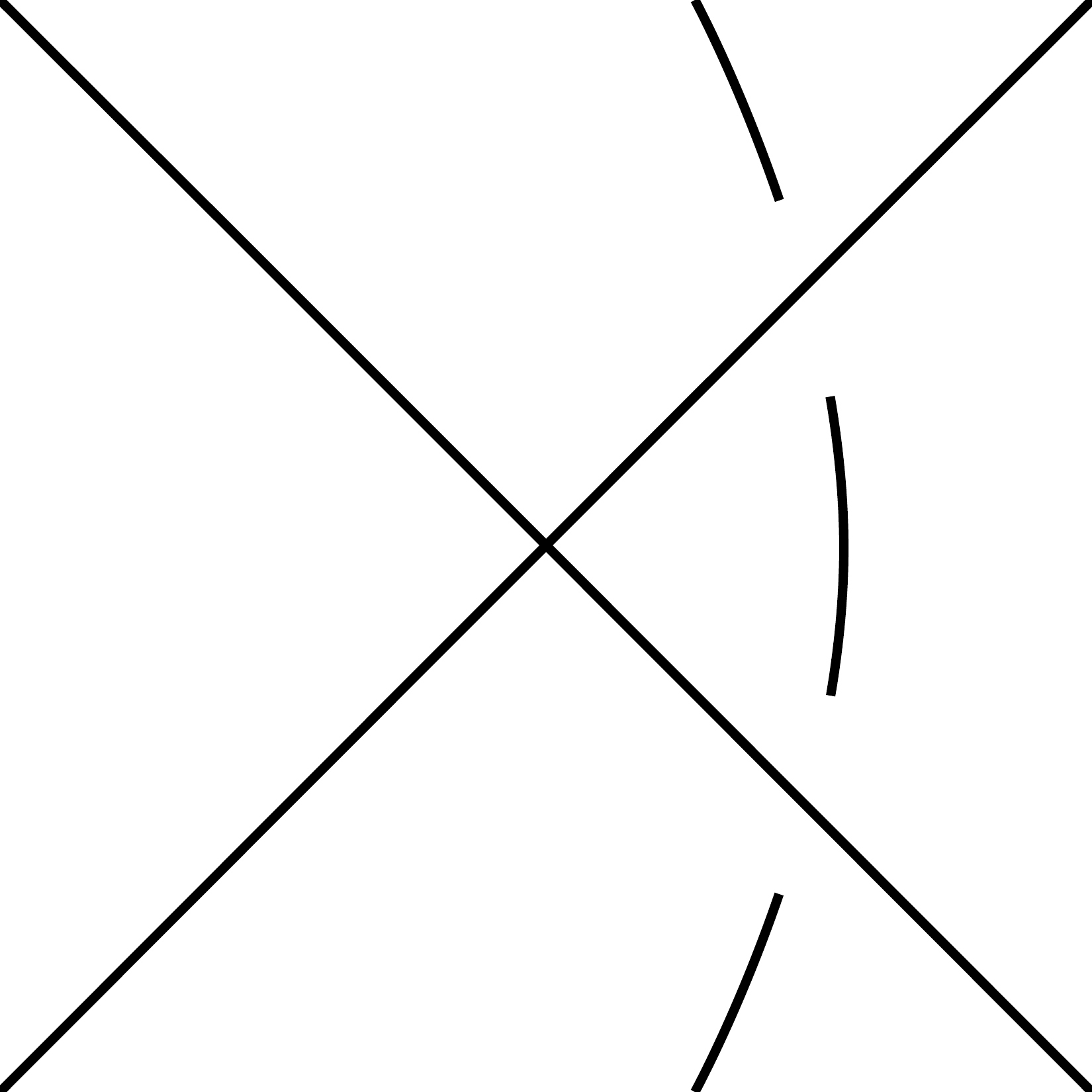}}\hspace{1cm}\raisebox{-12pt}{\includegraphics[height=0.4in]{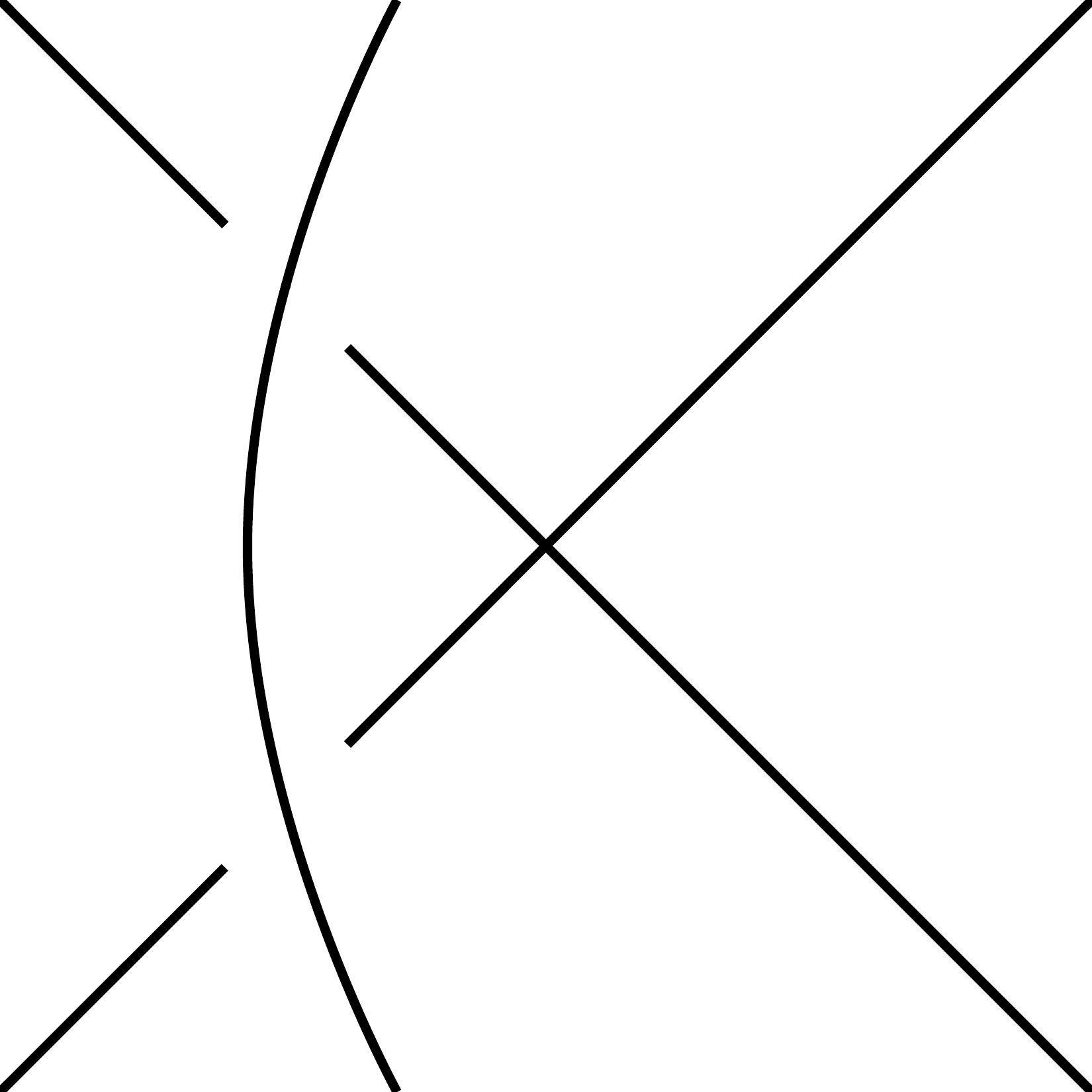}}\stackrel{R4}{\longleftrightarrow}\raisebox{-12pt}{\includegraphics[height=0.4in]{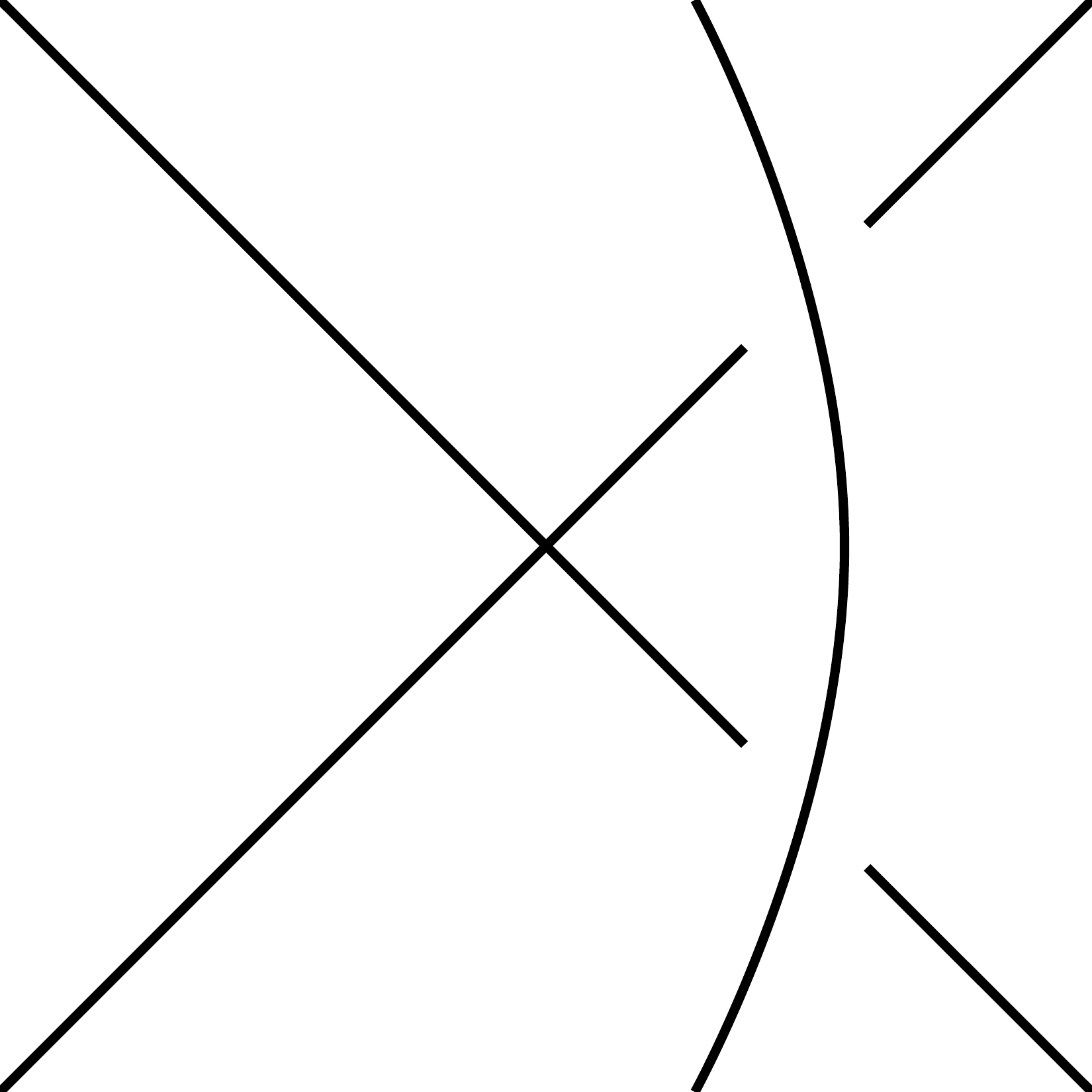}} \hspace{1cm} 
    \raisebox{-12pt}{\includegraphics[height=0.4in]{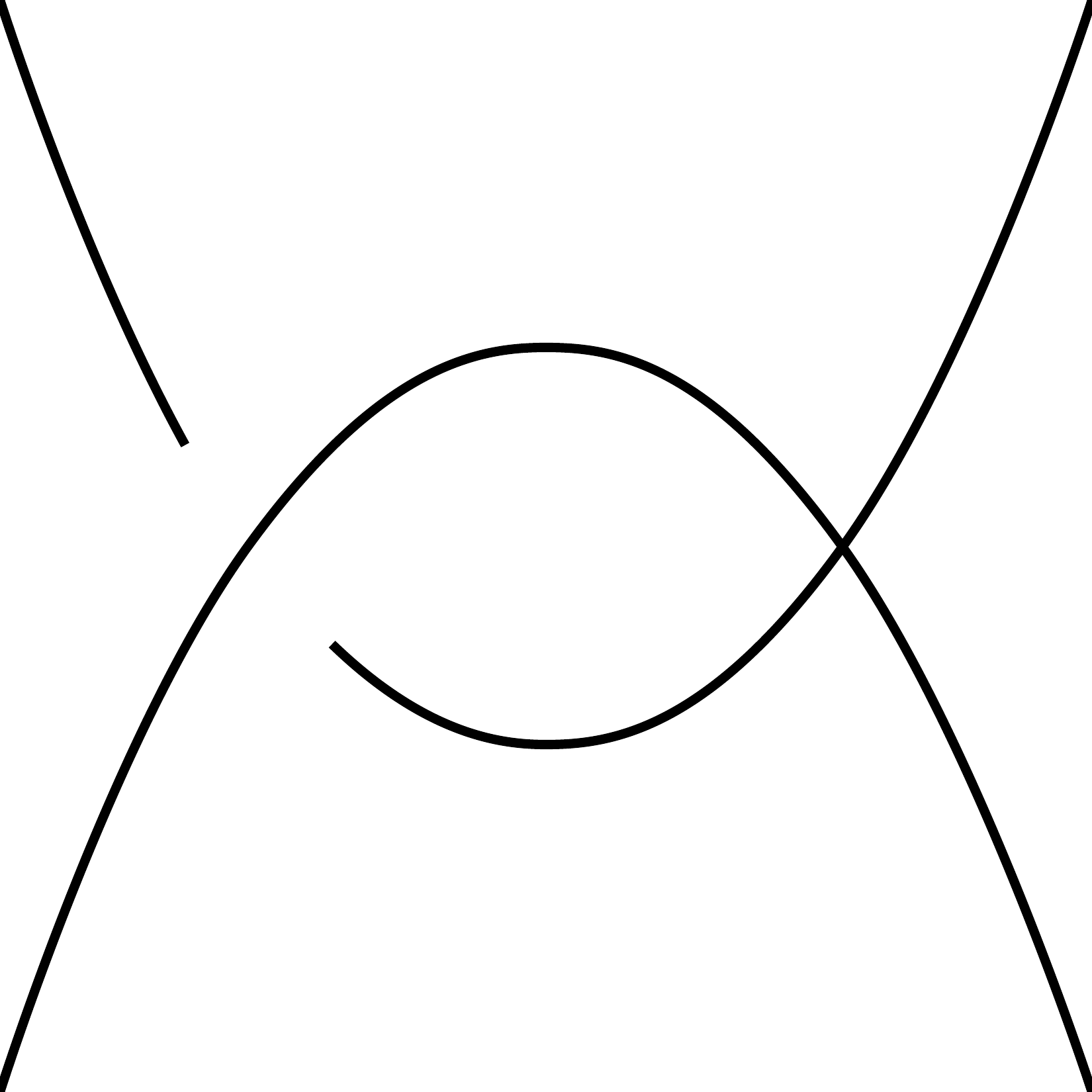}}\stackrel{R5}{\longleftrightarrow}\raisebox{-12pt}{\includegraphics[height=0.4in]{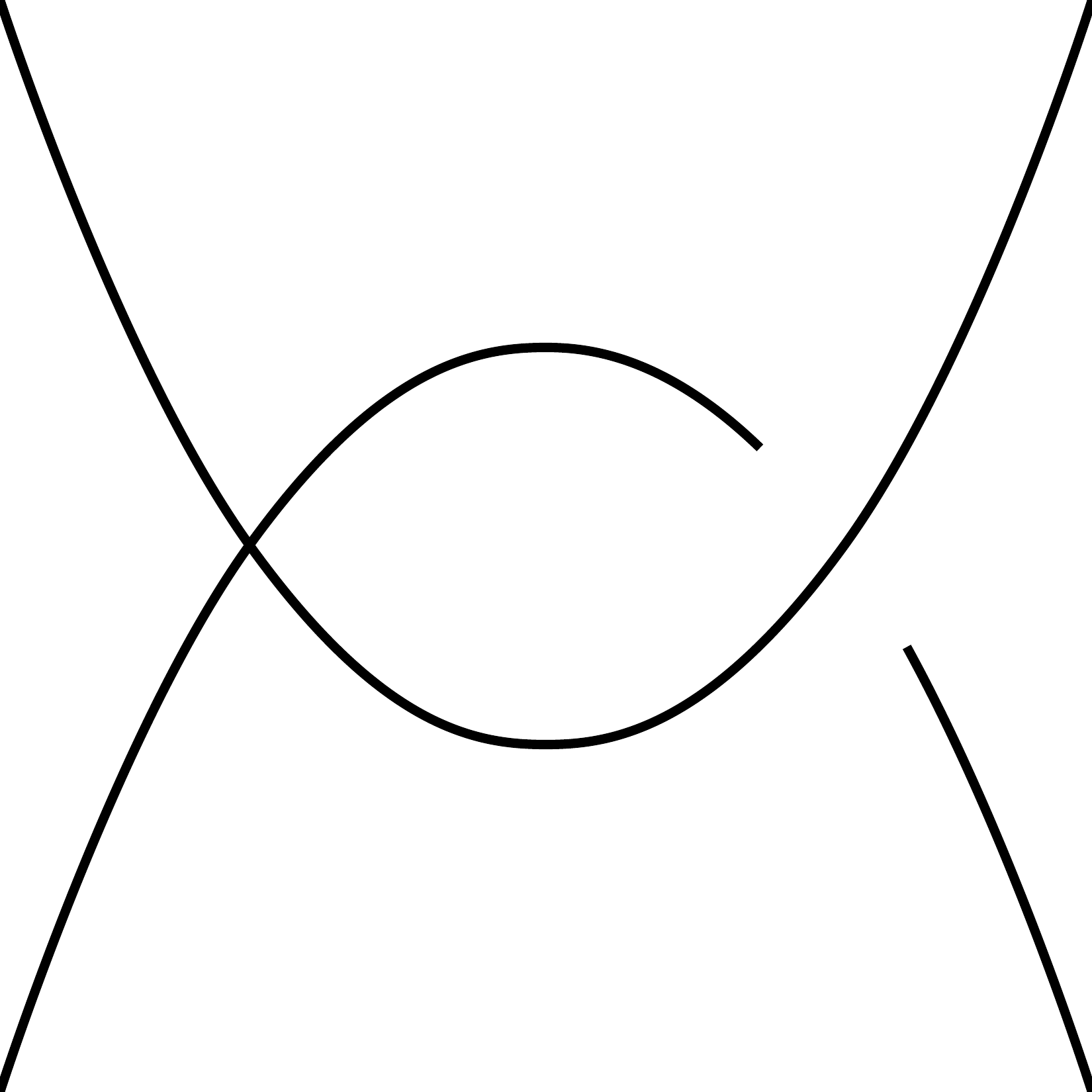}}\]
    \caption{Reidemeister-type moves for 4-valent knotted graphs}
    \label{fig:Reidemeister moves}
\end{figure}

The knotted 4-valent graphs that we consider in this work are oriented, meaning that each edge is given an orientation, such that the 4-valent vertices are \textit{balanced-oriented}; that is, the in-degree and out-degree at each vertex is two. Specifically, the vertices are of two orientation types, up to cyclic permutation: In-In-Out-Out and In-Out-In-Out (see Figure~\ref{fig:Vertices Orientation types}).

\begin{figure}[ht]
\centering
    \includegraphics[width=0.4in]{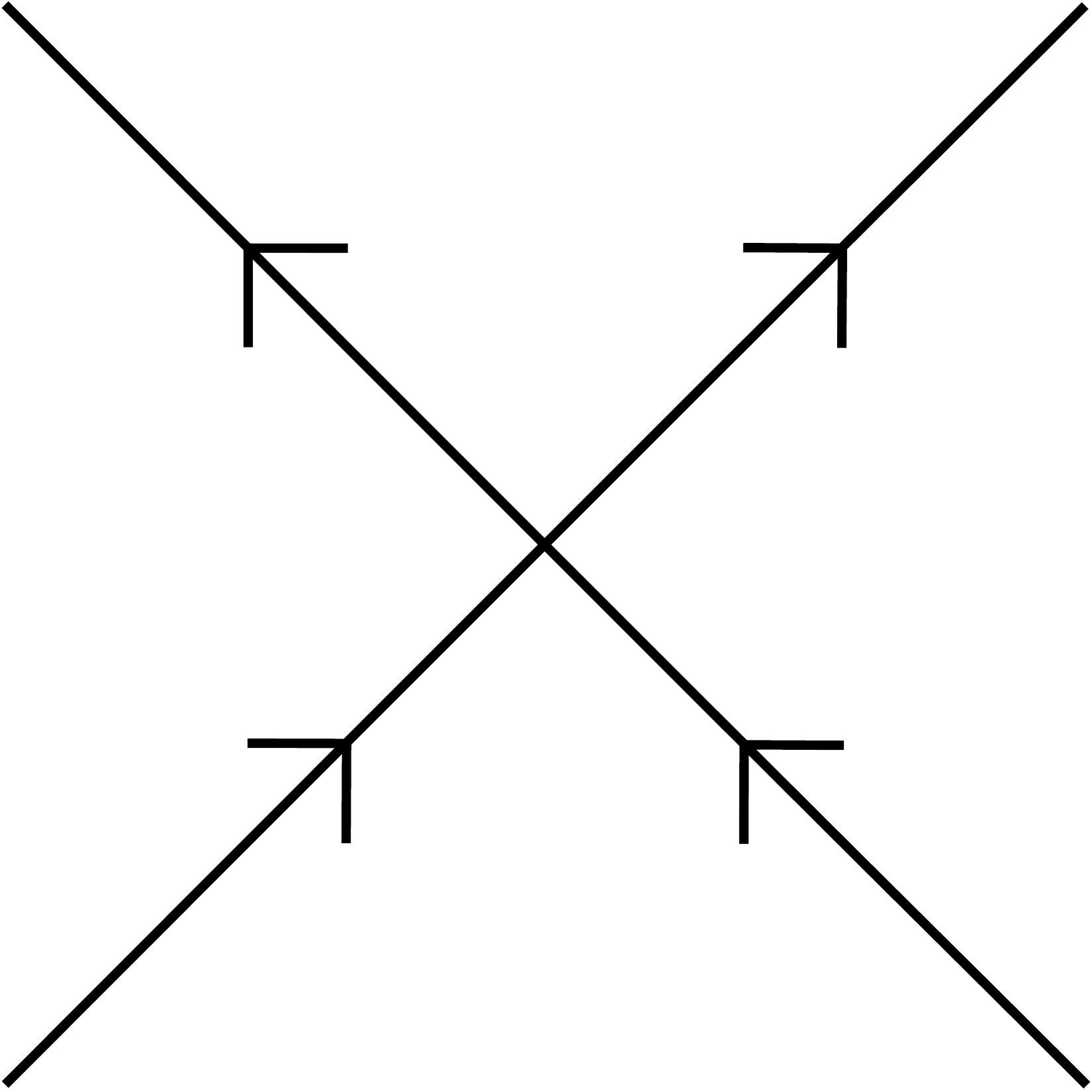}\hspace{1.5cm}\includegraphics[width=0.4in]{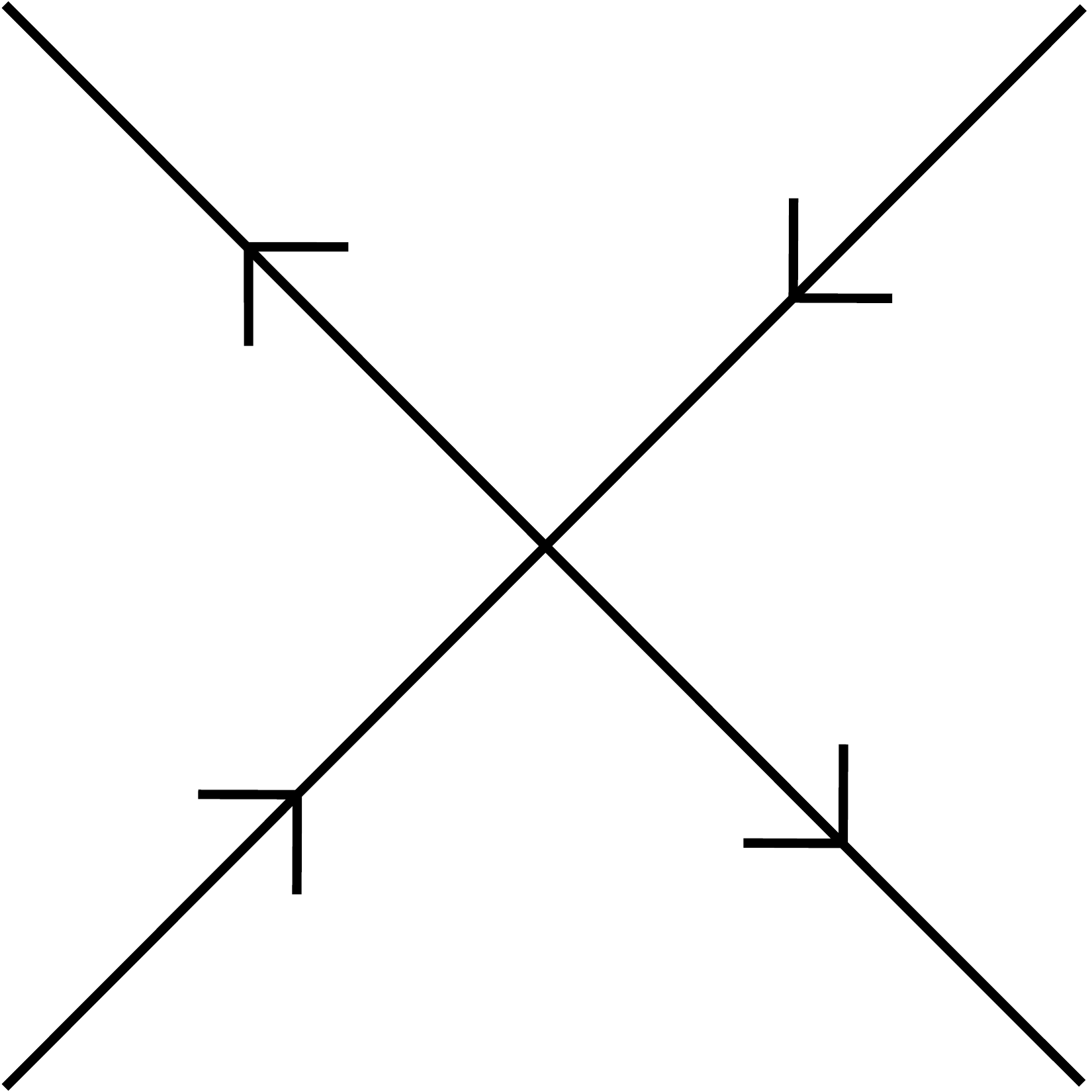}
\caption{Balanced-oriented 4-valent vertices}
\label{fig:Vertices Orientation types}
\end{figure}
The scope of this paper is to construct a polynomial invariant $P(L)$ for balanced-oriented, knotted 4-valent graphs $L$, which is an extension of the $sl(n)$ polynomial invariant for classical knots. We remind the reader that the $sl(n)$ polynomial is a one-variable specialization of the HOMFLY-PT polynomial invariant of oriented knots~\cite{HOMFLY, PT}. 

When working with oriented knotted graphs, there are several oriented versions of the Reidemeister-type moves for knotted graph diagrams. Hence, it is ideal to have at hand a generating set for all of the moves on knotted graph diagrams when proving that a certain quantity is an invariant of knotted graphs. In this paper, we also provide a minimal generating set of Reidemeister-type moves for diagrams of balanced-oriented, knotted 4-valent graphs with rigid vertices.

The paper is organized as follows: We provide the construction of the polynomial $P$ in Section~\ref{sec:polyinv}. In Section~\ref{sec:generatingset} we list all of the oriented versions of the Reidemeister-type moves for diagrams of balanced-oriented, knotted 4-valent graphs, and then provide a minimal generating set for these moves. We use this minimal generating set to prove in Section~\ref{sec:proofinv} that $P(L)$ is an invariant for balanced-oriented, knotted 4-valent graphs $L$ with rigid vertices.

\section{A Polynomial for Knotted, Balanced-Oriented, 4-Valent Graphs} \label{sec:polyinv}

The purpose of this paper is to construct a polynomial invariant for knotted 4-valent graphs by altering Caprau, Heywood, and Ibarra's ~\cite{Caprau_2014} modified MOY state model ~\cite{MOY} for the $sl(n)$ polynomial. Our polynomial invariant, denoted by $P(L)$ where $L$ is a balanced-oriented, knotted 4-valent graph with rigid vertices, is defined in terms of the skein relations and a graphical calculus involving planar graphs. 

Given a knotted 4-valent graph diagram $D$, we iteratively resolve the crossings using the skein relations in Figure~\ref{fig:SR Crossings}. The three diagrams in a skein relation are parts of knotted 4-valent graph diagrams that are identical, except in a small region where they differ as shown.

\begin{figure}[ht]
    \[P\Biggl(\raisebox{-10pt}{\includegraphics[height = .35in]{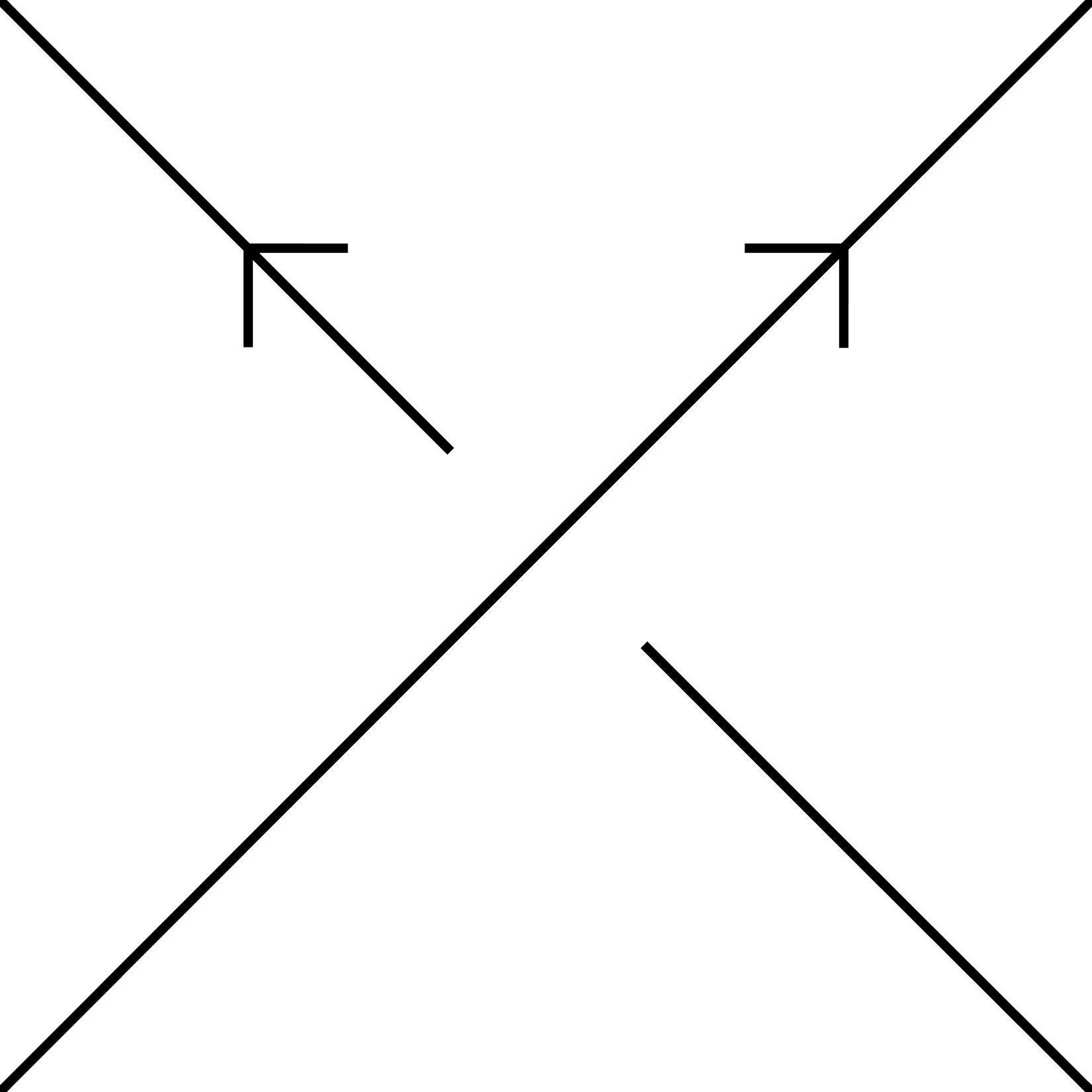}}\Biggr)= q^{n-1}P\Biggl(\raisebox{-10pt}{\includegraphics[height = .35in]{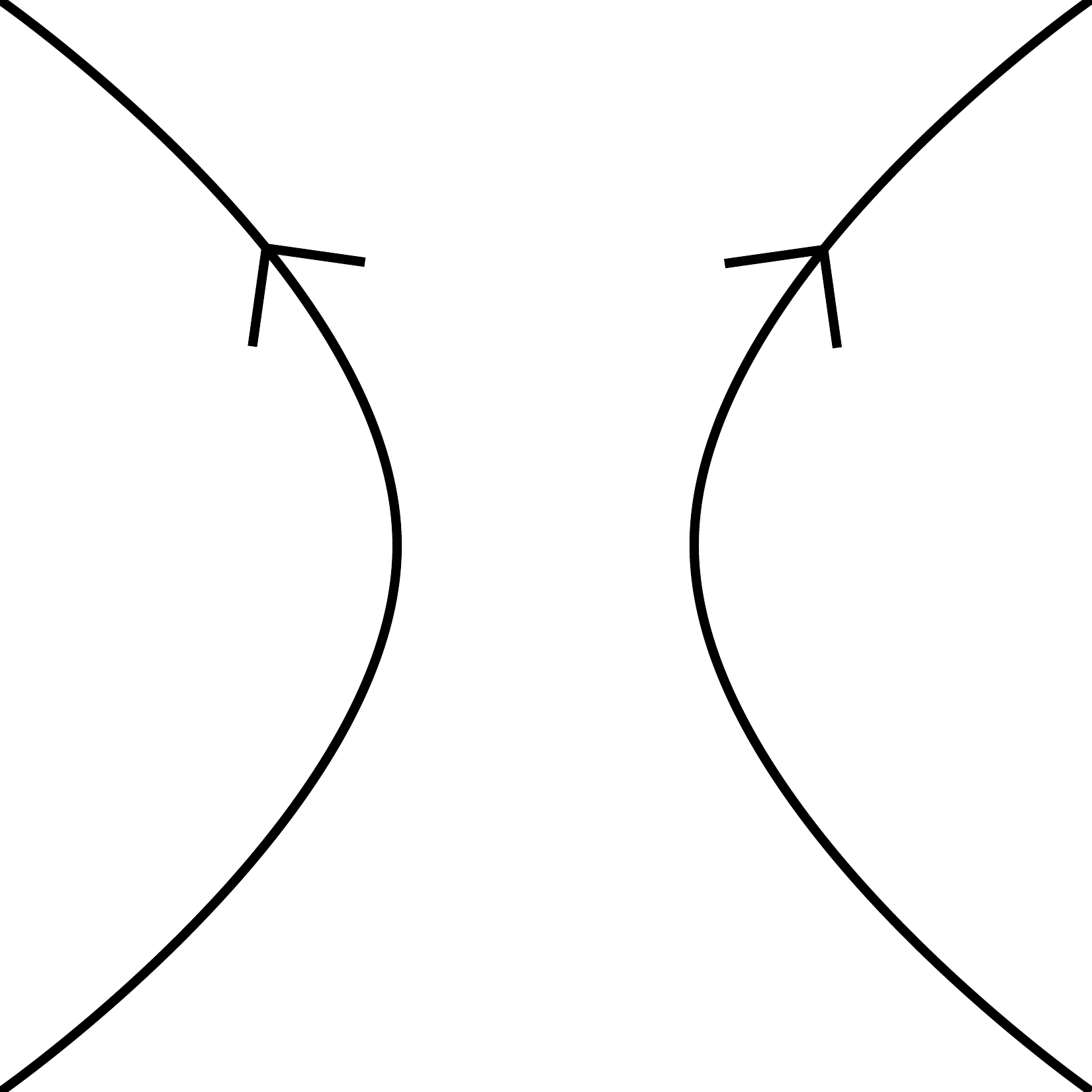}}\Biggr)-q^nP\Biggl(\raisebox{-10pt}{\includegraphics[height = .35in]{Figure-2/In-In-Out-Out.pdf}}\Biggr)\]

    \[P\Biggl(\raisebox{-10pt}{\includegraphics[height = .35in]{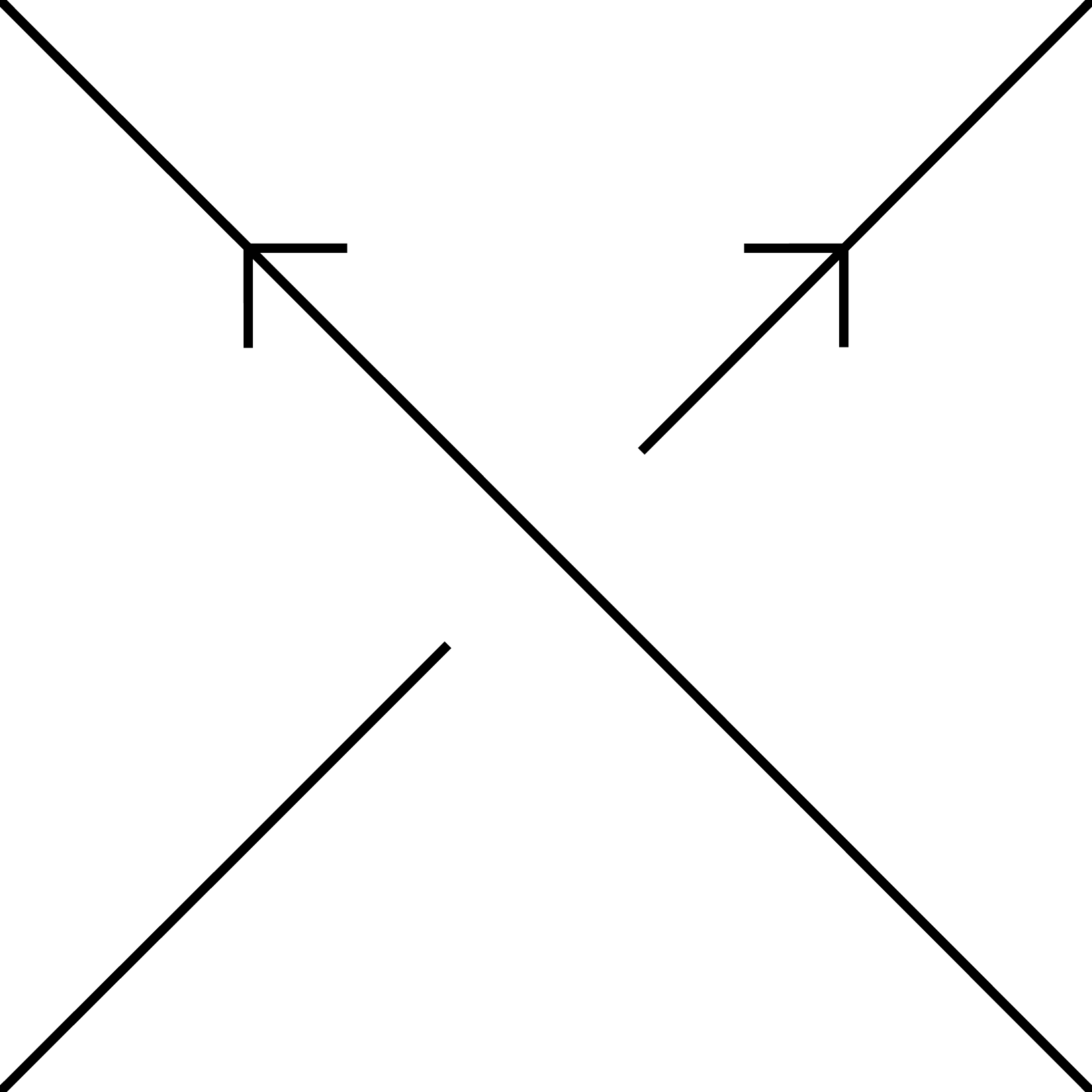}}\Biggr)= q^{1-n}P\Biggl(\raisebox{-10pt}{\includegraphics[height = .35in]{Skein-relations/oriented-strands-up.pdf}}\Biggr)-q^{-n}P\Biggl(\raisebox{-10pt}{\includegraphics[height = .35in]{Figure-2/In-In-Out-Out.pdf}}\Biggr)\]
    \caption{Skein relations for crossings}
    \label{fig:SR Crossings}
\end{figure}

After resolving all crossings, $P(D)$ is written as a linear combination of evaluations of planar 4-valent graphs; the resulting planar graphs are called the states of the original knotted 4-valent diagram. We evaluate a planar 4-valent graph using the skein relations for graphical states shown in Figure~\ref{fig:GSR}, where $[n]=\displaystyle\frac{{q^n}-q^{-n}}{q-q^{-1}}$ such that $n\in\mathbb{Z}$, $n\geq 2$, and $q$ is a formal parameter. Note that the symbol $P$ was omitted in Figure~\ref{fig:GSR} to avoid clutter.

\begin{figure}[ht]
    \[\raisebox{-12pt}{\includegraphics[height=0.4in]{Figure-2/In-Out-In-Out.pdf}}=\raisebox{-12pt}{\includegraphics[height =0.4in]{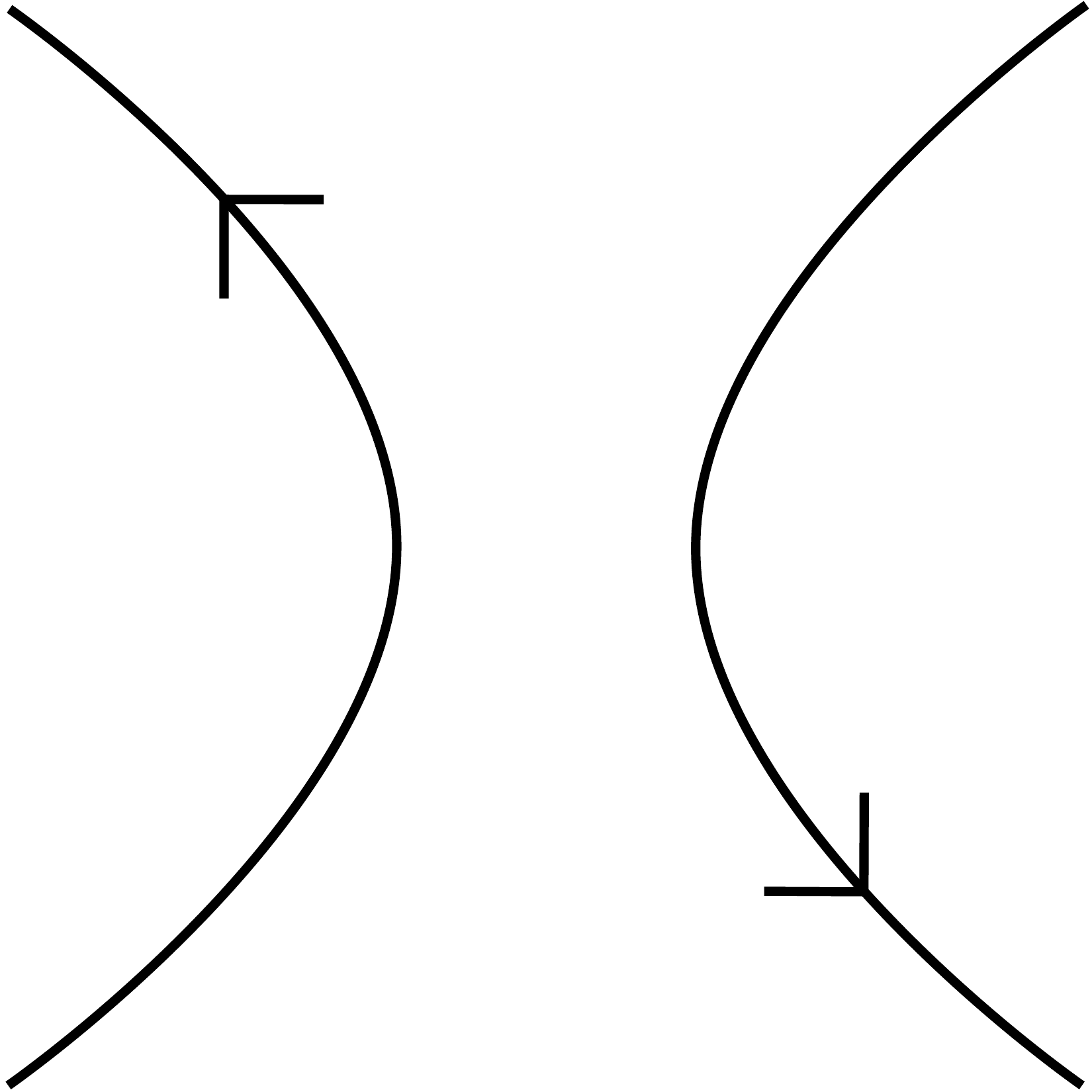}}+\raisebox{-12pt}{\includegraphics[height=0.4in]{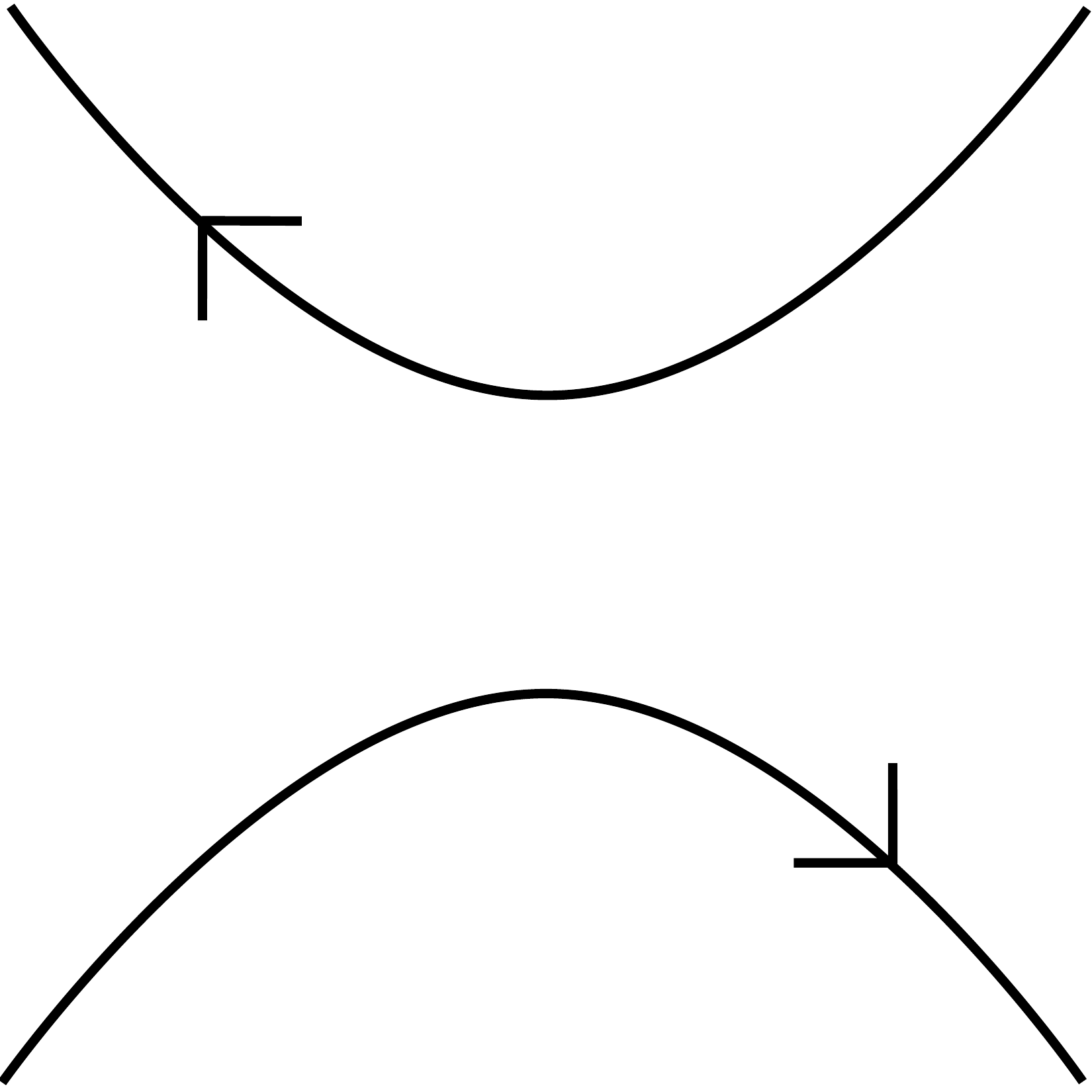}}\qquad
    \raisebox{-15pt}{\includegraphics[height=0.45in]{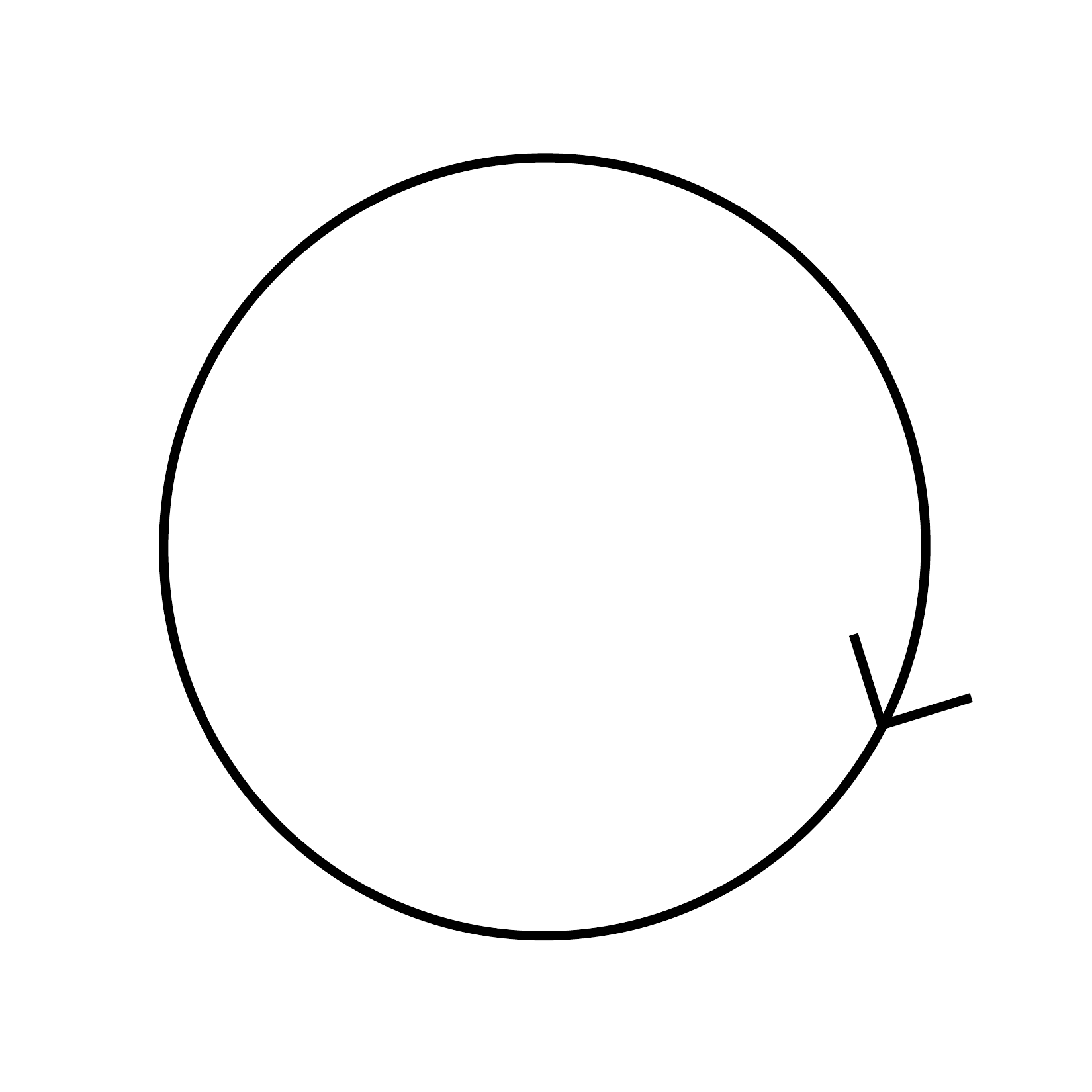}}=[n] \qquad \raisebox{-12pt}{\includegraphics[height=0.4in]{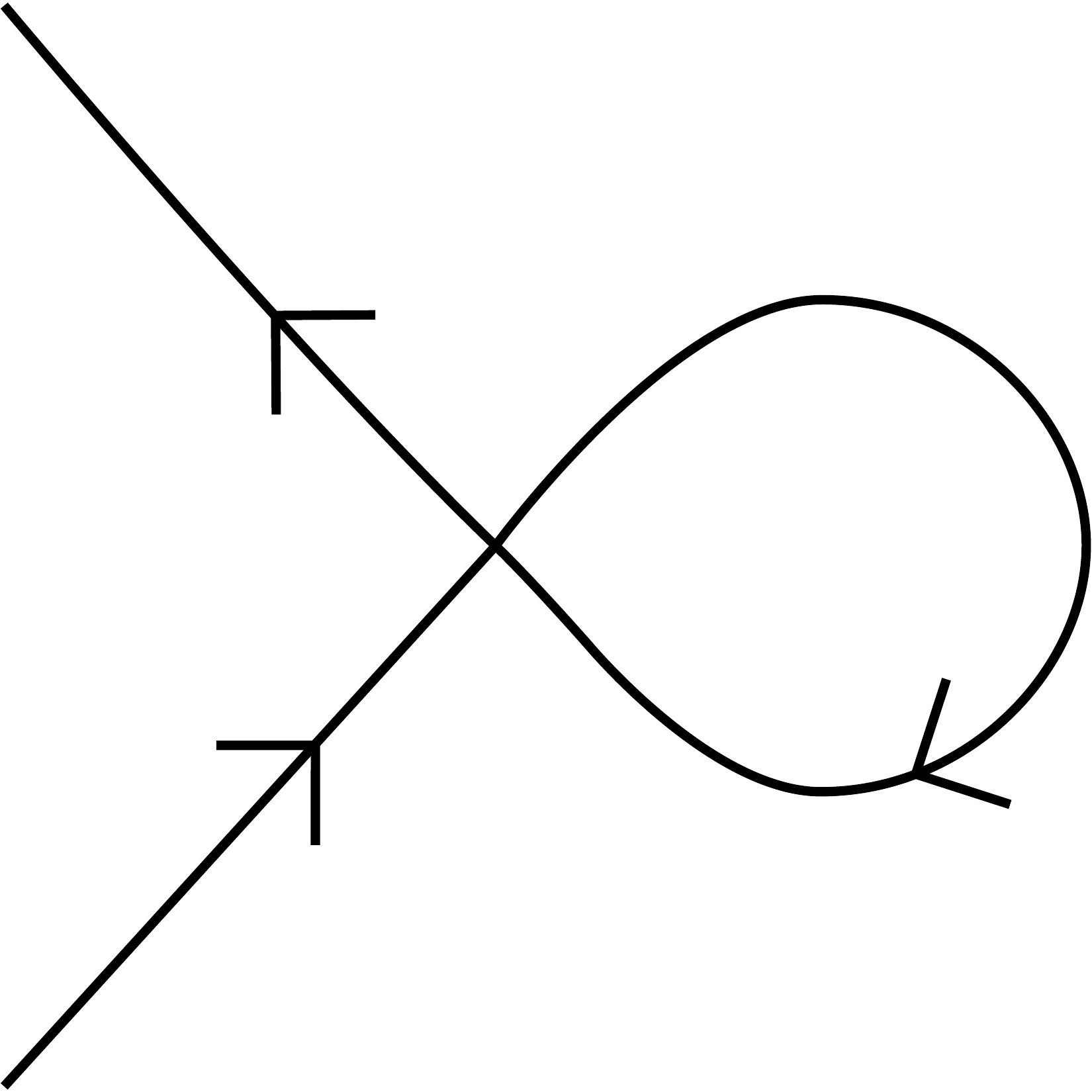}}=[n-1]\raisebox{-18pt}{\includegraphics[width=0.7in]{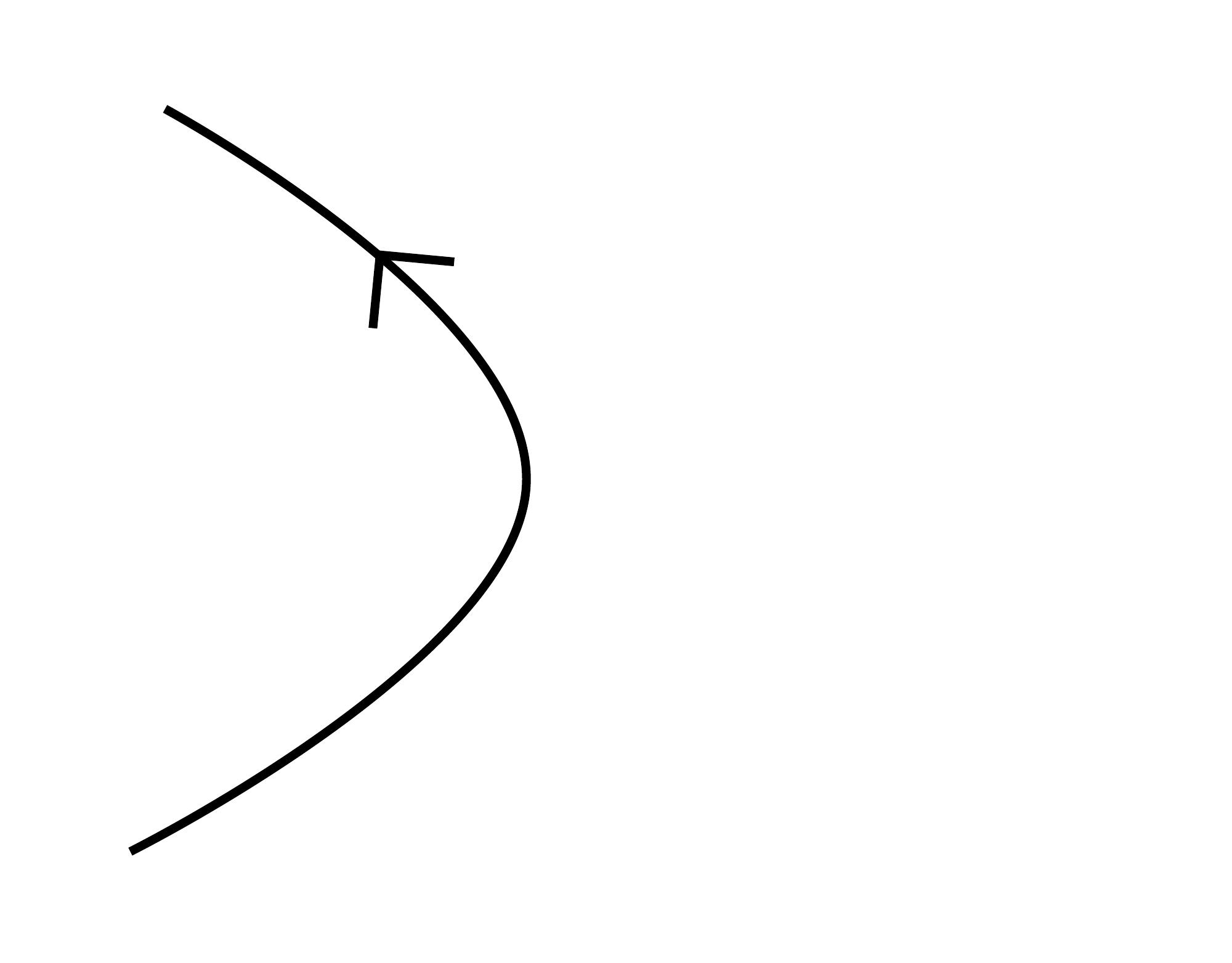}}
    \]

    \[
    \raisebox{-12pt}{\includegraphics[width=0.4in]{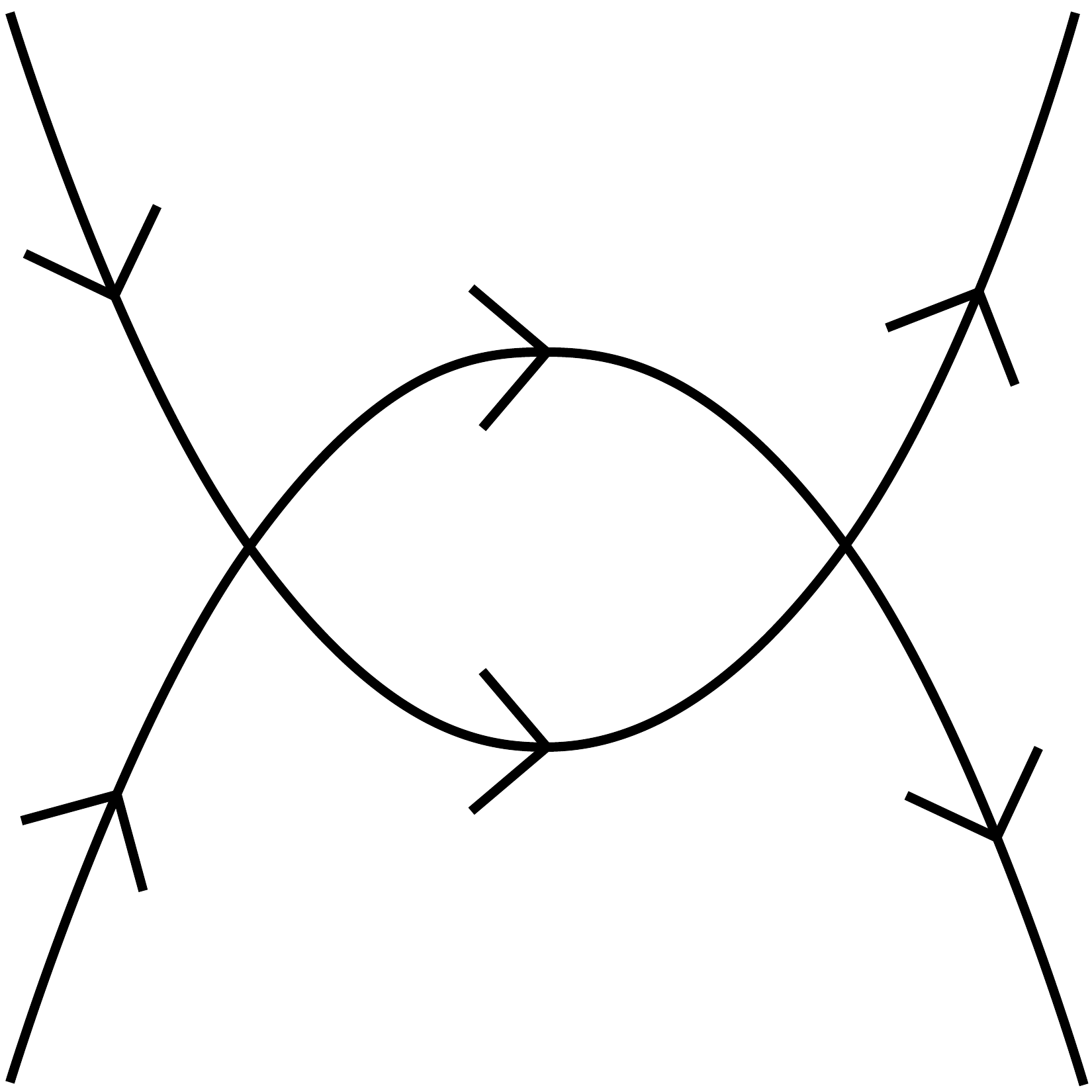}}=[2]\raisebox{16pt}{\rotatebox{270}{\includegraphics[height=0.4in]{Figure-2/In-In-Out-Out.pdf}}}\qquad\quad\raisebox{-12pt}{\includegraphics[width=.4in]{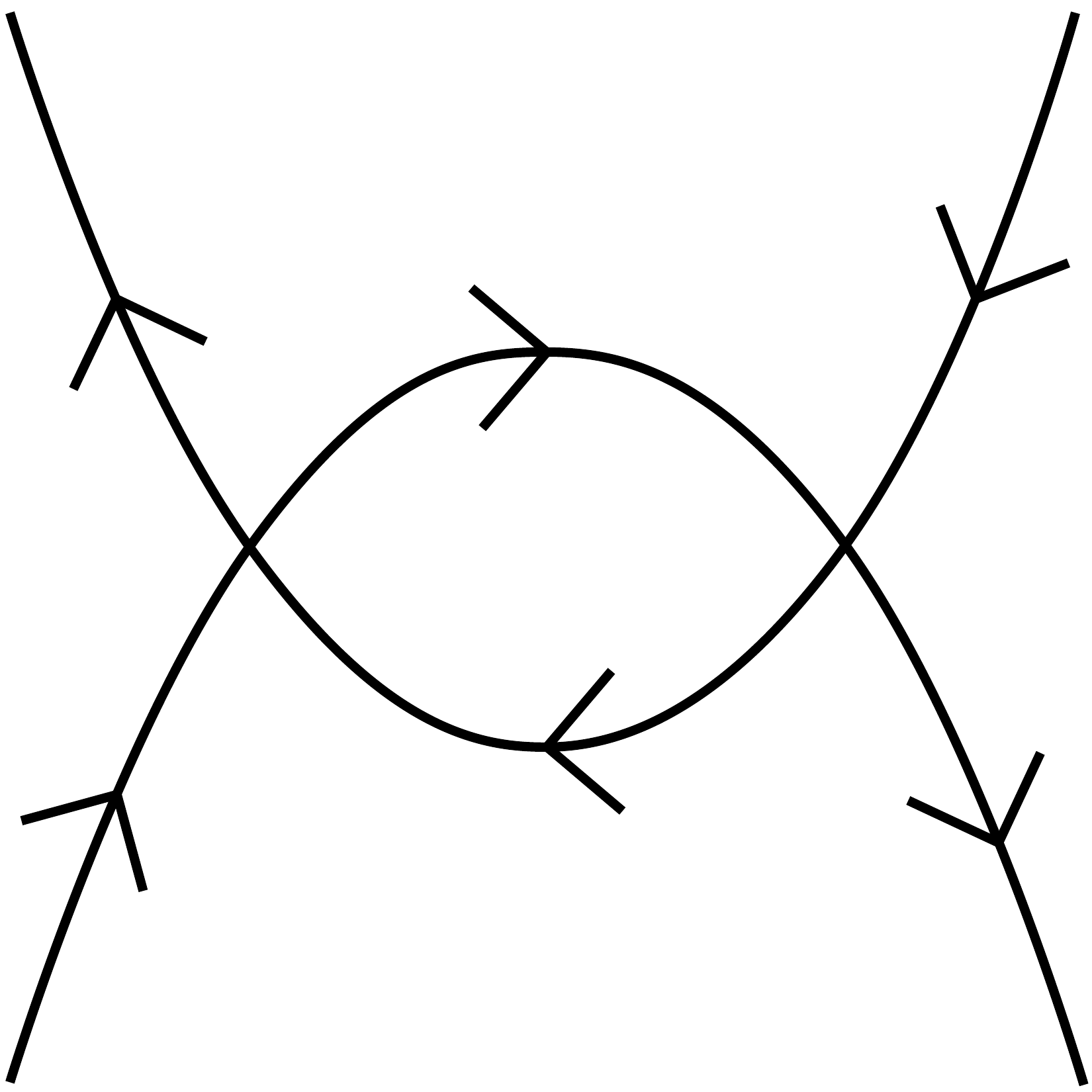}}=\raisebox{-10pt}{\includegraphics[width=.35in]{Graphical-Skein-Relations/two-oriented-strands-side.pdf}}+[n-2]\raisebox{-10pt}{\includegraphics[width=.35in]{Graphical-Skein-Relations/two-oriented-strands-up.pdf}}
    \]
    \[
    \raisebox{-23pt}{\includegraphics[width=.7in]{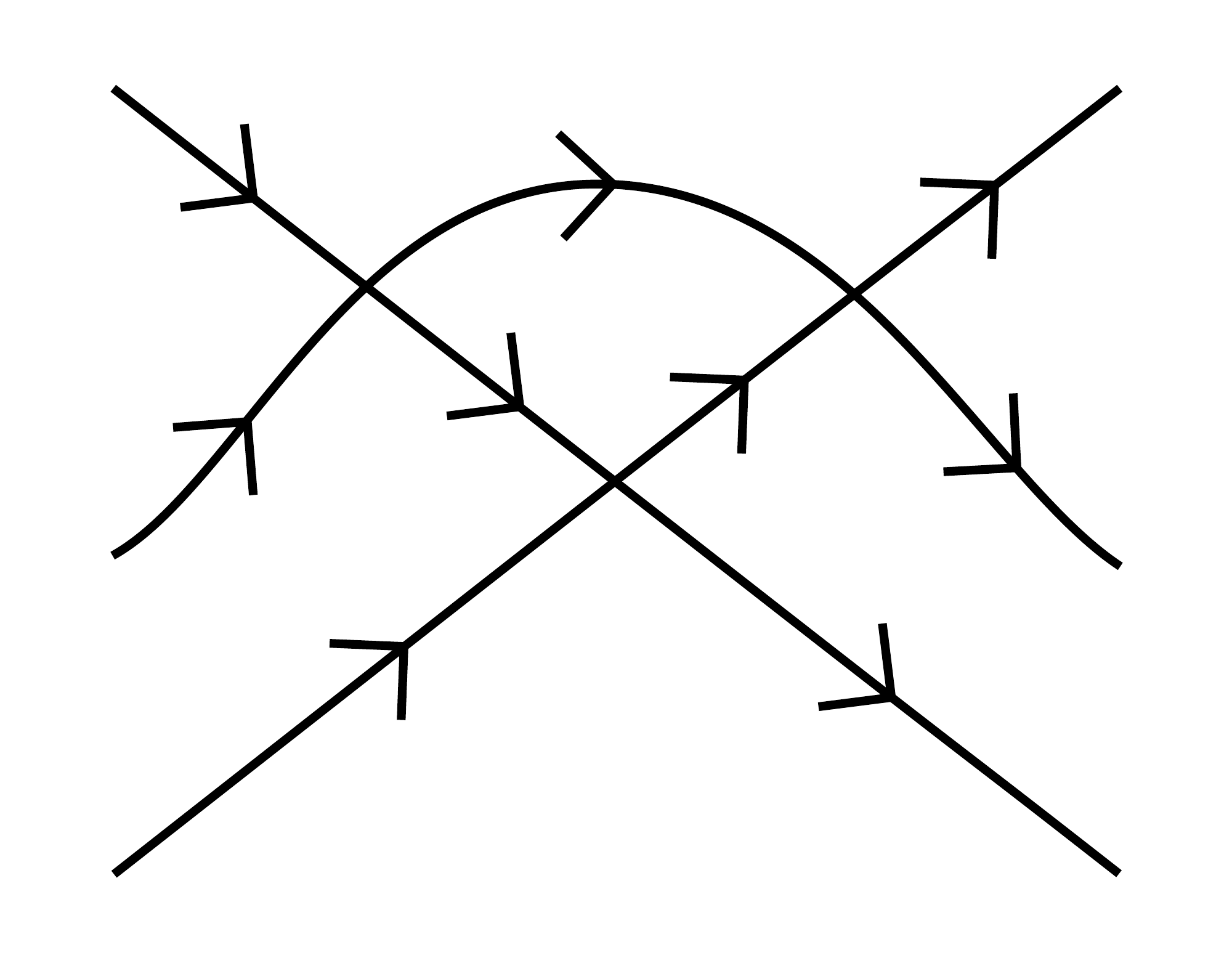}}\quad\raisebox{-6pt}{+}\quad\raisebox{-23pt}{\includegraphics[width=.7in]{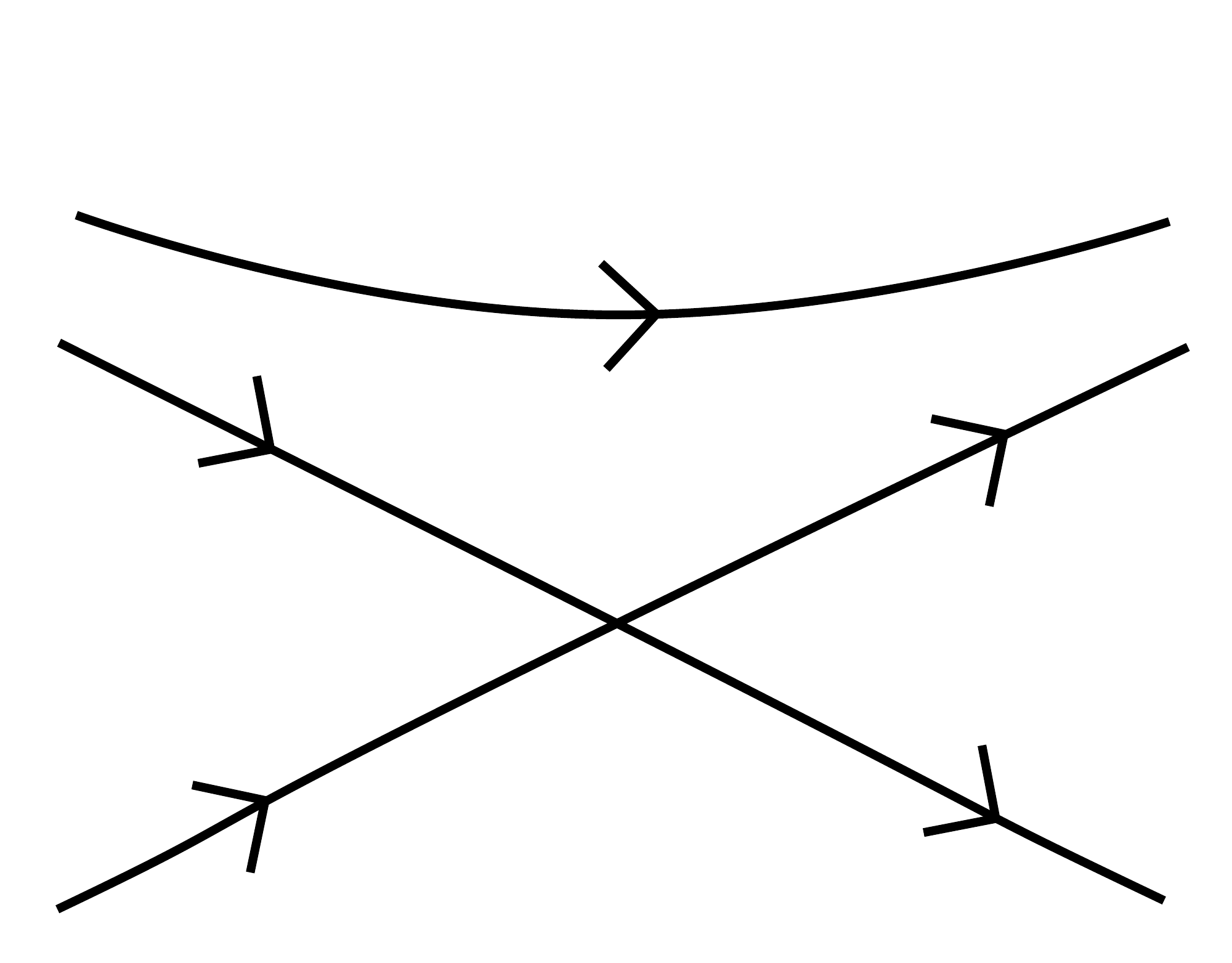}}\quad\raisebox{-6pt}{=}\quad\raisebox{-23pt}{\includegraphics[width=.7in]{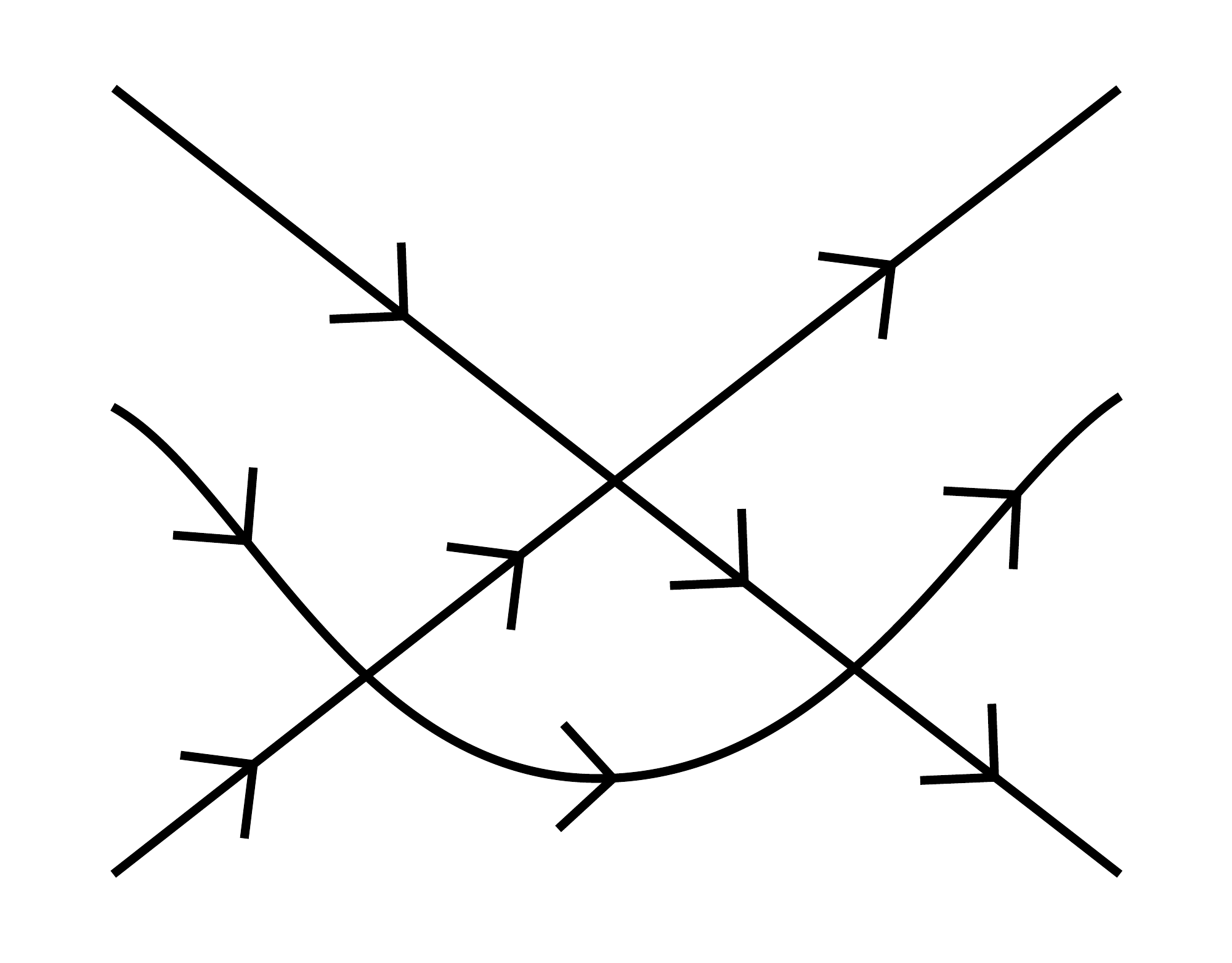}}\quad\raisebox{-6pt}{+}\quad\raisebox{-23pt}{{\includegraphics[width=.7in]{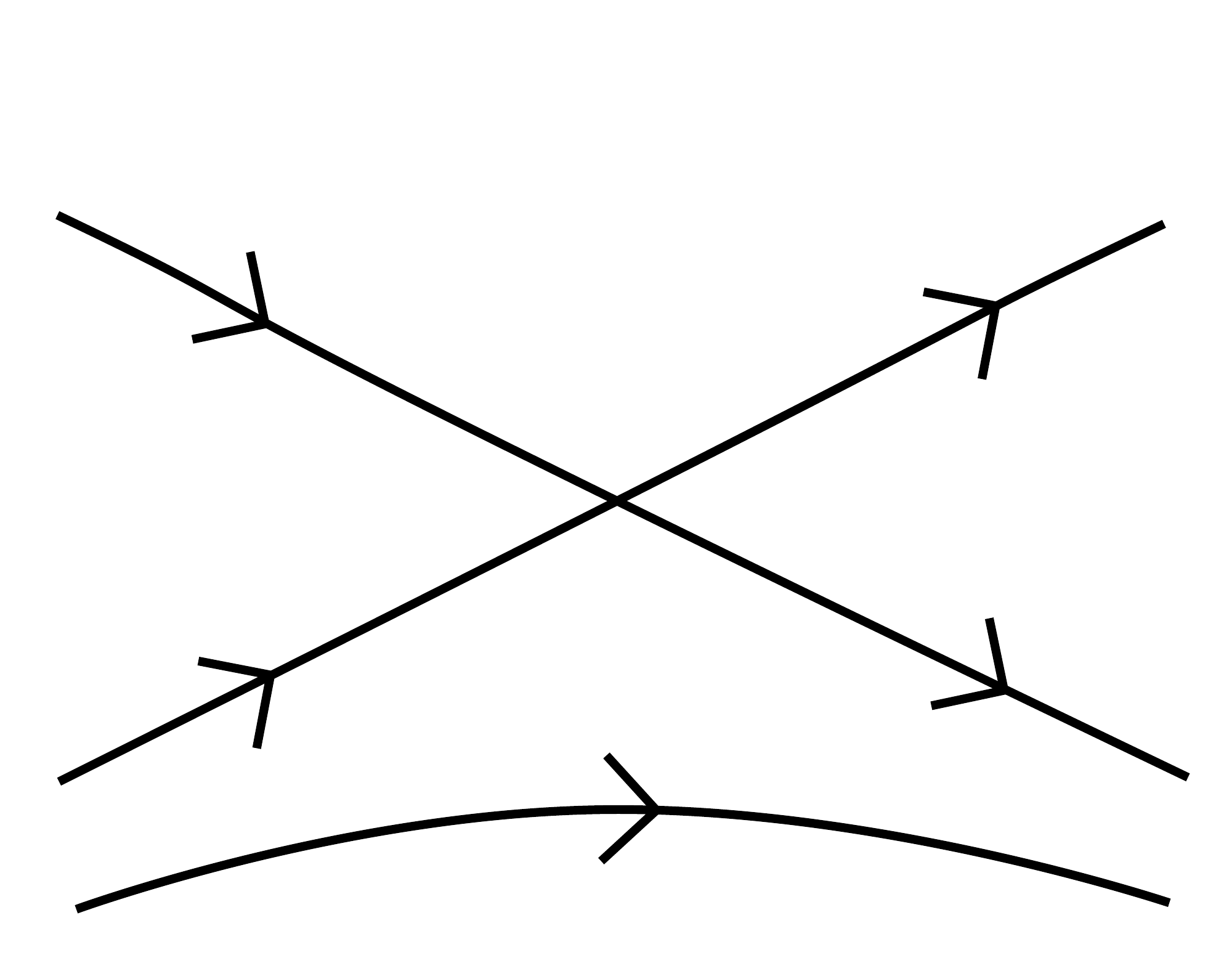}}}
    \]
    \[
    \raisebox{-30pt}{\includegraphics[width=.7in]{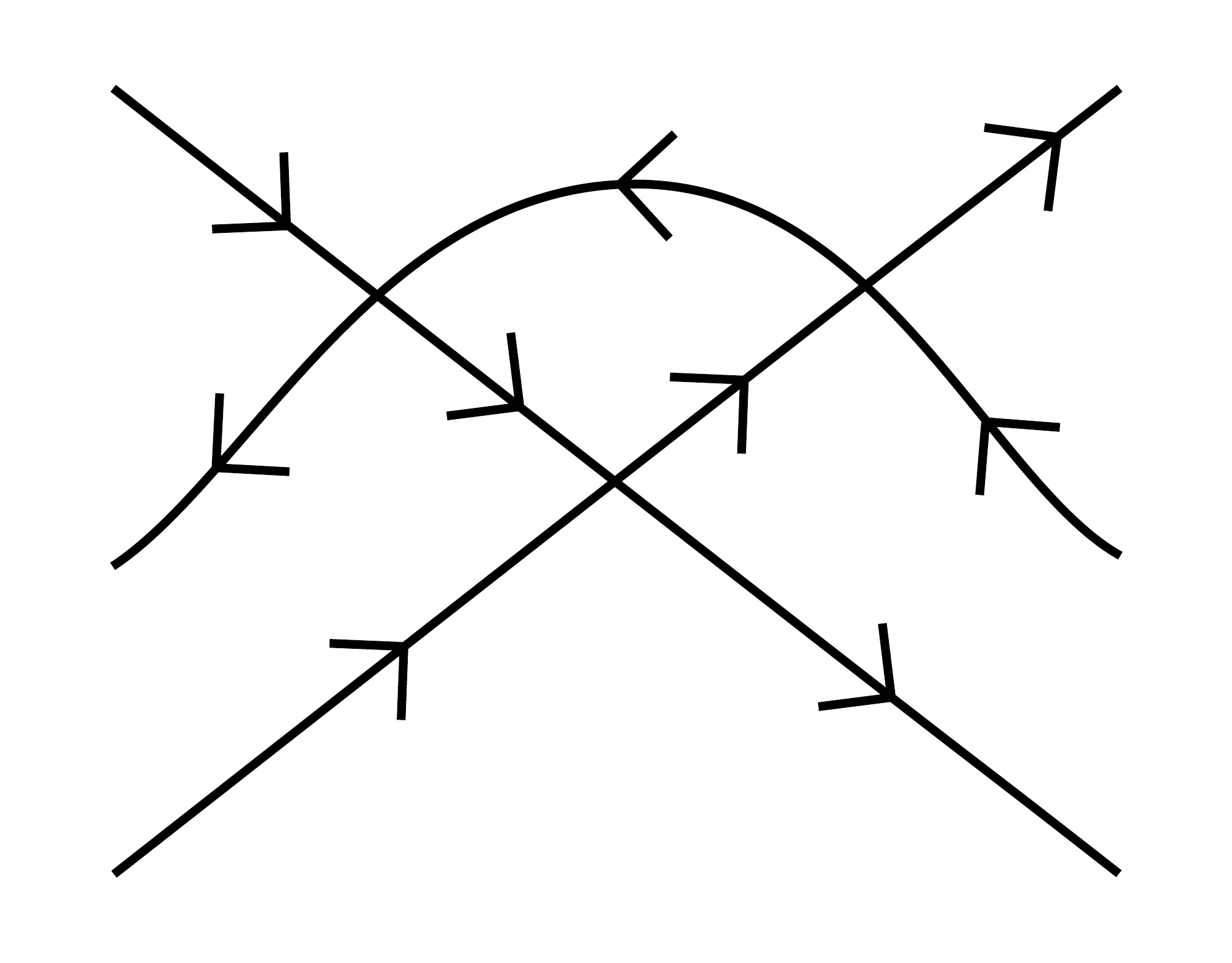}}\quad\raisebox{-13pt}{+\text{$[n-3]$}}\quad\raisebox{-30pt}{\includegraphics[width=.7in]{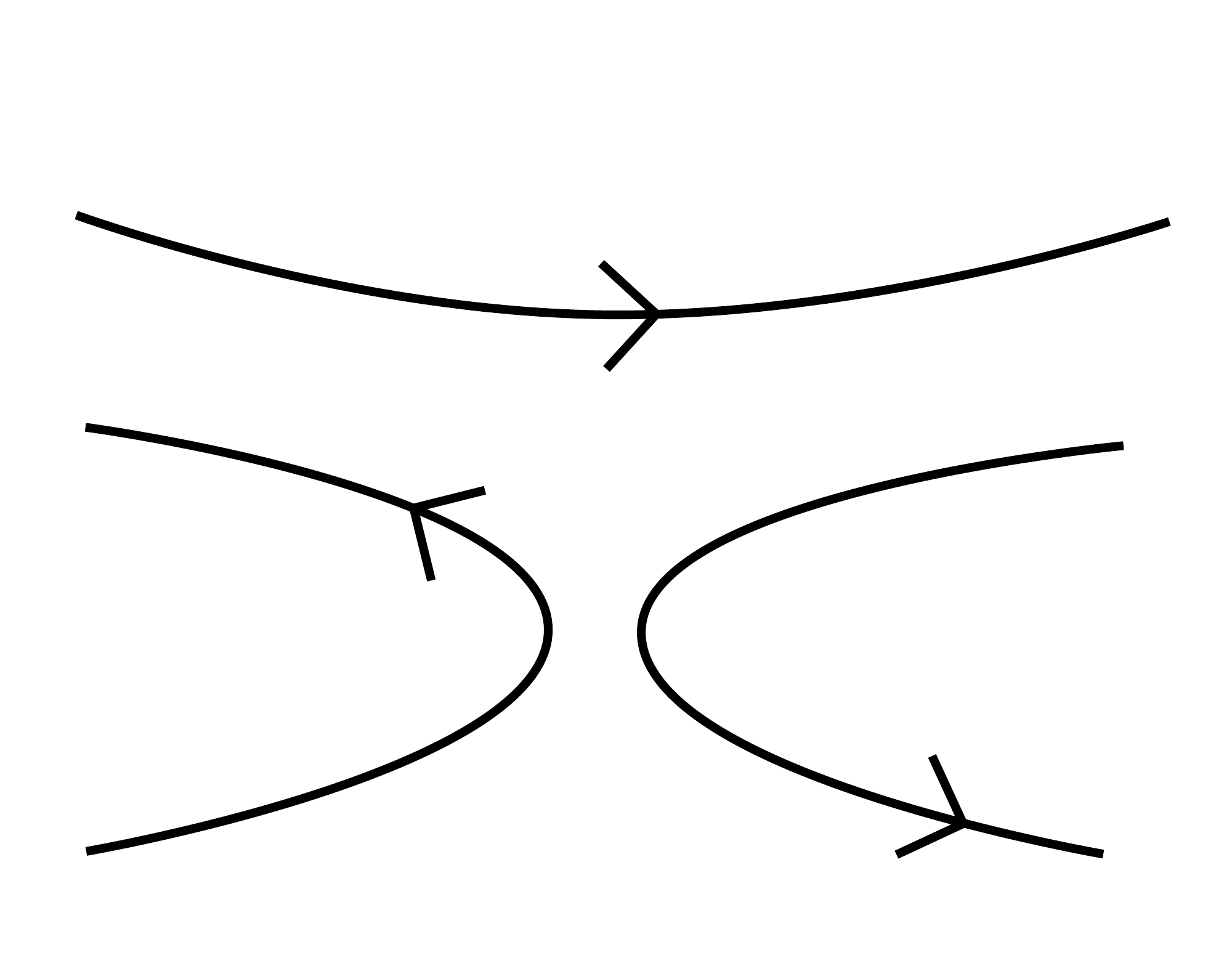}}\quad\raisebox{-12pt}{=}\quad\raisebox{-30pt}{\includegraphics[width=.7in]{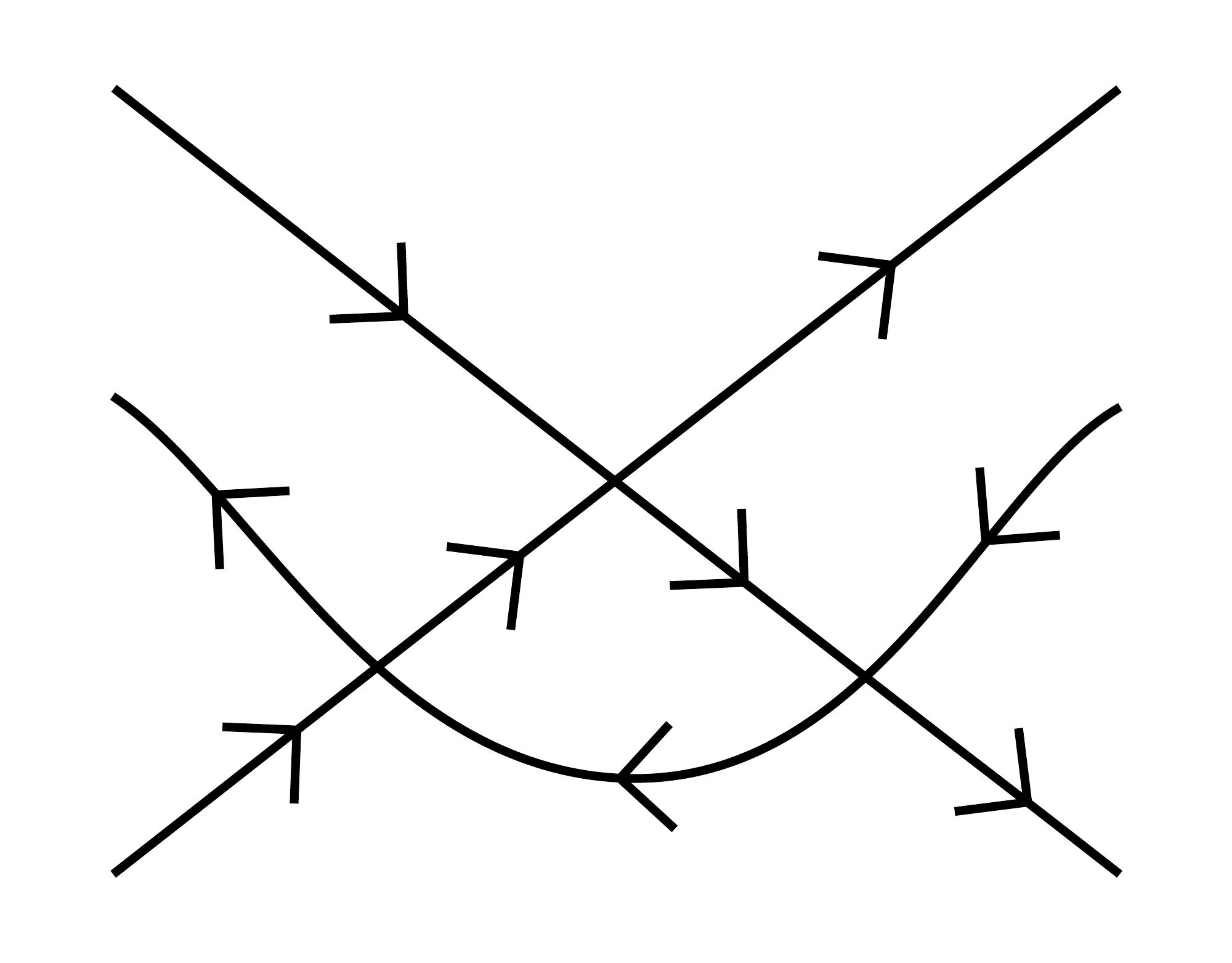}}\quad\raisebox{-13pt}{+\text{$[n-3]$}}\quad\raisebox{-30pt}{\includegraphics[width=.7in]{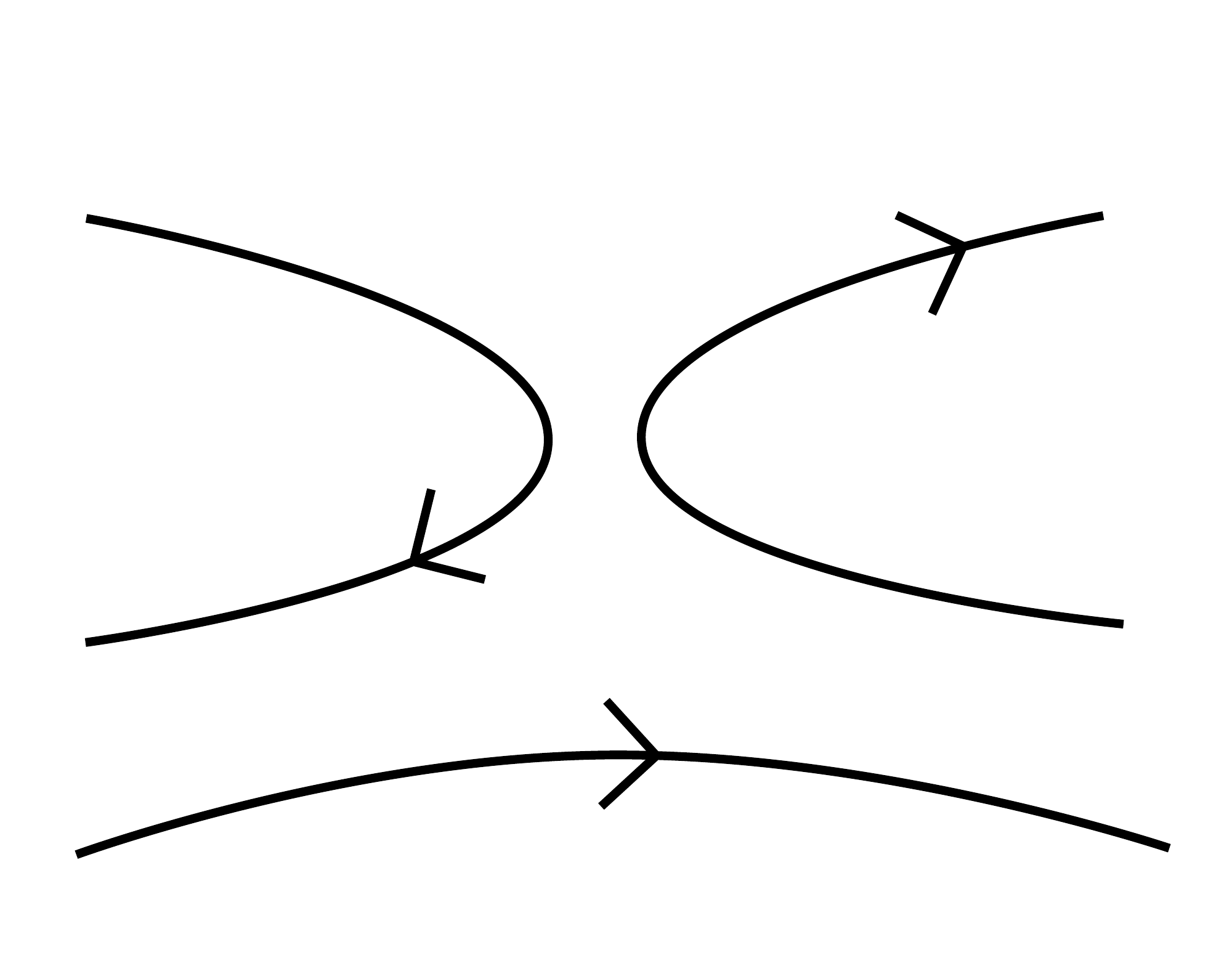}}
    \]
    \caption{Graphical skein relations}
    \label{fig:GSR}
\end{figure} 
The skein relations in Figure~\ref{fig:SR Crossings} imply that the polynomial $P$ satisfies the following skein relation:
\[q^{n}P\Biggl(\raisebox{-10pt}{\includegraphics[height = .35in]{Skein-relations/negcrossing.pdf}}\Biggr) - q^{-n}P\Biggl(\raisebox{-10pt}{\includegraphics[height = .35in]{Skein-relations/poscrossing.pdf}}\Biggr) = (q-q^{-1})P\Biggl(\raisebox{-8pt}{\includegraphics[height = .35in]{Skein-relations/oriented-strands-up.pdf}}\Biggr).  \]
This is combined with 
\[ P\left(D \cup \raisebox{-8pt}{\includegraphics[height=0.3in]{Graphical-Skein-Relations/oriented-circle.pdf }}\right) = [n]\, P(D), \]
where $D \cup \raisebox{-5pt}{\includegraphics[height=0.25in]{Graphical-Skein-Relations/oriented-circle.pdf }}$ is a disjoint union of any diagram $D$ and the standard diagram of the unknot. 
Therefore, when restricted to diagrams of knots (diagrams that contain only crossings and no vertices), our polynomial $P$ satisfies the defining skein relations for the $sl(n)$ polynomial for knots. Hence, $P$ can be regarded as an extension of the $sl(n)$ polynomial to balanced-oriented, knotted 4-valent graphs.

\begin{lemma}\label{lemma relations}
Let $G$ be a connected, oriented, 4-valent planar graph with vertices of type In-In-Out-Out. If $G$ contains at least one vertex and does not contain a loop $\raisebox{-7pt}{\includegraphics[height=0.25in]{Graphical-Skein-Relations/oriented-loop.pdf}}$ or a bigon $\raisebox{-7pt}{\includegraphics[width=0.25in]{Graphical-Skein-Relations/GSR-1.pdf}}$, $\raisebox{-7pt}{\includegraphics[width=0.25in]{Graphical-Skein-Relations/GSR-2.pdf}}$, then $G$ can be transformed into a 4-valent planar graph with vertices of type In-In-Out-Out and which contains a bigon, by using a finite sequence of the following moves:
\[ \raisebox{-8pt}{\includegraphics[width=.4in]{Graphical-Skein-Relations/GSR-5.pdf}} \leftrightarrow \raisebox{-8pt}{\includegraphics[width=.4in]{Graphical-Skein-Relations/GSR-7.pdf}} \quad \text{and} \quad \raisebox{-8pt}{\includegraphics[width=.4in]{Graphical-Skein-Relations/GSR-9.pdf}}  \leftrightarrow \raisebox{-8pt}{\includegraphics[width=.4in]{Graphical-Skein-Relations/GSR-11.pdf}} \]
\end{lemma}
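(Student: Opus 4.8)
The plan is to realize $G$ as a graph embedded in the sphere $S^2$ (by adjoining a point at infinity) and to combine a short Euler-characteristic count with an analysis of the orientations around each face, after which one of the two displayed flip moves is applied to a suitably chosen small face. For the count, every vertex is $4$-valent, so $\sum_v \deg v = 4V = 2E$ gives $E = 2V$, and Euler's formula $V - E + F = 2$ gives $F = V + 2$. Because $G$ has no loop (a $1$-gon) and no bigon (a $2$-gon), every face has at least three sides, so $2E = \sum_f \deg f \ge 3F$; together with $F = V+2$ this forces the average face degree $4V/(V+2)$ to be strictly less than $4$, whence $G$ has at least one triangular face. This existence step is completely elementary and is the engine of the argument.

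Next I would record how the balanced orientation constrains the corners of a face. At an In-In-Out-Out vertex the cyclic edge pattern is in, in, out, out, so exactly one corner has both edges oriented inward (a sink corner), exactly one has both oriented outward (a source corner), and the remaining two are transversal. Walking around the boundary of a face, the ``with or against the orientation'' type of the traversed edges switches precisely at sink and source corners; hence every face carries equally many source and sink corners, say $m(f)$, and the faces with $m(f)=0$ are coherently oriented cycles. Treating the orientation as a combinatorial flow, each In-In-Out-Out vertex has index $0$ and each face has index $1-m(f)$, so Poincar\'e--Hopf on $S^2$ yields $\sum_f (1-m(f)) = 2$; in particular at least two faces are coherently oriented. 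This bookkeeping lets me enumerate the finitely many orientation types of the small face produced above --- for a triangle only $m=0$ and $m=1$ are possible --- and it guarantees that a coherently oriented face is always available as the site of a move.

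Finally I would run the reduction. For a minimal coherently oriented face (or the triangle isolated above), the orientation classification shows that the local picture along one of its edges is exactly the left-hand side of one of the two displayed moves; applying that flip either exposes a bigon directly or replaces $G$ by another loop- and bigon-free graph in which a chosen complexity measure has strictly decreased. Since both moves are local replacements that preserve planarity and the In-In-Out-Out vertex type, termination cannot come from a drop in $V$; I would instead use a geometric monovariant --- for instance the number of sides of the smallest coherently oriented face, or $\min_{\text{orbit}} \sum_f \binom{\deg f}{2}$ --- and show each flip strictly decreases it until a bigon is forced to appear. The main obstacle is exactly this last step: matching every admissible oriented small-face configuration to the correct move (a finite but figure-dependent case check) and verifying that the monovariant strictly decreases at each flip while no loop or bigon is inadvertently destroyed. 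Everything upstream --- the Euler and Poincar\'e--Hopf accounting --- is routine by comparison.
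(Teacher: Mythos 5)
Your opening counts are correct, and they coincide with how the cited source begins: from $4V=2E$ and $V-E+F=2$ one gets $F=V+2$, and since a loop-free, bigon-free graph has all face degrees at least $3$, the inequality $4V=2E\ge 3F$ forces a triangular face. The orientation bookkeeping is also sound: each In-In-Out-Out vertex has exactly one sink corner and one source corner, every face has equally many of each, and $\sum_f\bigl(1-m(f)\bigr)=F-V=2$ is just a restatement of the Euler count. For comparison, the paper itself writes out none of this: its proof of Lemma~\ref{lemma relations} is a single sentence deferring to Carpentier~\cite{Carpentier}, whose Lemma~2 establishes the unoriented analogue for the Kauffman--Vogel calculus; your orientation analysis is the kind of supplement one would indeed need to adapt that proof, since the two permitted moves here are oriented.

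The genuine gap is the step you yourself flag as ``the main obstacle'', and it is not a detail --- it is the entire content of the lemma; producing a triangle by Euler's formula is the easy part. Two concrete failures. (i) You never verify that the triangle (or a coherently oriented face) actually matches the left-hand side of one of the two displayed moves: each move realizes only a particular oriented local configuration, so the finite case analysis matching the possible oriented triangles ($m=0$ and $m=1$) against the two moves must be carried out, not asserted; moreover a coherently oriented face need not be small, so its mere existence does not provide a move site. (ii) A single flip does not produce a bigon, so one must iterate, and neither of your termination devices works as stated: the quantity $\min_{\text{orbit}}\sum_f\binom{\deg f}{2}$ is by definition constant on the orbit of $G$ under the moves, so it cannot ``strictly decrease at each flip''; and the number of sides of the smallest coherently oriented face is not shown to decrease under a flip --- a flip changes the side counts only of the faces adjacent to the flipped configuration, by $\pm 1$, and there is no reason your chosen face is among them. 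What is missing is the actual reduction that Carpentier's cited lemma supplies: an argument that a triangle can be repeatedly pushed into a suitably chosen adjacent face so that that face's side count strictly decreases, with the graph remaining in the admissible class at every stage, until a bigon appears. Until you supply that argument together with the orientation case-check, what you have is a plausible outline, not a proof.
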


\begin{proof}
This statement can be proved in a similar manner as in Carpentier's paper~\cite[Lemma 2]{Carpentier}.
\end{proof}

The graphical skein relations given in Figure~\ref{fig:GSR} allow us to evaluate any 4-valent planar graph $G$. Using the first skein relation, we can write the evaluation $P(G)$ as a linear combination of evaluations of 4-valent planar graphs with fewer vertices, all of which are of type In-In-Out-Out. By applying the skein relations involving a loop or a bigon, we obtain $P(G)$ written as a $\mathbb{Z}[q, q^{-1}]$-linear combination of evaluations of 4-valent planar graphs with fewer vertices of type In-In-Out-Out. Then by Lemma~\ref{lemma relations}, if a 4-valent planar graph with vertices of type In-In-Out-Out does not contain a loop or a bigon, we can use the last two graphical skein relations from Figure~\ref{fig:GSR} in a neighborhood of a triangular face, to write the evaluation of the graph in terms of evaluations of graphs with fewer vertices and the evaluation of a graph that contains a bigon. The latter graph is then evaluated using one of the graphical relations involving a bigon. We continue this process until $P(G)$ is written as a linear combination of evaluations of disjoint unions of circles, where each circle is then replaced by $[n]$.

\begin{proposition}
There is a unique polynomial $P(G)\in \mathbb{Z}[q, q^{-1}]$ associated to a planar, balanced-oriented, 4-valent graph $G$, such that the polynomial satisfies the graphical relations given in Figure~\ref{fig:GSR}.
\end{proposition}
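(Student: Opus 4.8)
The plan is to prove uniqueness and existence separately: uniqueness is essentially forced by the reduction procedure described above, whereas existence rests on a well-definedness (path-independence) statement that carries the real content. For uniqueness, suppose $P_1$ and $P_2$ both take values in $\mathbb{Z}[q,q^{-1}]$ and satisfy every relation of Figure~\ref{fig:GSR}. I would induct on the complexity of $G$, measured by the pair (number of In-Out-In-Out vertices, number of In-In-Out-Out vertices) ordered lexicographically. The first relation of Figure~\ref{fig:GSR} rewrites any In-Out-In-Out vertex as a combination of two graphs having one fewer such vertex and no new vertices; once only In-In-Out-Out vertices remain, the loop and bigon relations together with Lemma~\ref{lemma relations} and the last two relations strictly reduce the vertex count, terminating in disjoint unions of circles (each equal to $[n]$, and the empty diagram to $1$). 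Because the coefficients in every relation are intrinsic to Figure~\ref{fig:GSR} and independent of $P_i$, following one fixed reduction sequence expresses $P_1(G)$ and $P_2(G)$ through the same lower-complexity evaluations, which agree by induction; hence $P_1=P_2$.

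For existence the point to settle is well-definedness: the scalar returned by the reduction algorithm must not depend on the choices made along the way---which vertex to split, which loop or bigon to remove, and which triangular face to use when invoking Lemma~\ref{lemma relations}. I would prove this by strong induction on the complexity measure above, comparing the first moves of two competing reduction sequences. When these moves act on disjoint parts of $G$ they commute, and the inductive hypothesis closes the case; when they overlap, one checks the finitely many critical configurations---two bigons, or a bigon and a loop, sharing an edge; a bigon adjacent to a triangle; two neighboring triangular faces; and the interaction of the vertex-splitting relation with each of these---verifying by direct computation with the relations of Figure~\ref{fig:GSR} that both branches reduce to a common value.

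I expect the main obstacle to be precisely the configurations involving the last two (square/triangle) relations: unlike the loop and bigon relations, these do not lower the vertex count by themselves---they exchange one triangular configuration for another---so closing up their critical pairs genuinely depends on Lemma~\ref{lemma relations} to guarantee that a bigon is eventually produced and then removed. A cleaner alternative that bypasses this confluence bookkeeping is to construct an explicit model: define $P(G)$ by the modified MOY state sum of \cite{Caprau_2014} (equivalently the $sl(n)$ graph evaluation of \cite{MOY}, applied after splitting the In-Out-In-Out vertices) and verify by a local state count that it satisfies each relation of Figure~\ref{fig:GSR}, much as in \cite{Carpentier}. This exhibits a polynomial satisfying the relations, which combined with the uniqueness argument completes the proof; here too, verifying the square/triangle relations is the most delicate step.
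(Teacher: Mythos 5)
Your uniqueness argument is essentially the paper's. The paper also proceeds by induction on the number of vertices (citing \cite[Theorem 3]{Carpentier}): split In-Out-In-Out vertices with the first relation, remove loops and bigons, and handle bigon-free graphs via Lemma~\ref{lemma relations}. The only difference is that the paper makes the key step explicit as a telescoping identity: since each move $G_{i-1}\leftrightarrow G_i$ of Lemma~\ref{lemma relations} changes the evaluation only by terms supported on graphs with fewer vertices, the inductive hypothesis gives $P(G_i)-P(G_{i-1})=P^*(G_i)-P^*(G_{i-1})$, and summing over $i$ reduces everything to the graph $G_m$ that contains a bigon. Your phrase that the last two relations ``strictly reduce the vertex count'' is inaccurate as written (they preserve it), but you correct this yourself in your final paragraph, and your fixed-reduction-sequence formulation amounts to the same telescoping. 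Where you genuinely diverge is existence. The paper disposes of it in one sentence---``the existence part follows from Lemma~\ref{lemma relations} and the above paragraph''---treating termination of the reduction algorithm as sufficient, and it never addresses independence of the choices made along the way (which vertex to split, which bigon to remove, which triangular face to use); this is precisely the well-definedness issue you single out as the real content. Your second route---defining $P$ by splitting the In-Out-In-Out vertices and evaluating the resulting In-In-Out-Out graphs by the modified MOY state sum of \cite{Caprau_2014}, \cite{MOY}, then verifying each relation of Figure~\ref{fig:GSR} by a local state count---is a genuinely different and more complete argument than what the paper offers; it is in fact the justification implicit in the paper's own description of its construction as an alteration of that state model, but it is never carried out there. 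In short: on uniqueness you match the paper; on existence your proposal is more careful than the paper's proof, whose one-line argument leaves exactly the path-independence gap that either your confluence analysis or your explicit-model construction would be needed to close.
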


\begin{proof} The existence part follows from Lemma~\ref{lemma relations} and the above paragraph. The uniqueness part is proved in a similar way as~\cite[Theorem 3]{Carpentier}, where the proof is by induction on the number of 4-valent vertices, as we now show.
    Let $P(G)$ and $P^*(G)$ be two polynomials associated to a planar, balanced-oriented, 4-valent graph $G$ and such that $P$ and $P^*$ satisfy the graphical relations in Figure~\ref{fig:GSR}.

    If $G$ has no vertices, then since $G$ is planar, it must be the disjoint union of unknotted circles. Hence, $P(G)=[n]^\alpha=P^*(G)$, where $\alpha$ is the number of unknotted circles in $G$.

    Suppose $P(G)=P^*(G)$ for all planar, balanced-oriented, 4-valent graphs $G$ with up to $n$ vertices. Let $G$ be a planar 4-valent graph containing $n+1$ vertices. If $G$ contains vertices of type In-Out-In-Out, we then resolve them using the first graphical relation in Figure~\ref{fig:GSR}, to write the evaluation  of $G$ in terms of planar graphs with fewer vertices, all of which are of type In-In-Out-Out. Thus, by the inductive hypothesis, $P(G) = P^*(G)$. On the other hand, if $G$ contains only vertices of type In-In-Out-Out and it contains a loop or a bigon, then using the graphical skein relations in Figure~\ref{fig:GSR} we can write the polynomials associated to $G$ as a sum of polynomials associated to graphs with fewer vertices than $G$. Thus by our inductive hypothesis, $P(G)=P^*(G)$. Now, if $G$ contains only vertices of type In-In-Out-Out and it does not contain a loop or a bigon, then by Lemma~\ref{lemma relations} there exists a finite sequence of graphs
    \[G=G_0\rightarrow G_1\rightarrow G_2\rightarrow\cdots\rightarrow G_m,\]
    where $G_m$ contains a bigon and where for each $i$ with $1 \leq i\leq m$, $G_i$ is obtained from $G_{i-1}$ via one of the moves below:
    \[ \raisebox{-8pt}{\includegraphics[width=.4in]{Graphical-Skein-Relations/GSR-5.pdf}} \leftrightarrow \raisebox{-8pt}{\includegraphics[width=.4in]{Graphical-Skein-Relations/GSR-7.pdf}} \quad \text{or} \quad \raisebox{-8pt}{\includegraphics[width=.4in]{Graphical-Skein-Relations/GSR-9.pdf}}  \leftrightarrow \raisebox{-8pt}{\includegraphics[width=.4in]{Graphical-Skein-Relations/GSR-11.pdf}} \]
 
     By the last two graphical relations in Figure~\ref{fig:GSR}, we know that $P\biggl(\raisebox{-8pt}{\includegraphics[width=.4in]{Graphical-Skein-Relations/GSR-5.pdf}}\biggr)-P\biggl(\raisebox{-8pt}{\includegraphics[width=.4in]{Graphical-Skein-Relations/GSR-7.pdf}}\biggr)$ and $P\biggl(\raisebox{-8pt}{\includegraphics[width=.4in]{Graphical-Skein-Relations/GSR-9.pdf}}\biggr)-P\biggl(\raisebox{-8pt}{\includegraphics[width=.4in]{Graphical-Skein-Relations/GSR-11.pdf}}\biggr)$, as well as $P^*\biggl(\raisebox{-8pt}{\includegraphics[width=.4in]{Graphical-Skein-Relations/GSR-5.pdf}}\biggr)-P^*\biggl(\raisebox{-8pt}{\includegraphics[width=.4in]{Graphical-Skein-Relations/GSR-7.pdf}}\biggr)$ and $P^*\biggl(\raisebox{-8pt}{\includegraphics[width=.4in]{Graphical-Skein-Relations/GSR-9.pdf}}\biggr)-P^*\biggl(\raisebox{-8pt}{\includegraphics[width=.4in]{Graphical-Skein-Relations/GSR-11.pdf}}\biggr)$, can be written as a sum of polynomials associated to graphs with fewer vertices than $G$. Then by the induction hypothesis,
     \[P(G_i)-P(G_{i-1})=P^*(G_i)-P^*(G_{i-1}),\] for all $i$ where $1\leq i \leq m$. Hence, \[P(G_m)-P(G_0)=P^*(G_m)-P^*(G_0).\] Since $G_m$ contains a bigon, the first part of the proof implies that $P(G_m)=P^*(G_m)$. Therefore, $P(G)=P^*(G)$, as desired. By the principle of mathematical induction, we conclude that the statement holds for any planar 4-valent graph with balanced-oriented vertices.  
\end{proof}

We will prove that $P(L)$ is an invariant for balanced-oriented,  knotted 4-valent graphs $L$ in Section~\ref{sec:proofinv}.

\begin{example}
We provide an example that evaluates the polynomial $P$ for a given diagram. In the first step below, we apply the skein relation for the vertex of type In-Out-In-Out. In the second step, we use the graphical relation to resolve the negative crossing in the second diagram, and use that $P$ is invariant under the first Reidemeister move to simplify the evaluation of the first diagram. In the third step we employ the first graphical relation involving a bigon followed by combining like terms. In the final steps, we use the graphical relation for the loop and that the value of the standard diagram of the unknot is $[n]$.
    \begin{align*}
        P\biggl(\raisebox{-8pt}{\includegraphics[height = .35in]{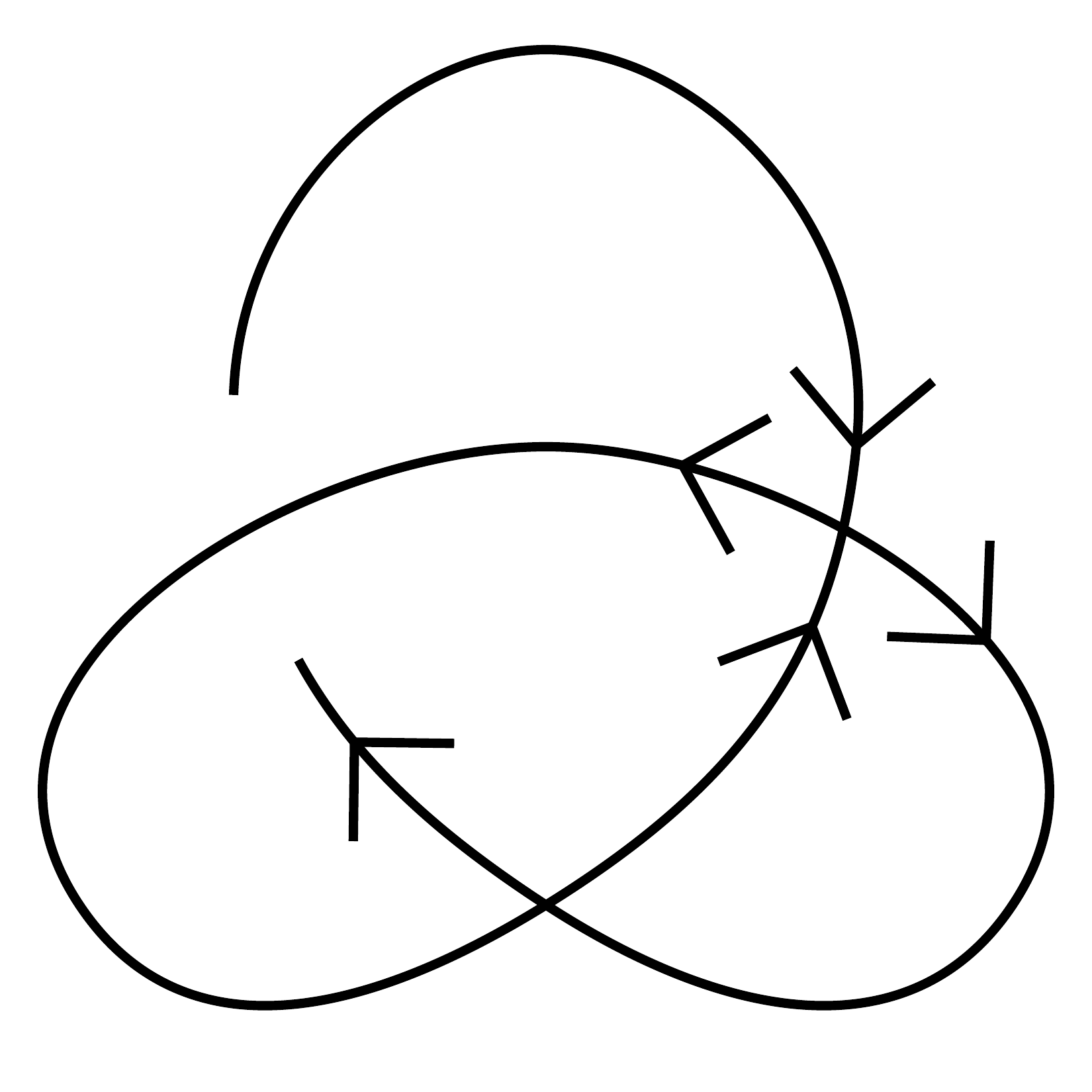}}\biggr)&=P\biggl(\raisebox{-8pt}{\includegraphics[height = .3in]{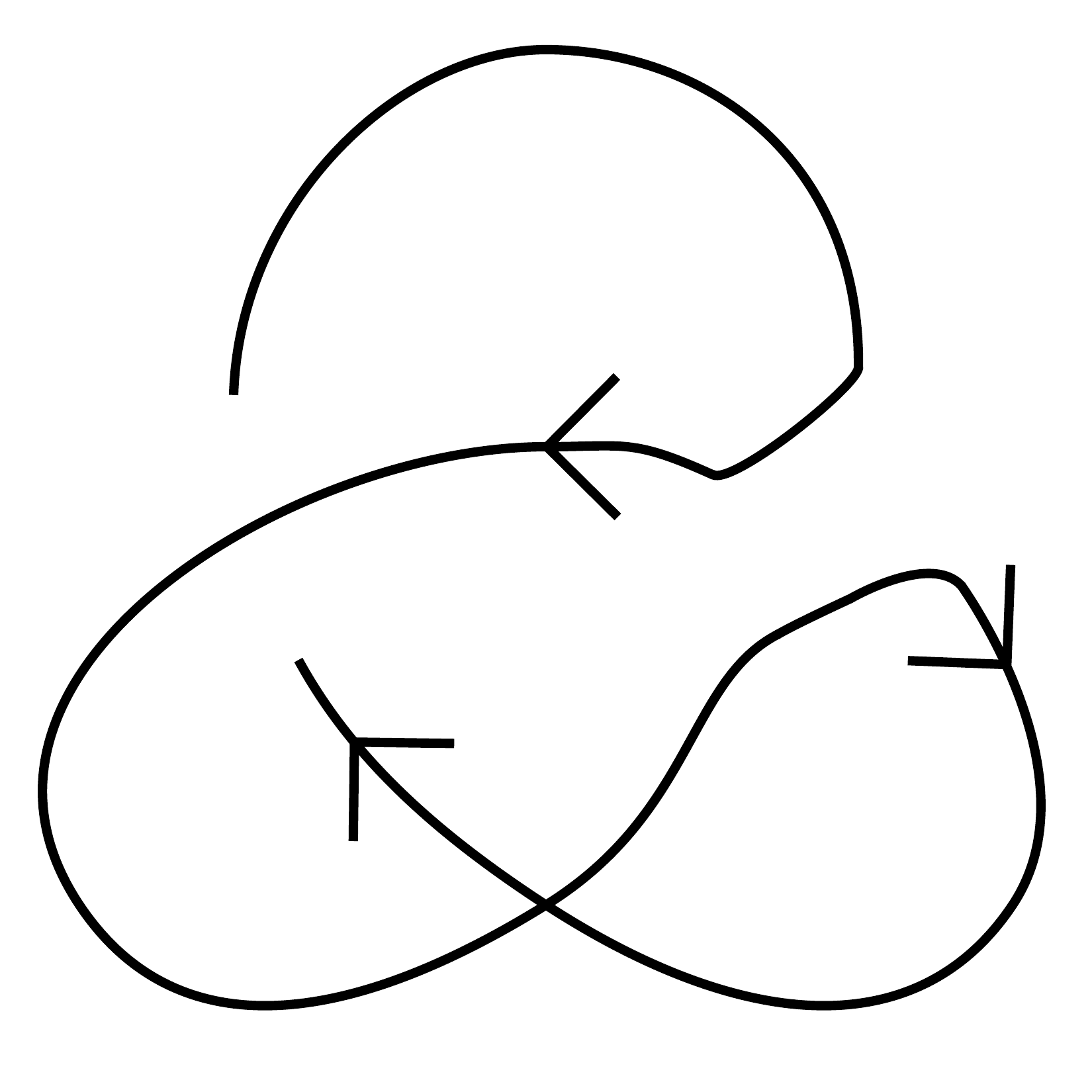}}\biggr)+P\biggl(\raisebox{-8pt}{\includegraphics[height = .3in]{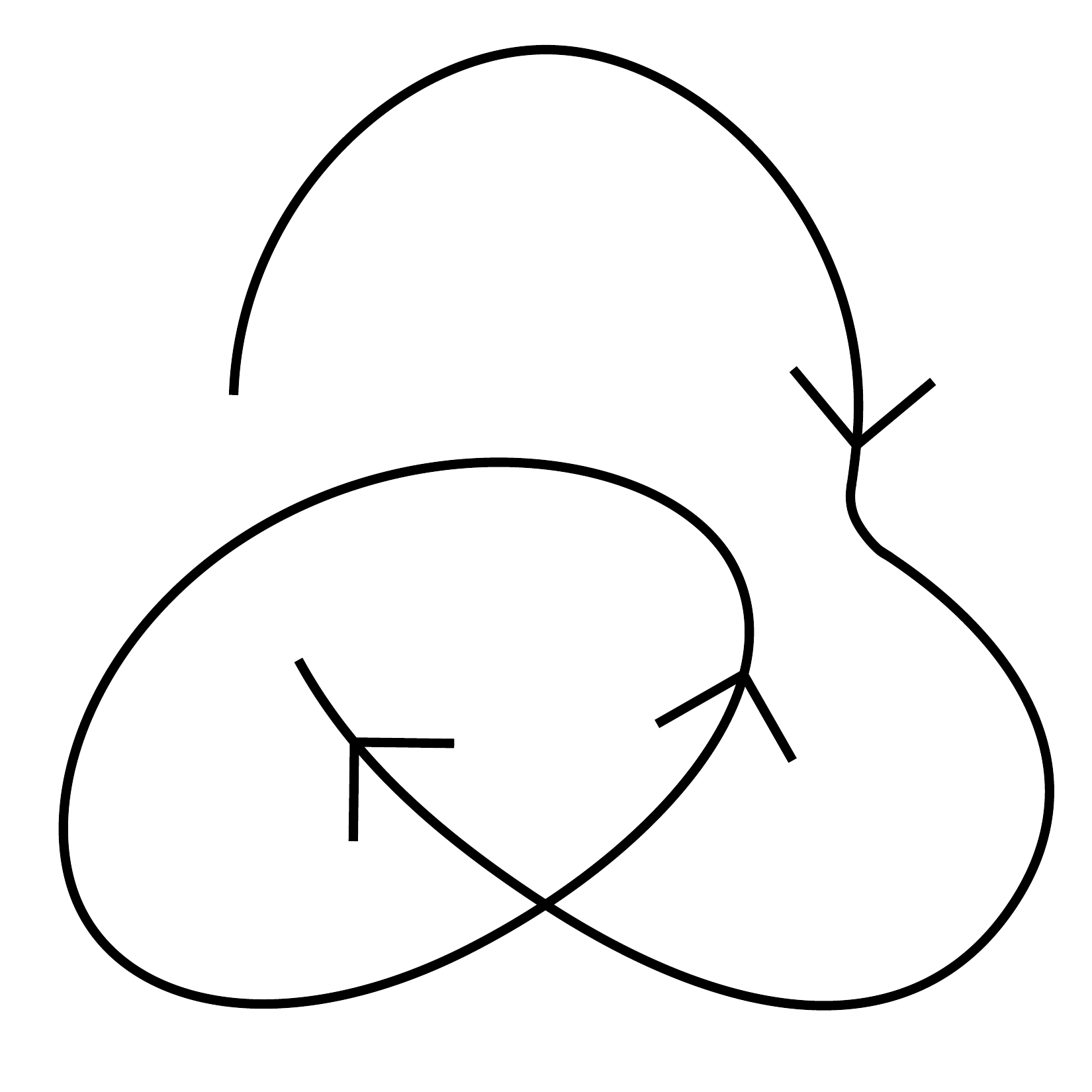}}\biggr) \\
        &=P\biggl(\raisebox{-8pt}{\includegraphics[height = .3in]{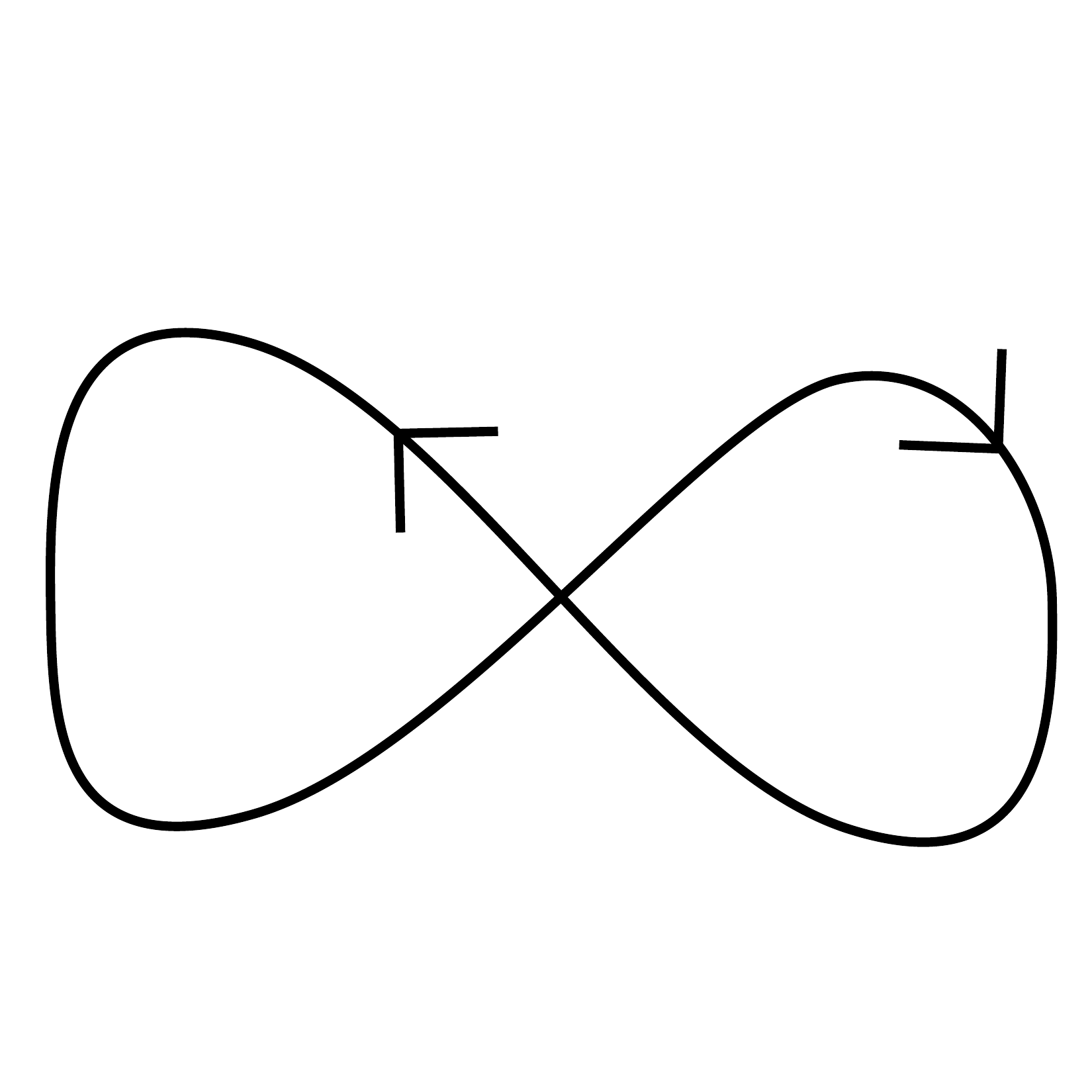}}\biggr)+\left[q^{1-n}P\biggl(\raisebox{-8pt}{\includegraphics[height = .3in]{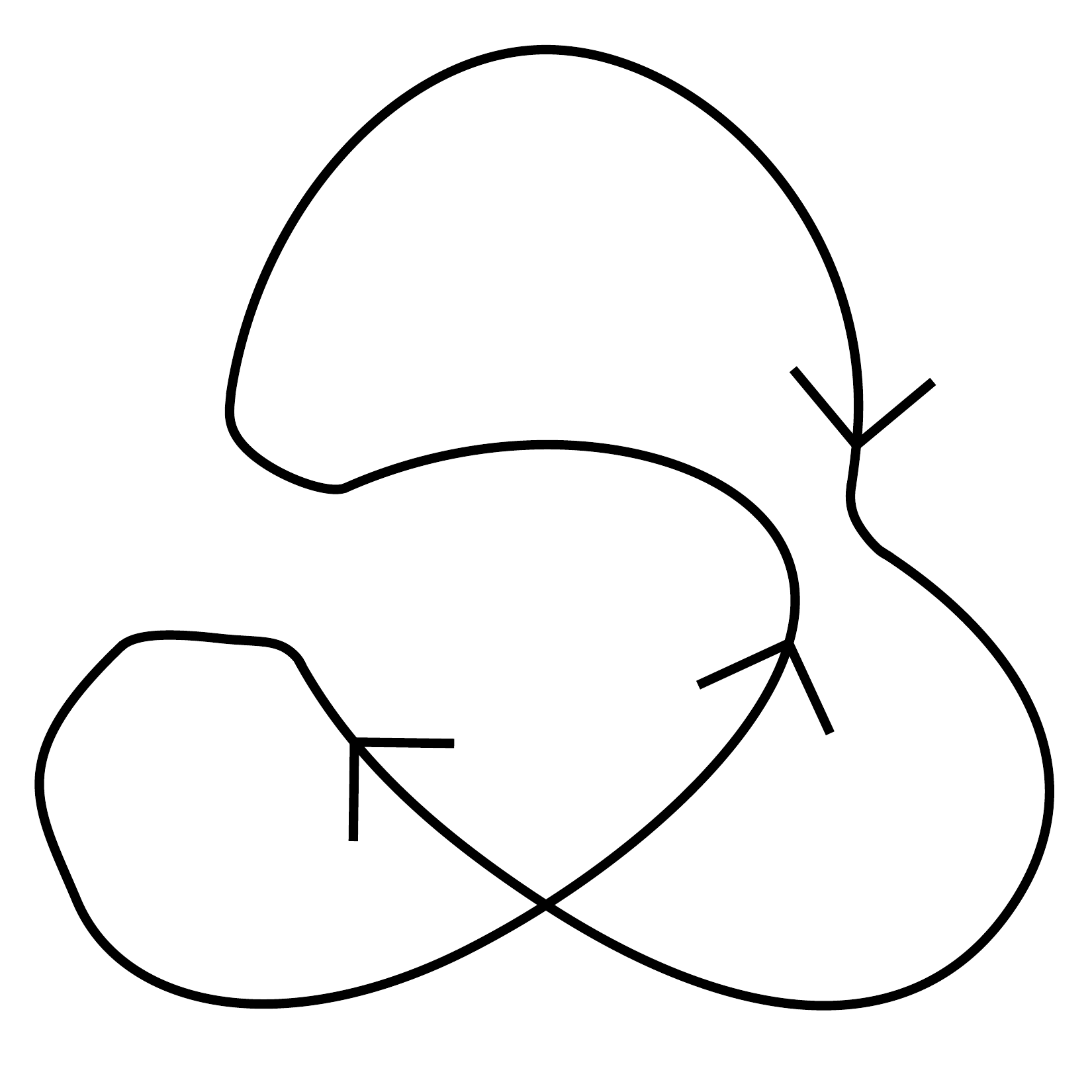}}\biggr)-q^{-n}P\biggl(\raisebox{-8pt}{\includegraphics[height = .3in]{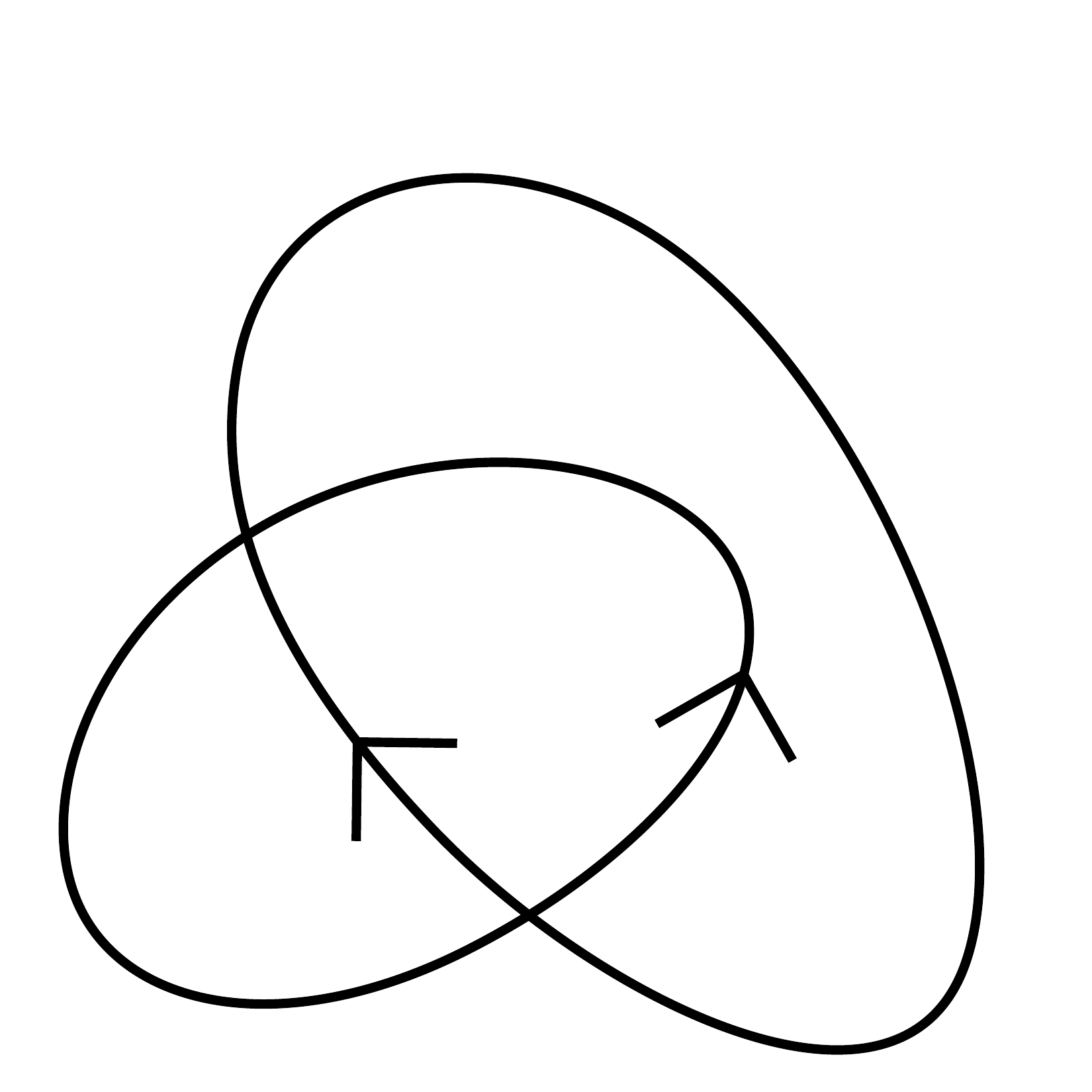}}\biggr)\right]\\
        &=(q^{1-n}+1)P\biggl(\raisebox{-8pt}{\includegraphics[height = .3in]{Example-2.3/Presentation-4.pdf}}\biggr)-q^{-n}[2]P\biggl(\raisebox{-8pt}{\includegraphics[height = .3in]{Example-2.3/Presentation-4.pdf}}\biggr)\\
        &=(q^{1-n}-q^{-n}[2]+1)P\biggl(\raisebox{-8pt}{\includegraphics[height = .3in]{Example-2.3/Presentation-4.pdf}}\biggr)\\
        &=(q^{1-n}-q^{-n}[2]+1)[n-1]P\biggl(\raisebox{-8pt}{\includegraphics[height = .3in]{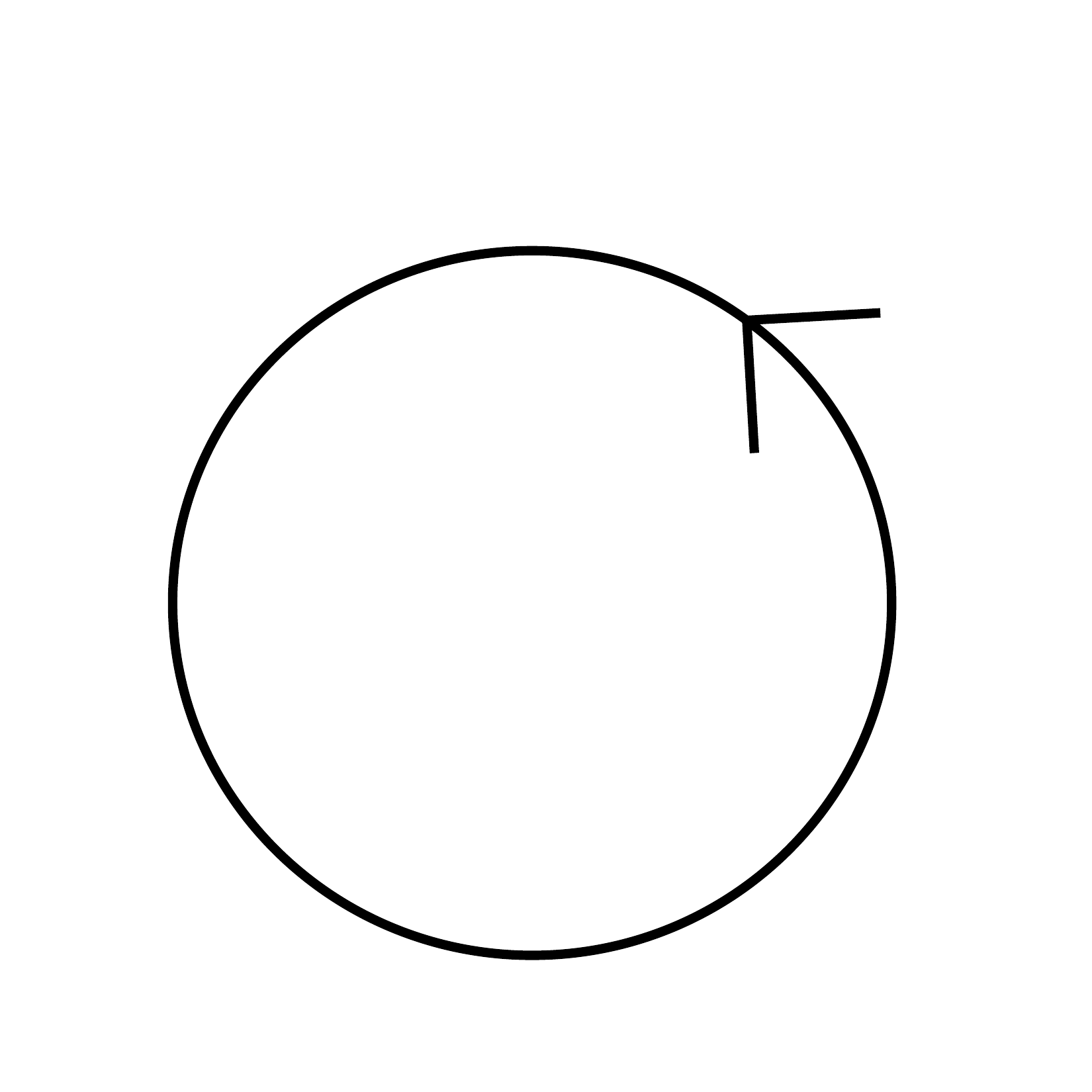}}\biggr)\\
        &=(q^{1-n}-q^{-n}[2]+1)[n-1][n].
        \end{align*}
\end{example}

\section{Reidemeister-type Moves for Knotted, Balanced-Oriented, 4-Valent Graphs}

\label{sec:generatingset}
In this section, we list all oriented versions of the Reidemeister-type moves for diagrams of balanced-oriented, knotted 4-valent graphs with rigid vertices. We also provide a minimal generating set for all of such moves, which will shorten the proof of invariance of the polynomial $P$.

Figures~\ref{fig:Omega1 Moves}, \ref{fig:Omega2 Moves}, and \ref{fig:Omega3 Moves} show all the oriented versions of the three classical Reidemeister moves $R1, R2$ and $R3$, but we employ Polyak's notation from~\cite{Polyak_2010}, using $\Omega1, \Omega2$ and respectively $\Omega3$ to denote these moves.
\begin{figure}[ht]
    \[\raisebox{-13pt}{\includegraphics[height=0.45in]{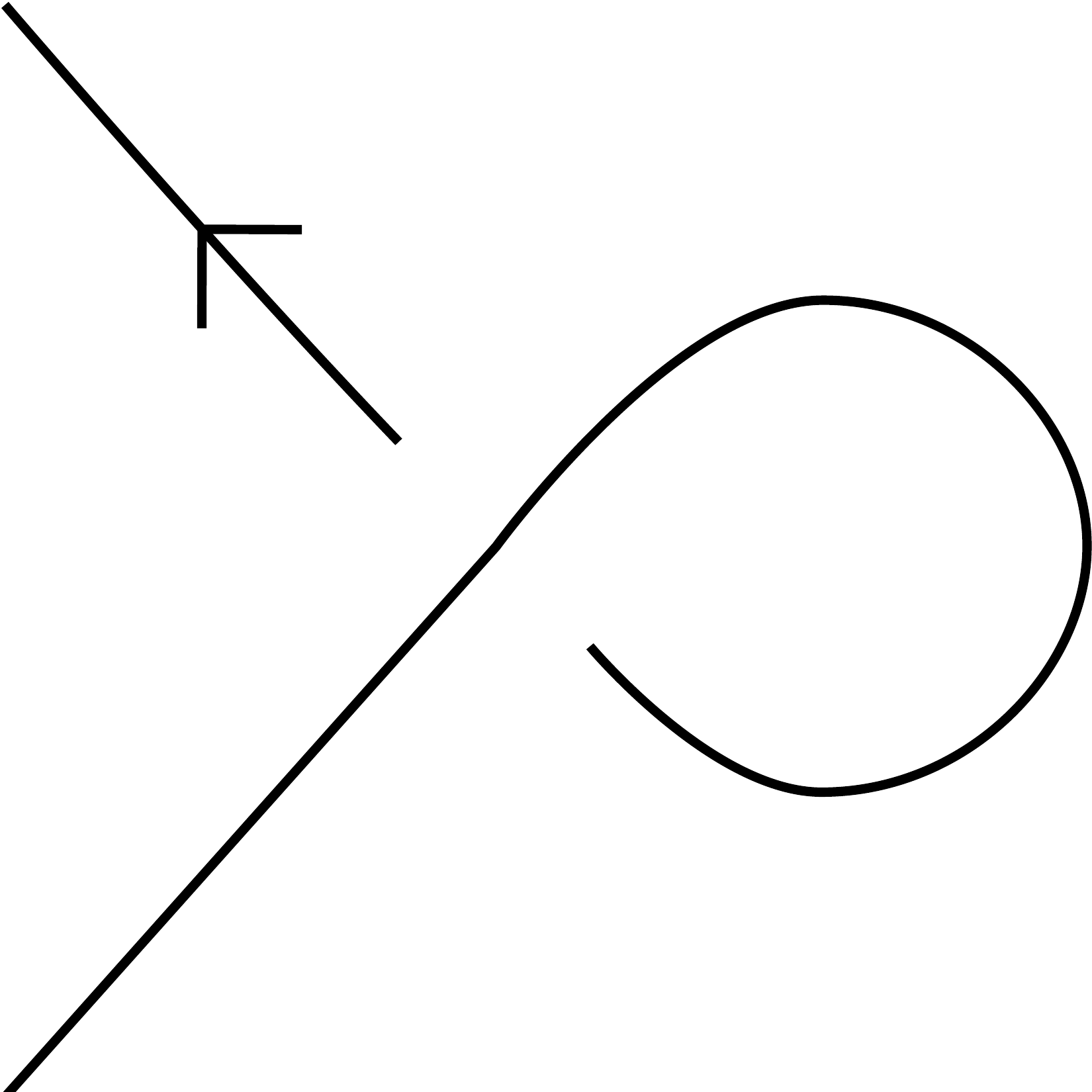}}\stackrel{\Omega 1a}{\longleftrightarrow}\raisebox{-13pt}{\includegraphics[height =0.45in]{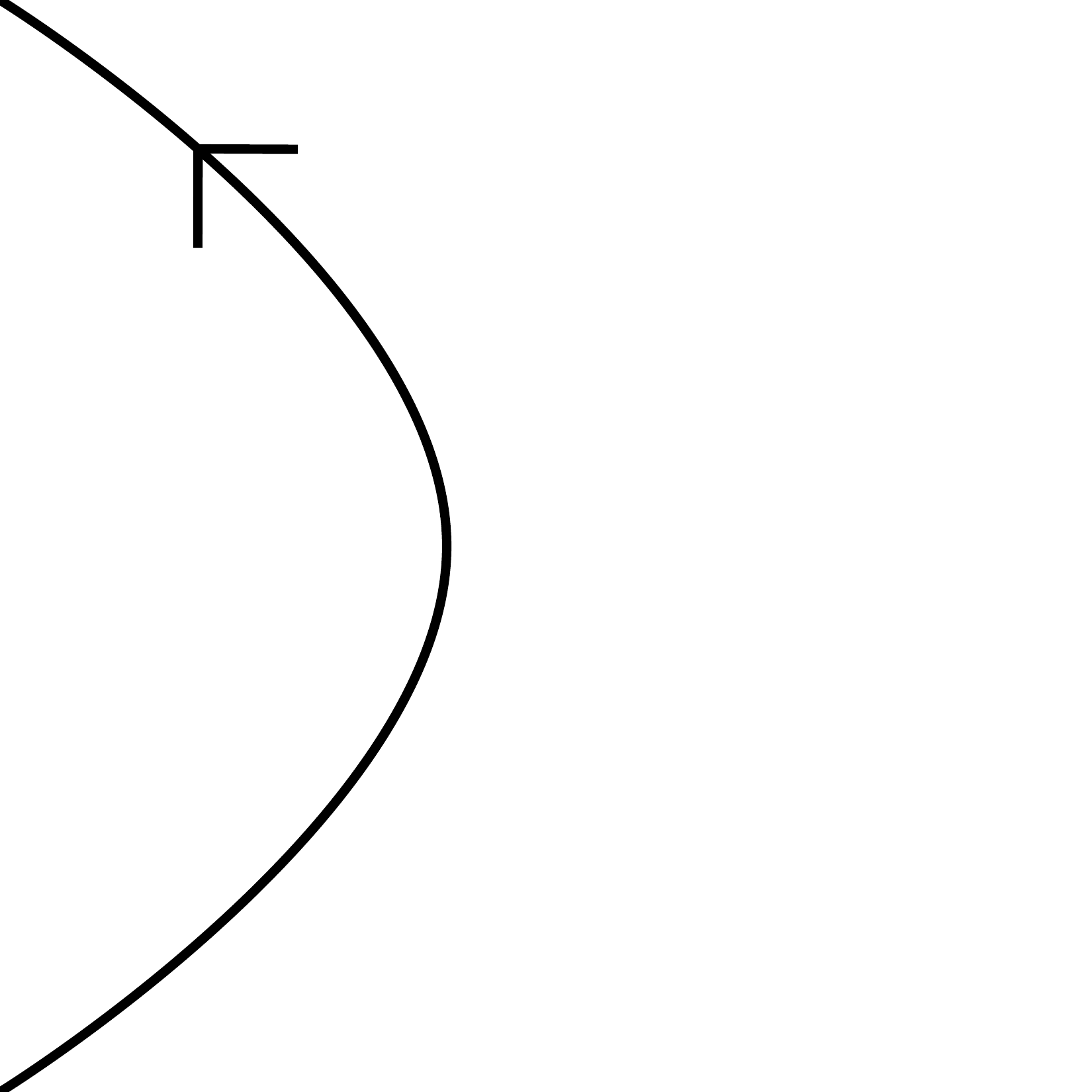}}\hspace{1.5cm}\raisebox{-13pt}{\includegraphics[height=0.45in]{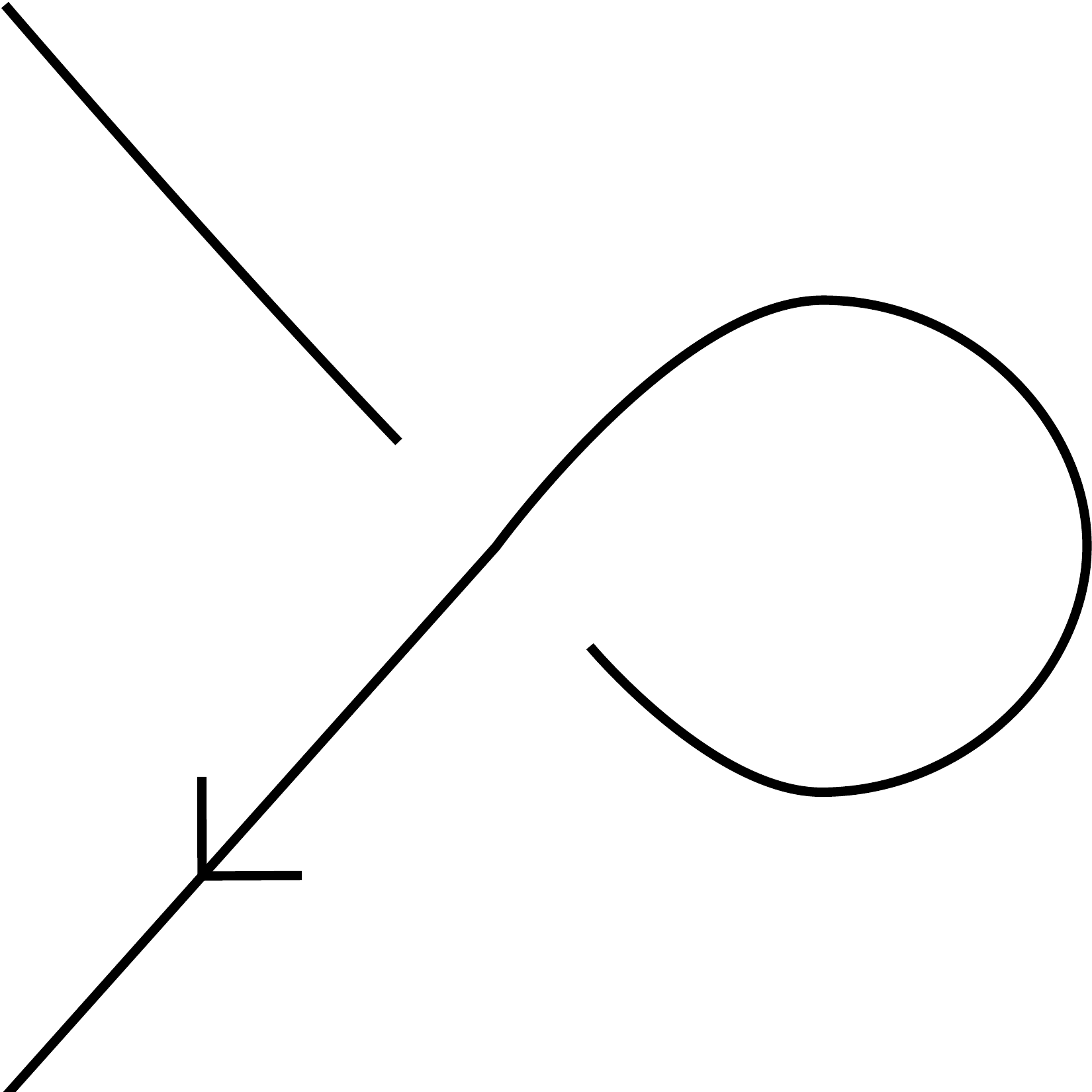}}\stackrel{\Omega 1b}{\longleftrightarrow}\raisebox{-13pt}{\includegraphics[height =0.45in]{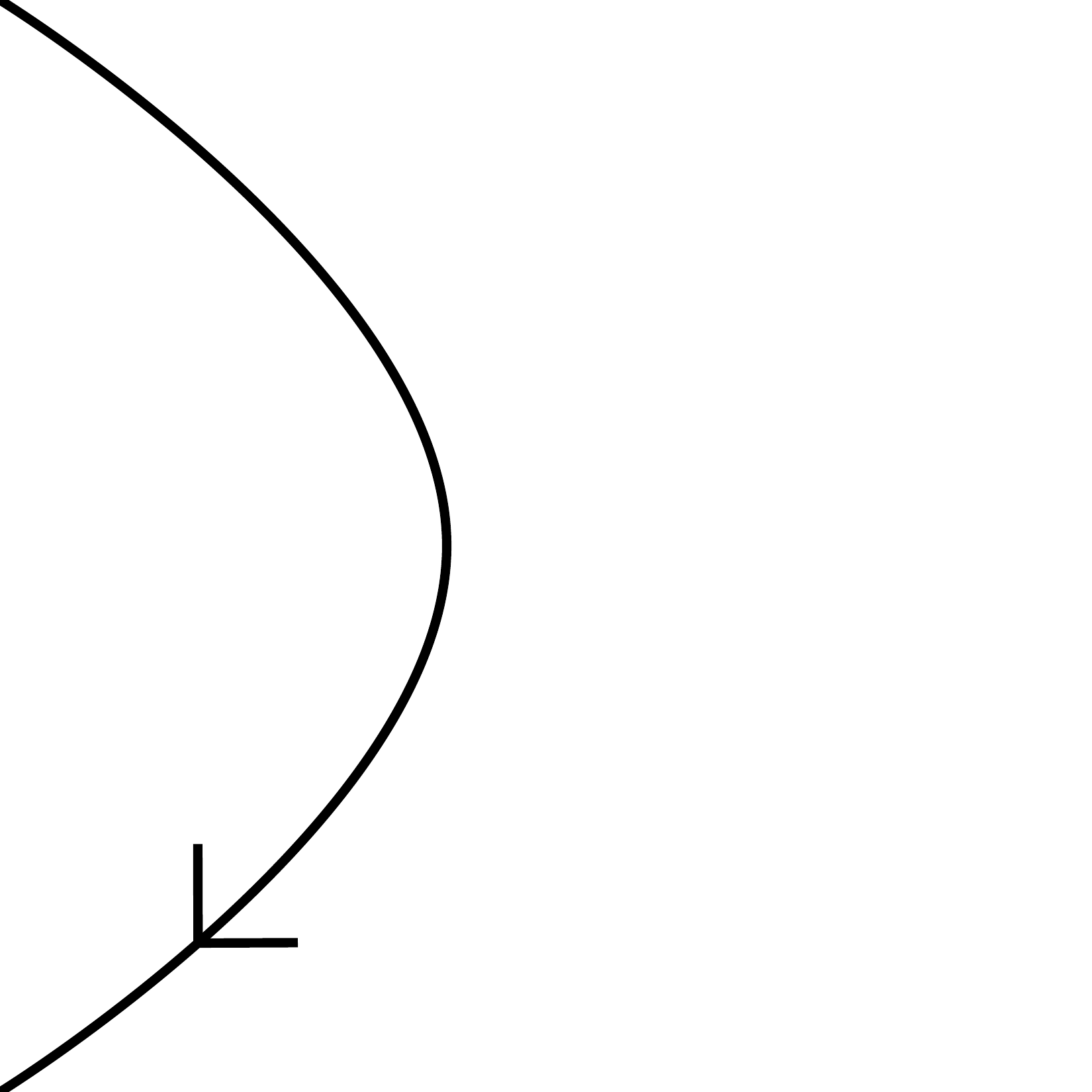}}\]
    \[\raisebox{-13pt}{\includegraphics[height=0.45in]{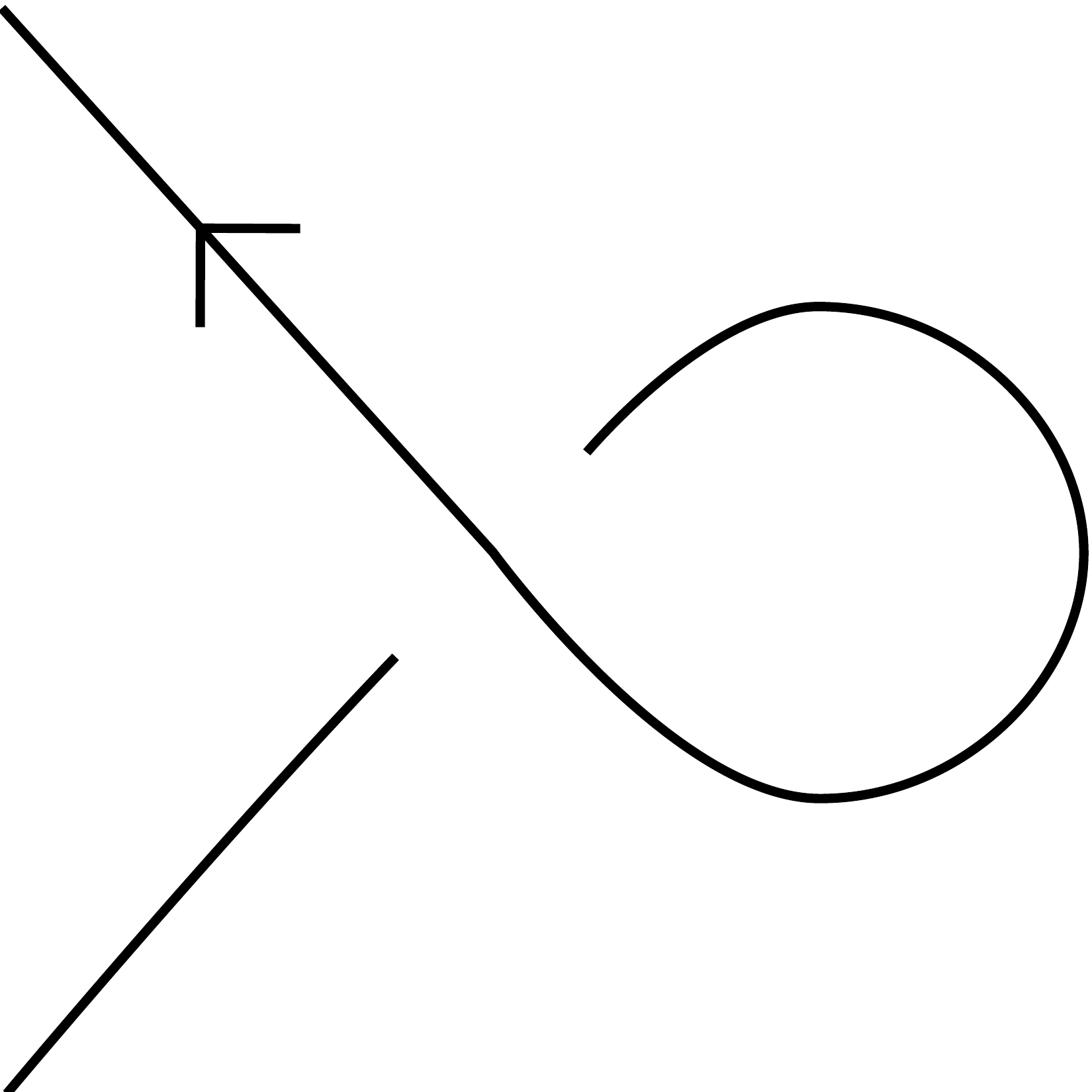}}\stackrel{\Omega 1c}{\longleftrightarrow}\raisebox{-13pt}{\includegraphics[height =0.45in]{Generating-Sets/O1ac-2.pdf}}\hspace{1.5cm}\raisebox{-13pt}{\includegraphics[height=0.45in]{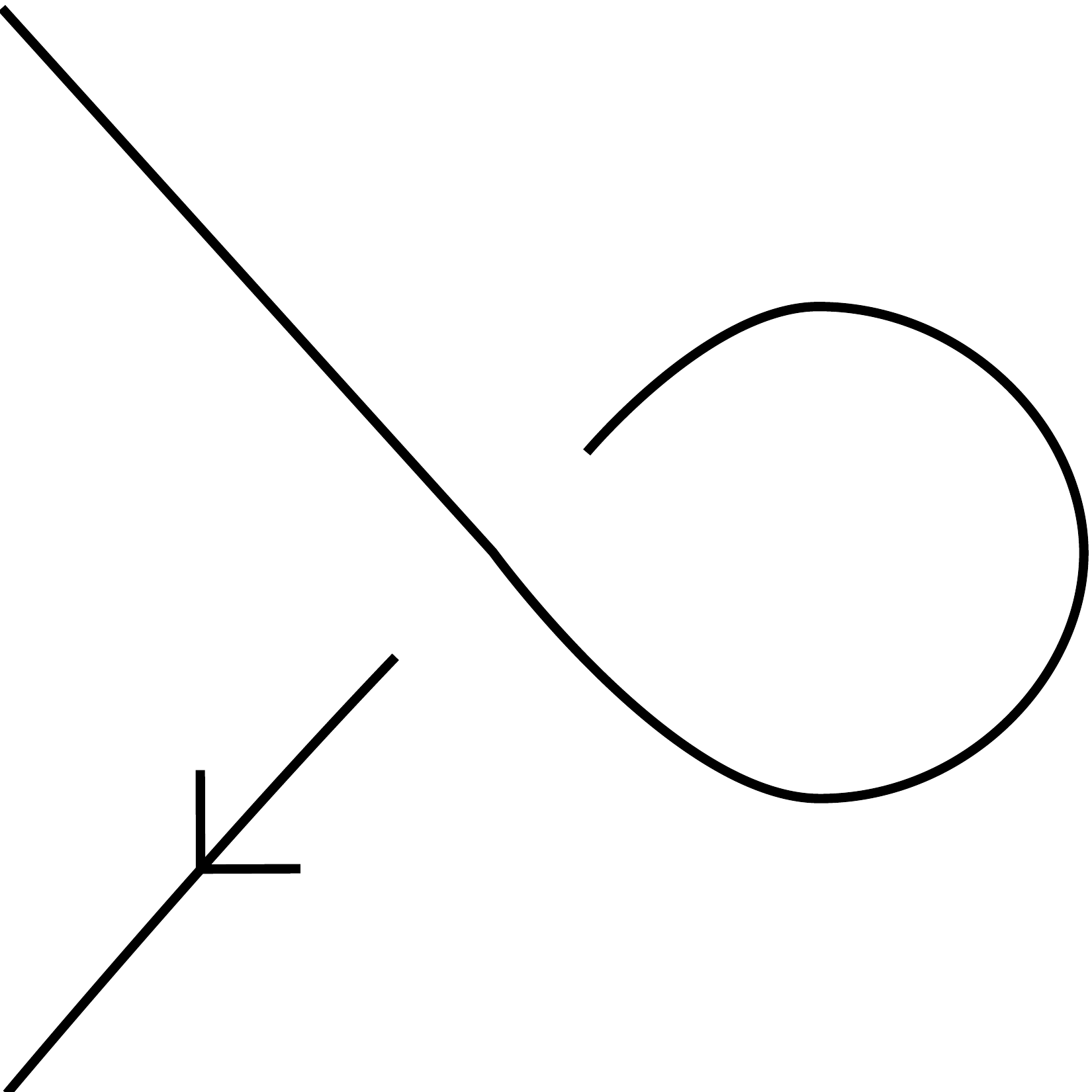}}\stackrel{\Omega 1d}{\longleftrightarrow}\raisebox{-13pt}{\includegraphics[height =0.45in]{Generating-Sets/O1bd-2.pdf}}\]
    \caption{Oriented $\Omega1$ moves}
    \label{fig:Omega1 Moves}
\end{figure}

\begin{figure}[ht]
    \[\raisebox{-13pt}{\includegraphics[height=0.45in]{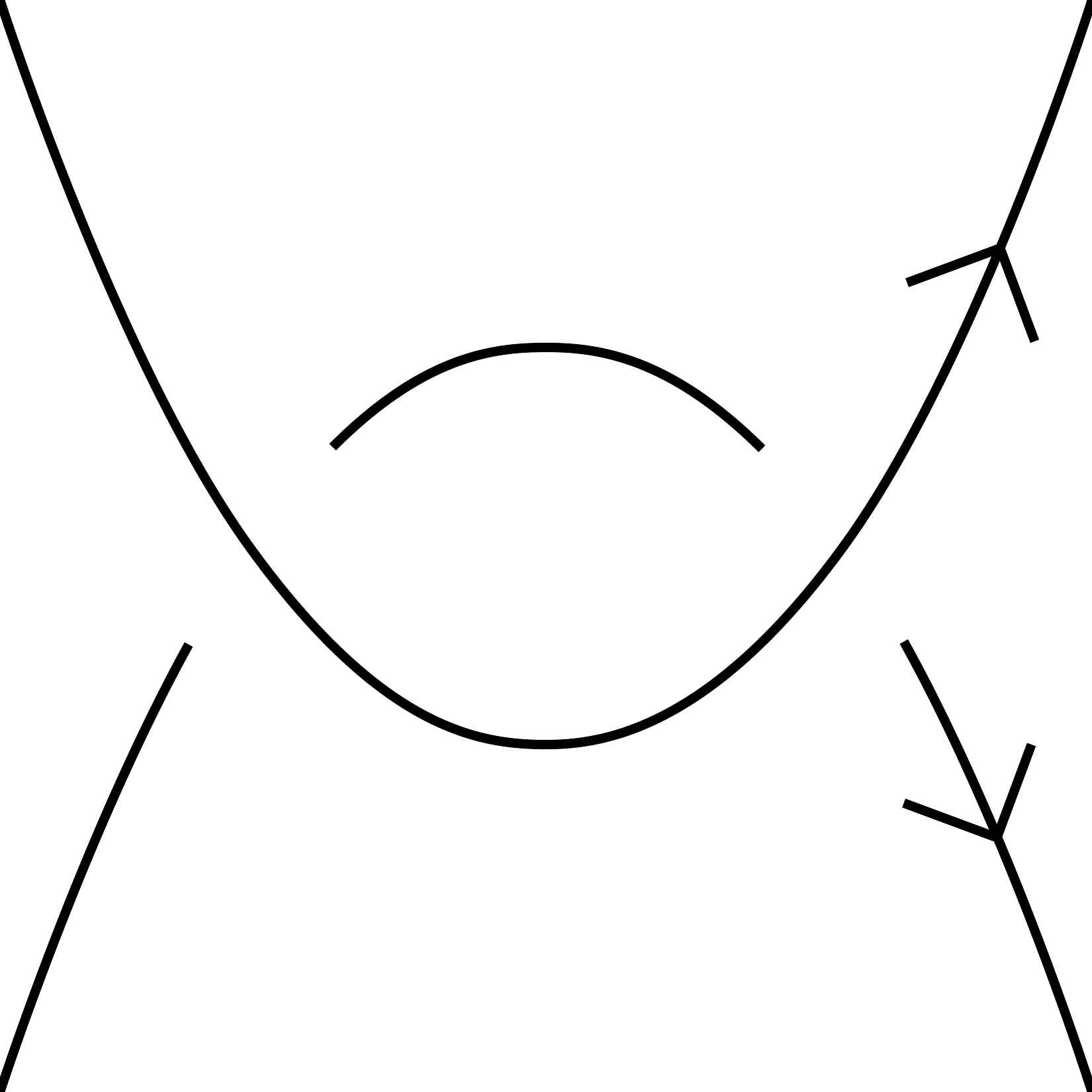}}\stackrel{\Omega 2a}{\longleftrightarrow}\raisebox{-13pt}{\includegraphics[height =0.45in]{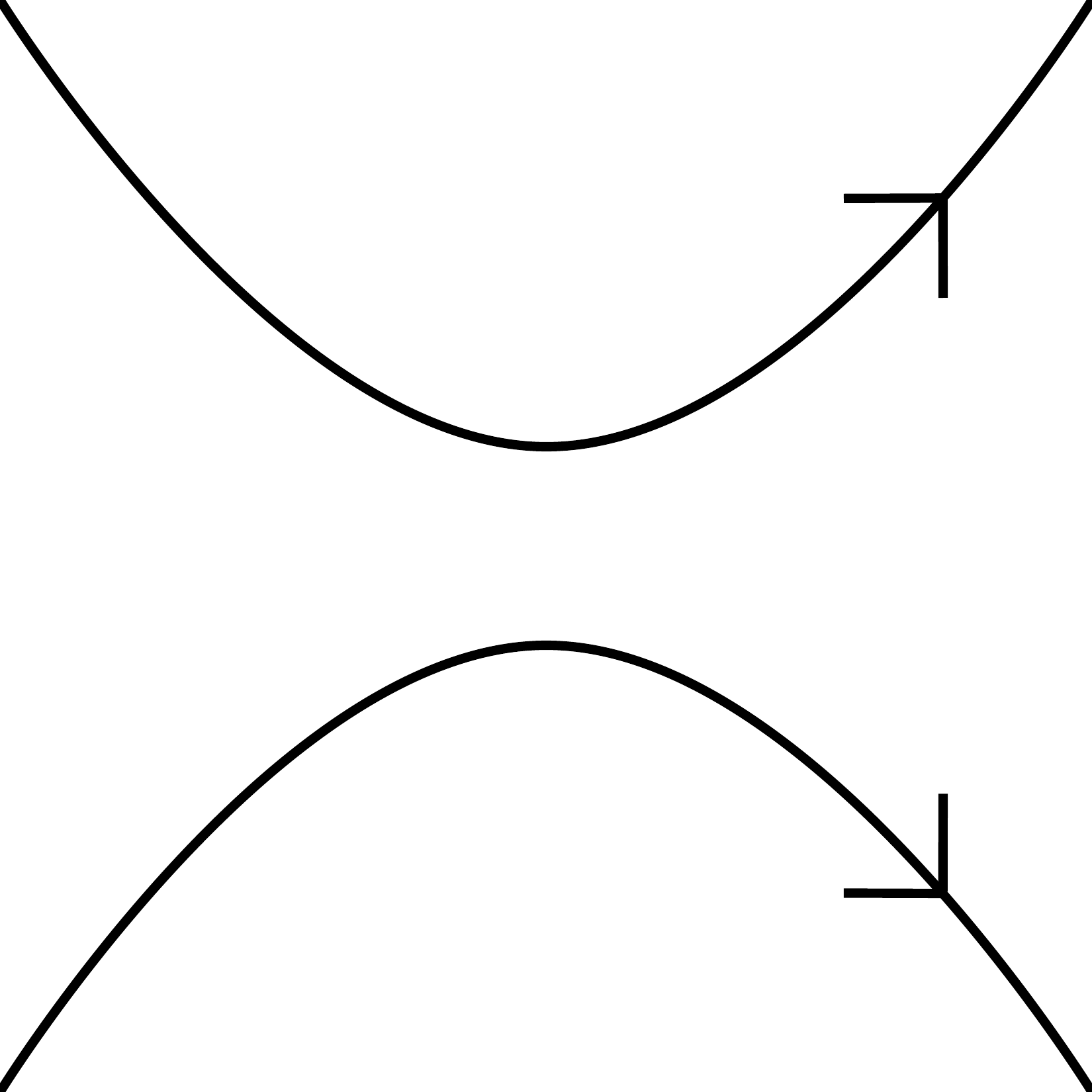}}\hspace{1.5cm}\raisebox{-13pt}{\includegraphics[height=0.45in]{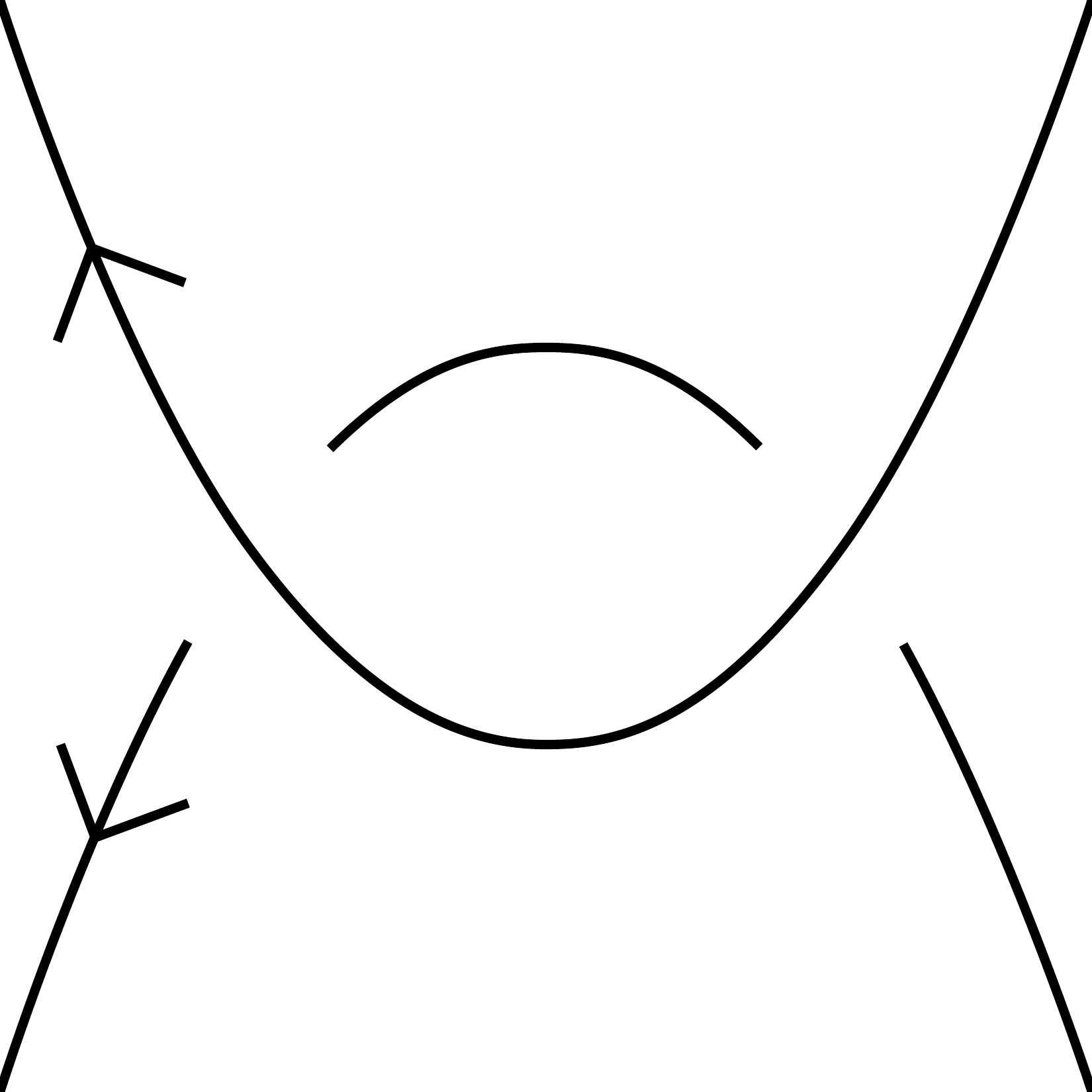}}\stackrel{\Omega 2b}{\longleftrightarrow}\raisebox{-13pt}{\includegraphics[height =0.45in]{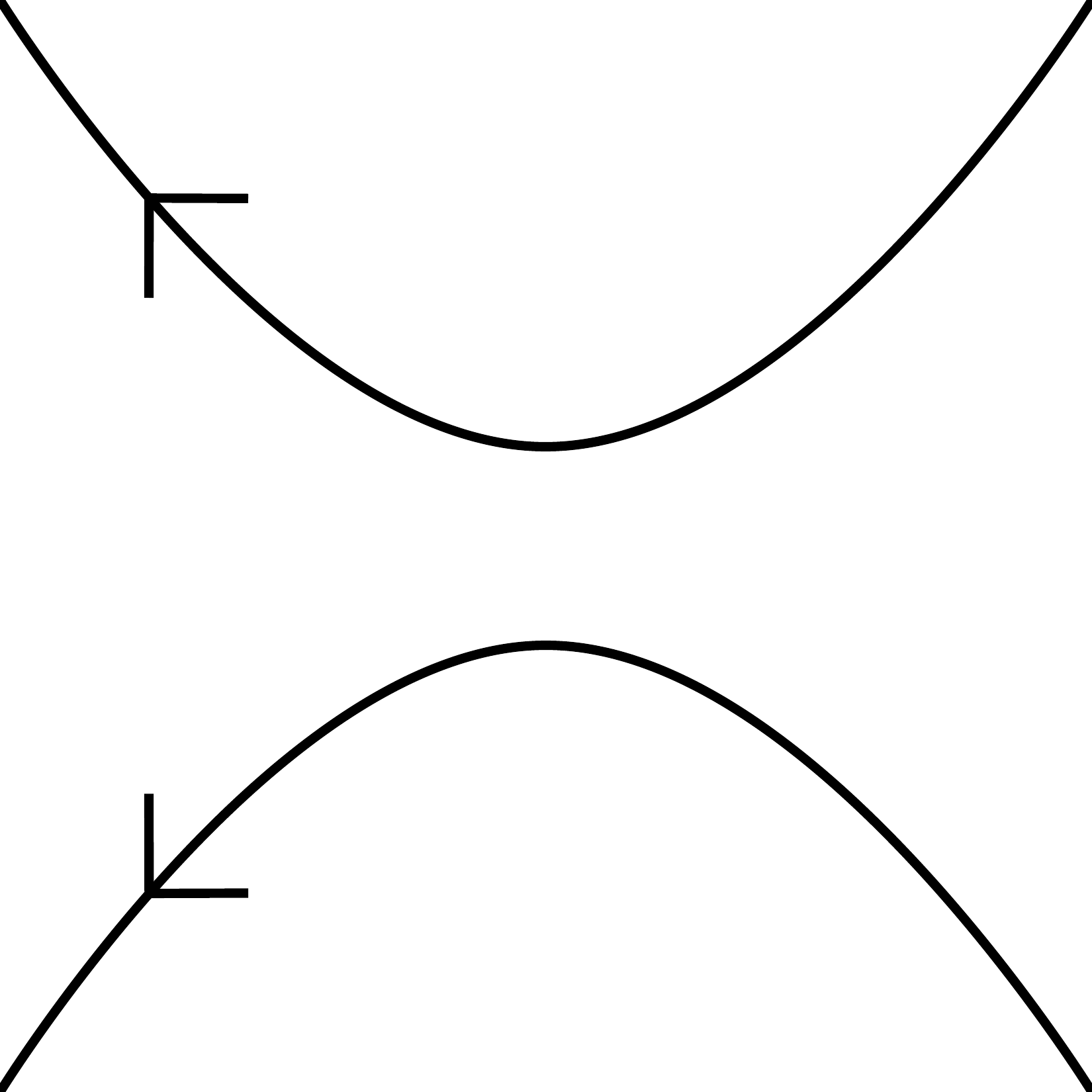}}\]
    \vspace{0.10cm}
    \[\raisebox{-13pt}{\includegraphics[height=0.45in]{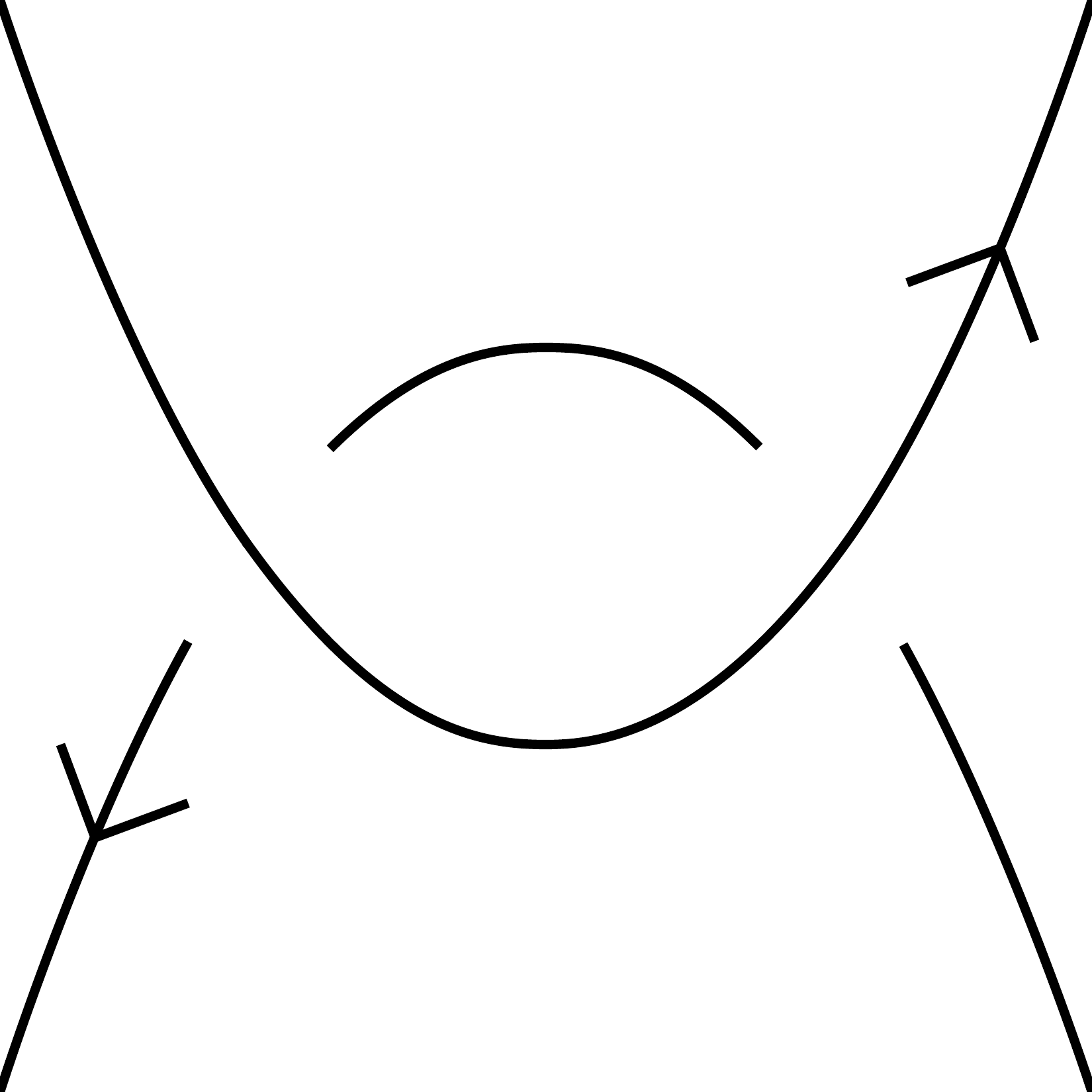}}\stackrel{\Omega 2c}{\longleftrightarrow}\raisebox{-13pt}{\includegraphics[height =0.45in]{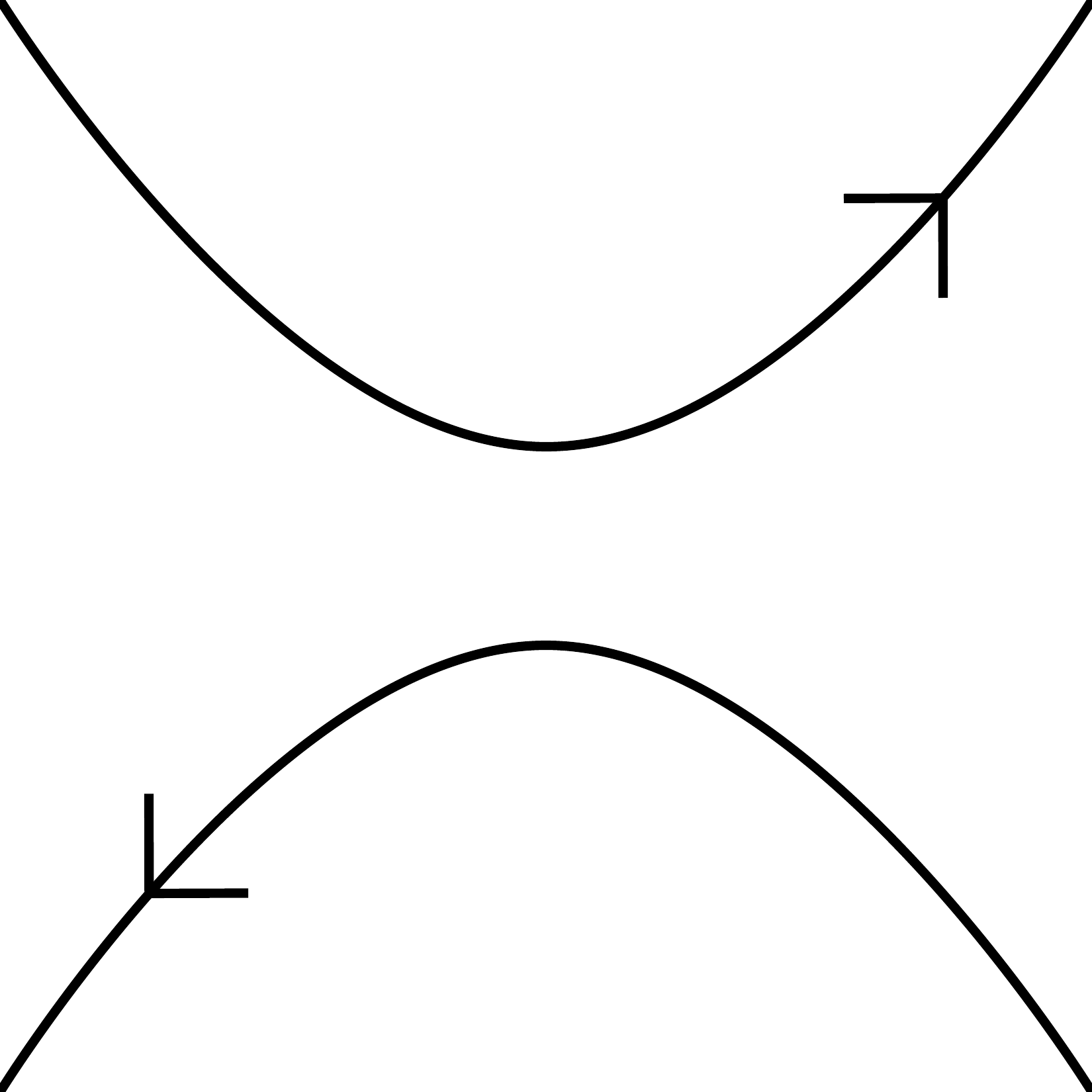}}\hspace{1.5cm}\raisebox{-13pt}{\includegraphics[height=0.45in]{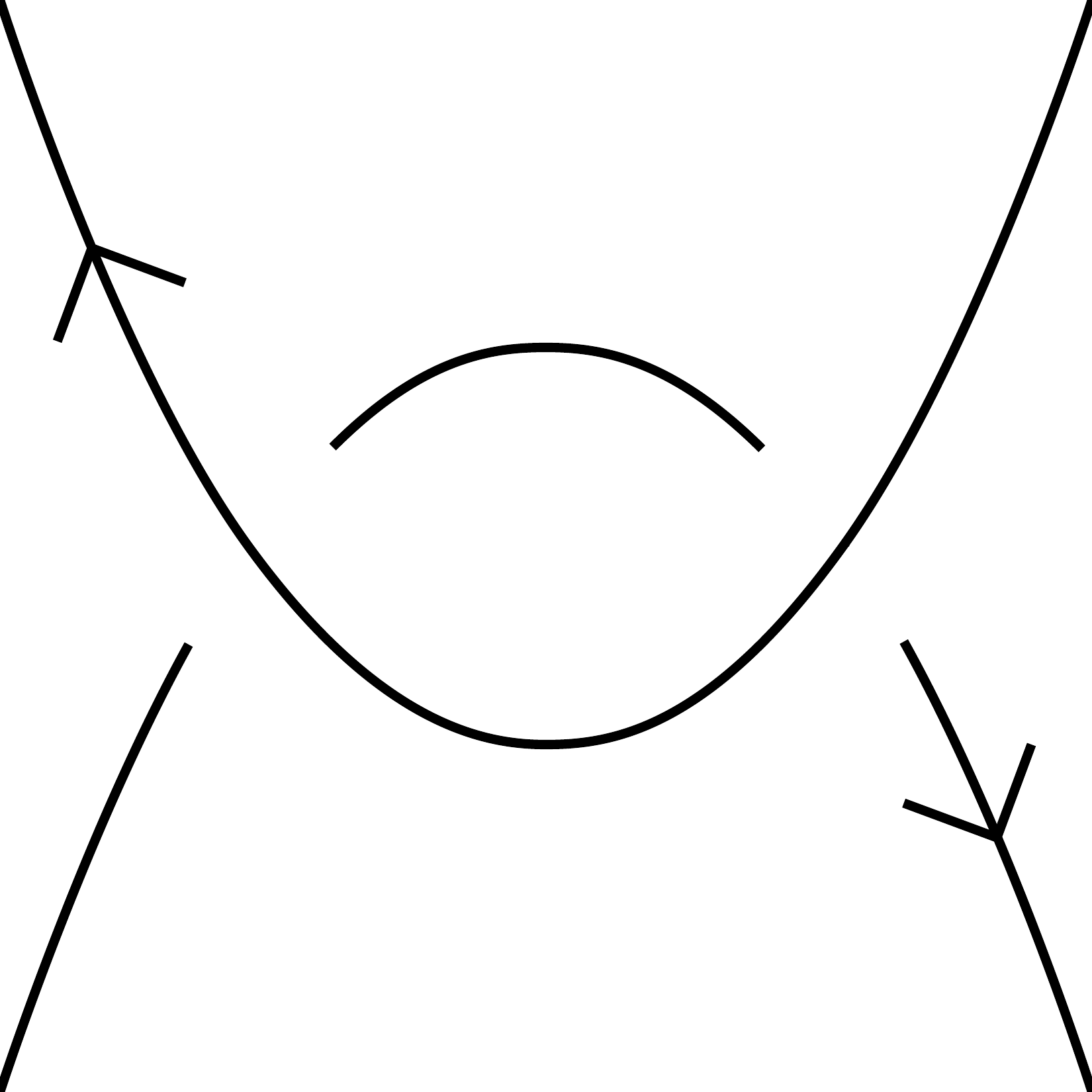}}\stackrel{\Omega 2d}{\longleftrightarrow}\raisebox{-13pt}{\includegraphics[height =0.45in]{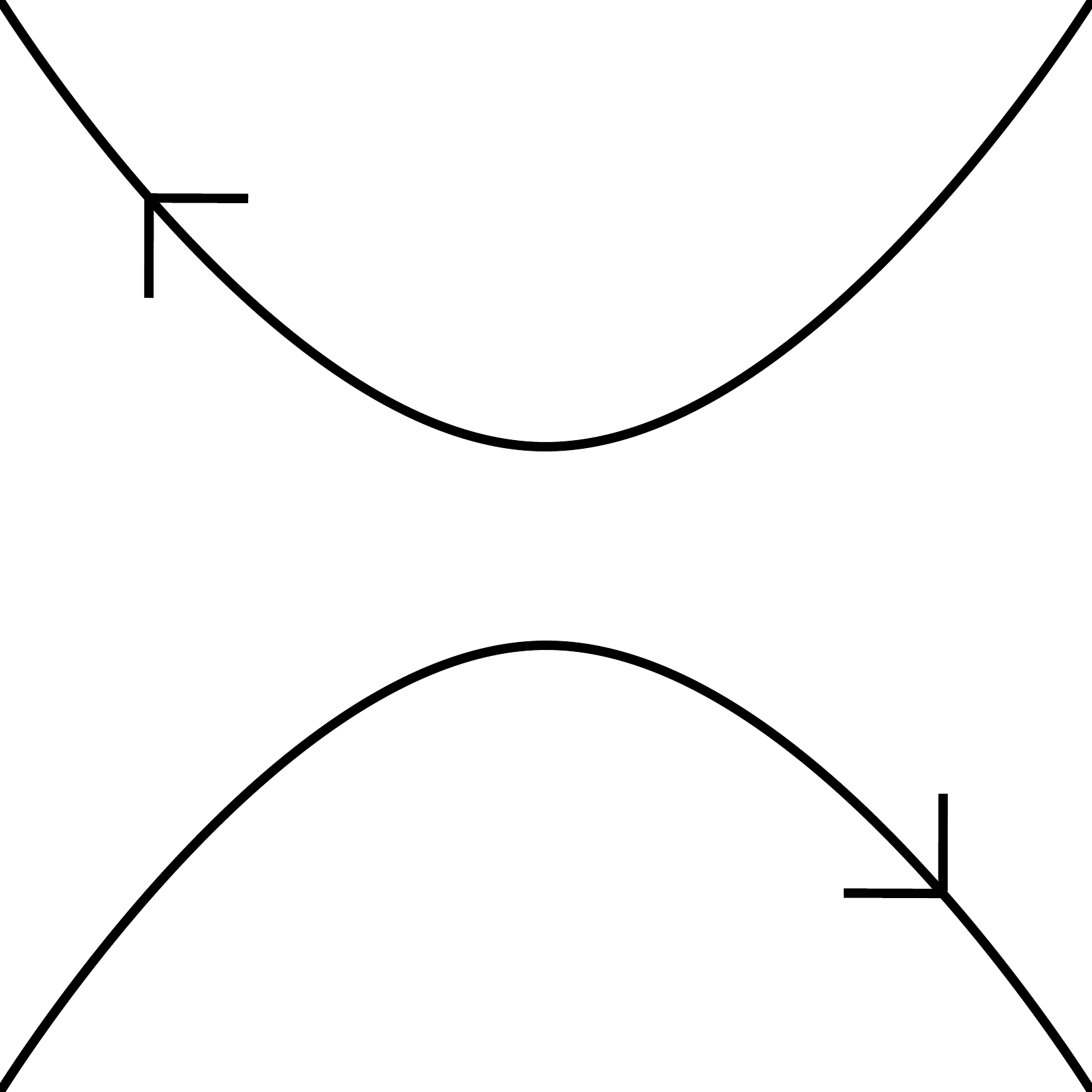}}\]
    \caption{Oriented $\Omega2$ moves}
    \label{fig:Omega2 Moves}
\end{figure}

\begin{figure}[ht]
    \[\raisebox{-13pt}{\includegraphics[height=0.45in]{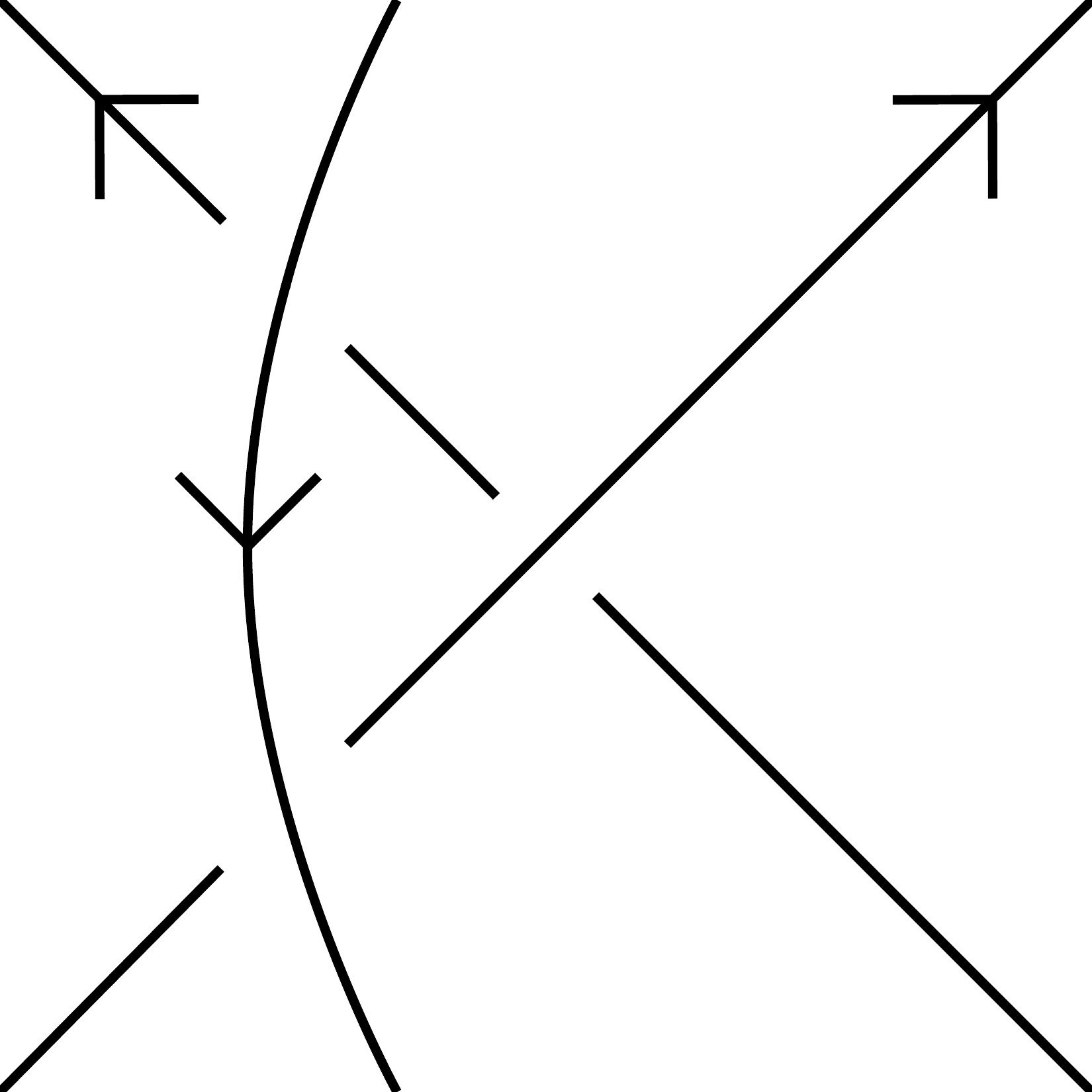}}\stackrel{\Omega 3a}{\longleftrightarrow}\raisebox{-13pt}{\includegraphics[height =0.45in]{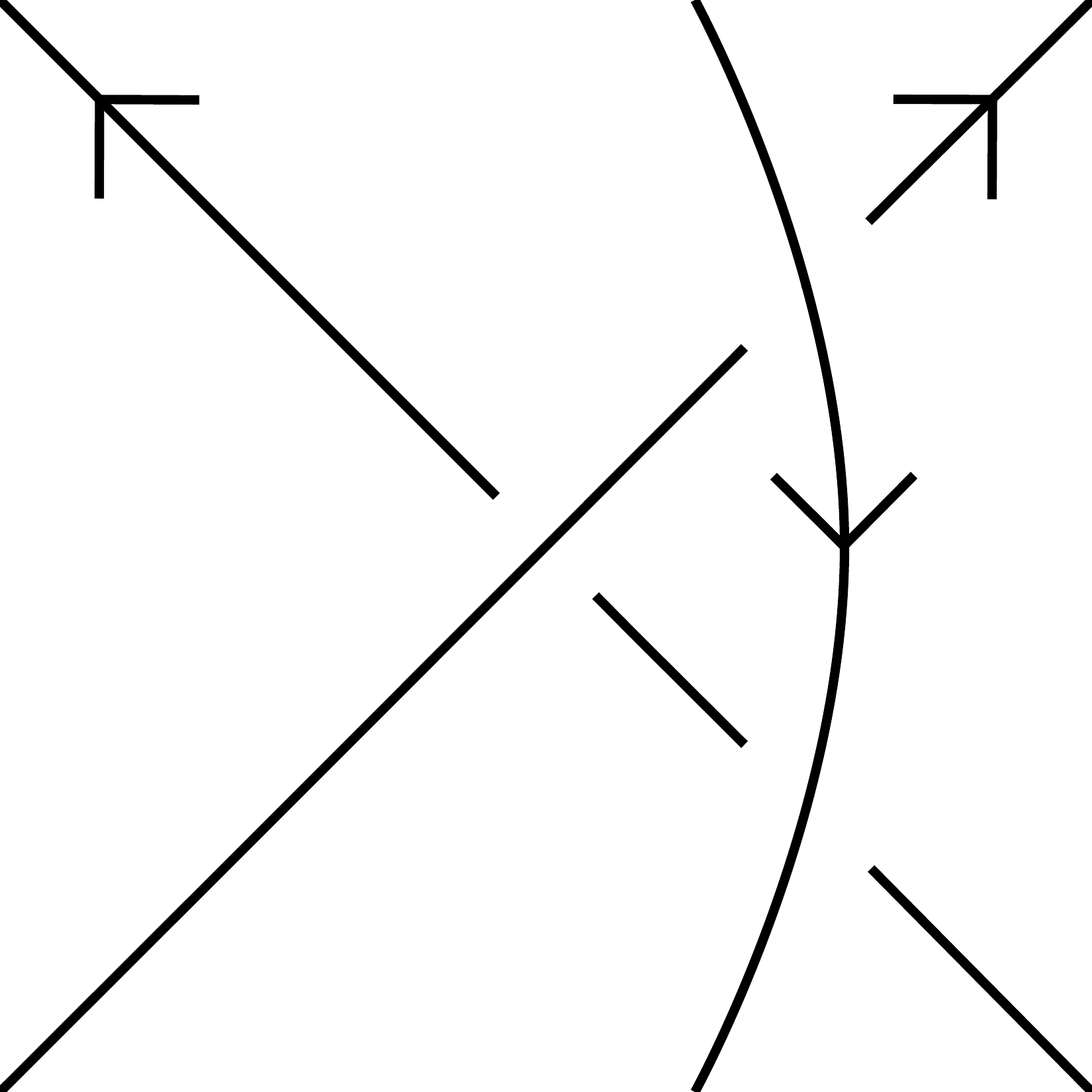}}\hspace{1.5cm}\raisebox{-13pt}{\includegraphics[height=0.45in]{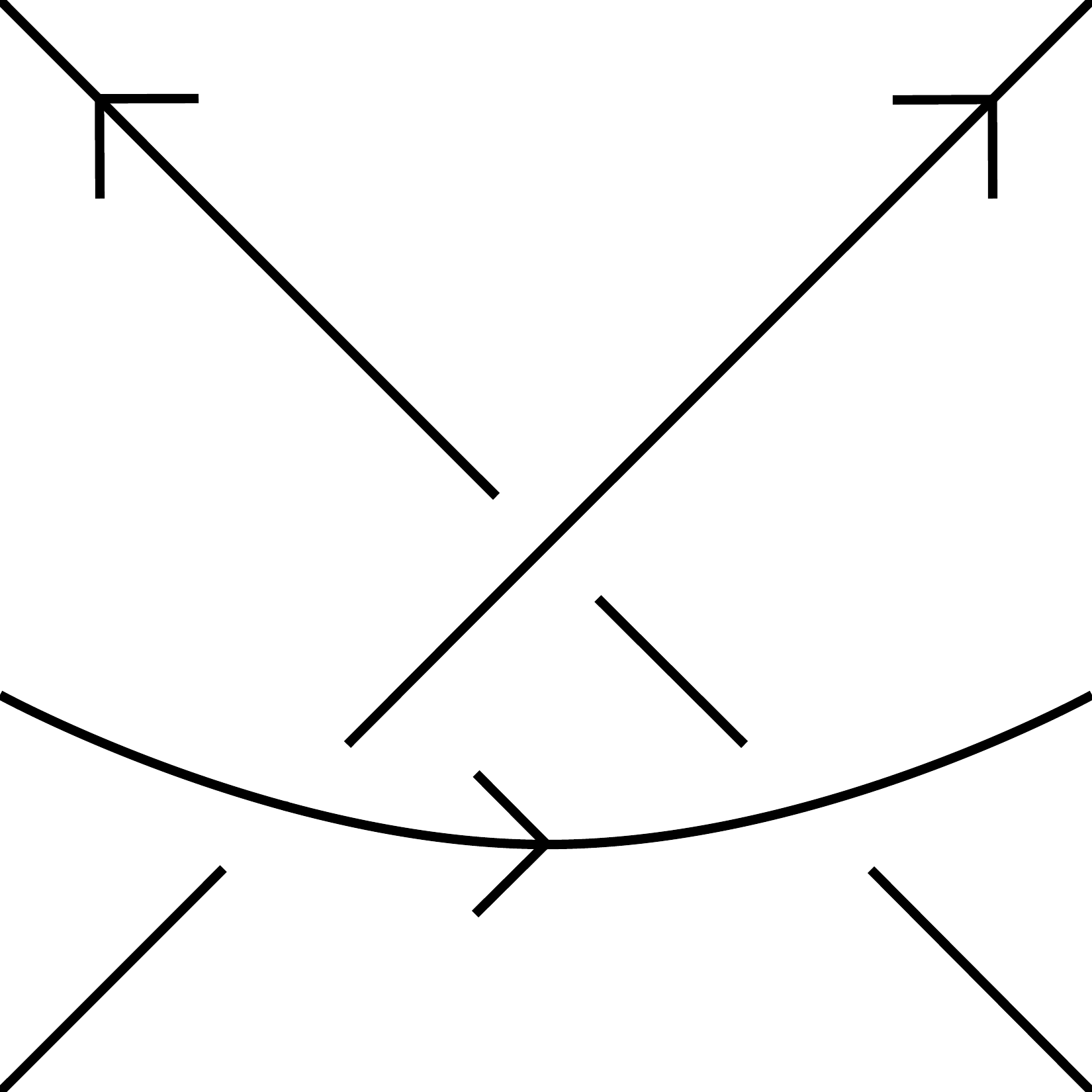}}\stackrel{\Omega 3b}{\longleftrightarrow}\raisebox{-13pt}{\includegraphics[height =0.45in]{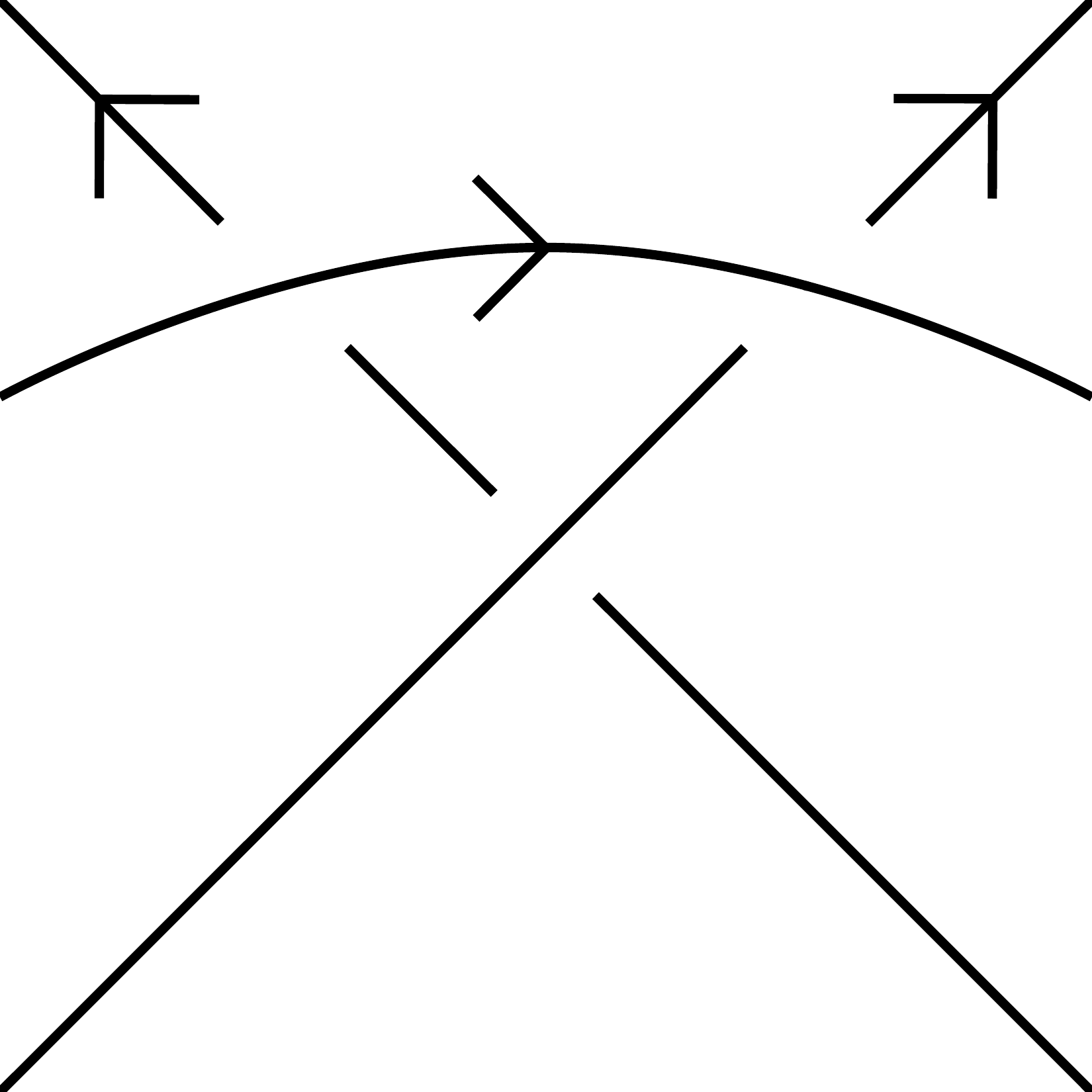}}\]
    \vspace{0.10cm}
    \[\raisebox{-13pt}{\includegraphics[height=0.45in]{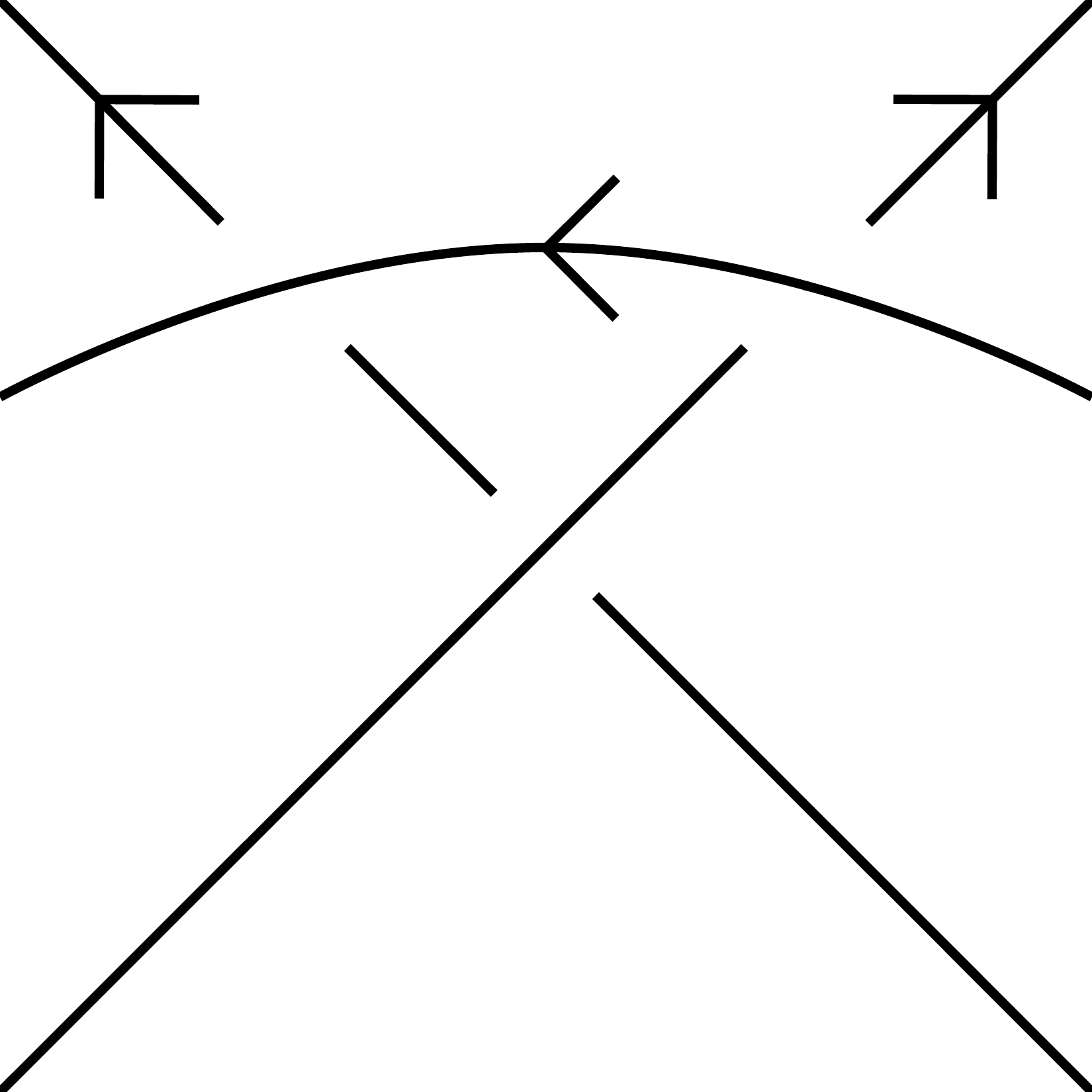}}\stackrel{\Omega 3c}{\longleftrightarrow}\raisebox{-13pt}{\includegraphics[height =0.45in]{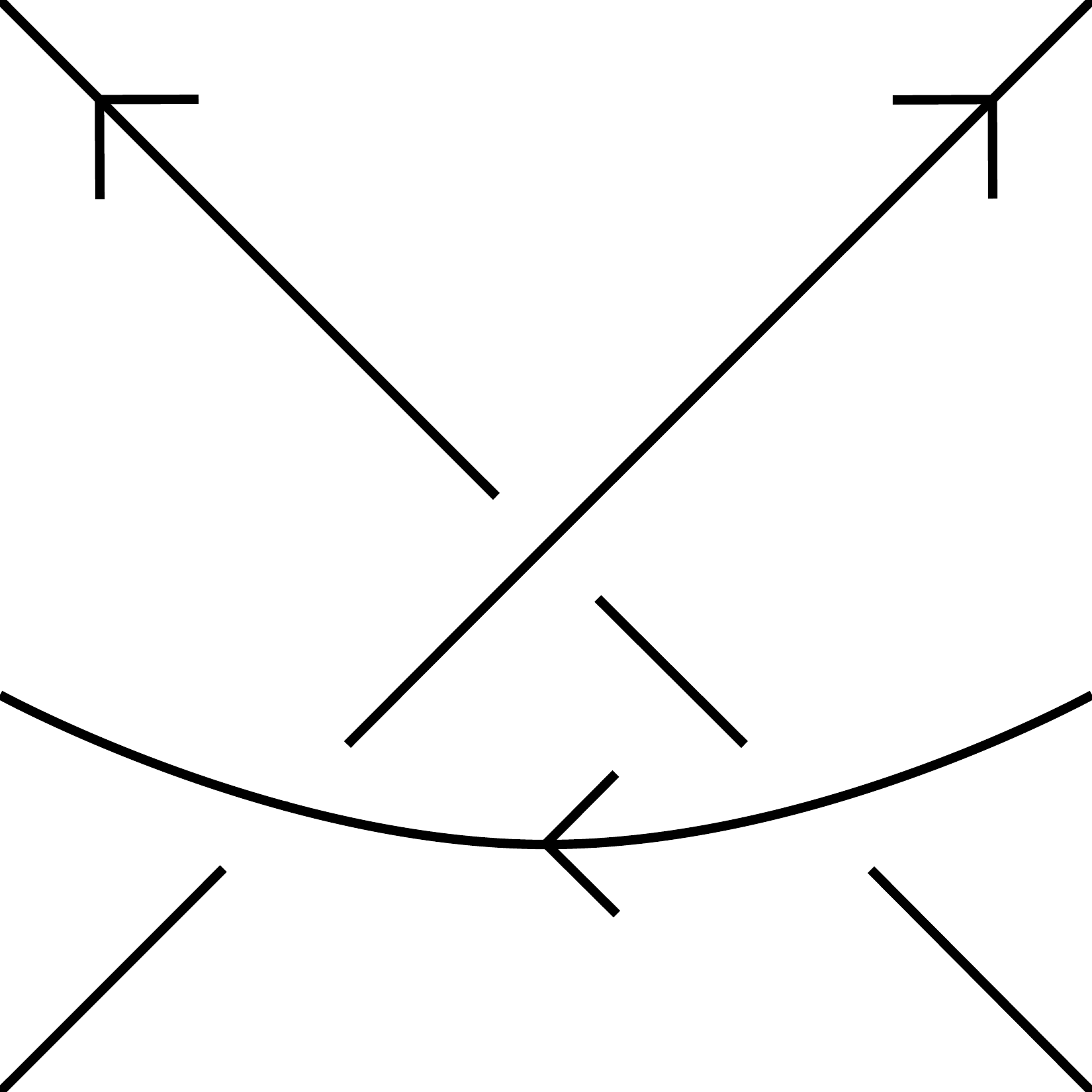}}\hspace{1.5cm}\raisebox{-13pt}{\includegraphics[height=0.45in]{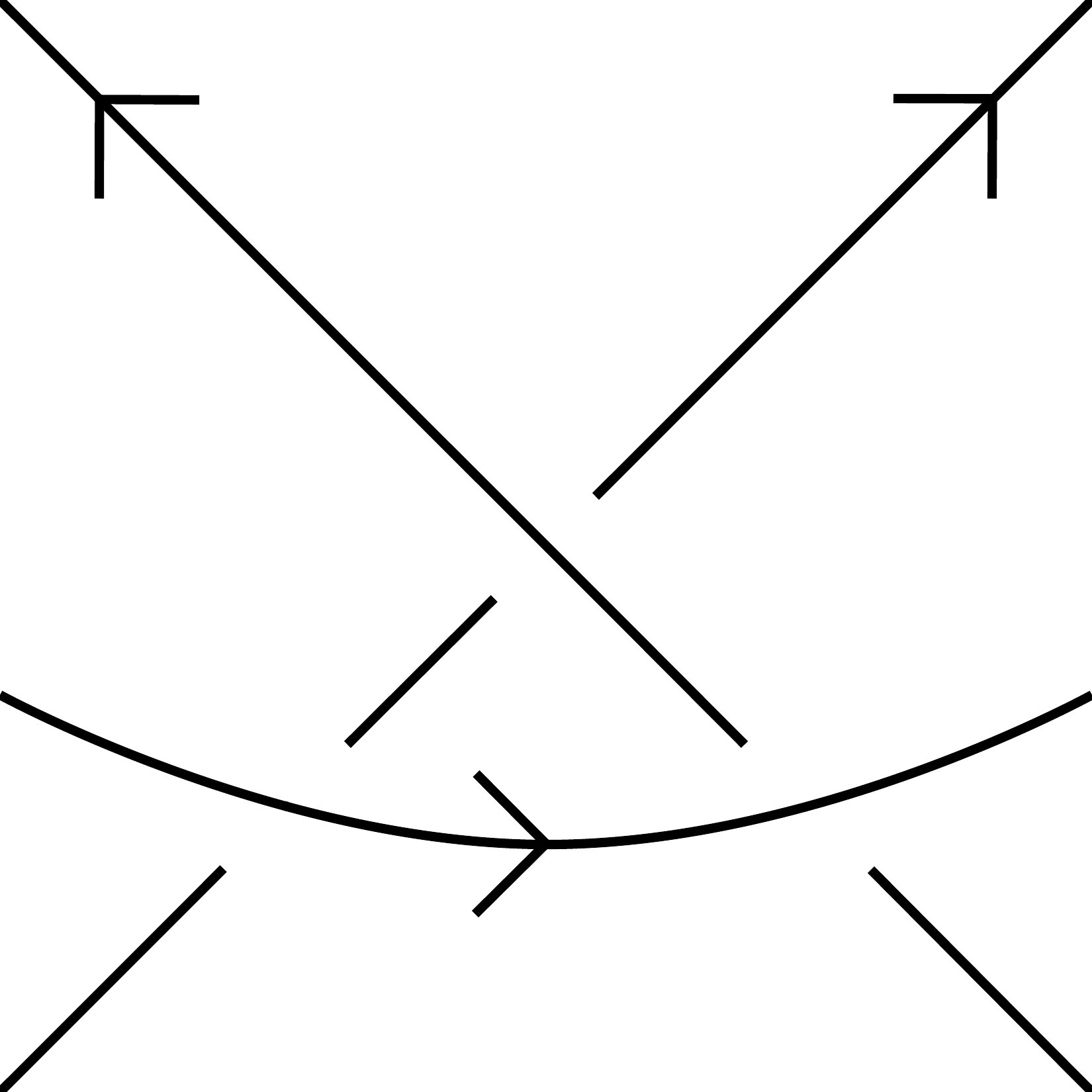}}\stackrel{\Omega 3d}{\longleftrightarrow}\raisebox{-13pt}{\includegraphics[height =0.45in]{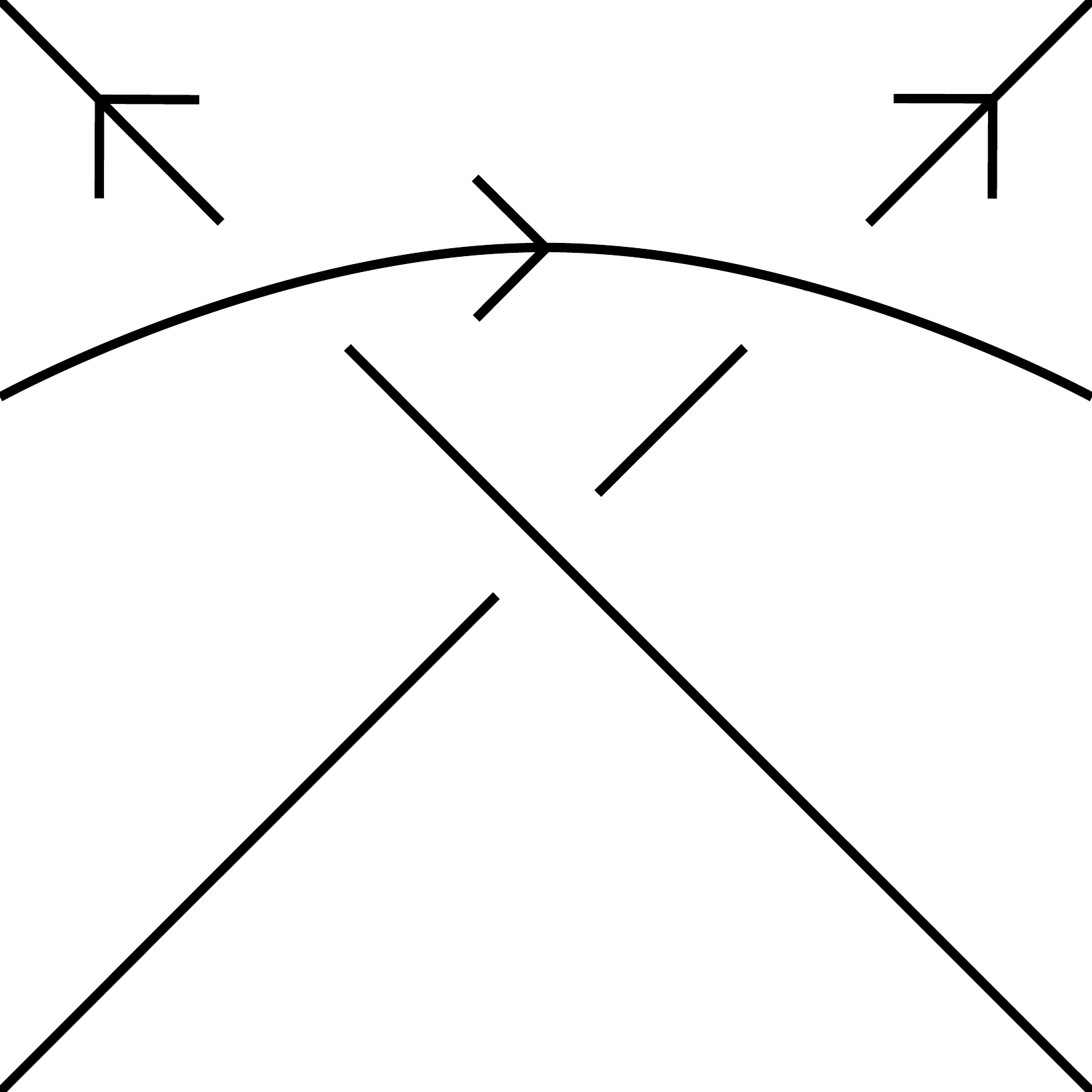}}\]
    \vspace{0.10cm}
    \[\raisebox{-13pt}{\includegraphics[height=0.45in]{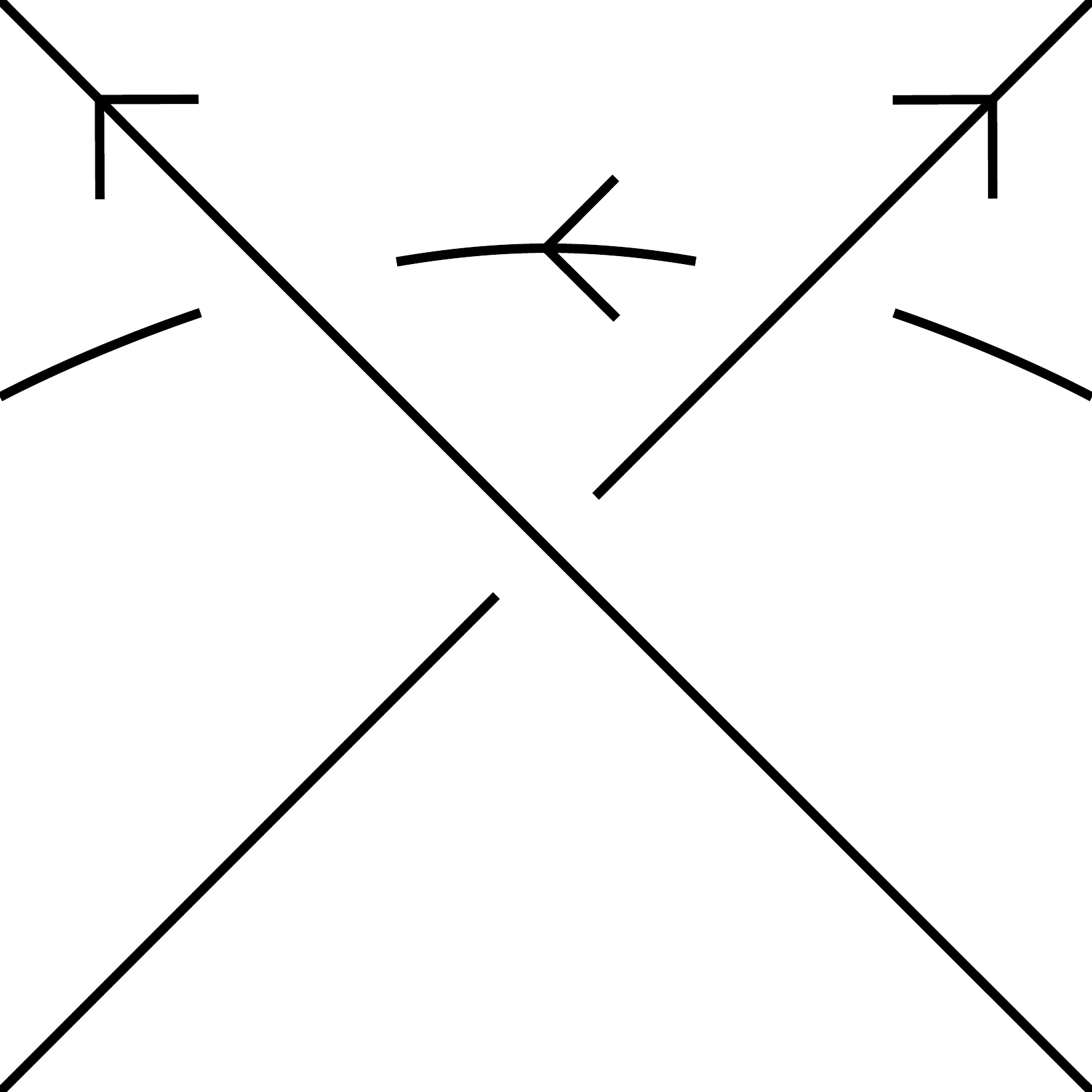}}\stackrel{\Omega 3e}{\longleftrightarrow}\raisebox{-13pt}{\includegraphics[height =0.45in]{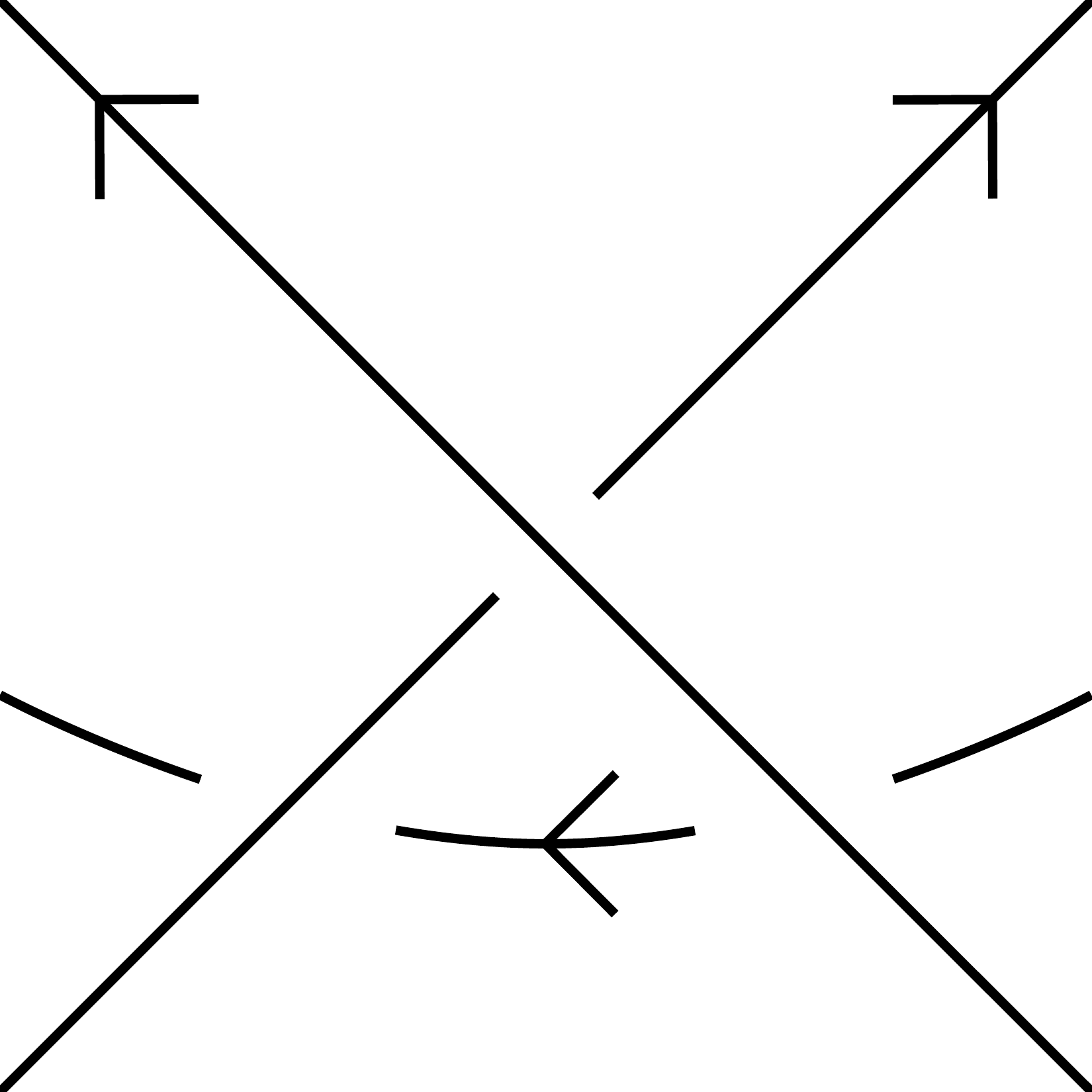}}\hspace{1.5cm}\raisebox{-13pt}{\includegraphics[height=0.45in]{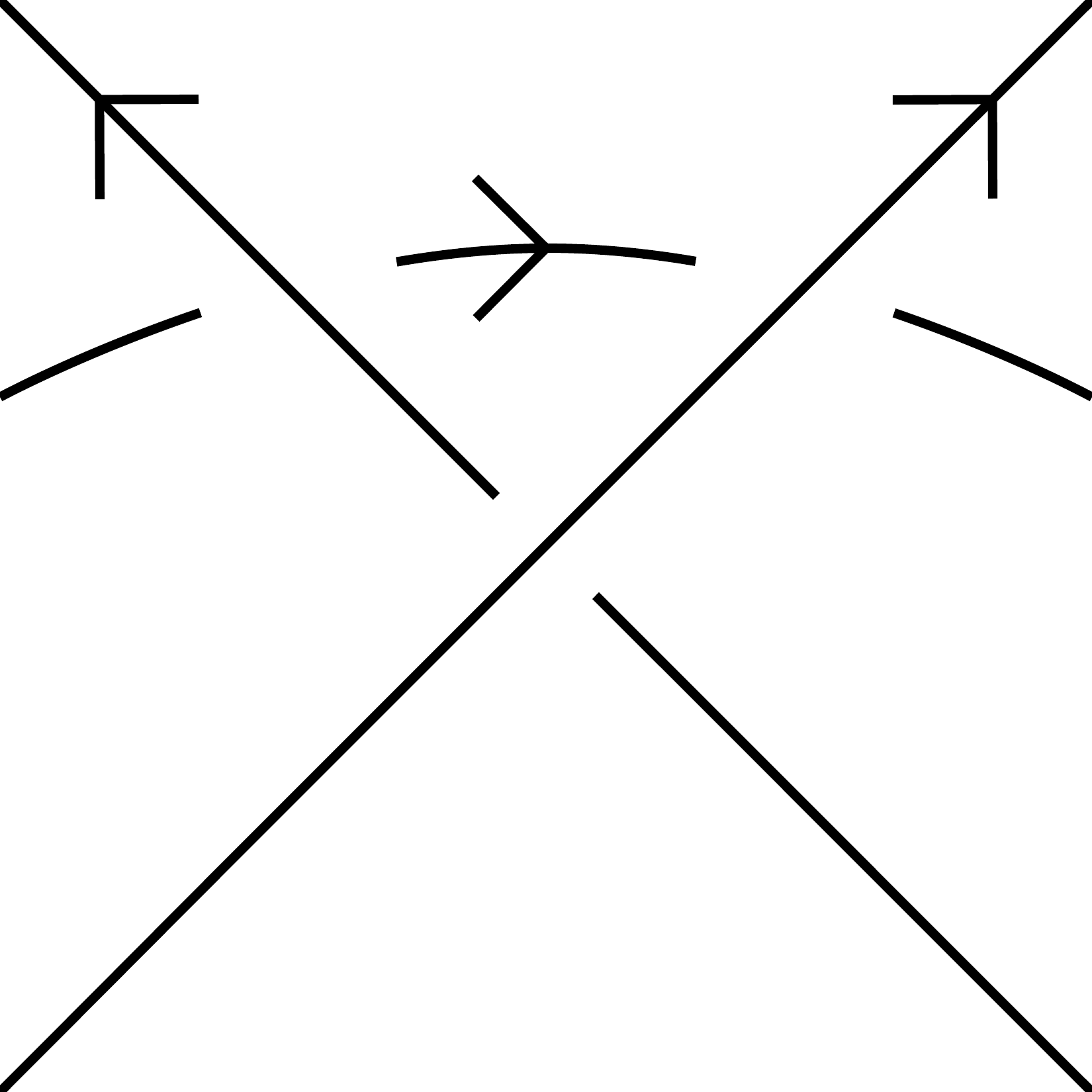}}\stackrel{\Omega 3f}{\longleftrightarrow}\raisebox{-13pt}{\includegraphics[height =0.45in]{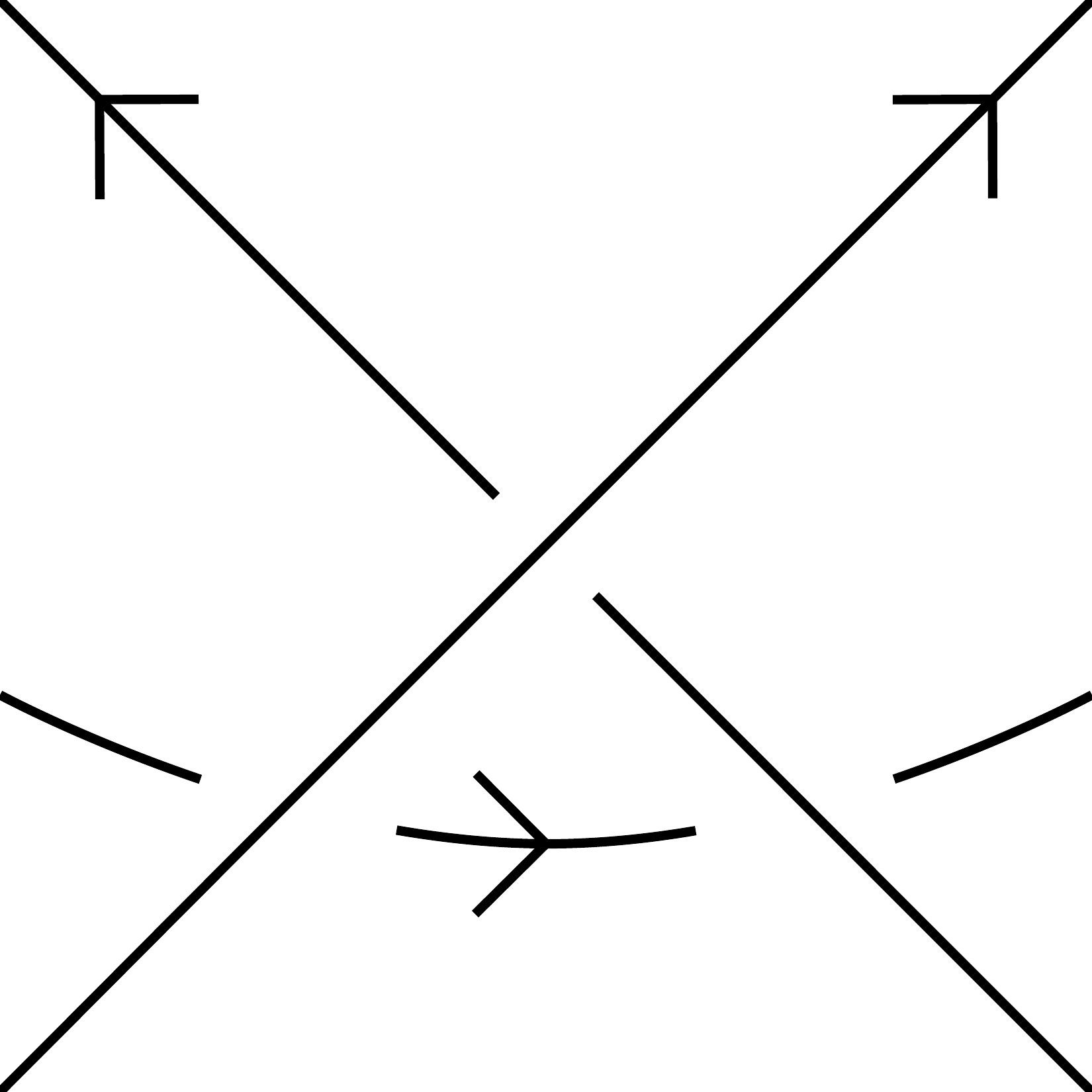}}\]
    \vspace{0.10cm}
    \[\raisebox{-13pt}{\includegraphics[height=0.45in]{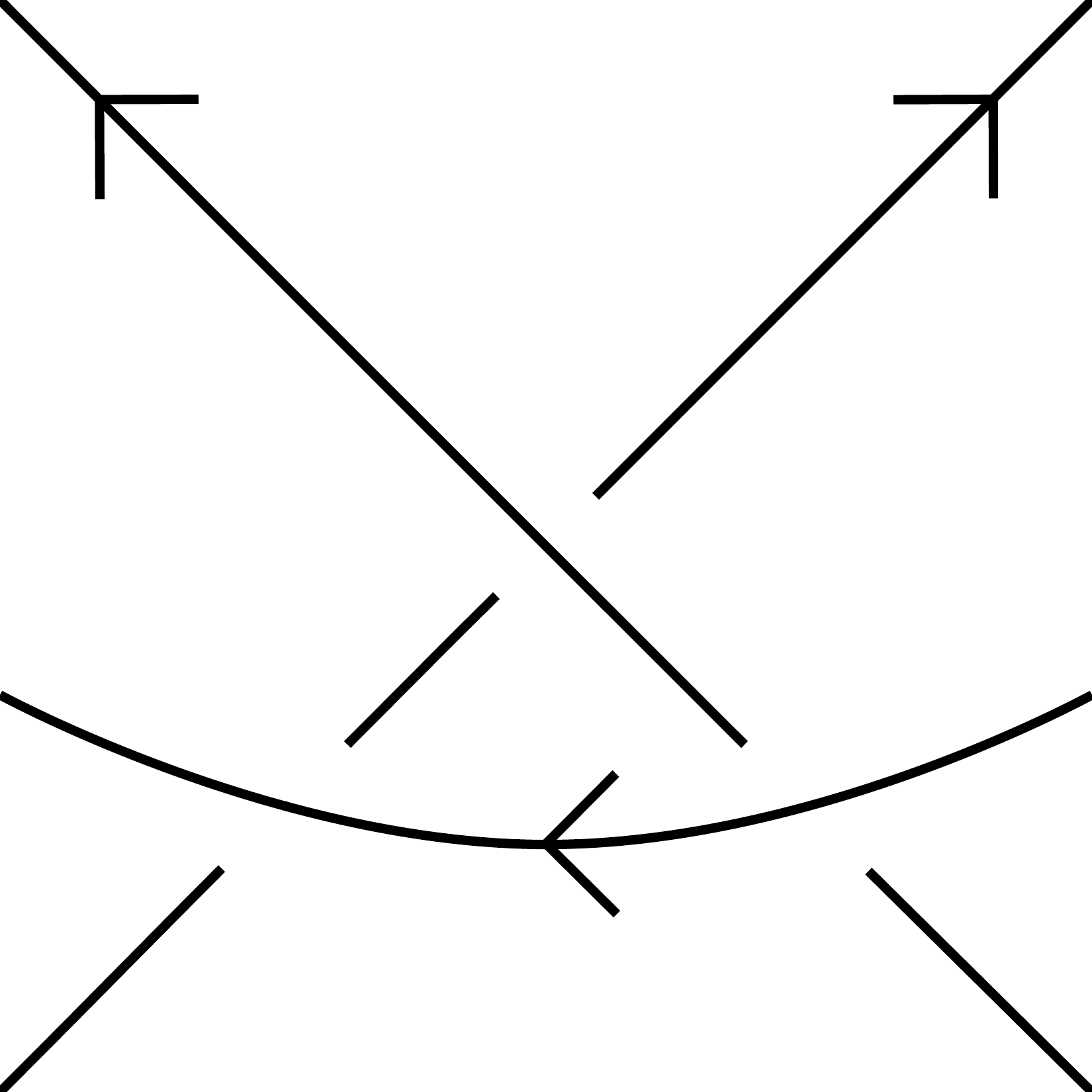}}\stackrel{\Omega 3g}{\longleftrightarrow}\raisebox{-13pt}{\includegraphics[height =0.45in]{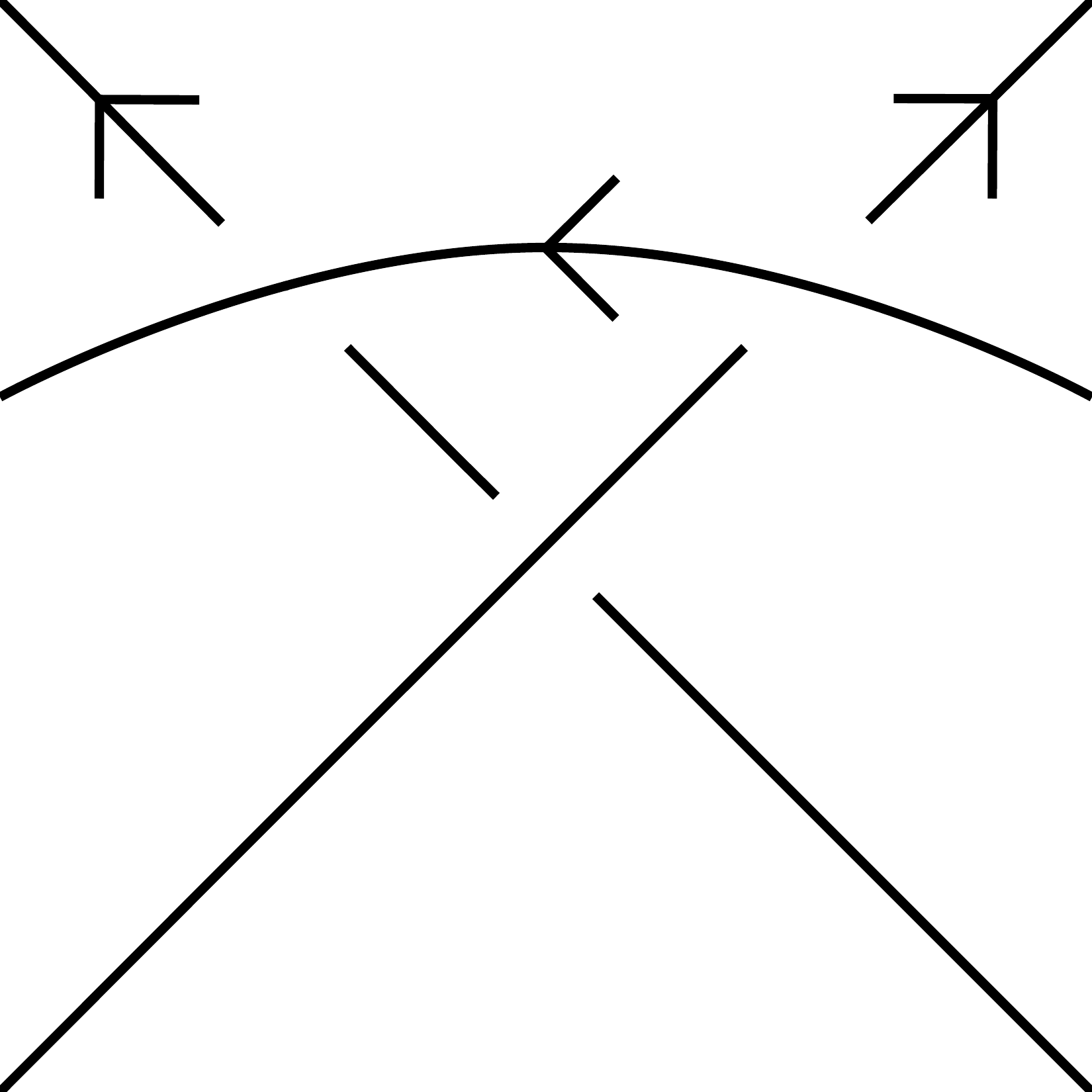}}\hspace{1.5cm}\raisebox{-13pt}{\includegraphics[height=0.45in]{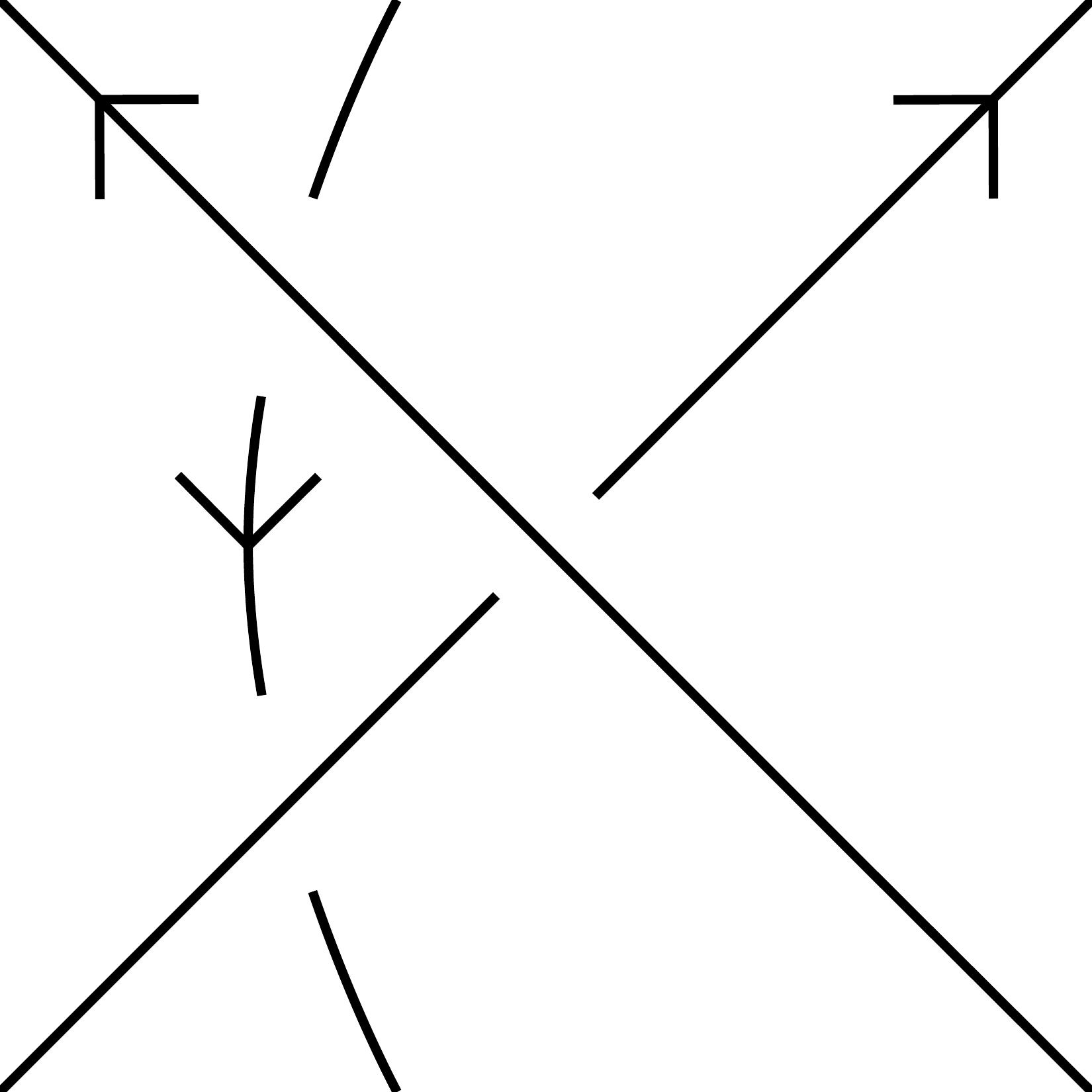}}\stackrel{\Omega 3h}{\longleftrightarrow}\raisebox{-13pt}{\includegraphics[height =0.45in]{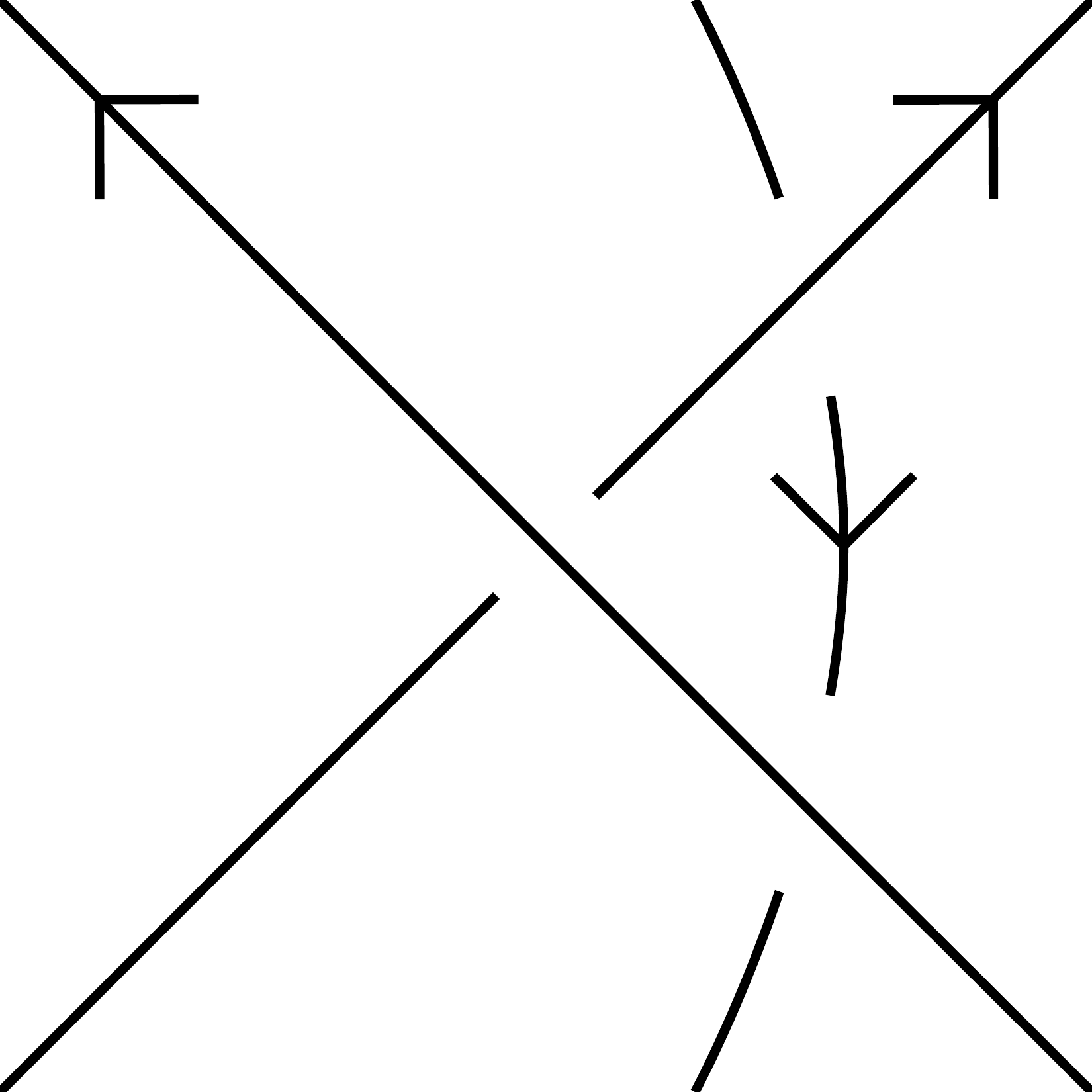}}\]
    \caption{Oriented $\Omega3$ moves}
    \label{fig:Omega3 Moves}
\end{figure}

We split the oriented versions of the moves $R4$ and respectively $R5$ into two sets, to separate the moves applied in a neighborhood of a vertex of type In-In-Out-Out from those applied in a neighborhood of a vertex of type In-Out-In-Out. This will also allow us to apply certain results from the work by Bataineh et al. in~\cite{Bataineh}. We remark that the paper ~\cite{Bataineh} is about singular links and that a diagram of a singular link contains classical crossings and 4-valent vertices of type In-In-Out-Out. Moreover, singular links are equivalent to knotted 4-valent graphs
(regarded up to rigid-vertex isotopy) whose vertices are of type In-In-Out-Out.

In Figure~\ref{fig:IIOO Omega4 Moves}, we present the oriented versions of the moves $R4$ with vertices of type In-In-Out-Out, and we denote these oriented versions of the moves by $\Omega4$, following the conventions used in~\cite{Bataineh}. Similarly, in Figure~\ref{fig:IOIO Omega4 Moves}, we give all oriented versions of the moves $\Omega4$ with vertices of type In-Out-In-Out.

\begin{figure}[ht]
    \[\raisebox{-13pt}{\includegraphics[height=0.45in]{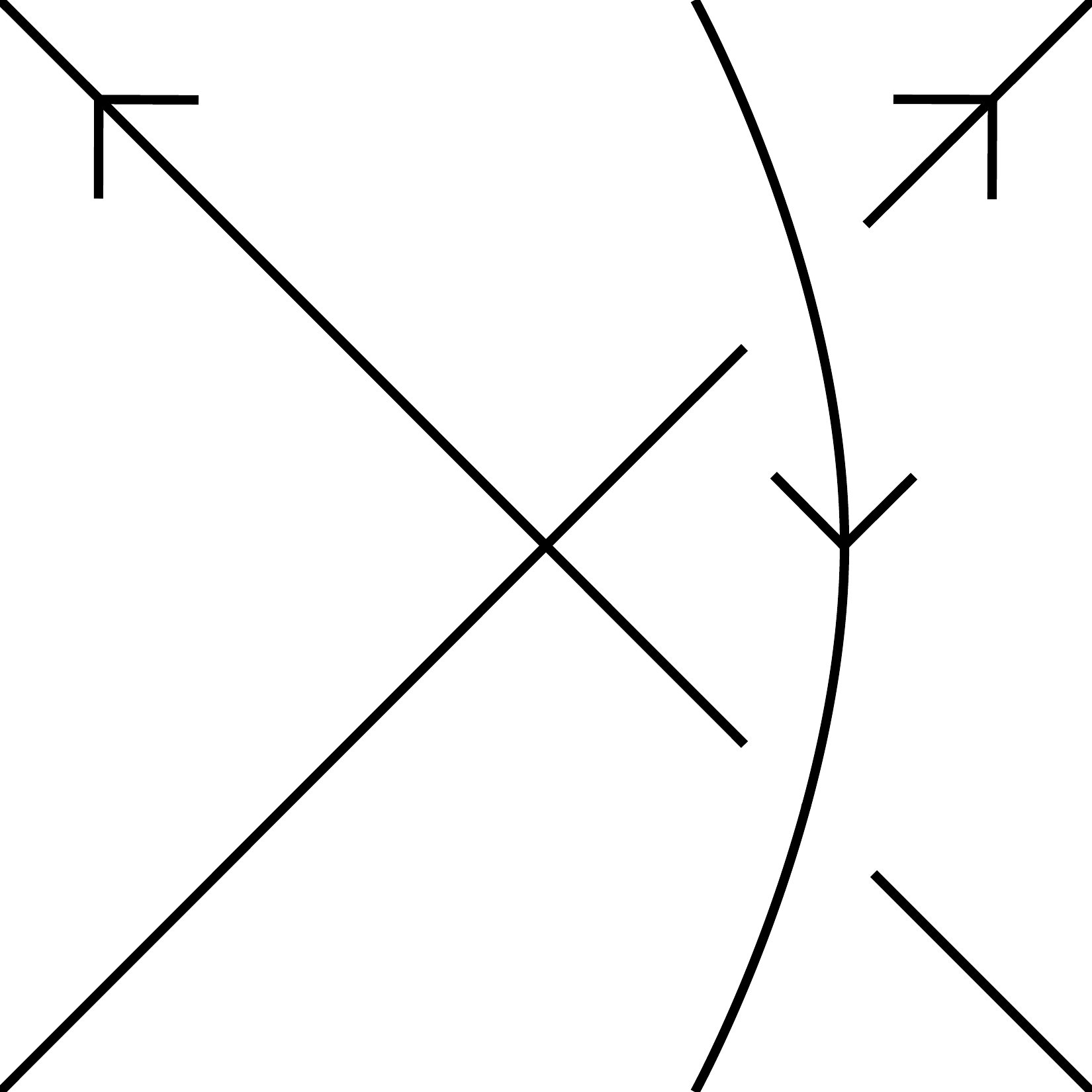}}\stackrel{\Omega 4a}{\longleftrightarrow}\raisebox{-13pt}{\includegraphics[height =0.45in]{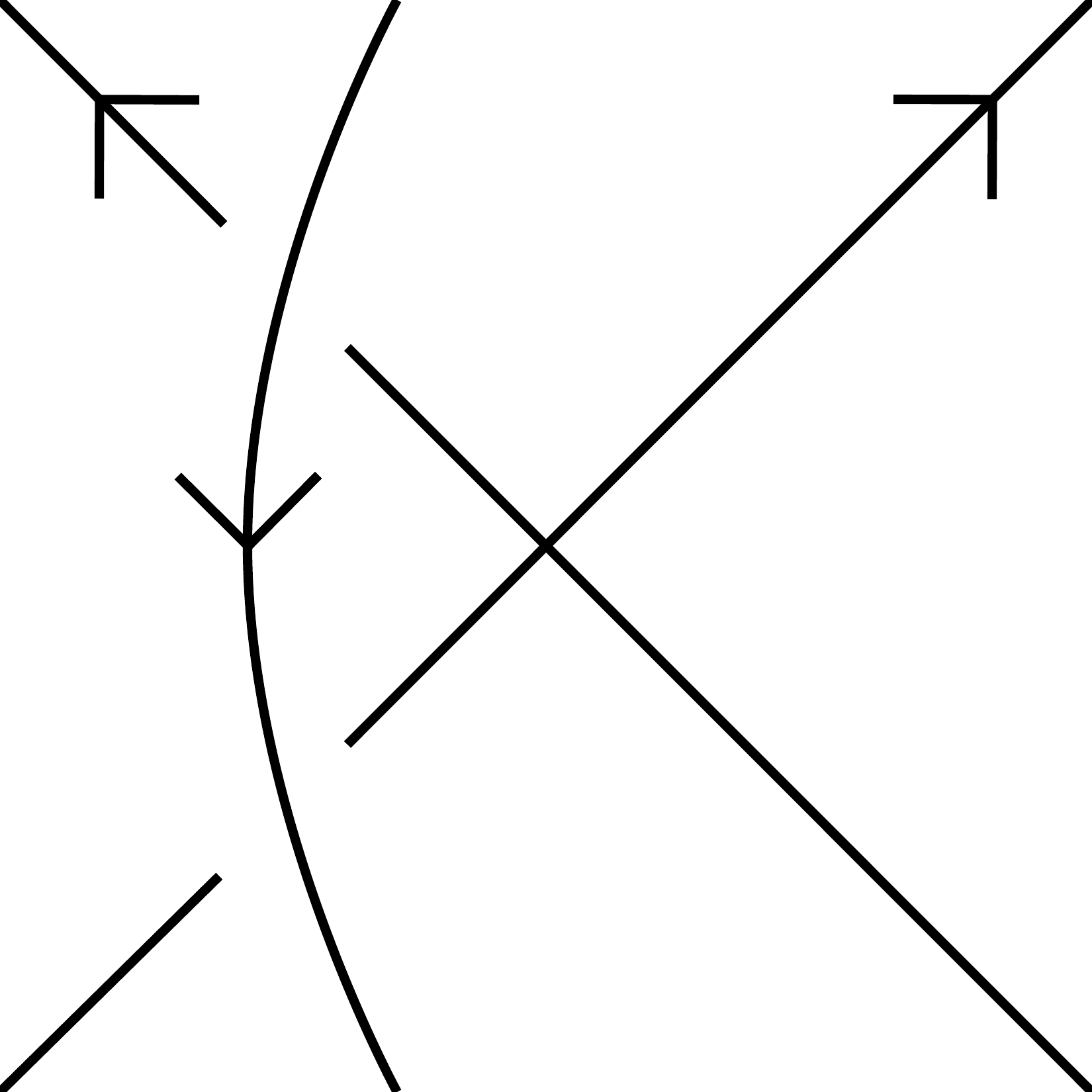}}\hspace{1.5cm}\raisebox{-13pt}{\includegraphics[height=0.45in]{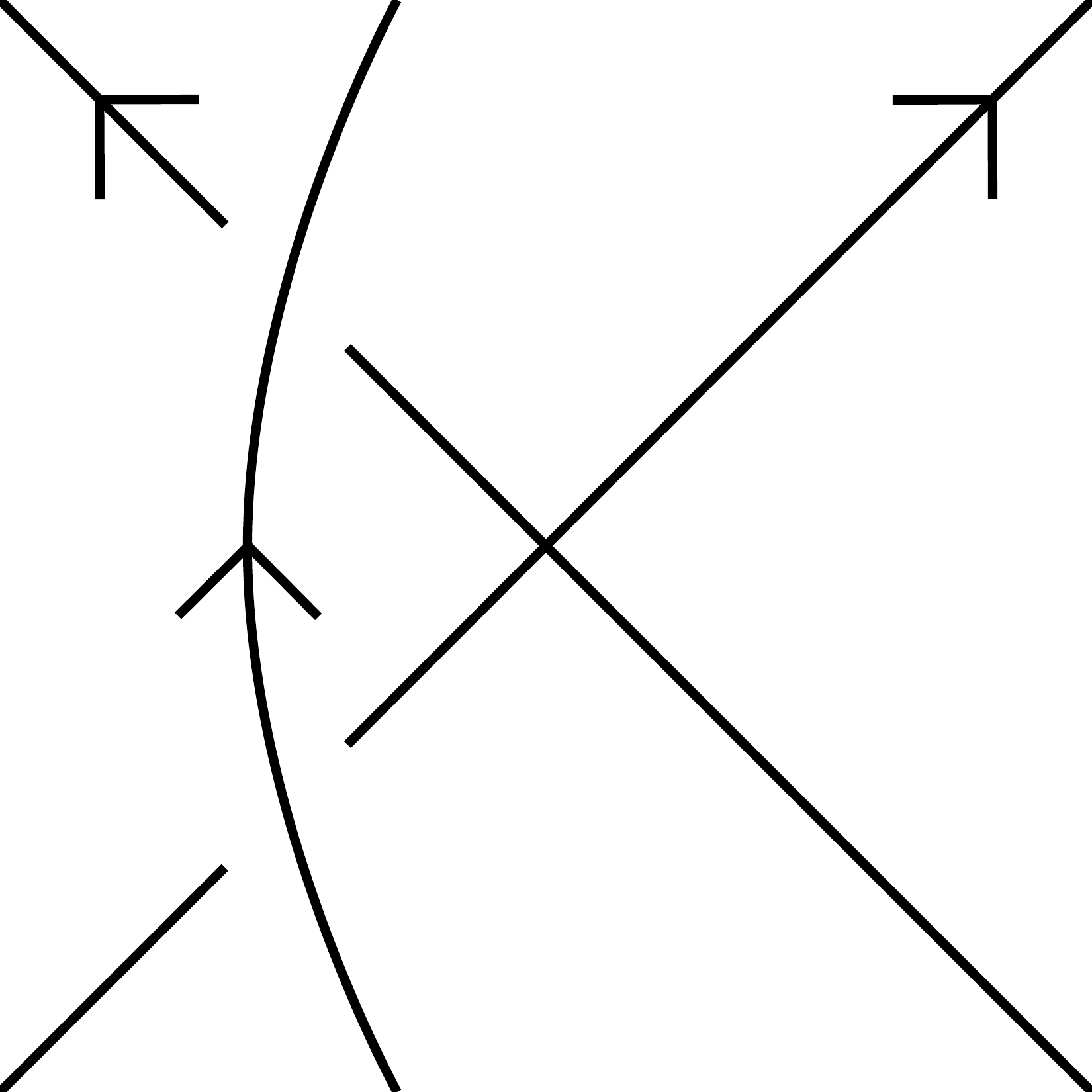}}\stackrel{\Omega 4b}{\longleftrightarrow}\raisebox{-13pt}{\includegraphics[height =0.45in]{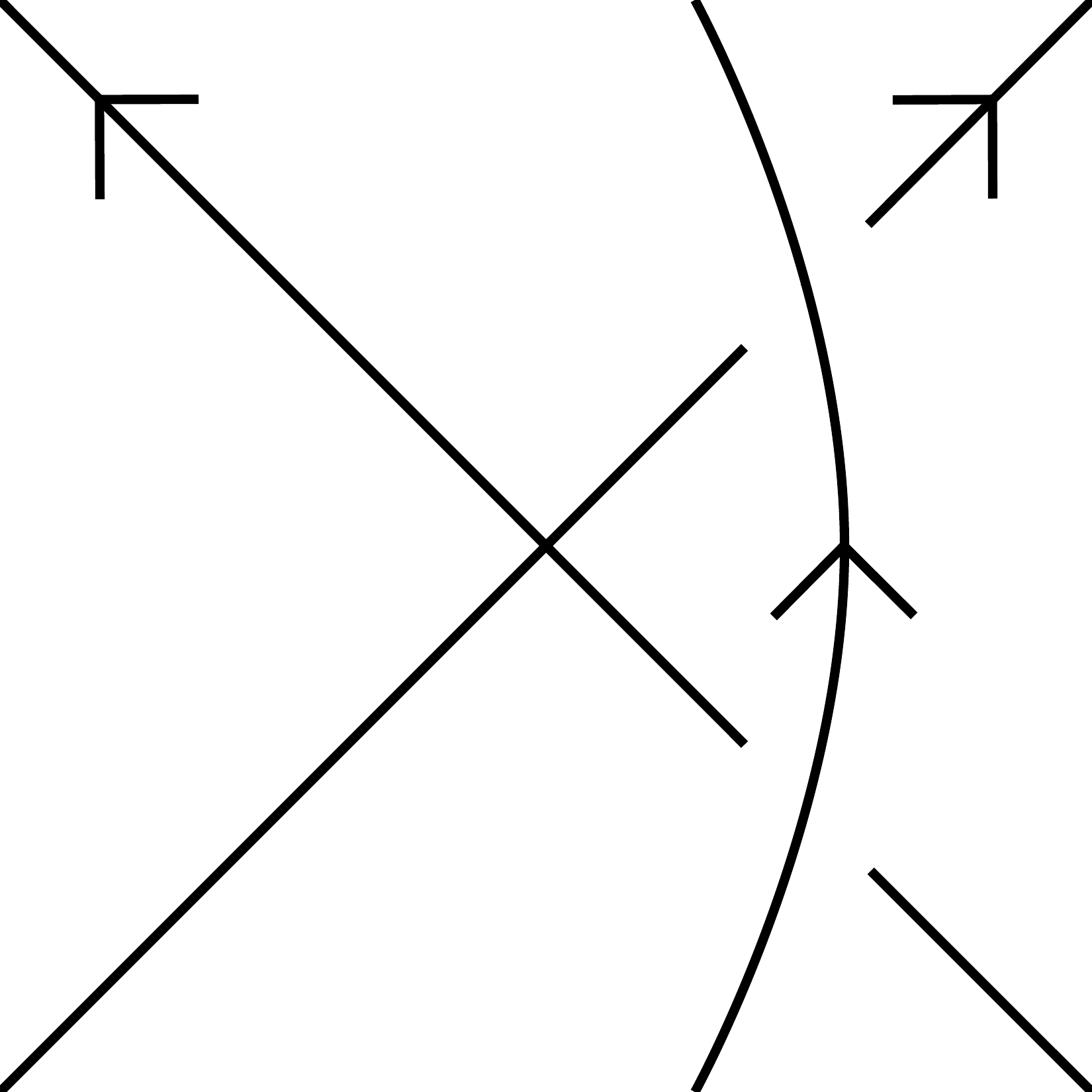}}\]
    \vspace{0.10cm}
    \[\raisebox{-13pt}{\includegraphics[height=0.45in]{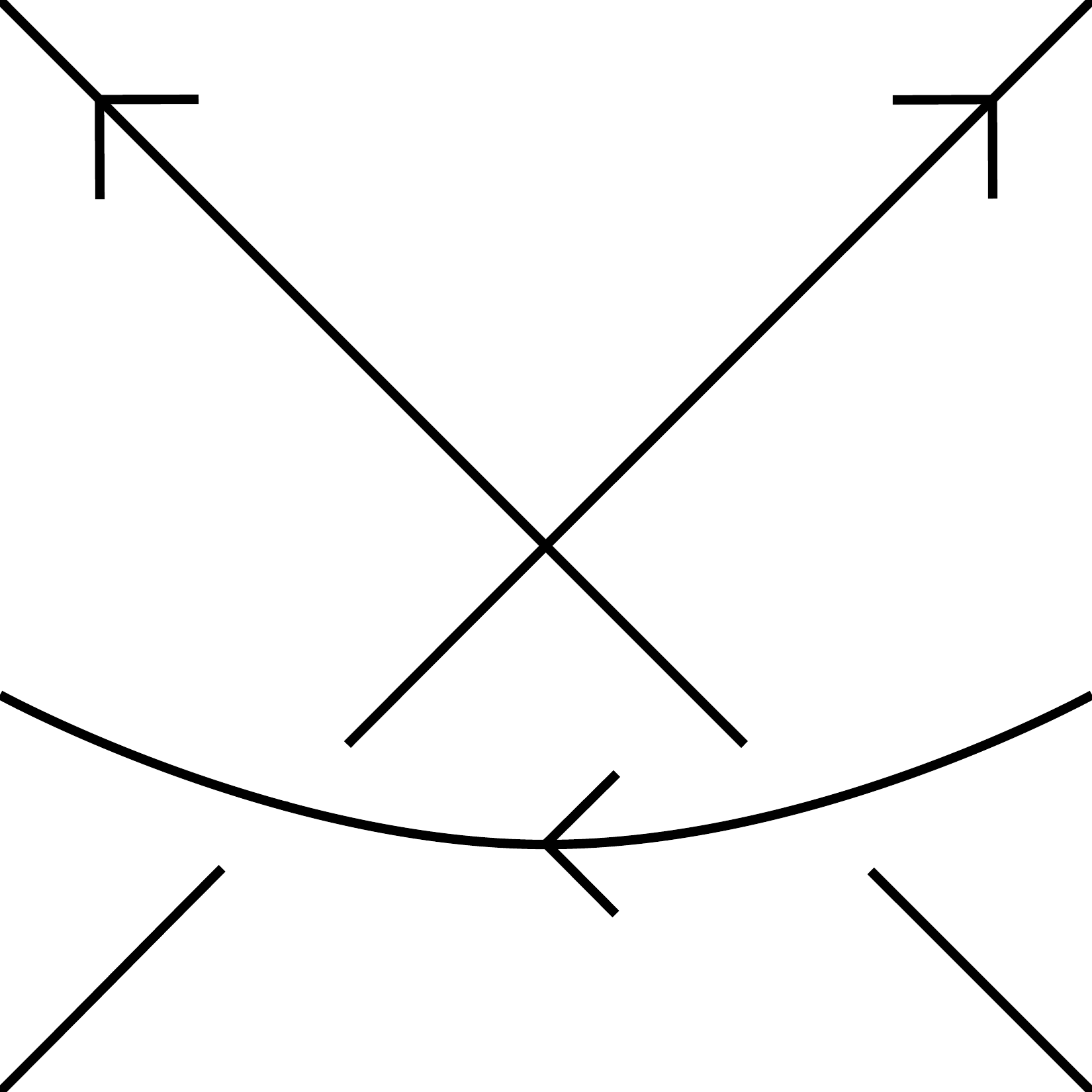}}\stackrel{\Omega 4c}{\longleftrightarrow}\raisebox{-13pt}{\includegraphics[height =0.45in]{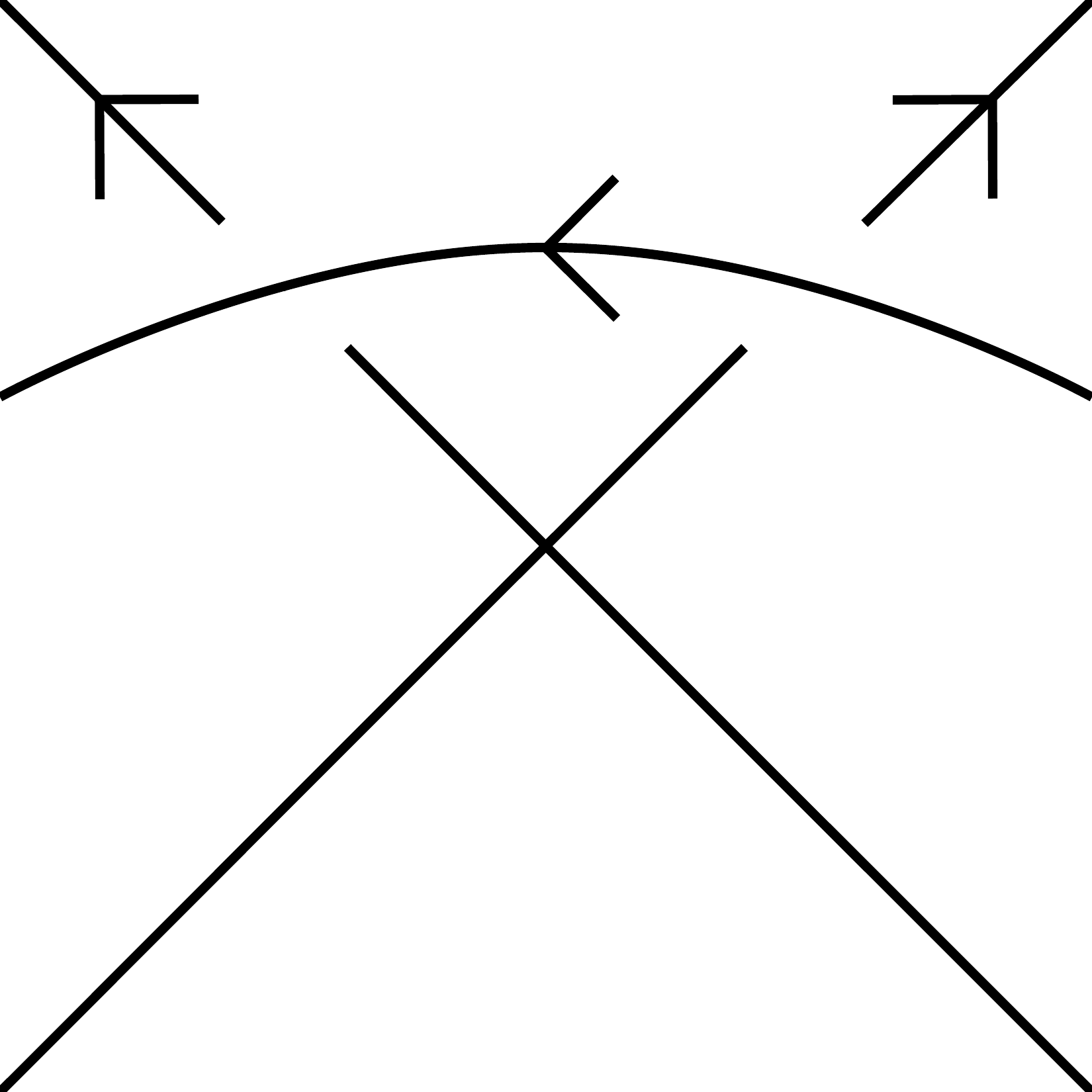}}\hspace{1.5cm}\raisebox{-13pt}{\includegraphics[height=0.45in]{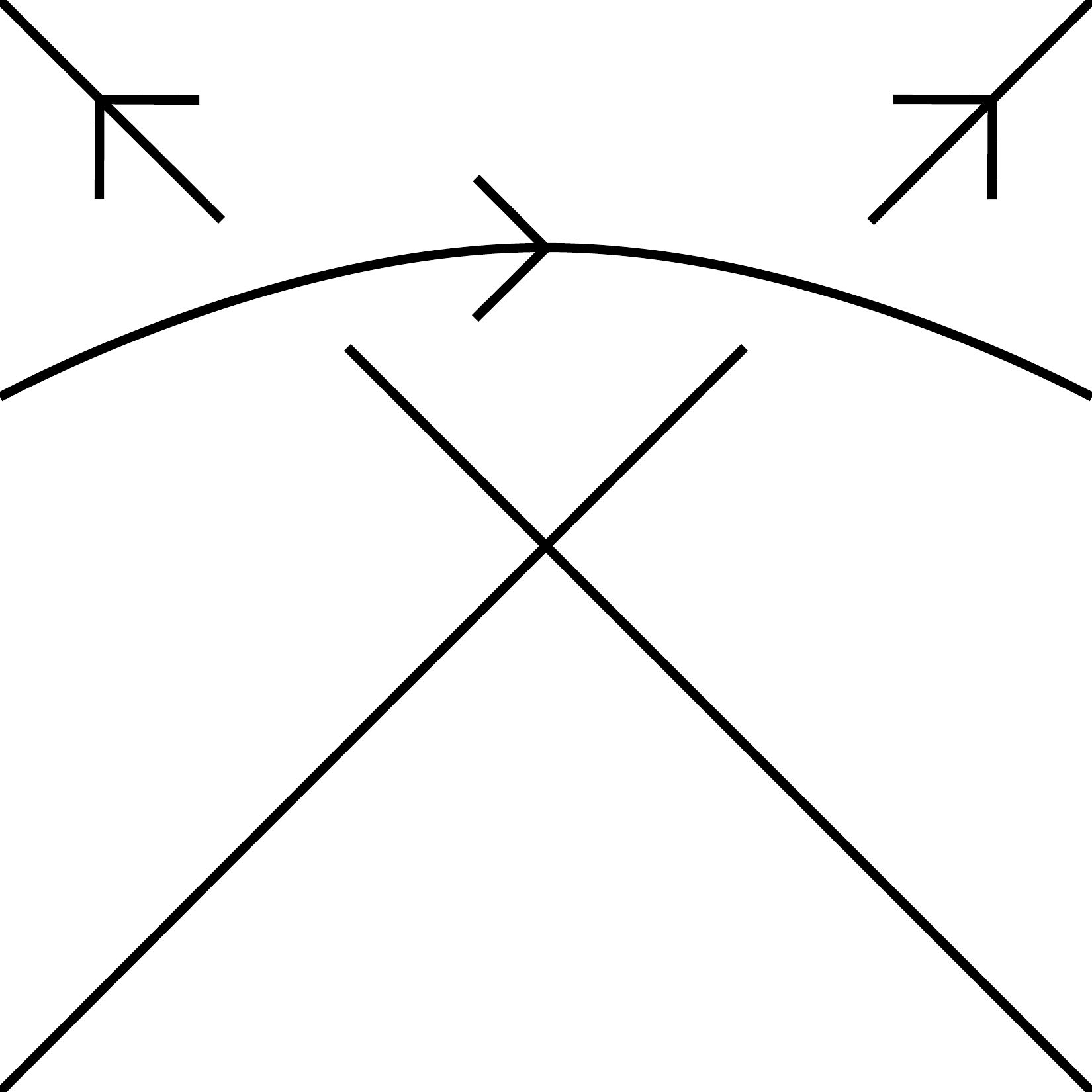}}\stackrel{\Omega 4d}{\longleftrightarrow}\raisebox{-13pt}{\includegraphics[height =0.45in]{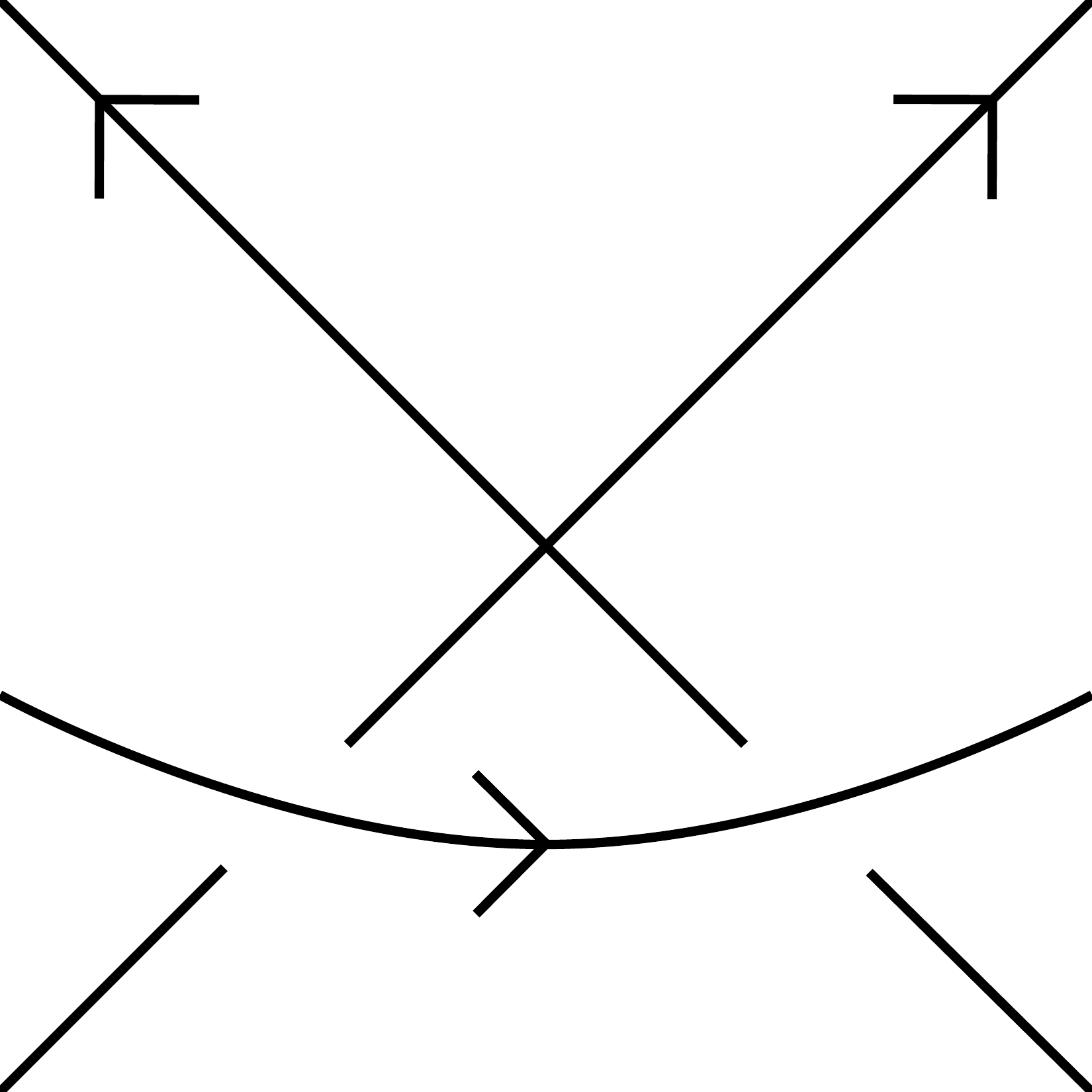}}\]
    \vspace{0.10cm}
    \[\raisebox{-13pt}{\includegraphics[height=0.45in]{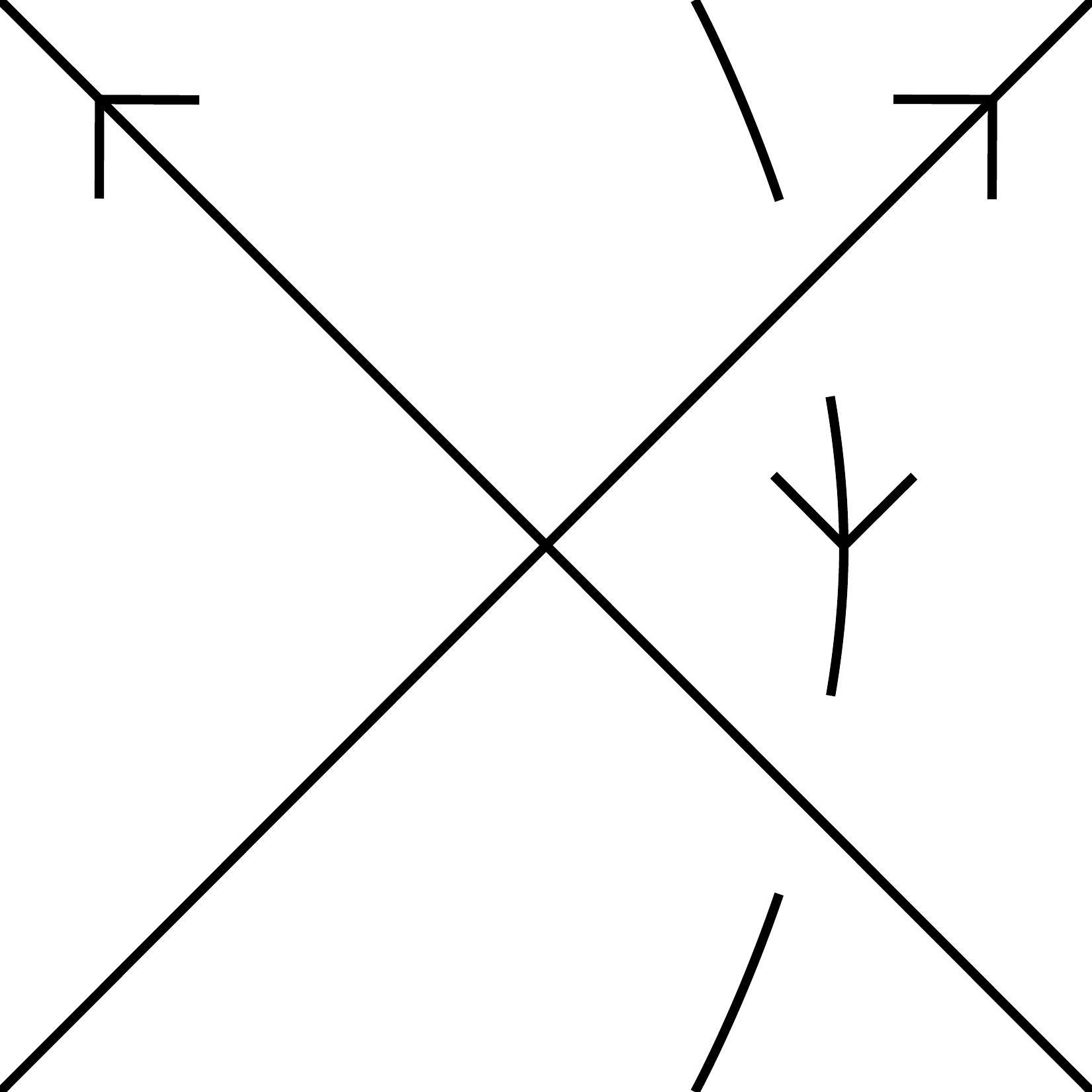}}\stackrel{\Omega 4e}{\longleftrightarrow}\raisebox{-13pt}{\includegraphics[height =0.45in]{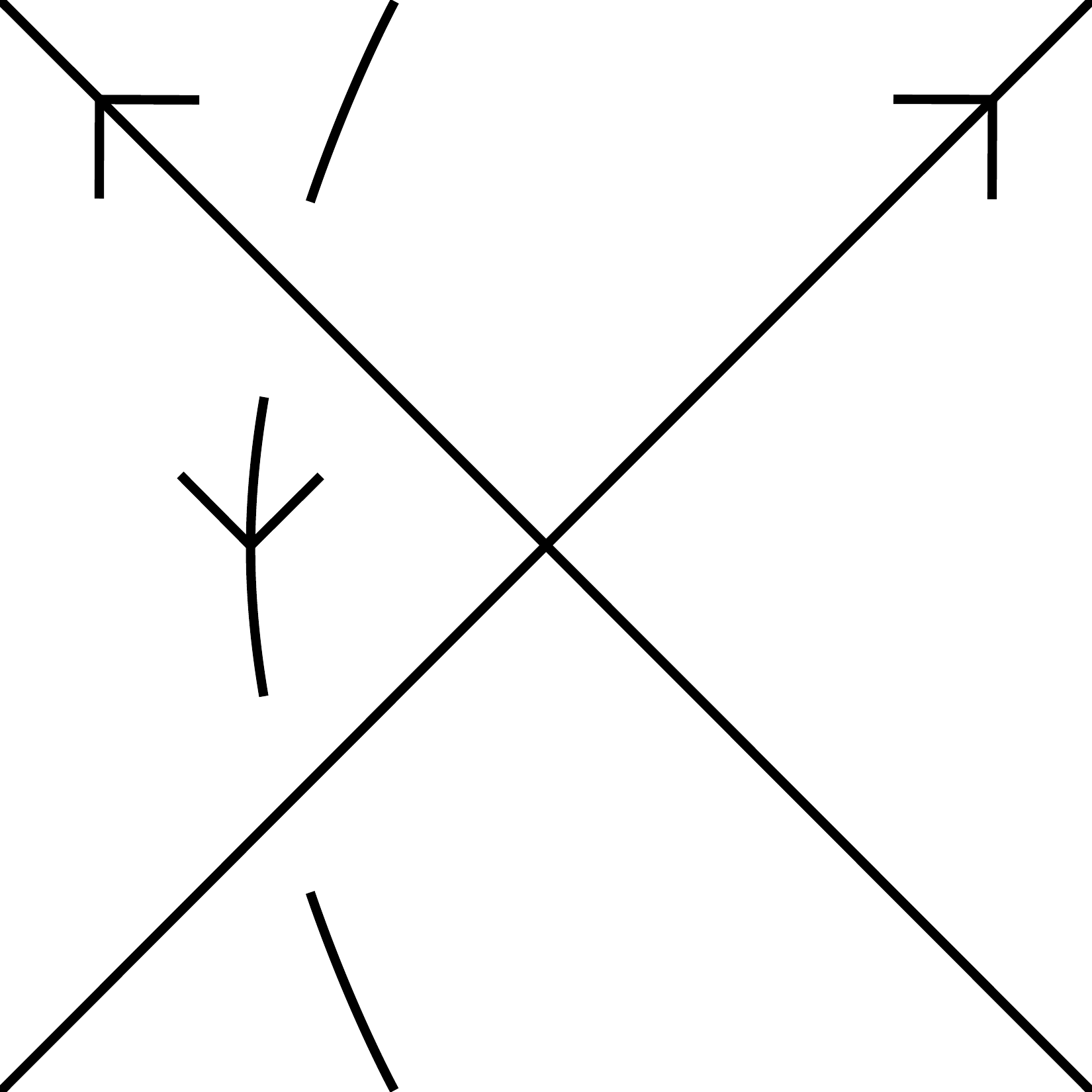}}\hspace{1.5cm}\raisebox{-13pt}{\includegraphics[height=0.45in]{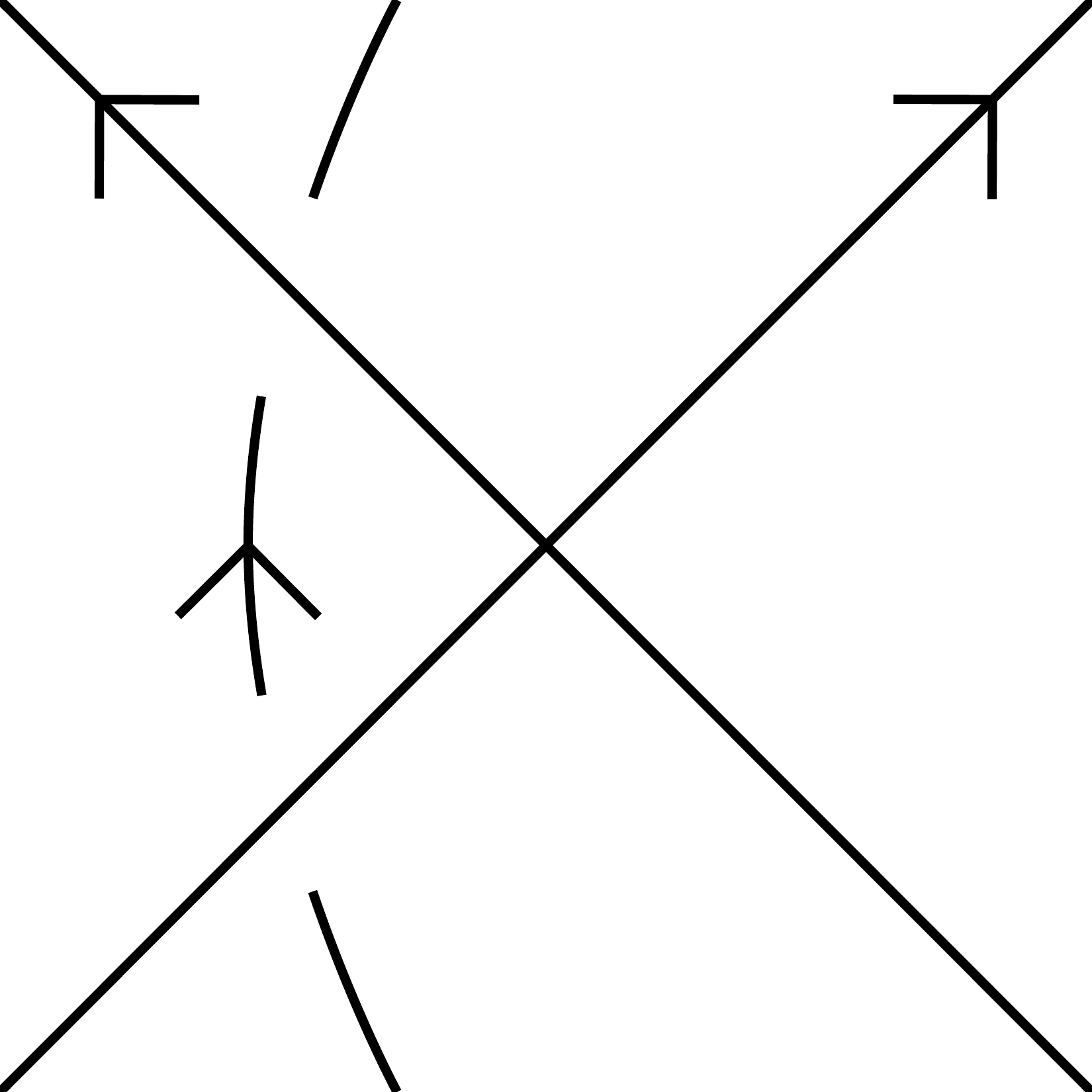}}\stackrel{\Omega 4f}{\longleftrightarrow}\raisebox{-13pt}{\includegraphics[height =0.45in]{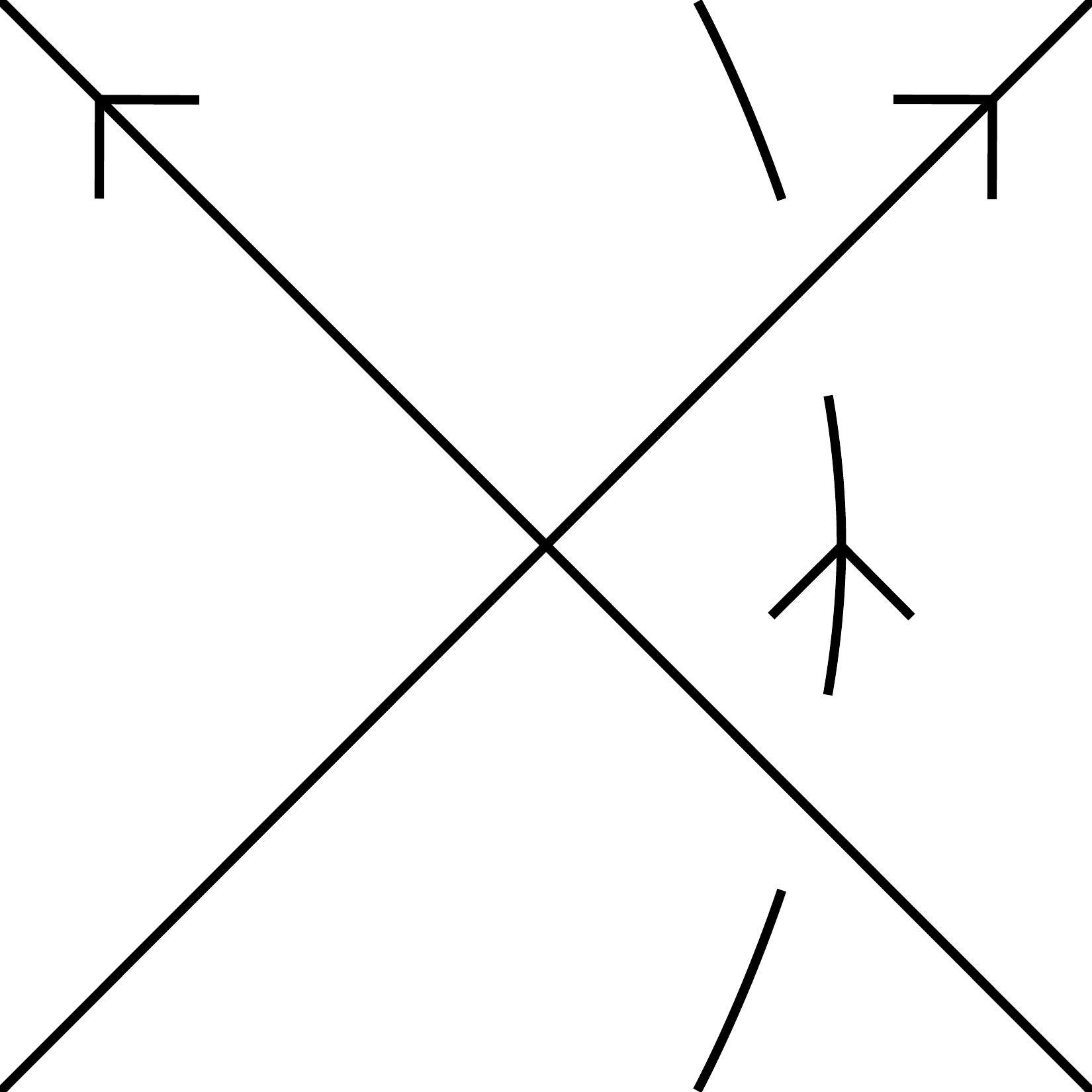}}\]
    \vspace{0.10cm}
    \[\raisebox{-13pt}{\includegraphics[height=0.45in]{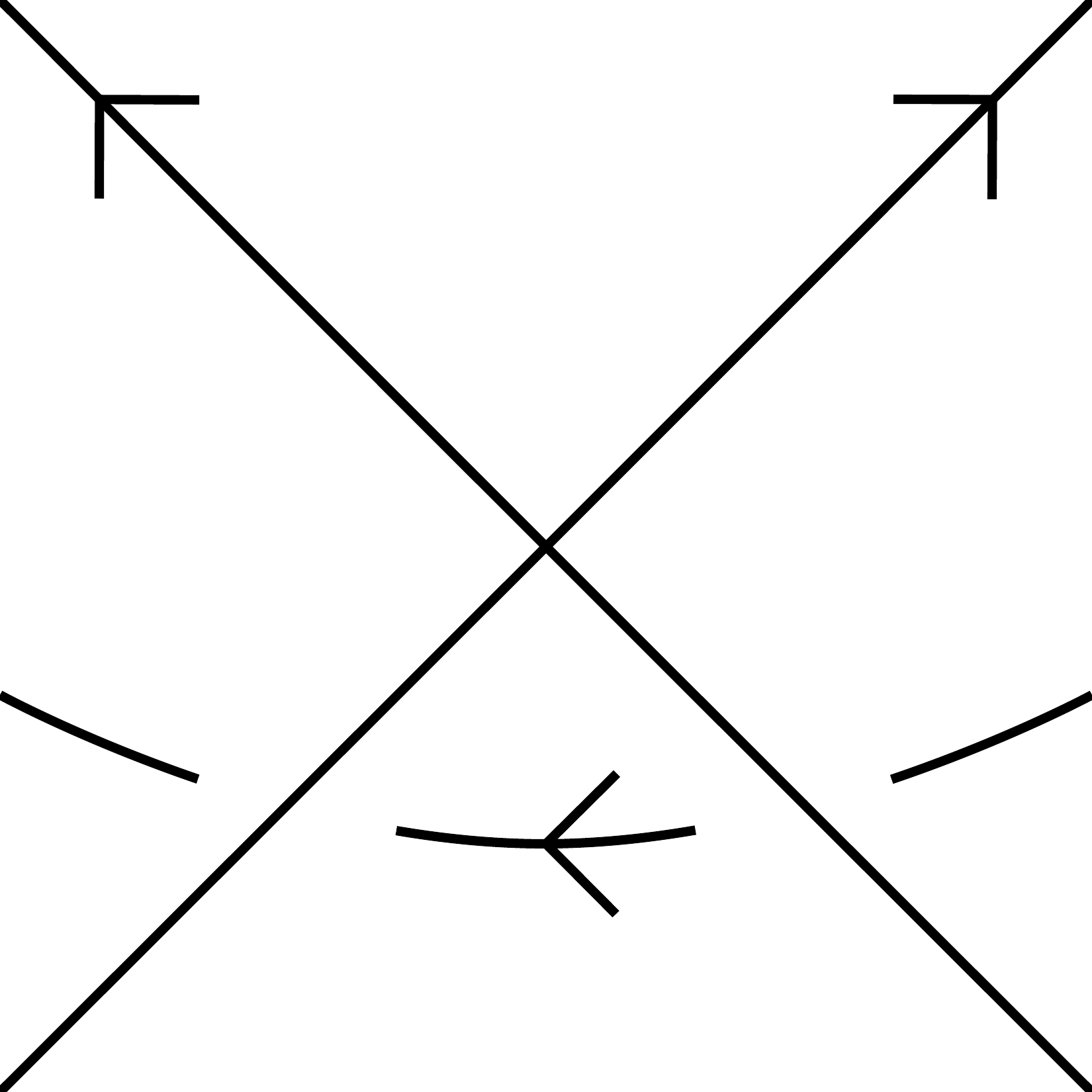}}\stackrel{\Omega 4g}{\longleftrightarrow}\raisebox{-13pt}{\includegraphics[height =0.45in]{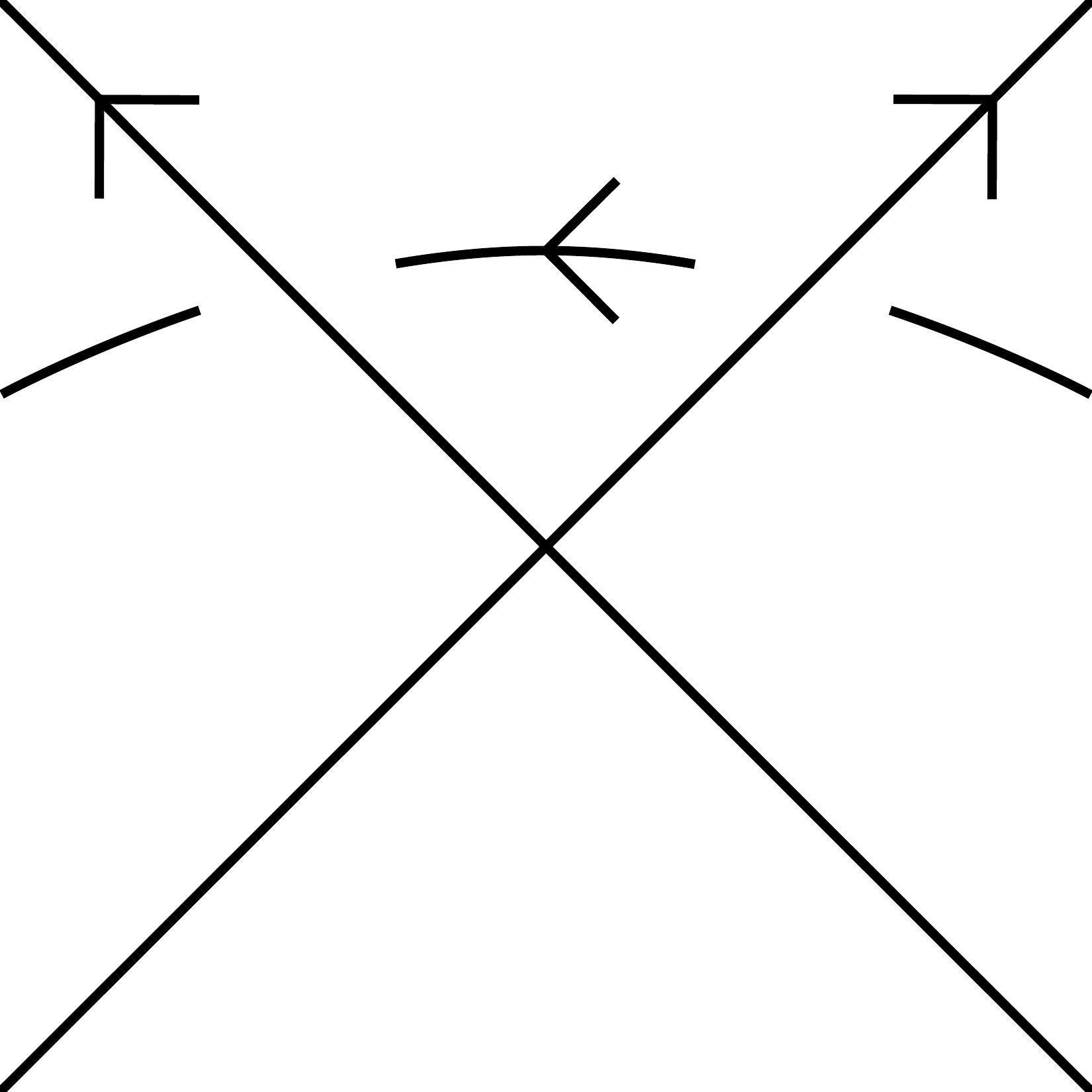}}\hspace{1.5cm}\raisebox{-13pt}{\includegraphics[height=0.45in]{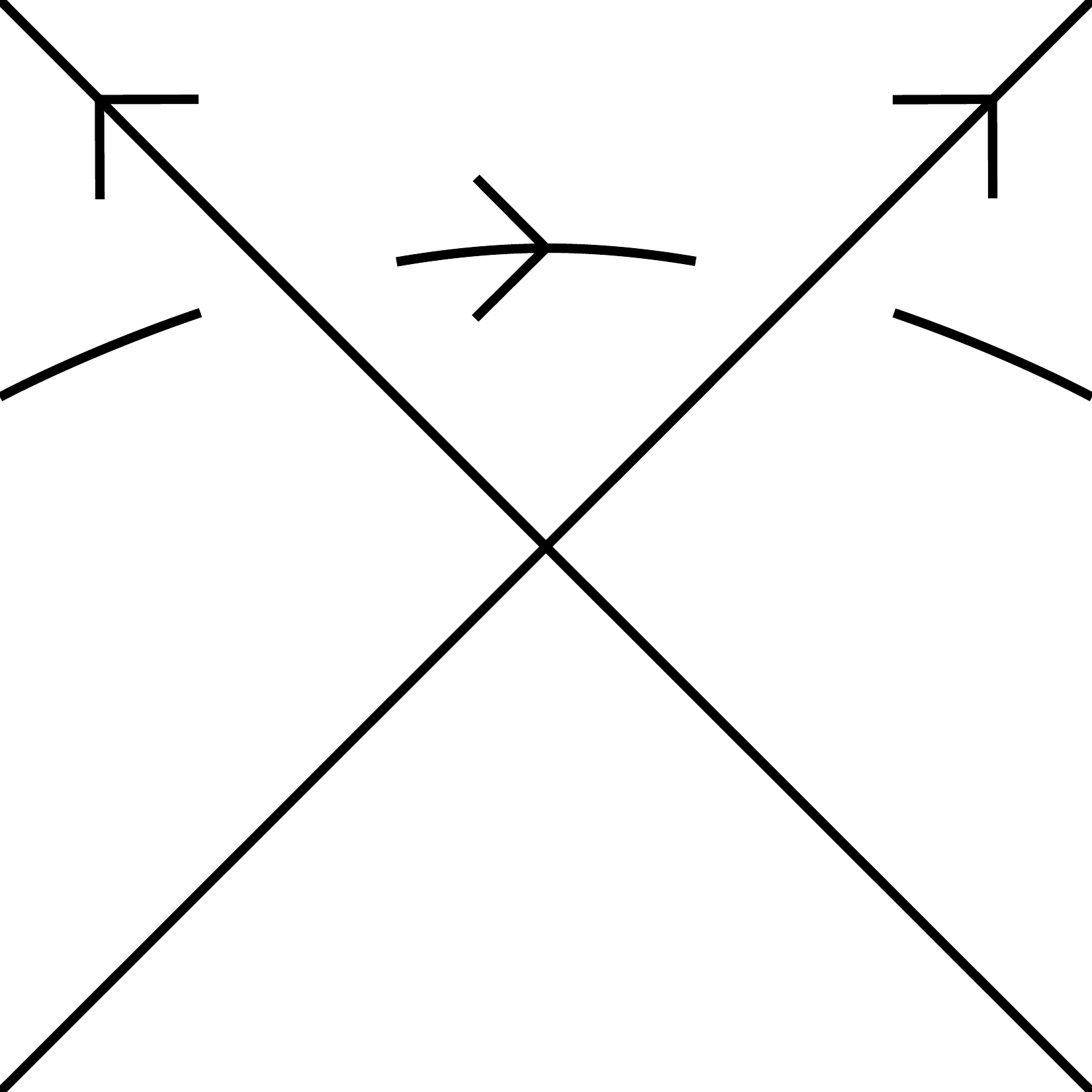}}\stackrel{\Omega 4h}{\longleftrightarrow}\raisebox{-13pt}{\includegraphics[height =0.45in]{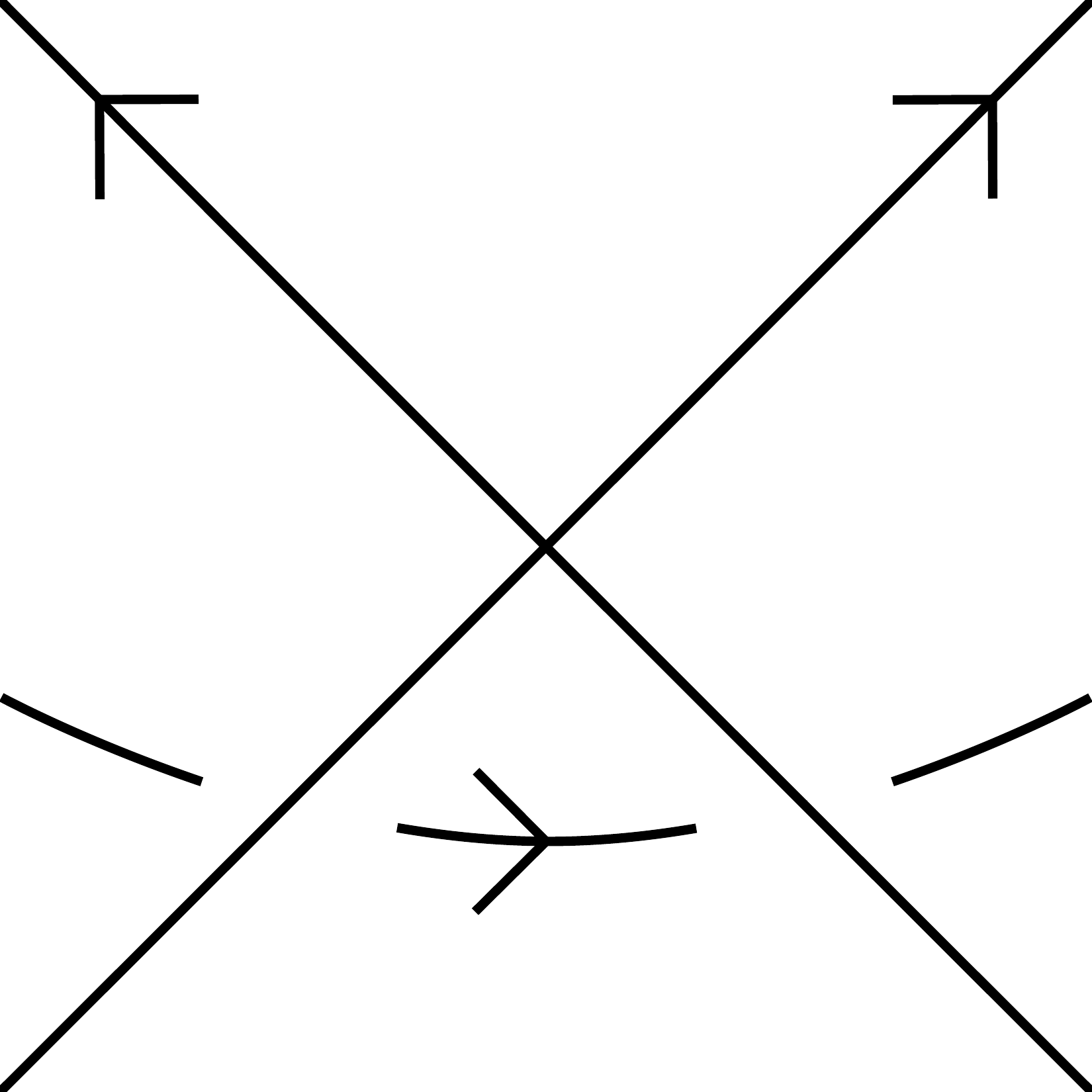}}\]
    \caption{$\Omega4$ moves with vertices of type In-In-Out-Out}
    \label{fig:IIOO Omega4 Moves}
\end{figure}

\begin{figure}[ht]
    \[\raisebox{-13pt}{\includegraphics[height=0.45in]{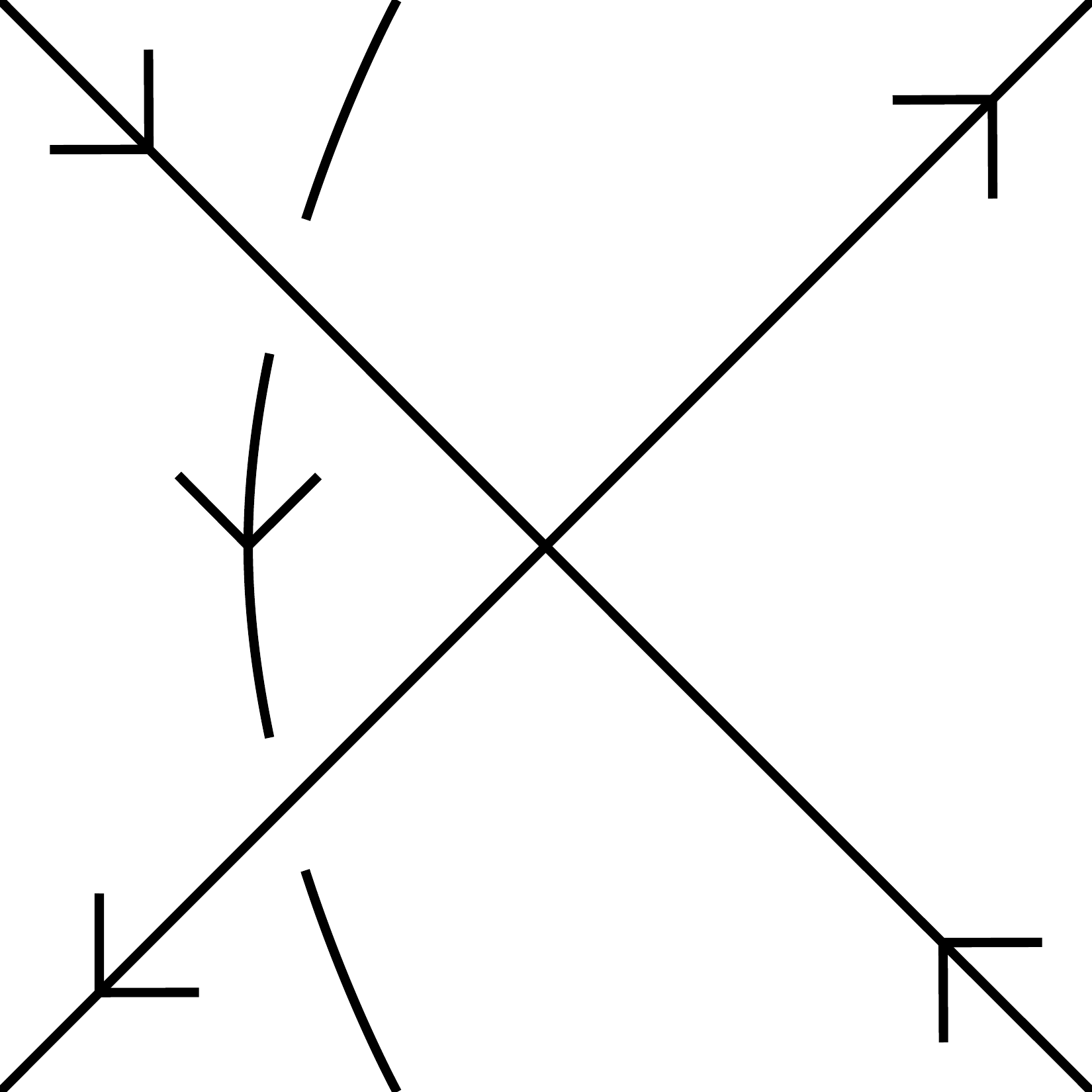}}\stackrel{\Omega 4i}{\longleftrightarrow}\raisebox{-13pt}{\includegraphics[height =0.45in]{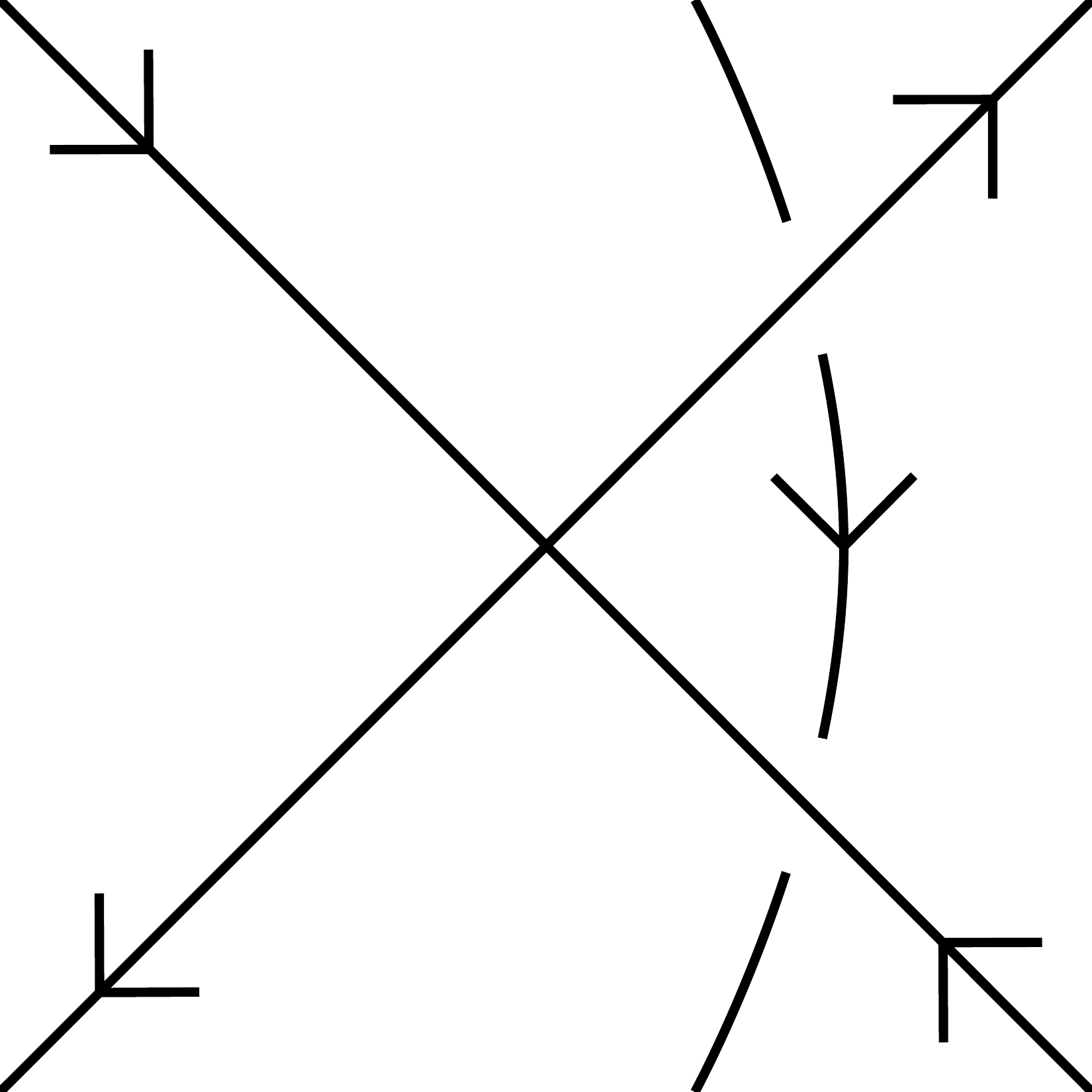}}\hspace{1.5cm}\raisebox{-13pt}{\includegraphics[height=0.45in]{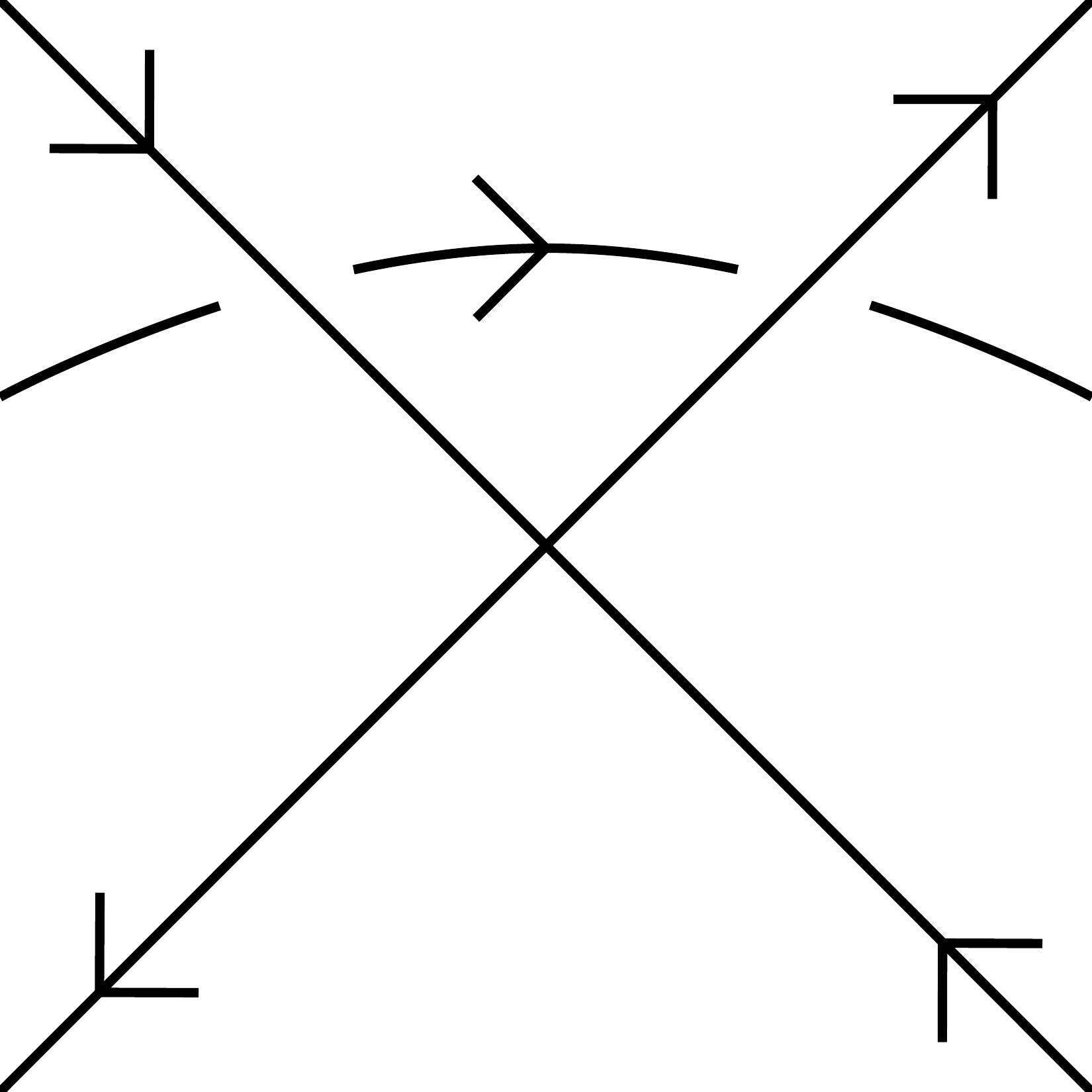}}\stackrel{\Omega 4j}{\longleftrightarrow}\raisebox{-13pt}{\includegraphics[height =0.45in]{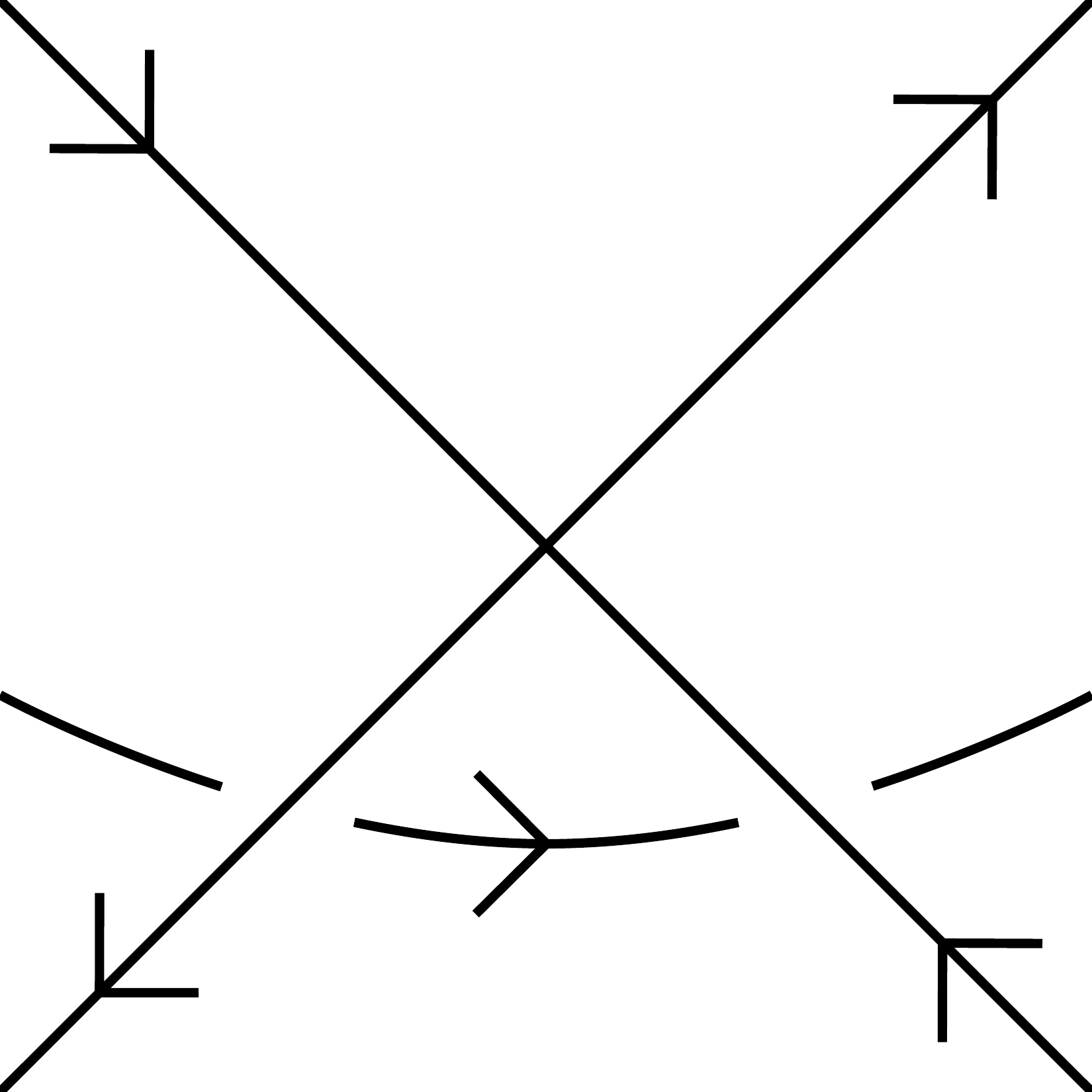}}\]
    \vspace{0.10cm}
    \[\raisebox{-13pt}{\includegraphics[height=0.45in]{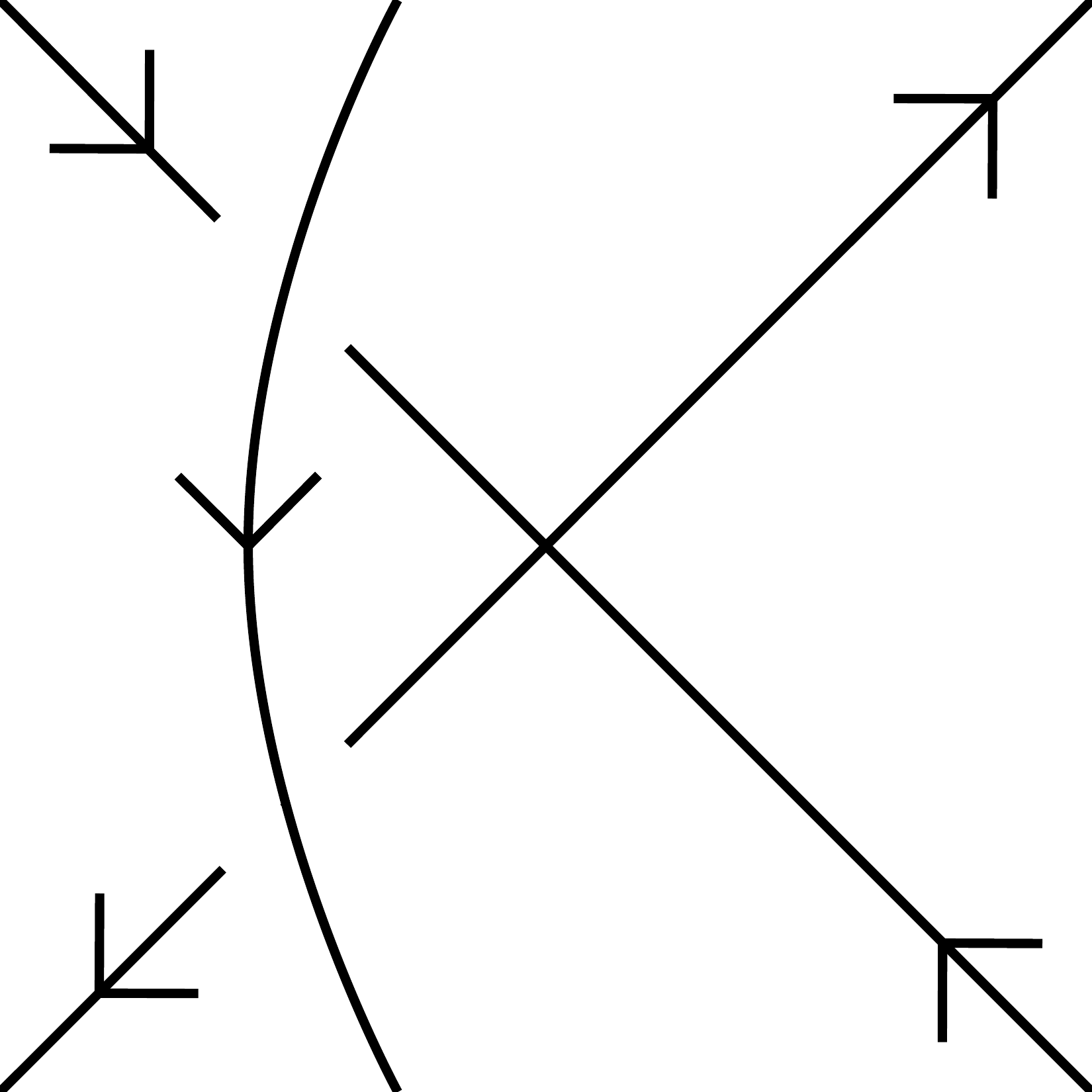}}\stackrel{\Omega 4k}{\longleftrightarrow}\raisebox{-13pt}{\includegraphics[height =0.45in]{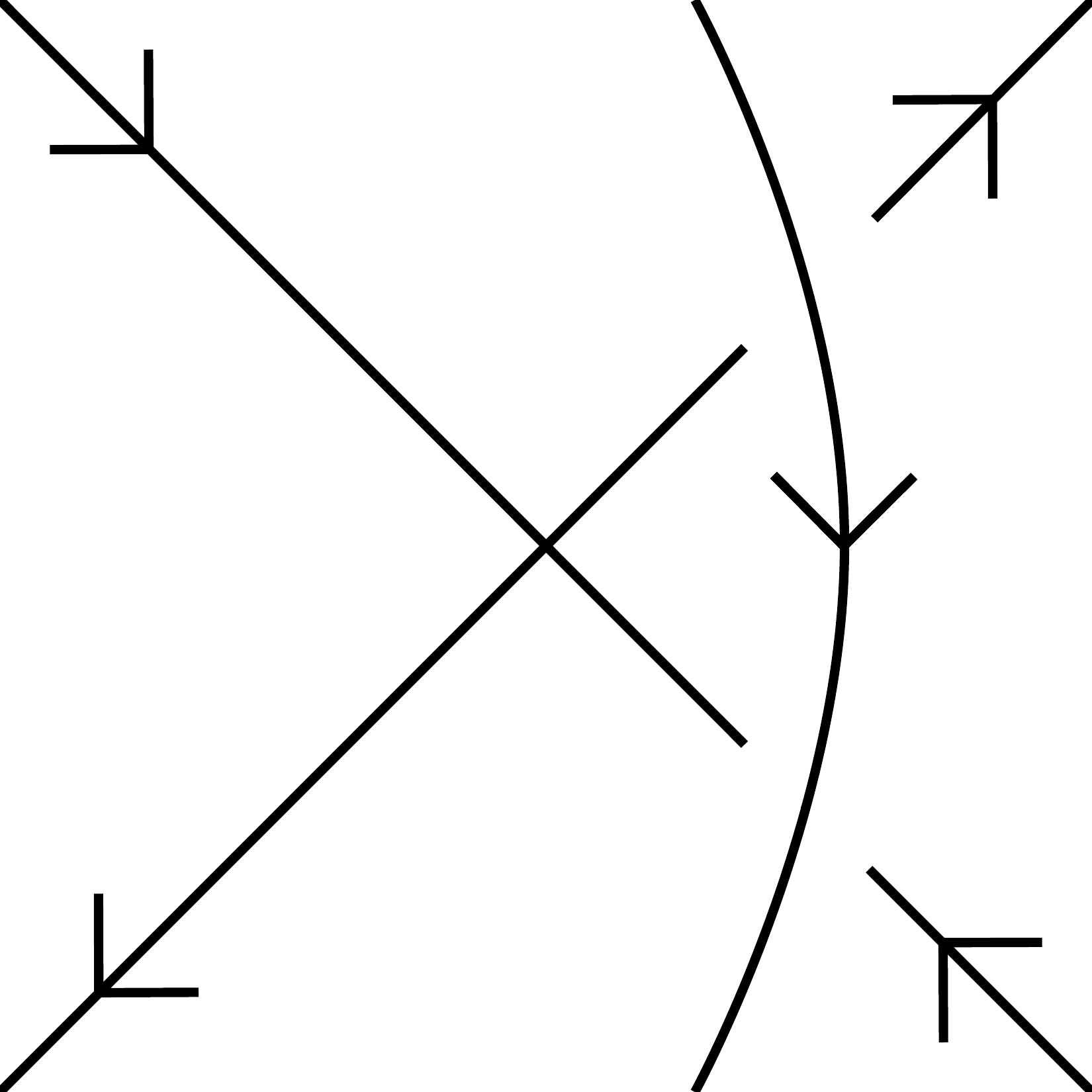}}\hspace{1.5cm}\raisebox{-13pt}{\includegraphics[height=0.45in]{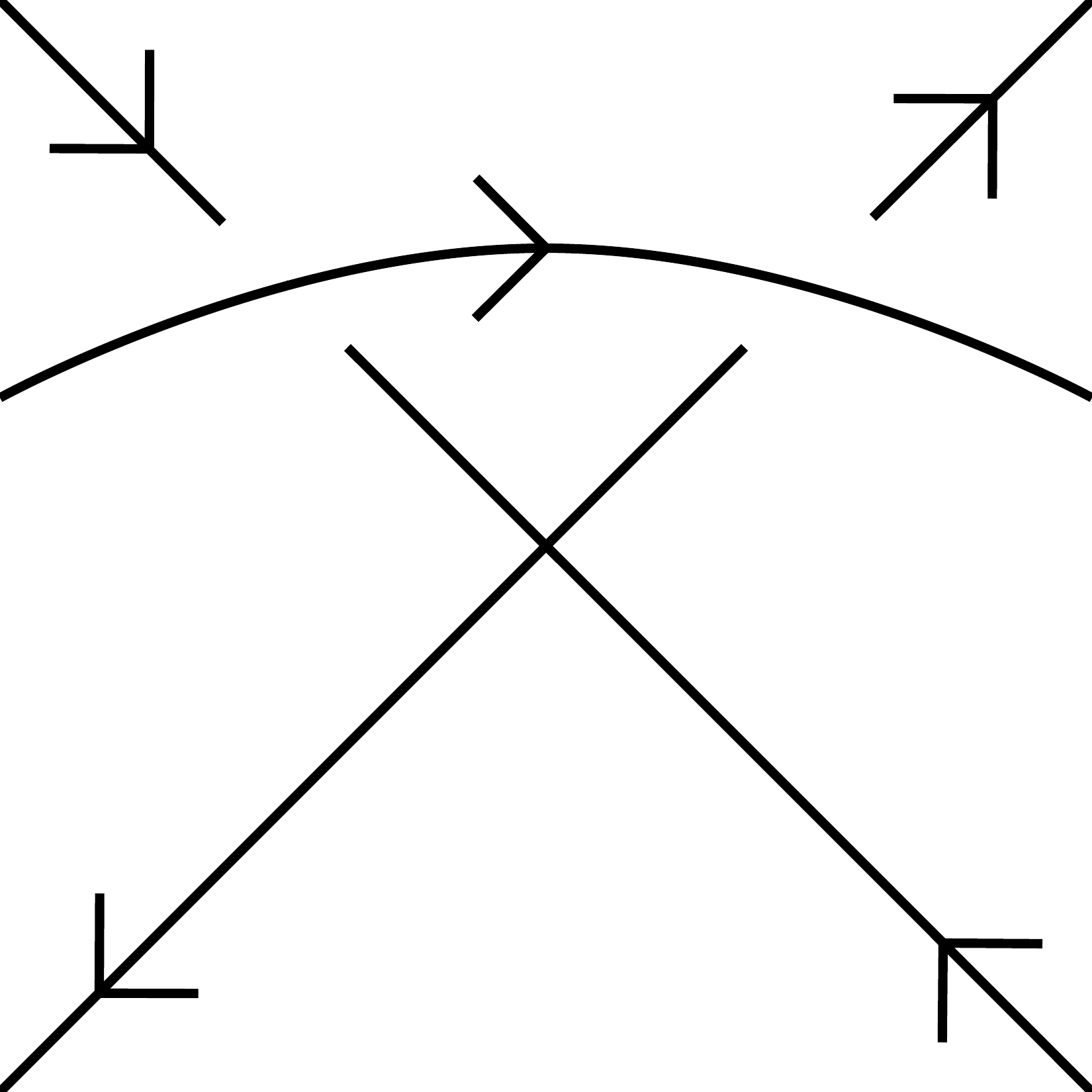}}\stackrel{\Omega 4l}{\longleftrightarrow}\raisebox{-13pt}{\includegraphics[height =0.45in]{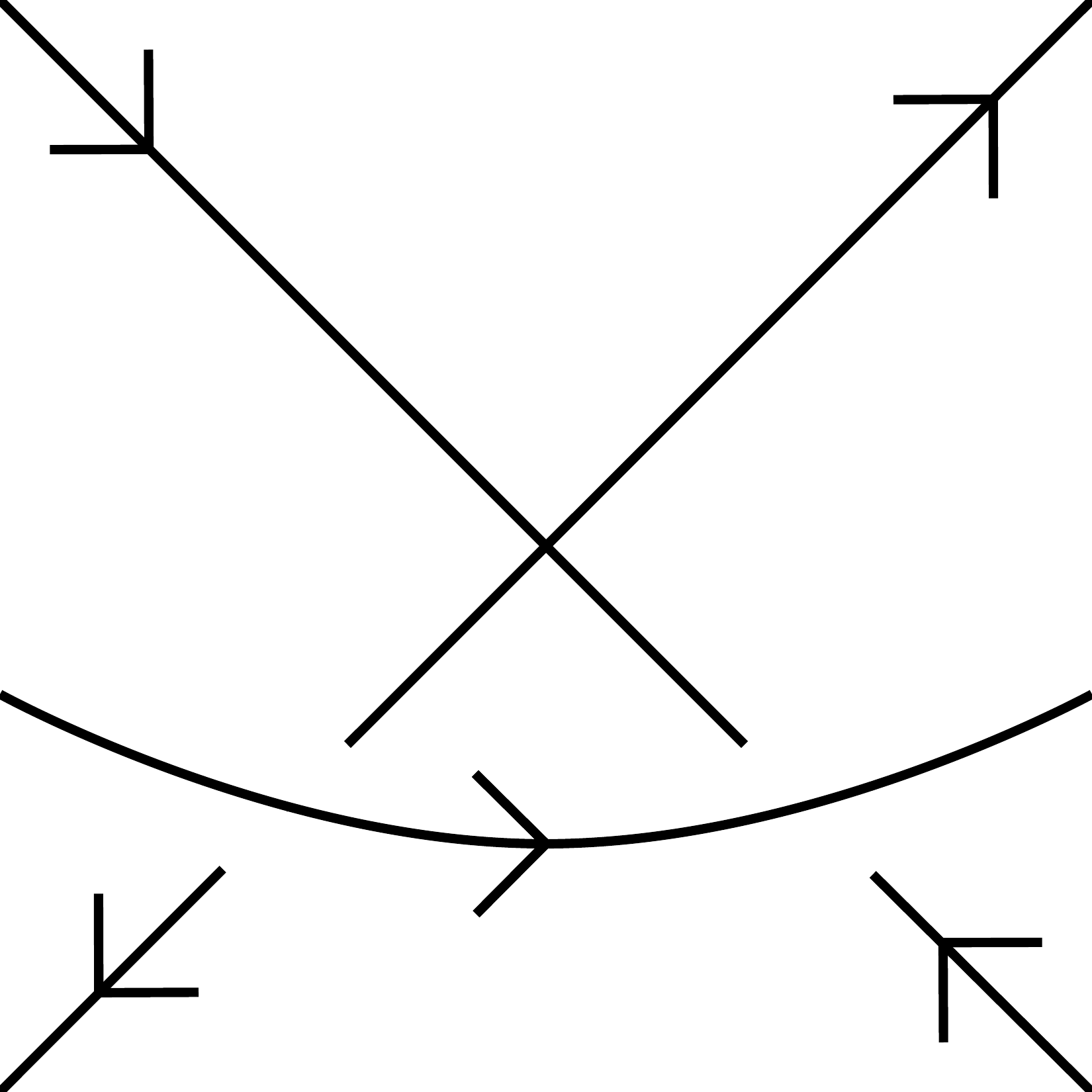}}\]
    \caption{$\Omega4$ moves with vertices of type In-Out-In-Out}
    \label{fig:IOIO Omega4 Moves}
\end{figure}

Figure~\ref{fig:IIOO Omega5 Moves} shows all of the oriented versions of the moves $R5$ with vertices of type In-In-Out-Out, and we denote these oriented versions of the move by $\Omega5$, to be again consistent with the conventions used in~\cite{Bataineh}. Finally, in Figure~\ref{fig:IOIO Omega5 Moves} we present all oriented versions of the moves $\Omega5$ with vertices of type In-Out-In-Out.

\begin{figure}[ht]
    \[\raisebox{-13pt}{\includegraphics[height=0.45in]{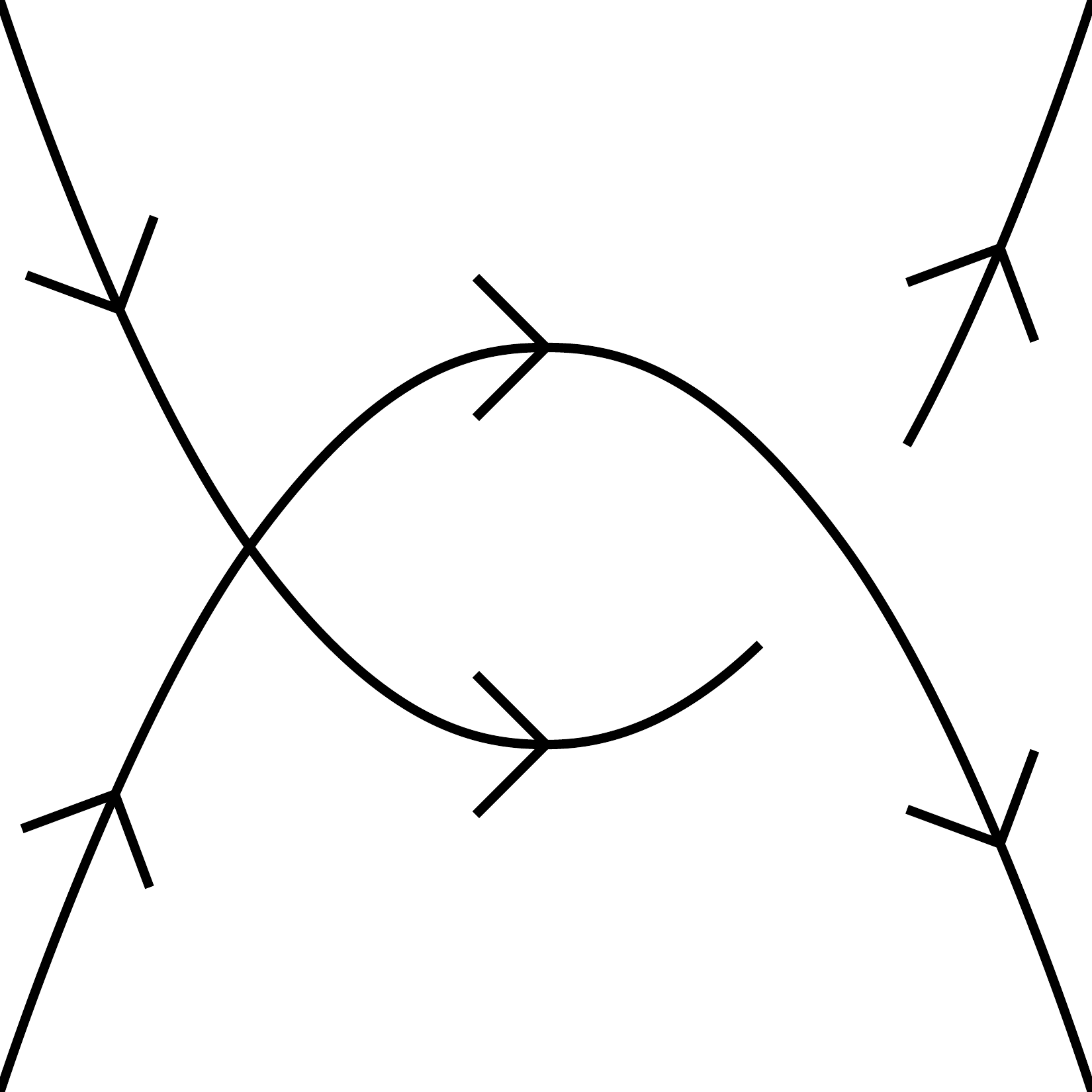}}\stackrel{\Omega 5a}{\longleftrightarrow}\raisebox{-13pt}{\includegraphics[height =0.45in]{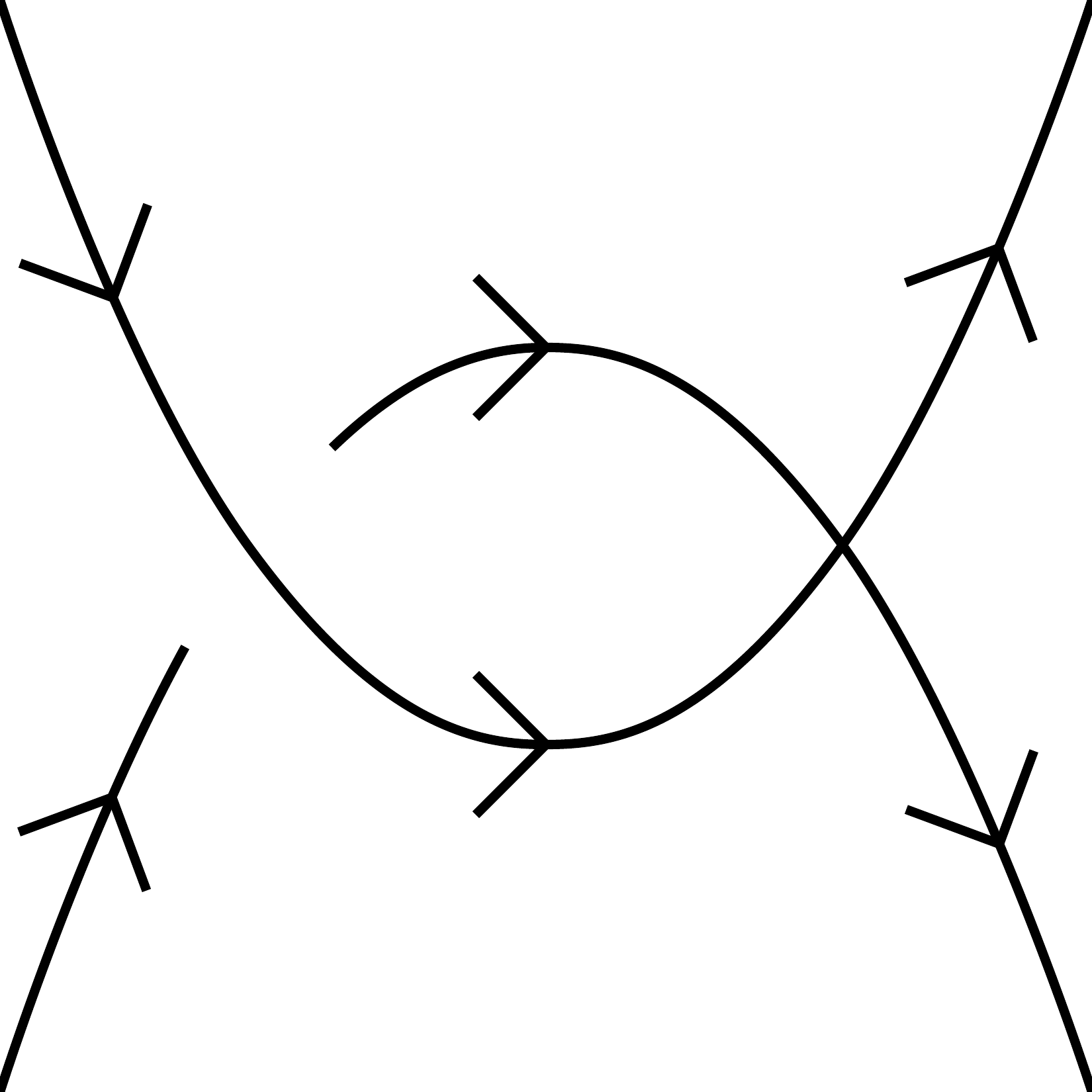}}\hspace{1.5cm}\raisebox{-13pt}{\includegraphics[height=0.45in]{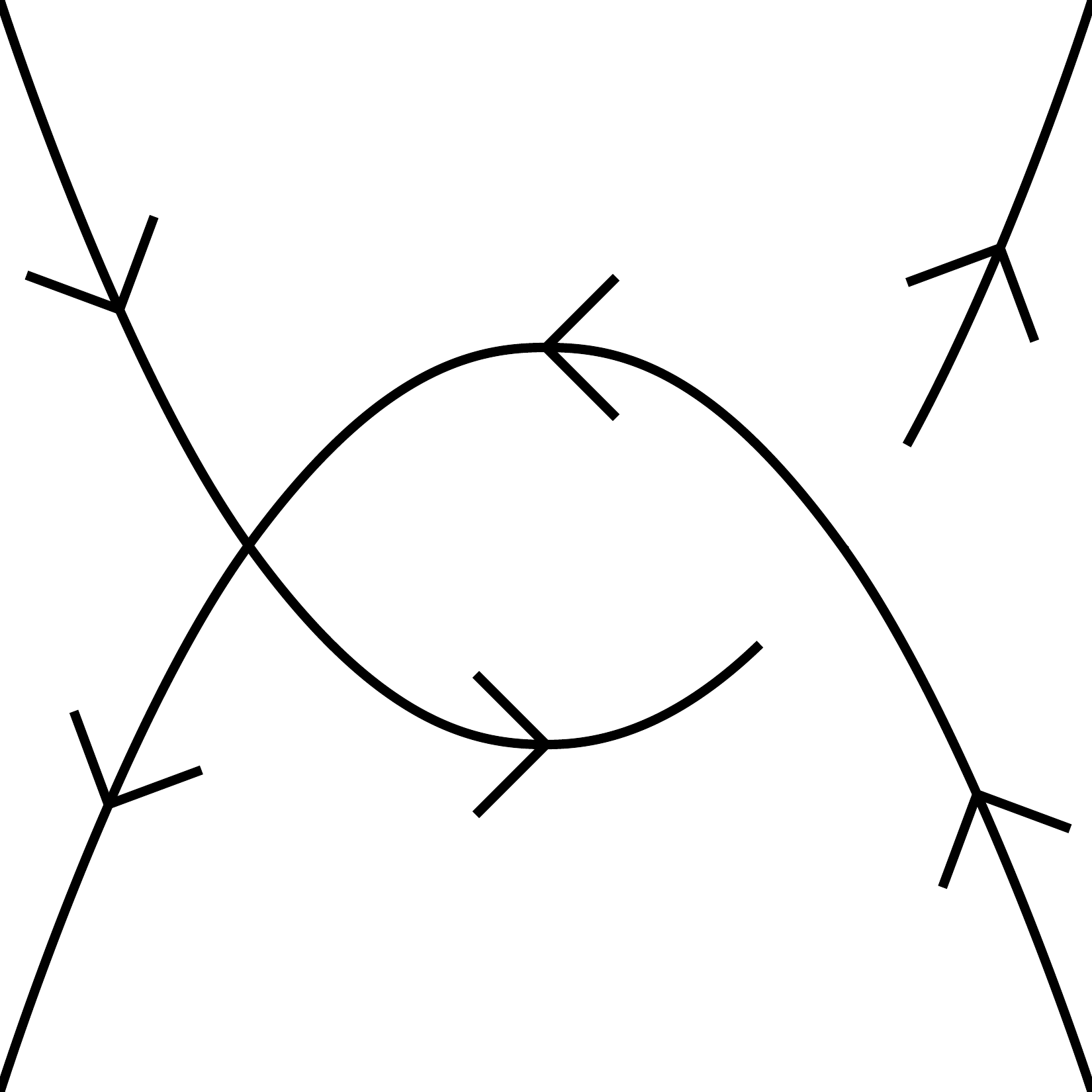}}\stackrel{\Omega 5b}{\longleftrightarrow}\raisebox{-13pt}{\includegraphics[height =0.45in]{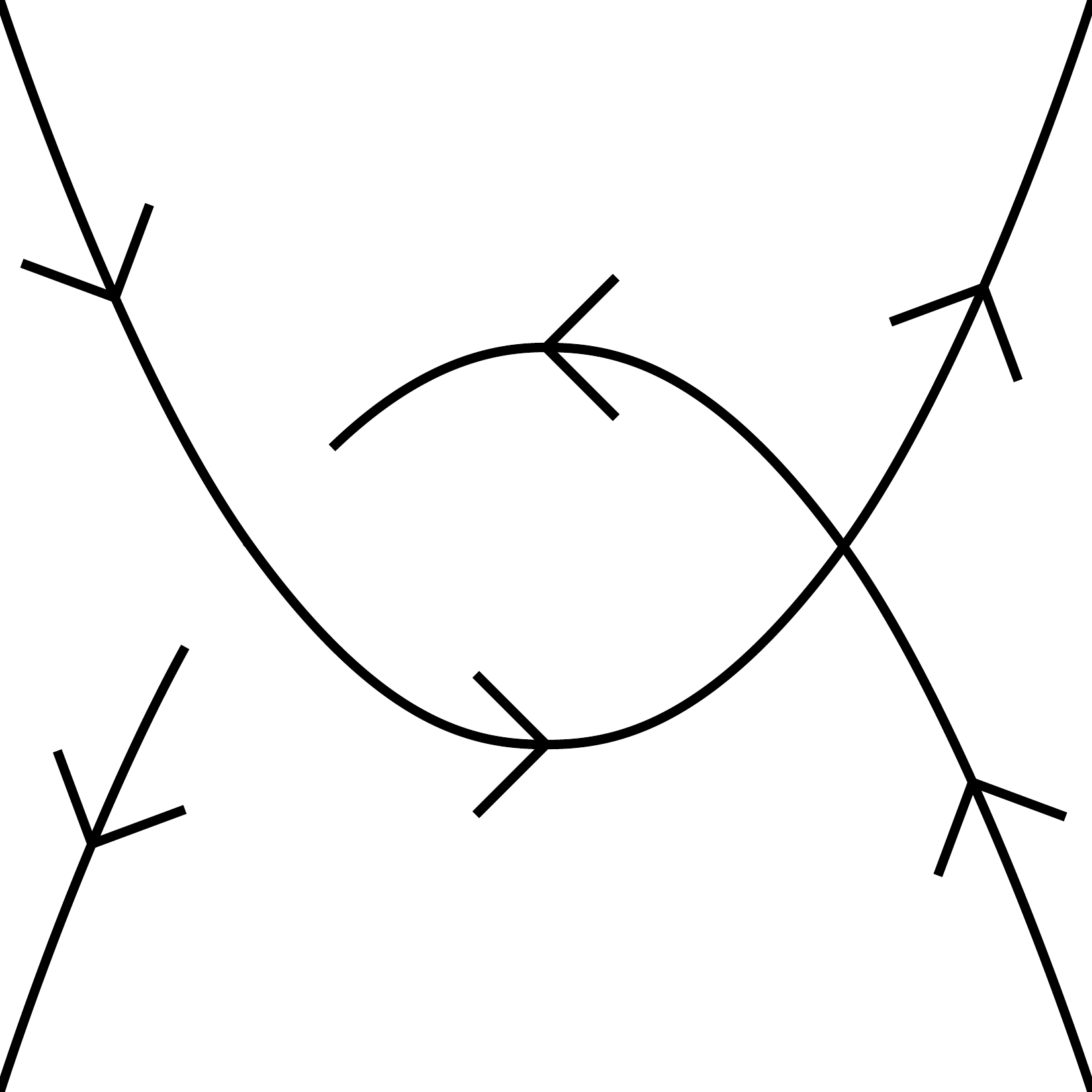}}\]
    \vspace{0.10cm}
    \[\raisebox{-13pt}{\includegraphics[height=0.45in]{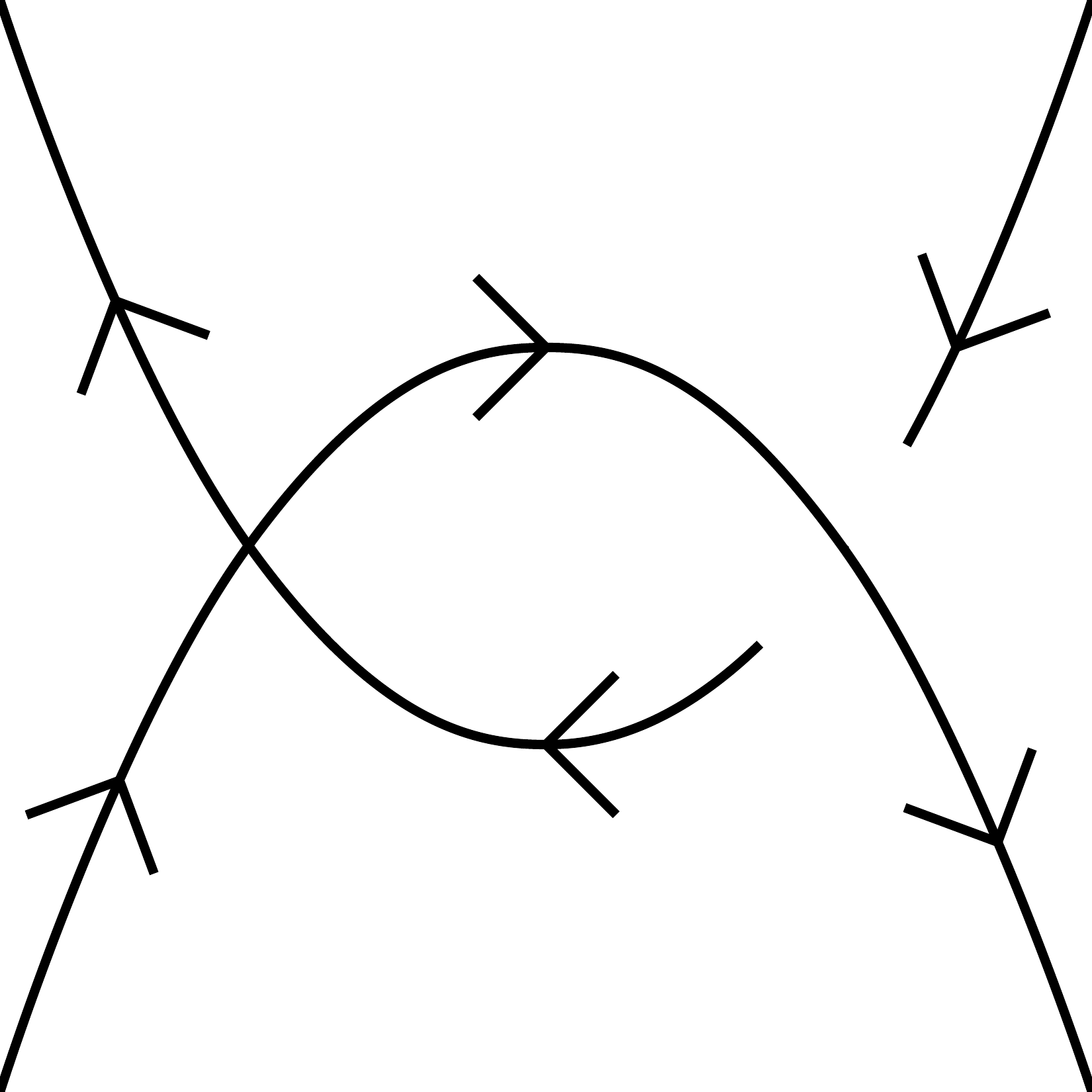}}\stackrel{\Omega 5c}{\longleftrightarrow}\raisebox{-13pt}{\includegraphics[height =0.45in]{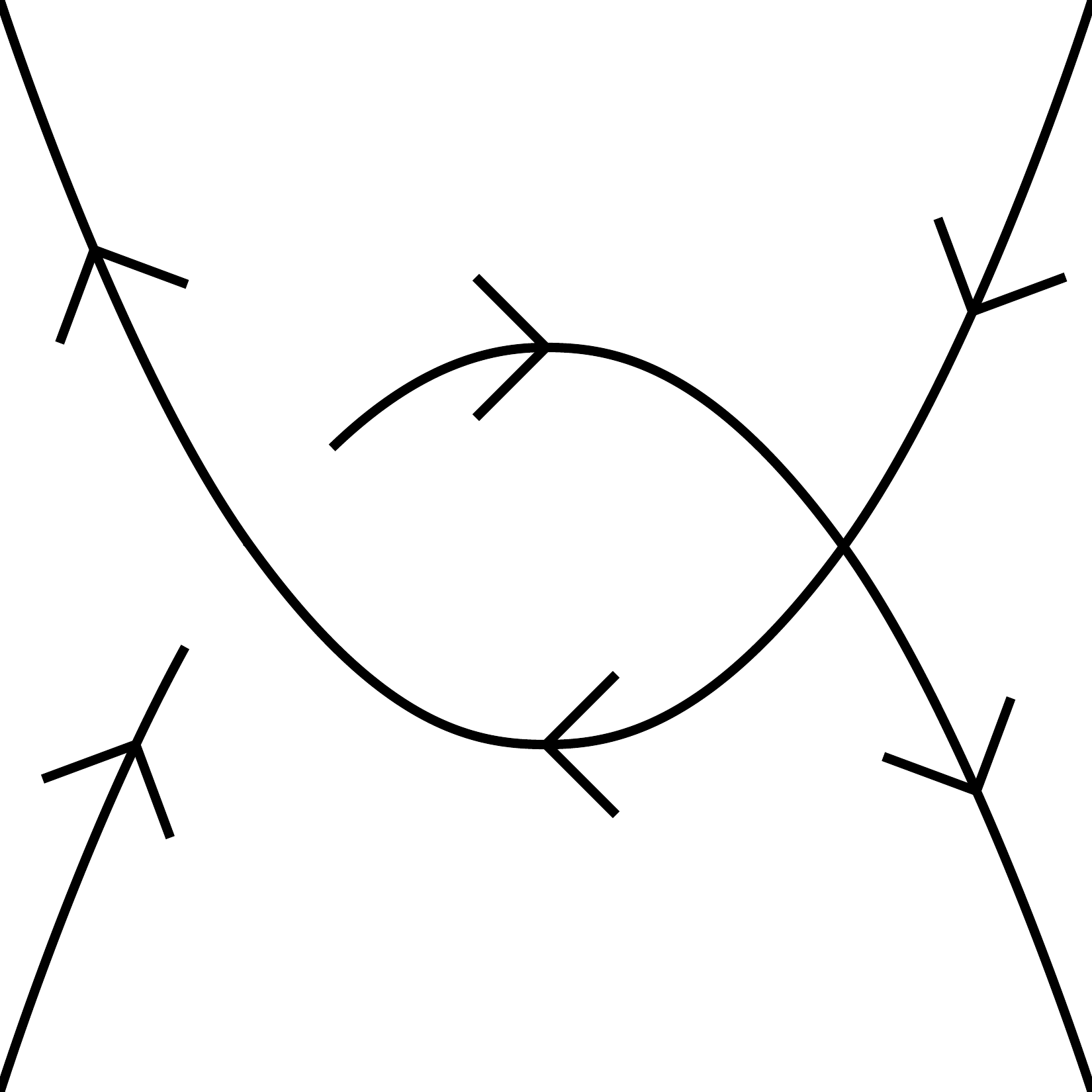}}\hspace{1.5cm}\raisebox{-13pt}{\includegraphics[height=0.45in]{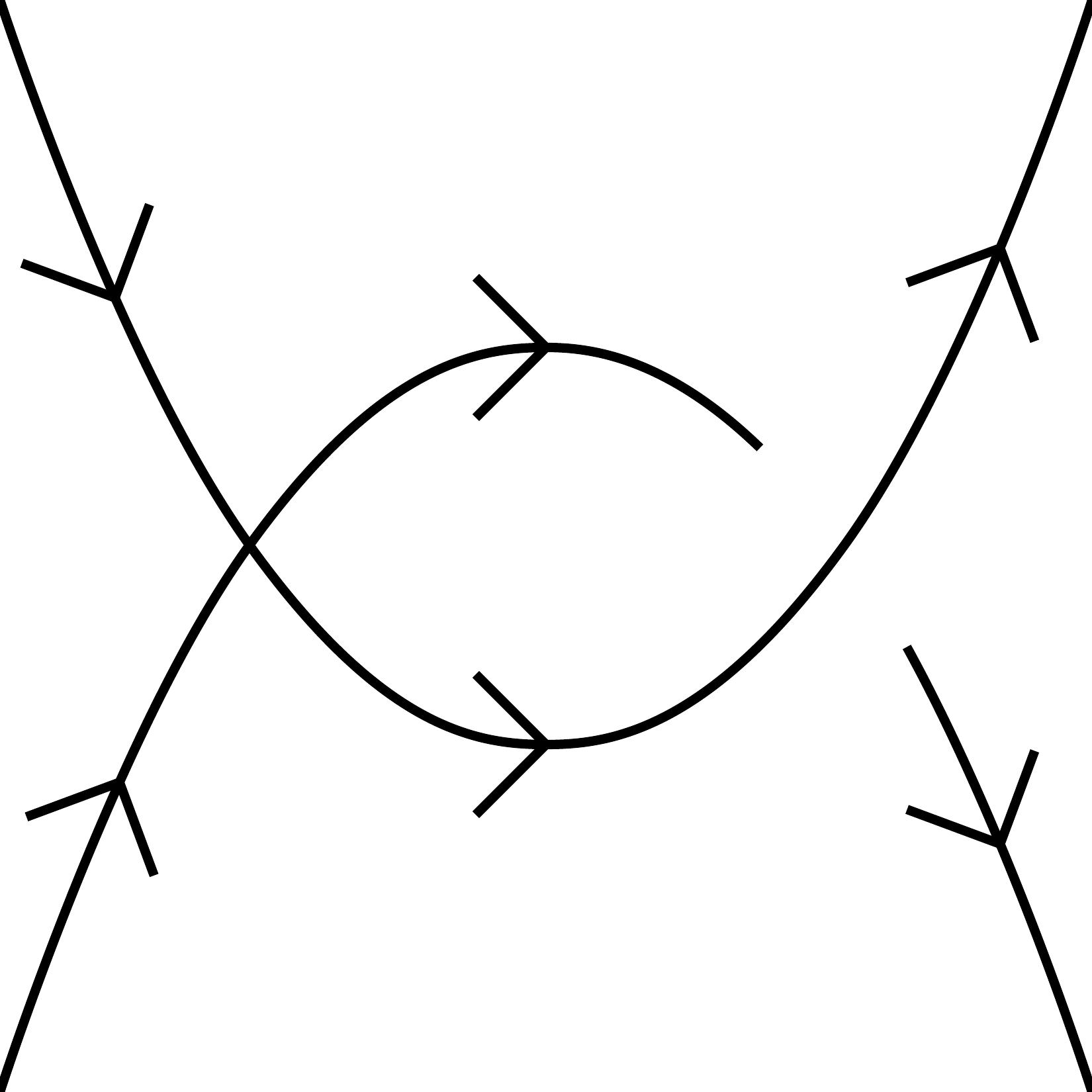}}\stackrel{\Omega 5d}{\longleftrightarrow}\raisebox{-13pt}{\includegraphics[height =0.45in]{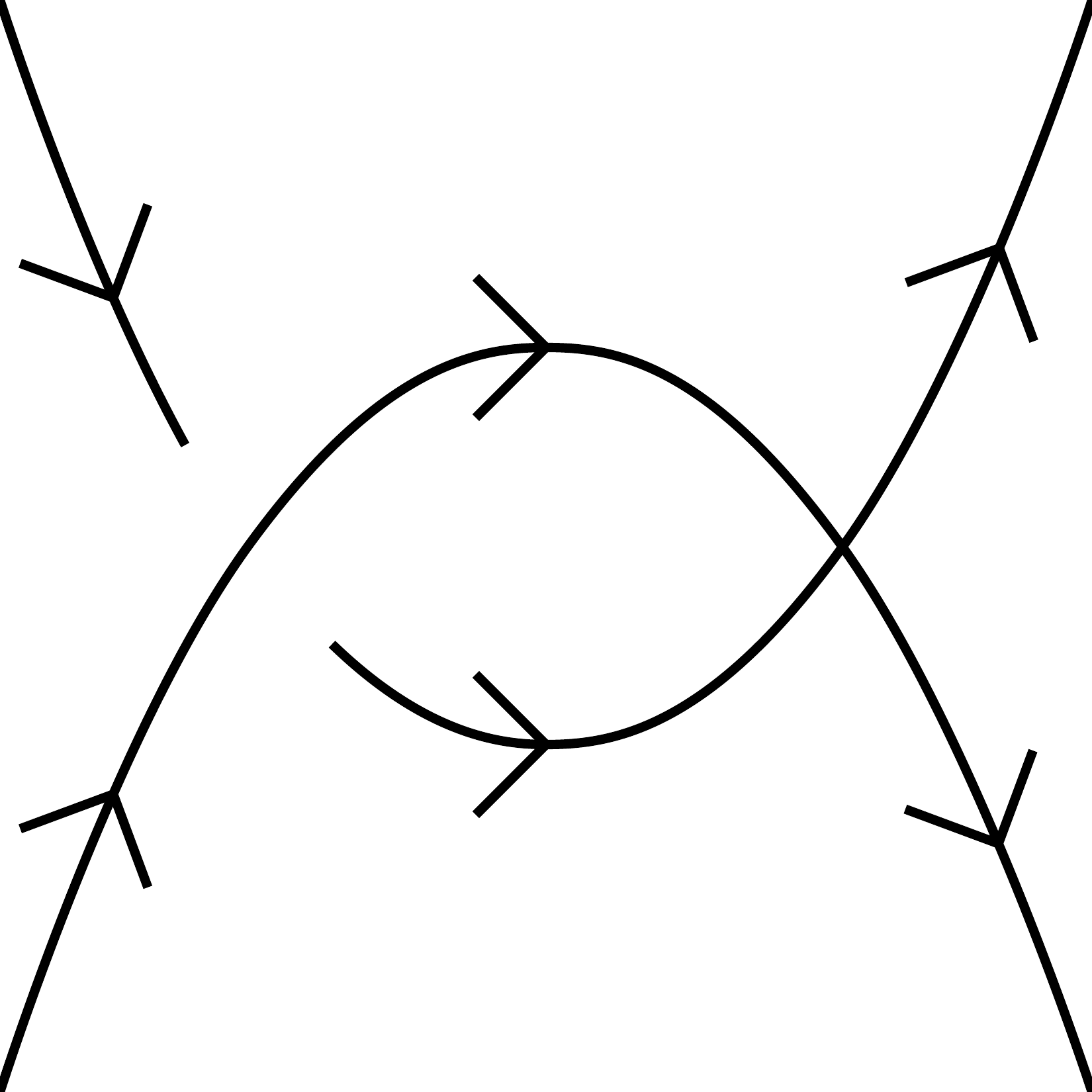}}\]
    \vspace{0.10cm}
    \[\raisebox{-13pt}{\includegraphics[height=0.45in]{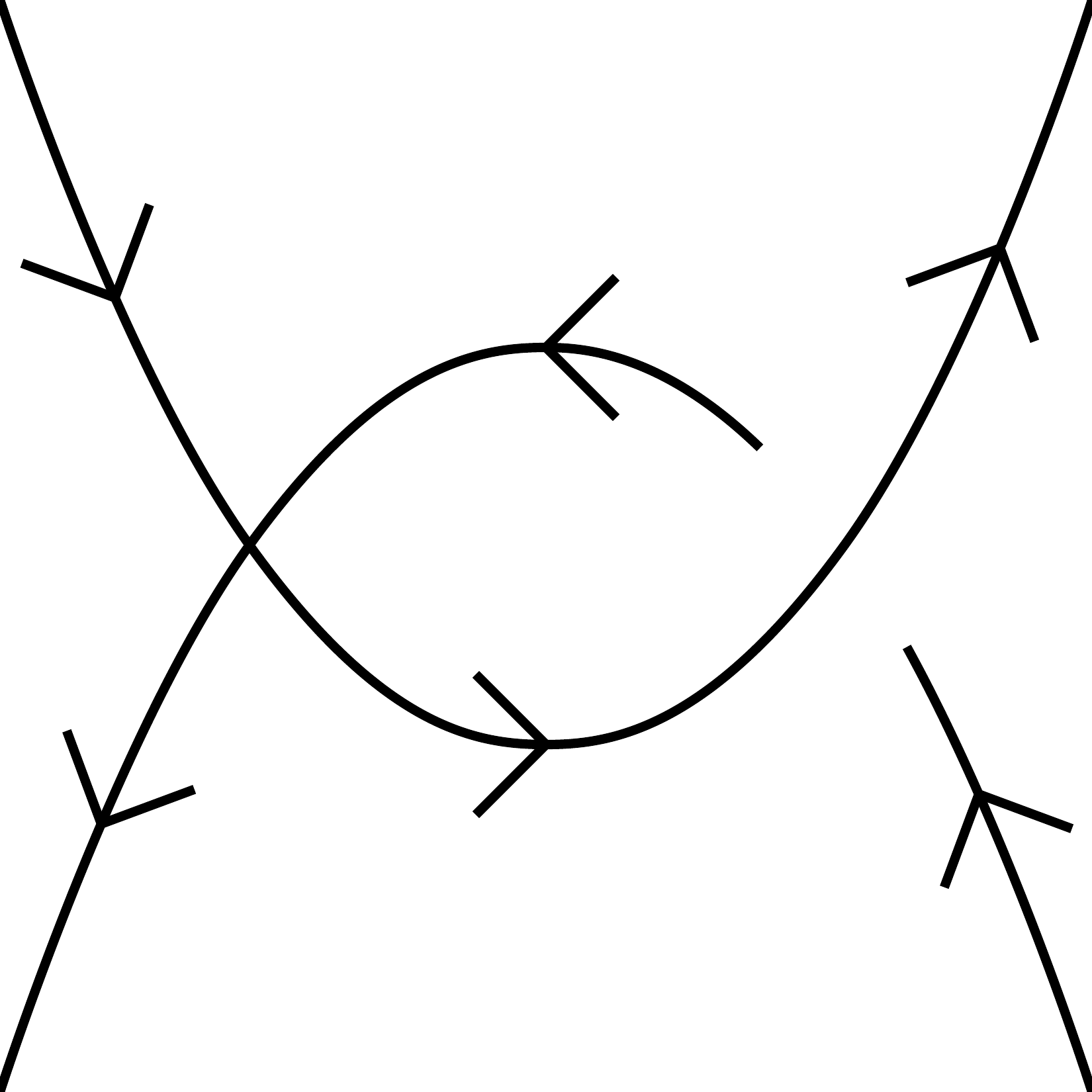}}\stackrel{\Omega 5e}{\longleftrightarrow}\raisebox{-13pt}{\includegraphics[height =0.45in]{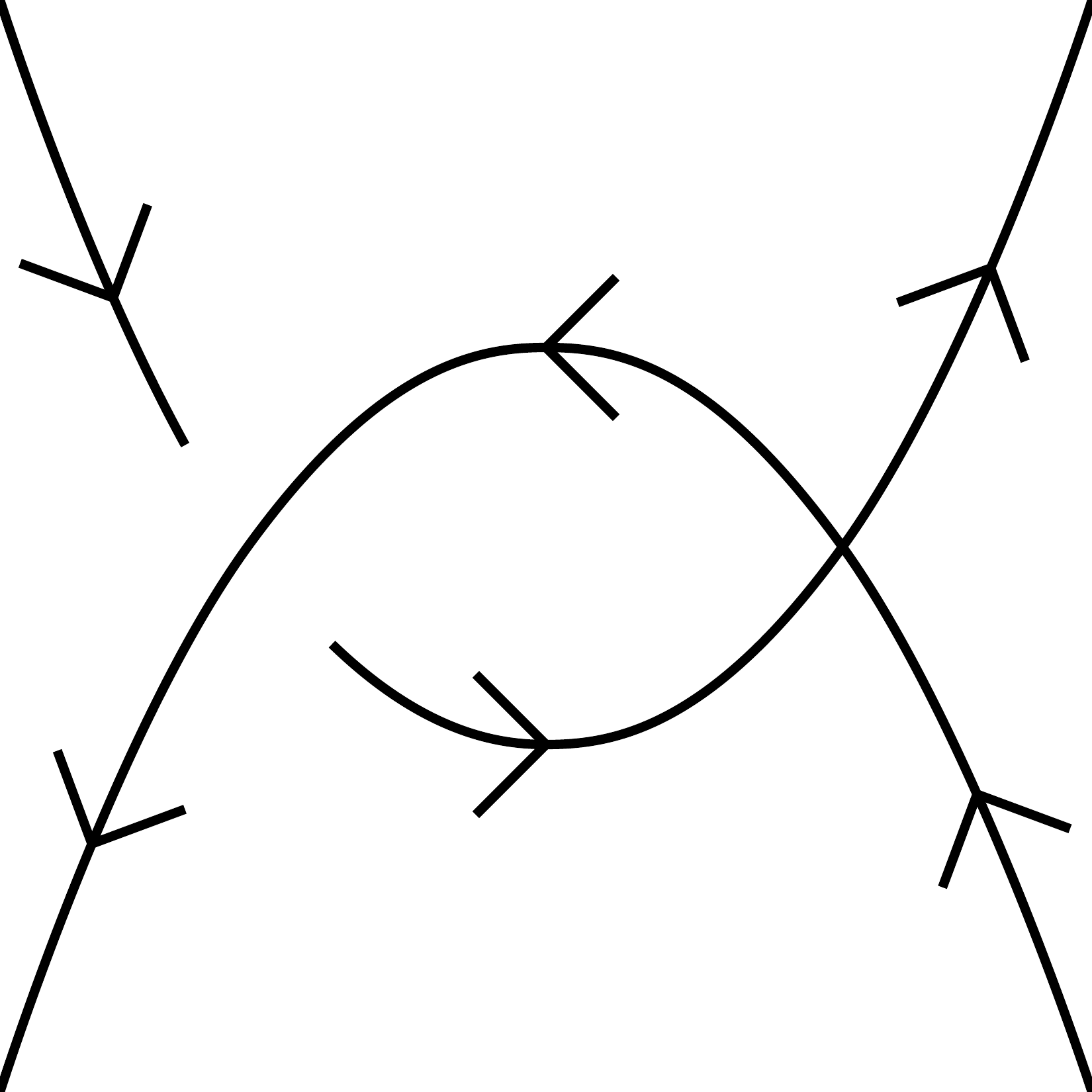}}\hspace{1.5cm}\raisebox{-13pt}{\includegraphics[height=0.45in]{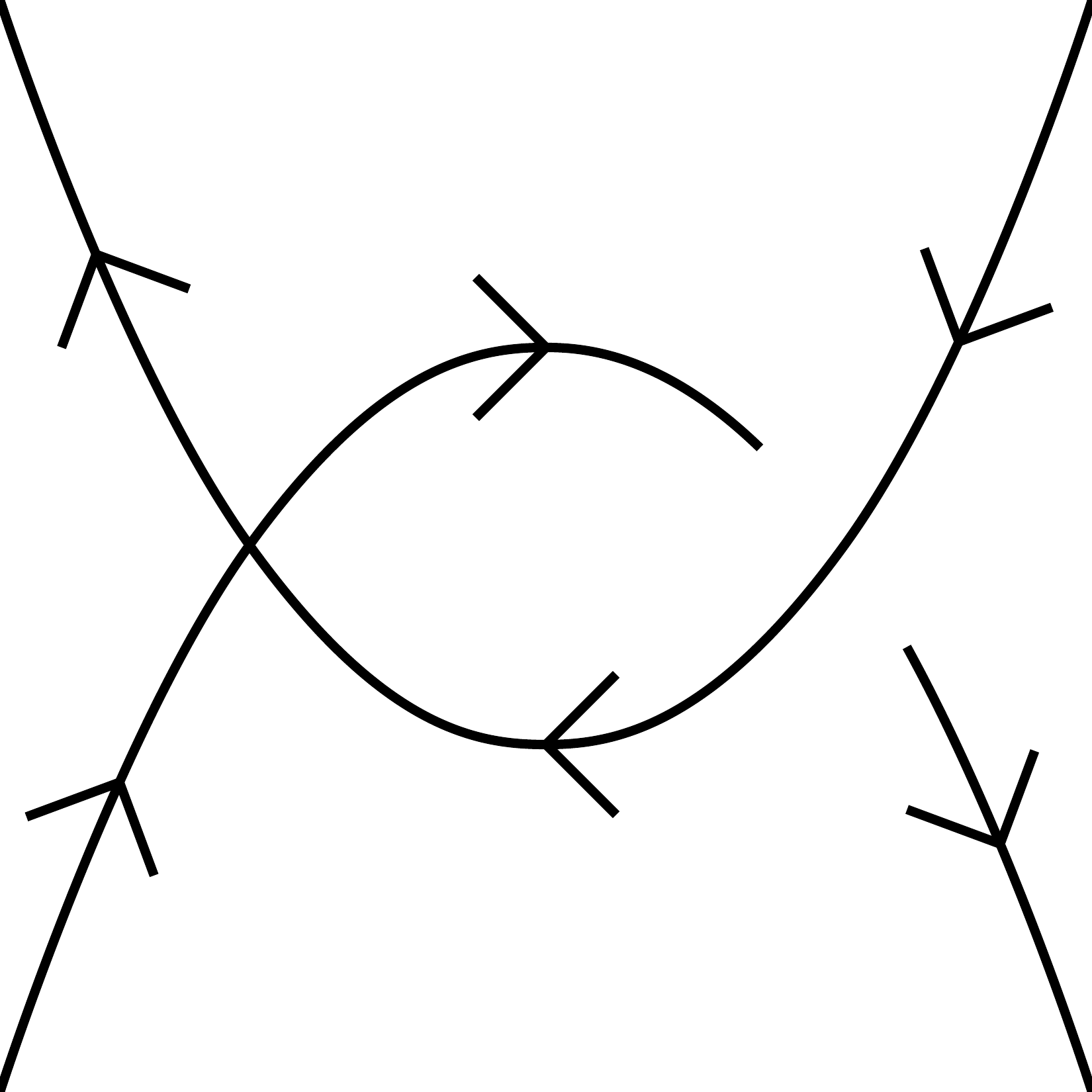}}\stackrel{\Omega 5f}{\longleftrightarrow}\raisebox{-13pt}{\includegraphics[height =0.45in]{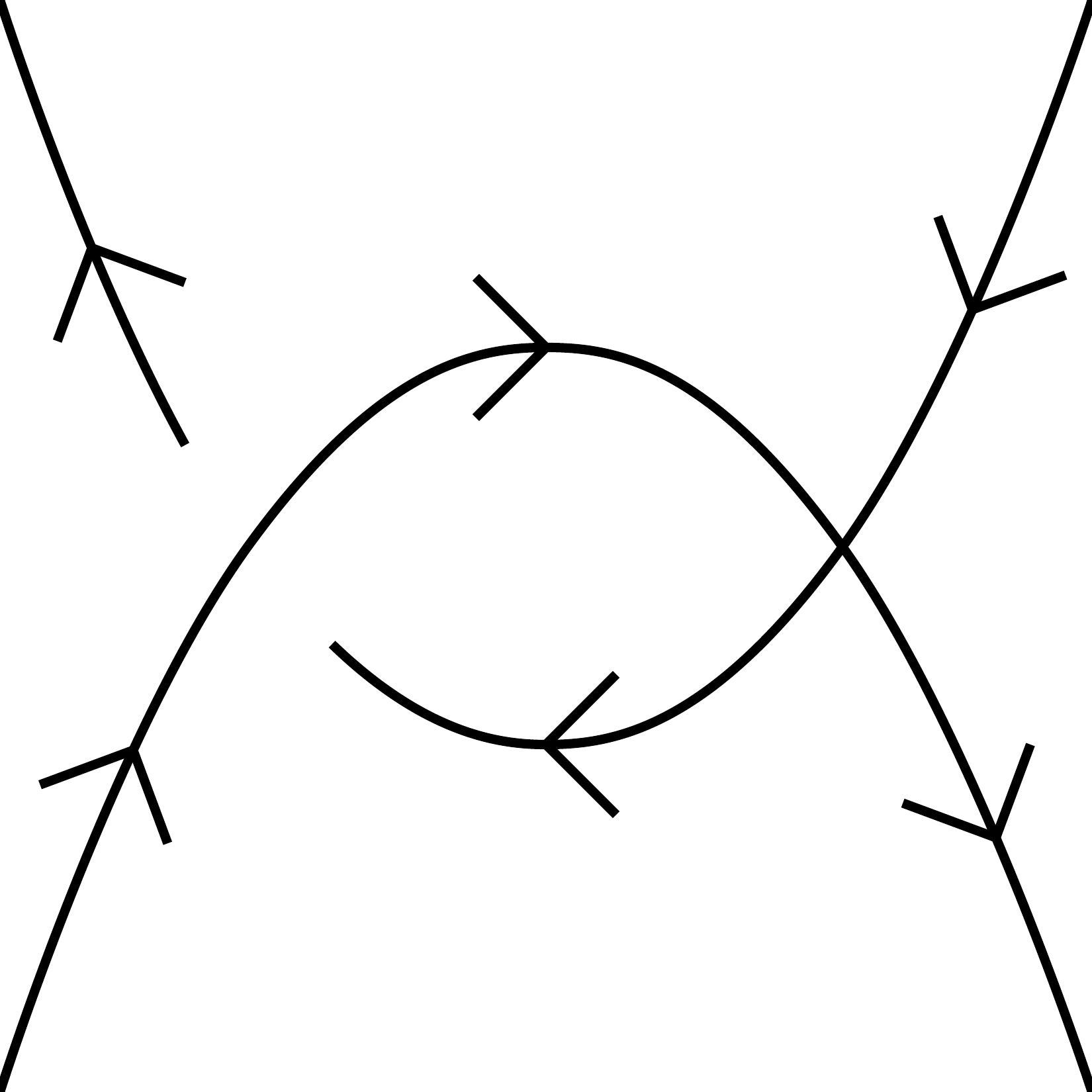}}\]
    \caption{$\Omega5$ Moves with vertices of type In-In-Out-Out}
    \label{fig:IIOO Omega5 Moves}
\end{figure}

\begin{figure}[ht]
    \[\raisebox{-13pt}{\includegraphics[height=0.45in]{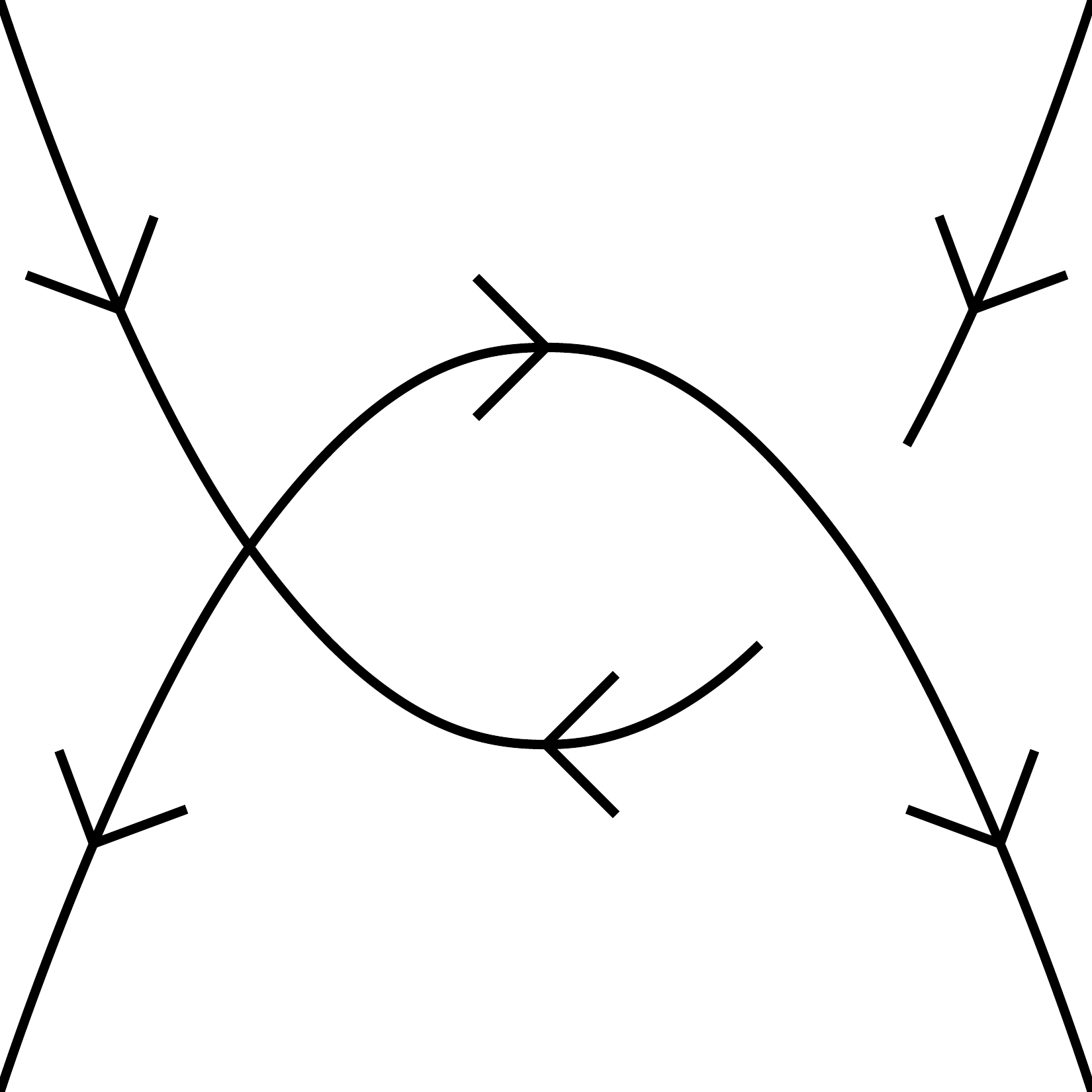}}\stackrel{\Omega 5g}{\longleftrightarrow}\raisebox{-13pt}{\includegraphics[height =0.45in]{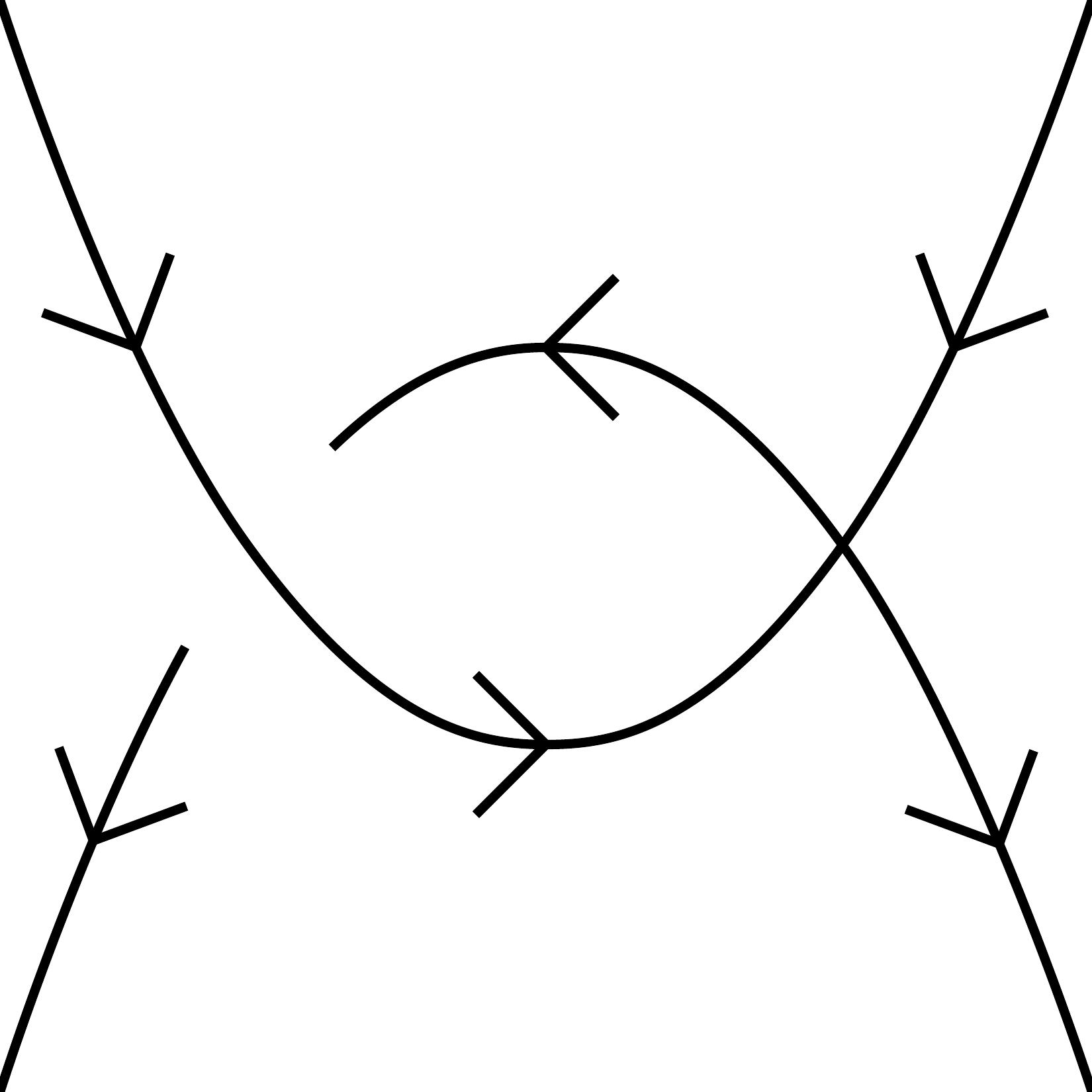}}\hspace{1.5cm}\raisebox{-13pt}{\includegraphics[height=0.45in]{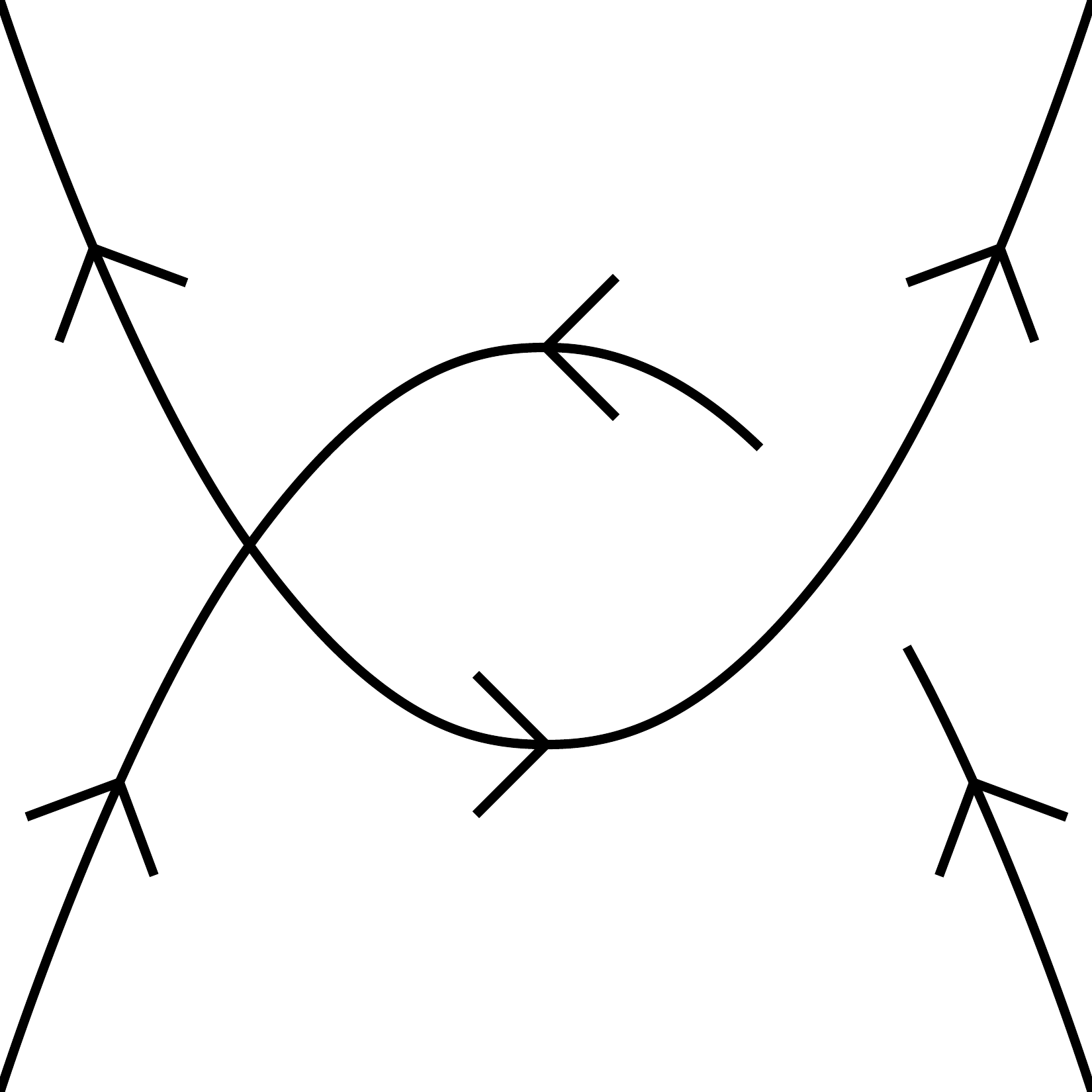}}\stackrel{\Omega 5h}{\longleftrightarrow}\raisebox{-13pt}{\includegraphics[height =0.45in]{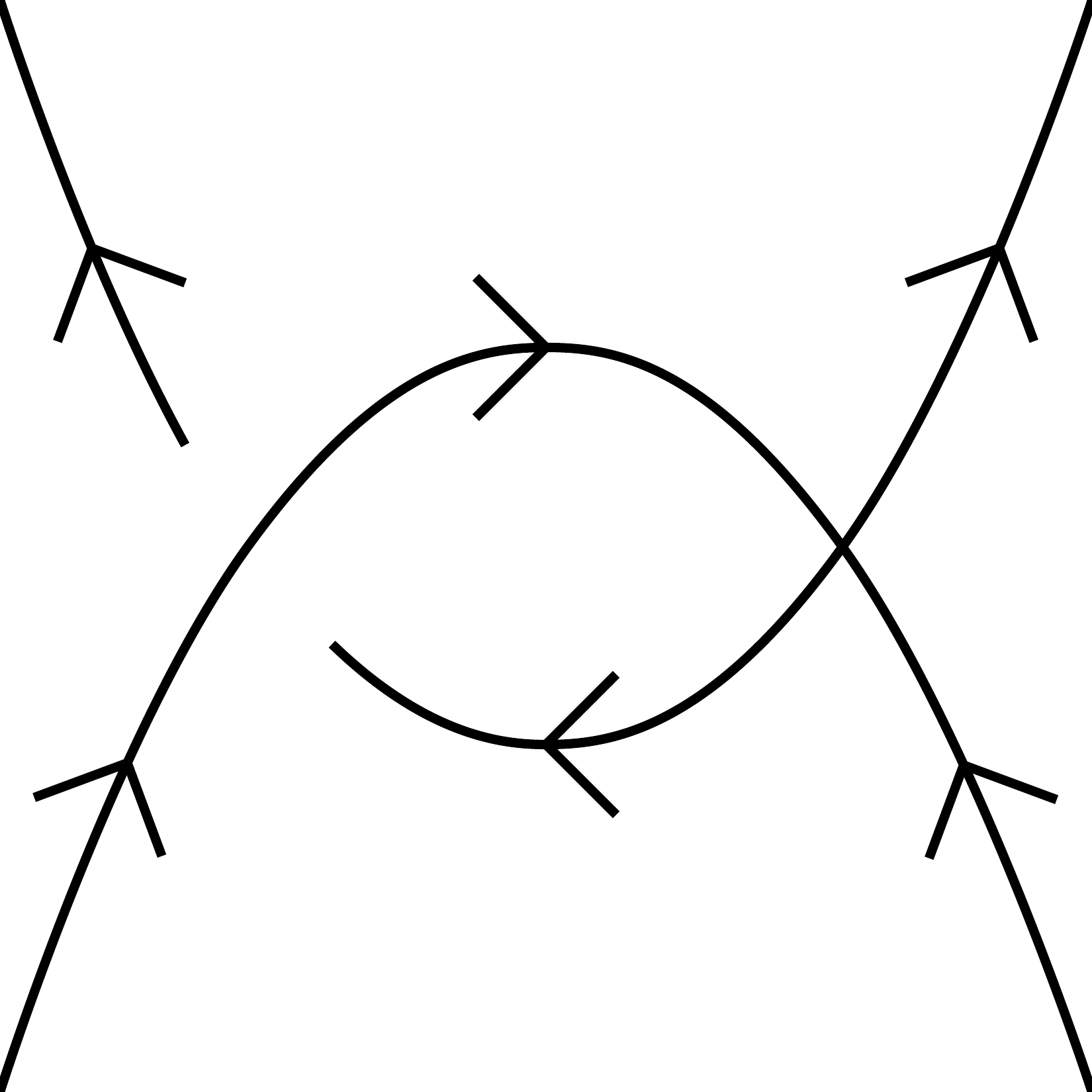}}\]
    \caption{$\Omega5$ Moves with vertices of type In-Out-In-Out}
    \label{fig:IOIO Omega5 Moves}
\end{figure}

\subsection{A Minimal Generating Set of Reidemeister-type Moves for Knotted 4-Valent Graphs}

In order to prove that $P$ is an invariant for balanced-oriented, knotted 4-valent graphs with rigid vertices, it is necessary to prove that $P$ is invariant under all oriented versions of every move for diagrams describing the rigid-vertex isotopy of balanced-oriented, knotted 4-valent graphs, which is computationally taxing. A collection of oriented Reidemeister-type moves is a generating set of all the oriented Reidemeister-type moves if every move can be obtained through a finite sequence of applications of moves in the generating set. By showing that $P$ is invariant under each move in the generating set, we know that $P$ is invariant under all of the oriented versions of the Reidemeister-type moves.

Polyak~\cite{Polyak_2010} showed that $\{\Omega1a,\Omega1b,\Omega2a,\Omega3a\}$ is a minimal generating set for all the oriented versions of the Reidemeister moves $\Omega1,\Omega2,$ and $\Omega3$ for oriented knot diagrams.
Building on Polyak's work, Caprau and Scott~\cite{CaprauScott} found all minimal generating sets for the moves $\Omega1,\Omega2,$ and $\Omega3$. For our purposes, we will use Polyak's minimal generating set of moves for oriented knot diagrams.

Furthermore, Bataineh et al.~\cite{Bataineh} showed that the moves $\Omega4a,\Omega4e$, and $\Omega5a$ generate all of the oriented $\Omega4$ and $\Omega5$ moves involving In-In-Out-Out vertex types listed above. 

Similar to Bataineh et al.~\cite{Bataineh}, we show that only two $\Omega4$ moves and one $\Omega5$ move, namely $\Omega4j$, $\Omega4l$, and $\Omega5g$, are needed to generate all oriented $\Omega4$ and $\Omega5$ moves involving vertices of type In-Out-In-Out. 

\begin{lemma}
    The move $\Omega4i$ can be realized by a sequence of the moves $\Omega2d,\Omega4j,$ and $\Omega2b$.      
\end{lemma}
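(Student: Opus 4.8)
The plan is to give a purely diagrammatic argument: starting from one side of the $\Omega4i$ move, I would produce the other side through the prescribed three-move sequence, working entirely within a small disk containing the relevant strand and the vertex of type In-Out-In-Out. The overall strategy mirrors the arguments of Bataineh et al.~\cite{Bataineh} for the In-In-Out-Out case: use a Reidemeister-II-type move to create a canceling pair of crossings that sets up the local configuration required by the generating $\Omega4$ move, apply that move to pass the strand across the vertex, and then use a second Reidemeister-II-type move to delete the leftover crossing pair.

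Concretely, I would first apply $\Omega2d$ to the initial diagram of $\Omega4i$ to introduce two canceling crossings alongside the strand that passes the vertex, thereby creating the local picture in which an $\Omega4j$ move is applicable. Second, I would apply $\Omega4j$ to slide the strand across the vertex; since $\Omega4i$ and $\Omega4j$ differ only in the over/under and orientation data at the participating crossings, this step produces a diagram that, up to a canceling pair of crossings, agrees with the target side of $\Omega4i$. Third, I would apply $\Omega2b$ to eliminate the remaining canceling pair, yielding precisely the other side of the $\Omega4i$ move. The clean way to record this is a chain of labeled diagrams
\[ D_0 \xrightarrow{\ \Omega2d\ } D_1 \xrightarrow{\ \Omega4j\ } D_2 \xrightarrow{\ \Omega2b\ } D_3, \]
where $D_0$ and $D_3$ are the two sides of $\Omega4i$, and verifying each arrow is a direct check against the move definitions in Figures~\ref{fig:Omega2 Moves} and~\ref{fig:IOIO Omega4 Moves}.

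The bulk of the work, and the main obstacle, is the orientation and crossing bookkeeping: one must verify at each stage that the orientations of all strands, the over/under assignments of all crossings, and the In-Out-In-Out type of the vertex match exactly the local models defining $\Omega2d$, $\Omega4j$, and $\Omega2b$. In particular, the two $\Omega2$ moves must be chosen with the correct orientation pattern so that the crossings created in the first step are exactly the ones removed in the last step, and the side on which the strand is pushed in the $\Omega2d$ step must be compatible with the direction in which $\Omega4j$ transports it across the vertex. I expect this verification to be routine but delicate, and best presented as the explicit sequence of labeled diagrams above rather than in prose.
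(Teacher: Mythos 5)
Your proposal takes exactly the paper's approach: the paper's proof is precisely the chain $D_0 \xrightarrow{\ \Omega2d\ } D_1 \xrightarrow{\ \Omega4j\ } D_2 \xrightarrow{\ \Omega2b\ } D_3$ you describe, presented as a sequence of explicit labeled diagrams in which $\Omega2d$ creates the canceling crossing pair, $\Omega4j$ slides the strand under the vertex, and $\Omega2b$ removes the leftover pair. The only difference is that the paper exhibits the intermediate diagrams pictorially, which is the bookkeeping verification you correctly identify as the remaining work.
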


\begin{proof}
    \[\adjustbox{scale=0.95}{\begin{tikzcd}
    	\raisebox{-17pt}{\rotatebox{90}{\includegraphics[width=0.5in]{Generating-Sets/O4i-1.pdf}}} & \raisebox{-17pt}{\includegraphics[width=0.5in]{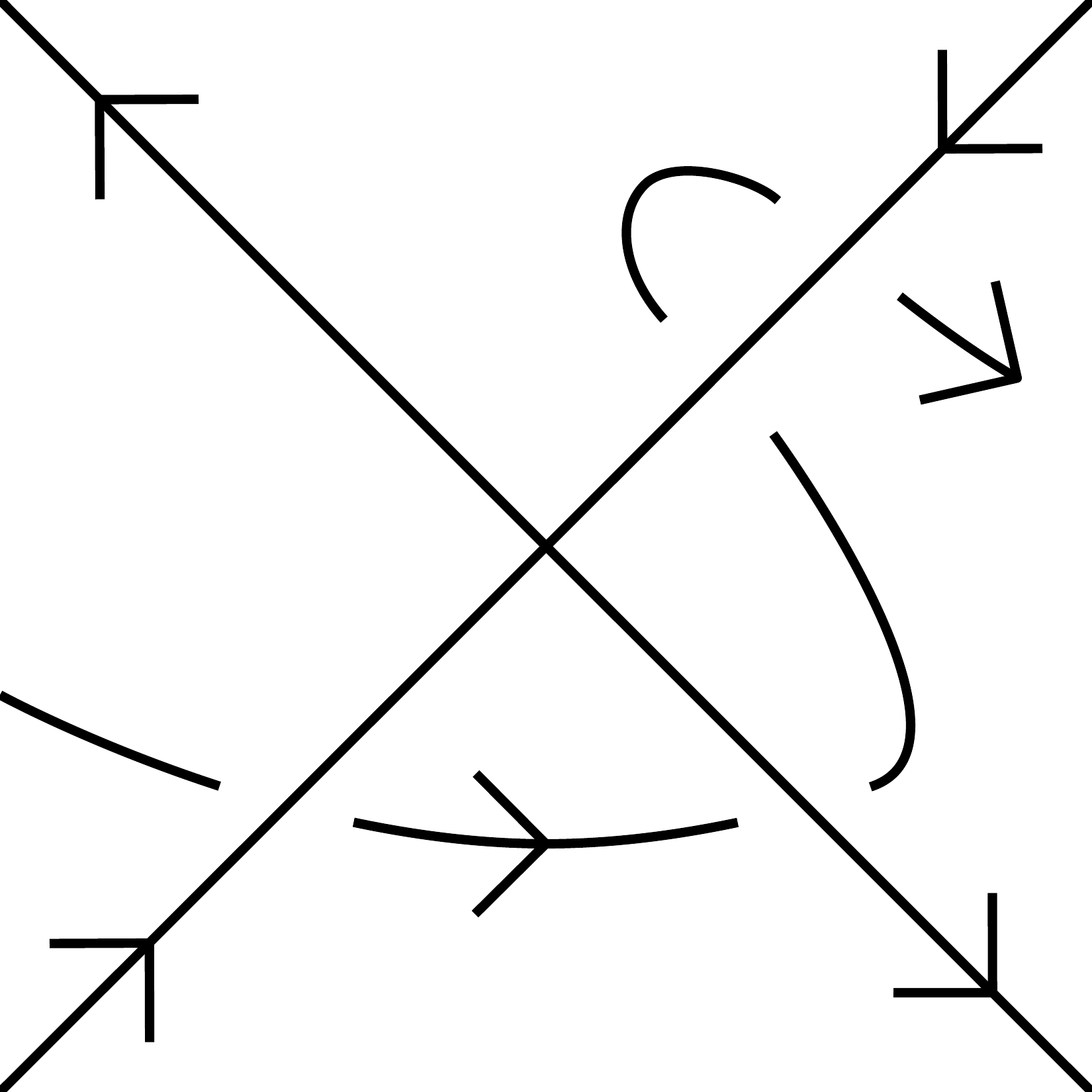}} & \raisebox{-17pt}{\includegraphics[width=0.5in]{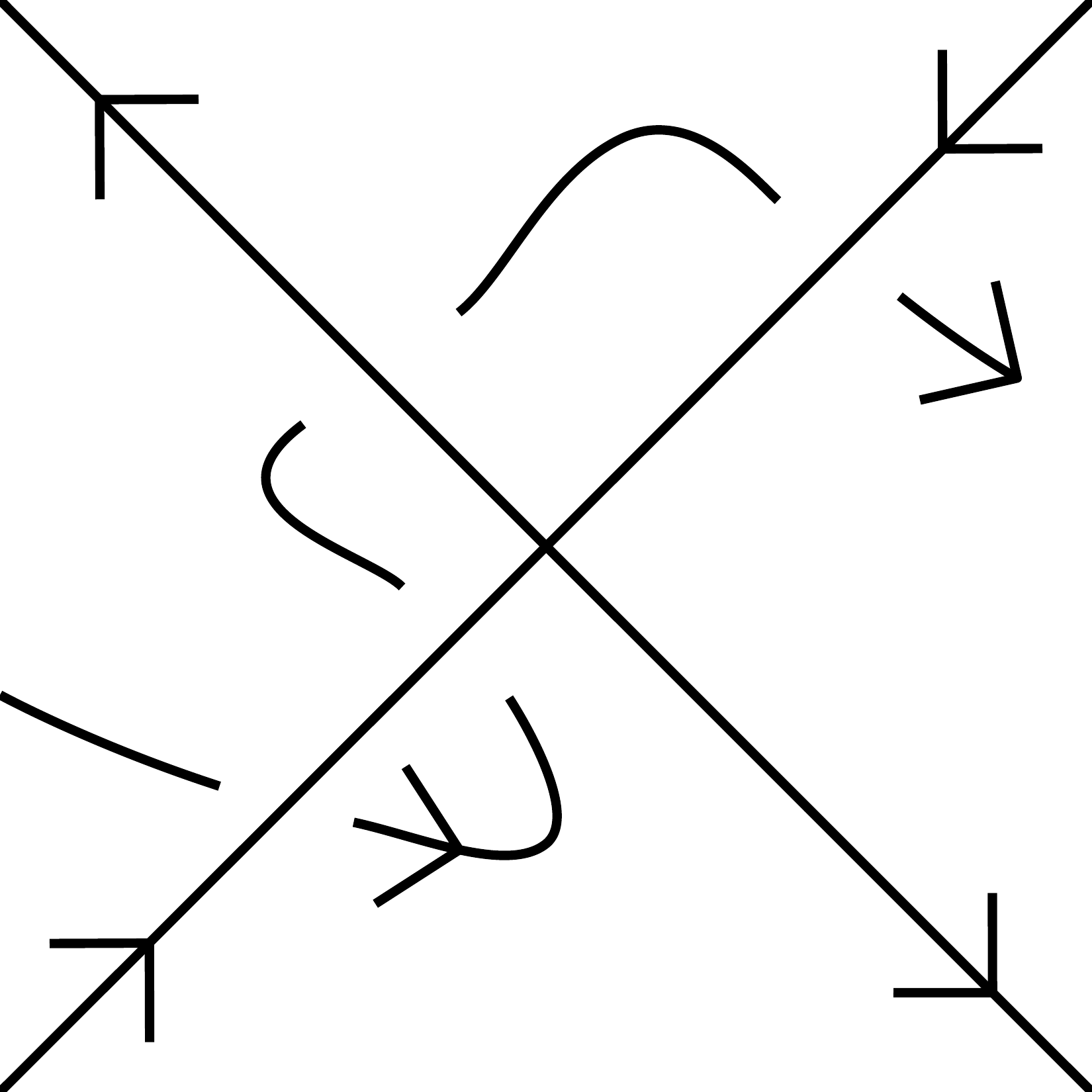}} & \raisebox{-17pt}{\rotatebox{90}{\includegraphics[width=0.5in]{Generating-Sets/O4i-2.pdf}}}
    	\arrow["\Omega2d", from=1-1, to=1-2]
    	\arrow["\Omega4j", from=1-2, to=1-3]
    	\arrow["\Omega2b", from=1-3, to=1-4]
    \end{tikzcd}}\]
\end{proof}

\begin{lemma}
    The move $\Omega4k$ can be realized by a sequence of the moves $\Omega2d,\Omega4l,$ and $\Omega2a$.      
\end{lemma}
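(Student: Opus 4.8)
The plan is to mimic, almost verbatim, the proof just given for $\Omega4i$. The governing idea there is that an $\Omega4$ move involving an In-Out-In-Out vertex can be realized by sandwiching one of the proposed generating $\Omega4$ moves between two oriented $\Omega2$ moves: the first $\Omega2$ maneuver repositions the passing strand so that the local picture near the vertex coincides with the left-hand side of a generator, the generator is then applied to slide the strand across the vertex, and a final $\Omega2$ move cancels the auxiliary crossings. Since $\Omega4k$ differs from $\Omega4i$ only in the over/under (equivalently, sign) data of the strand passing by the vertex, I expect $\Omega4l$ to play exactly the role that $\Omega4j$ did in the previous lemma, with the opening move still $\Omega2d$ but the closing move switching from $\Omega2b$ to $\Omega2a$ to accommodate the different crossing signs.

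Concretely, I would begin with the left-hand diagram of $\Omega4k$, rotated to match the conventions of Figure~\ref{fig:IOIO Omega4 Moves}. First I would apply $\Omega2d$ to slide an auxiliary pair of crossings into place so that a neighborhood of the In-Out-In-Out vertex becomes precisely the source diagram of $\Omega4l$. Next I would apply the generator $\Omega4l$ to carry the strand past the vertex. Finally I would apply $\Omega2a$ to undo the auxiliary crossings and arrive at the right-hand diagram of $\Omega4k$. This yields the chain
\[
D_0 \xrightarrow{\ \Omega2d\ } D_1 \xrightarrow{\ \Omega4l\ } D_2 \xrightarrow{\ \Omega2a\ } D_3 ,
\]
which I would display as a commutative diagram of four tangle pictures joined by three labeled arrows, entirely parallel to the diagram exhibited for $\Omega4i$.

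The substantive content, and the only place where an error could realistically arise, is the bookkeeping of orientations and over/under information at each intermediate stage. One must verify that $\Omega2d$ is genuinely the oriented variant that produces the correct source for $\Omega4l$, and in particular that the terminal cancellation is an $\Omega2a$ move rather than an $\Omega2b$ move (as was needed for $\Omega4i$), the distinction being forced by the crossing signs prescribed by $\Omega4k$. Because $\Omega2a$, $\Omega2d$, and $\Omega4l$ are all \emph{local} moves, this verification reduces to checking that the three small tangles agree on their boundaries along the chain; there is no global or inductive ingredient, so the argument is complete once the four intermediate diagrams are drawn and matched.
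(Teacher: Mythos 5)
Your proposal matches the paper's proof exactly: the paper realizes $\Omega4k$ by the same three-move chain $\Omega2d$, then $\Omega4l$, then $\Omega2a$, displayed as a sequence of four local diagrams just as you describe. You correctly identified both the role of $\Omega4l$ as the generator and the switch of the closing move from $\Omega2b$ to $\Omega2a$, so all that remains is to draw the four intermediate pictures, which is precisely what the paper's proof consists of.
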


\begin{proof}
    \[\adjustbox{scale=0.95}{\begin{tikzcd}
    	\raisebox{-17pt}{\rotatebox{90}{\includegraphics[width=0.5in]{Generating-Sets/O4k-1.pdf}}} & \raisebox{-17pt}{\includegraphics[width=0.5in]{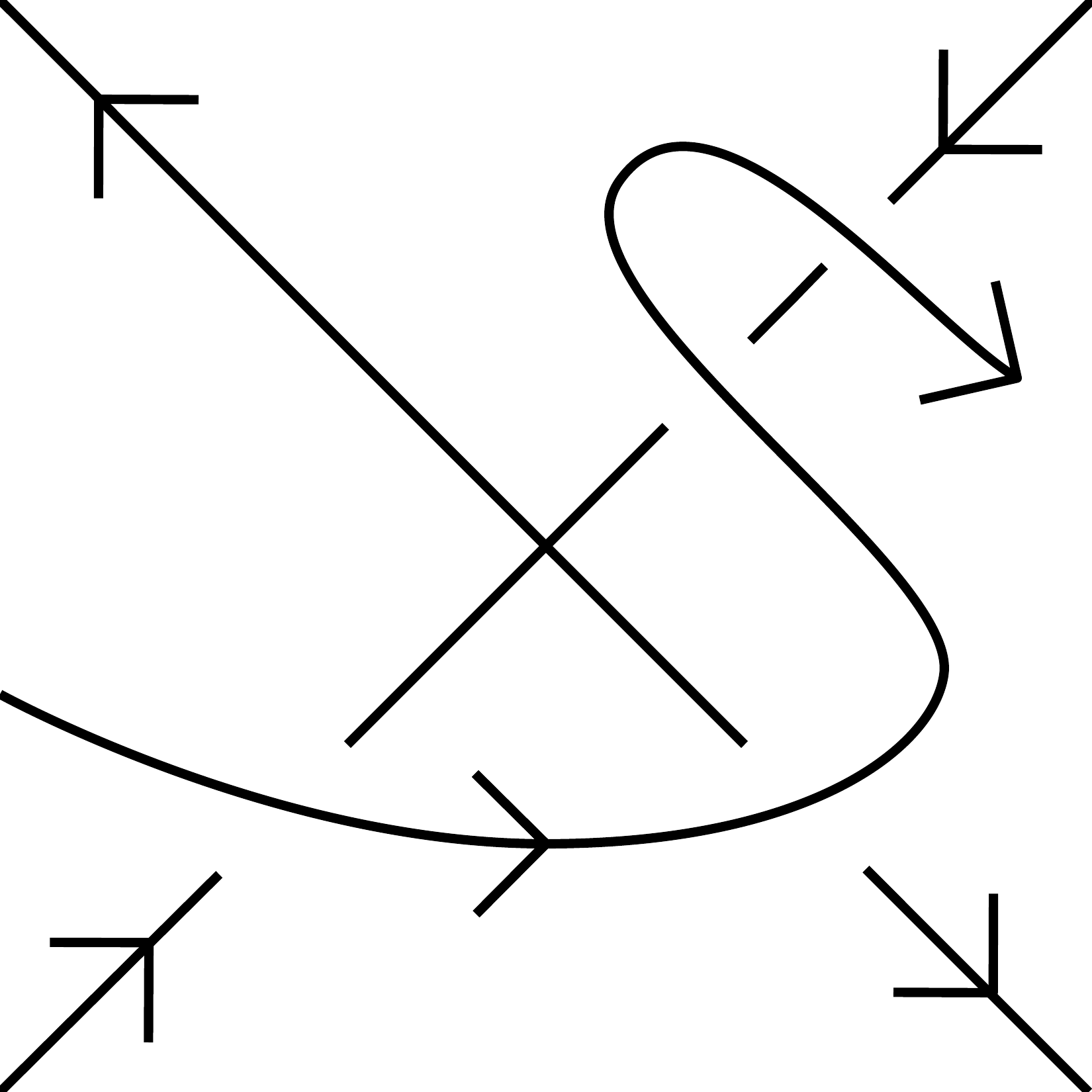}} & \raisebox{-17pt}{\includegraphics[width=0.5in]{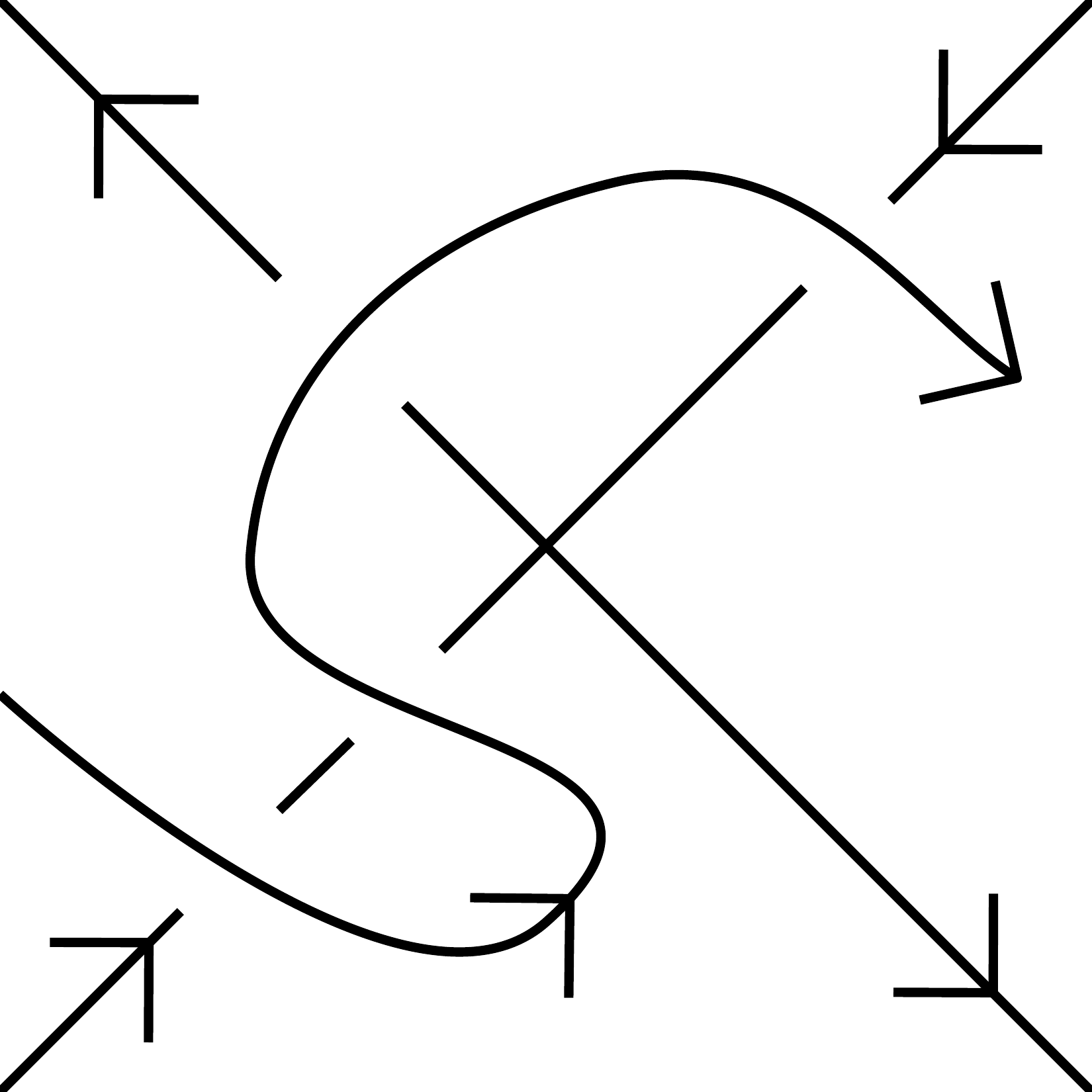}} & \raisebox{-17pt}{\rotatebox{90}{\includegraphics[width=0.5in]{Generating-Sets/O4k-2.pdf}}}
    	\arrow["\Omega2d", from=1-1, to=1-2]
    	\arrow["\Omega4l", from=1-2, to=1-3]
    	\arrow["\Omega2a", from=1-3, to=1-4]
    \end{tikzcd}}\]
\end{proof}

\begin{lemma}
    The move $\Omega5h$ can be realized by a sequence of $\Omega1c,\Omega4j,\Omega5g,\Omega4k,$ and $\Omega1d$ moves.      
\end{lemma}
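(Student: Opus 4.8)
The plan is to produce, exactly as in the two preceding lemmas, a single explicit chain of diagrams that begins at the left-hand picture of $\Omega5h$ and ends at its right-hand picture, where consecutive diagrams differ by precisely one of the listed moves applied in the prescribed order. Schematically, writing $L$ and $R$ for the two sides of $\Omega5h$, the chain takes the form
\[
L \xrightarrow{\ \Omega1c\ } D_1 \xrightarrow{\ \Omega4j\ } D_2 \xrightarrow{\ \Omega5g\ } D_3 \xrightarrow{\ \Omega4k\ } D_4 \xrightarrow{\ \Omega1d\ } R,
\]
so that five elementary moves and six diagrams are involved. Since $\Omega4k$ was already shown (in the preceding lemma) to be generated by $\Omega2d,\Omega4l,\Omega2a$, using it directly here is legitimate.

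The guiding idea is to realize $\Omega5h$ by conjugating the available generator $\Omega5g$ by a first-Reidemeister curl. The moves $\Omega5g$ and $\Omega5h$ differ only in the local crossing data of the strand that slides past the In-Out-In-Out vertex; the two $\Omega1$ moves supply and then cancel a curl whose passage across the vertex interchanges the relevant crossing type, so that $\Omega5g$ applied at the step $D_2 \to D_3$ produces the effect of $\Omega5h$. Concretely, first I would use $\Omega1c$ to introduce a curl on the sliding strand, positioned so that the resulting crossing sits on the side of the vertex where an $\Omega4$ move is available. Next, $\Omega4j$ transports that crossing across one of the edges incident to the vertex; $\Omega5g$ then slides the strand across the vertex; $\Omega4k$ transports the crossing back across the opposite incident edge; and finally $\Omega1d$ (the first-Reidemeister move complementary to $\Omega1c$, reflecting that the curl has been carried to the other side of the vertex) removes the curl, landing on $R$.

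The verification is entirely diagrammatic, and the main obstacle is bookkeeping rather than conceptual. At each of the five arrows one must check that the two adjacent diagrams agree outside a small disk, that inside that disk they differ by exactly the named oriented variant (and not some other oriented version of the same Reidemeister-type move), that all strand orientations are consistent and the central vertex remains of type In-Out-In-Out throughout, and that no hidden planar isotopy is secretly doing additional work. The most delicate point is confirming that the curl created by $\Omega1c$ is, after being transported around the vertex by $\Omega4j$ and $\Omega4k$ and after the strand is moved by $\Omega5g$, exactly the curl removed by $\Omega1d$ — that is, that the orientations and crossing signs match so that it is $\Omega1d$ rather than $\Omega1c$ that applies at the final step. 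Once the six diagrams are drawn with care, each transition is immediate, and the chain furnishes the required sequence.
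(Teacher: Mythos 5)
Your proposal matches the paper's proof: the paper's argument is exactly the six-diagram chain $L \to \Omega1c \to \Omega4j \to \Omega5g \to \Omega4k \to \Omega1d \to R$, i.e., conjugating the generator $\Omega5g$ by a curl that is created, transported across the vertex by the two $\Omega4$ moves, and then cancelled. Your observation that using $\Omega4k$ is legitimate because the preceding lemma derives it from $\Omega2d$, $\Omega4l$, $\Omega2a$ is also consistent with how the paper assembles its generating set.
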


\begin{proof}
    \[\adjustbox{scale=0.95}{\begin{tikzcd}
    	\raisebox{-17pt}{\includegraphics[width=0.5in]{Generating-Sets/O5h-1.pdf}} & \raisebox{-17pt}{\includegraphics[width=0.5in]{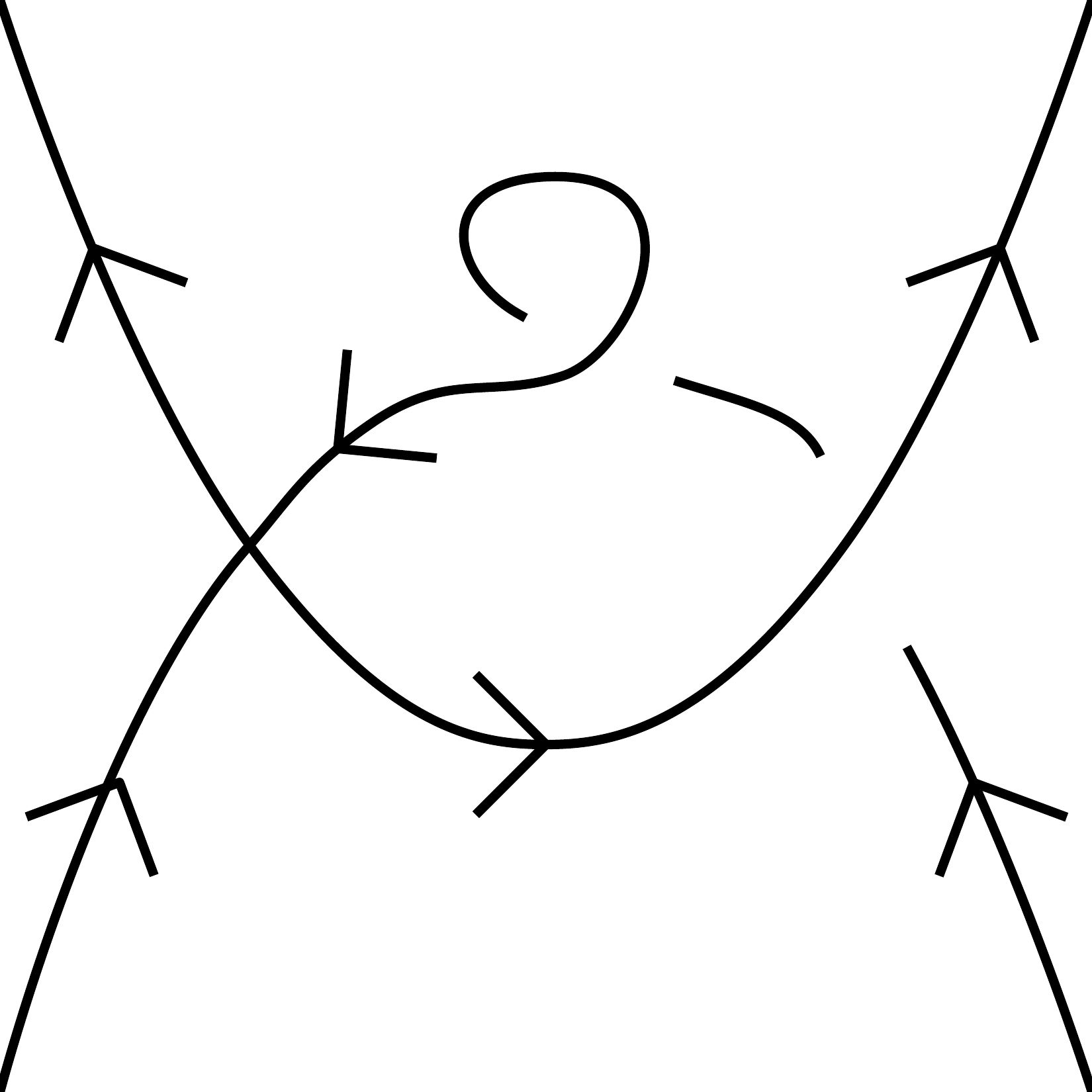}} & \raisebox{-17pt}{\includegraphics[width=0.5in]{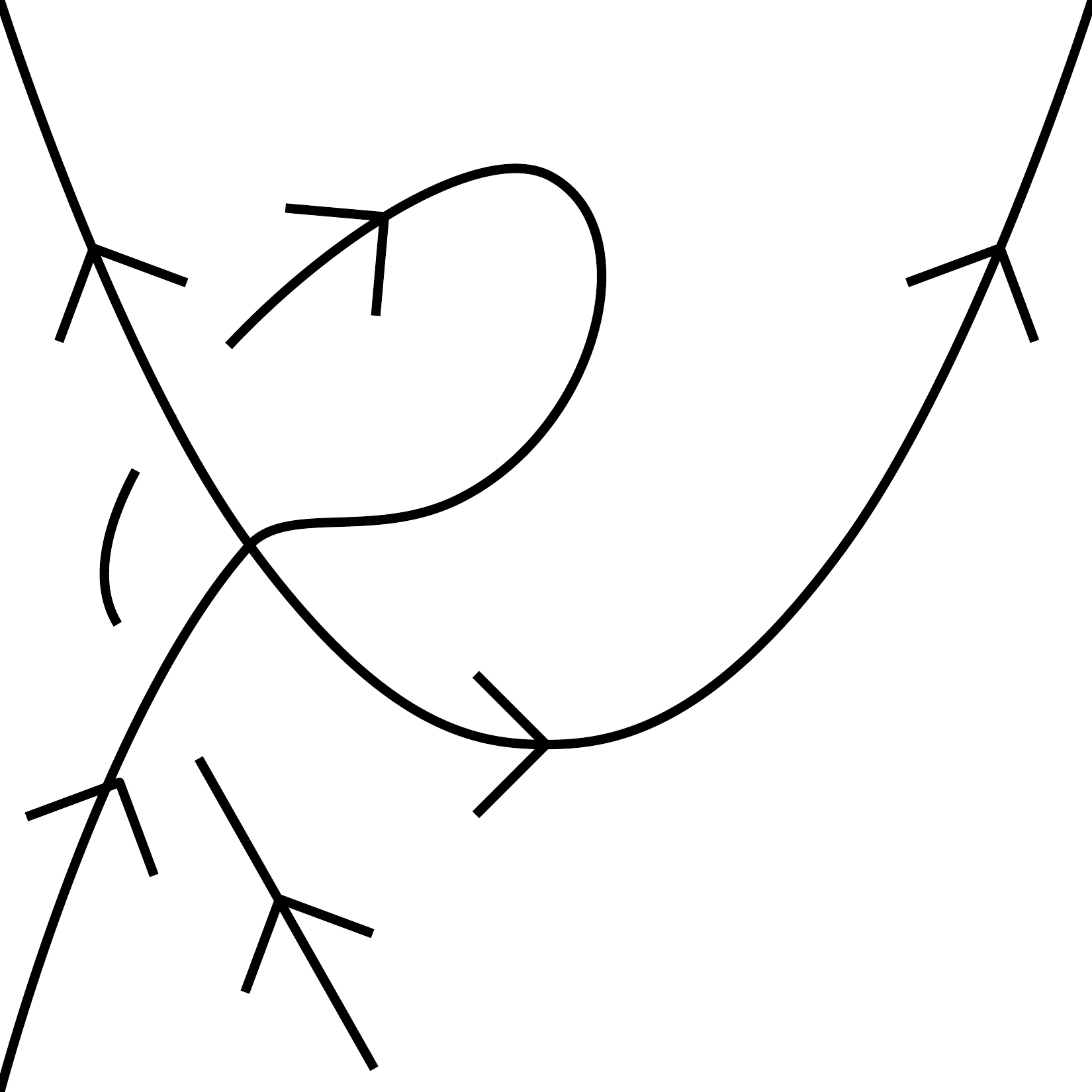}} & \raisebox{-17pt}{\includegraphics[width=0.5in]{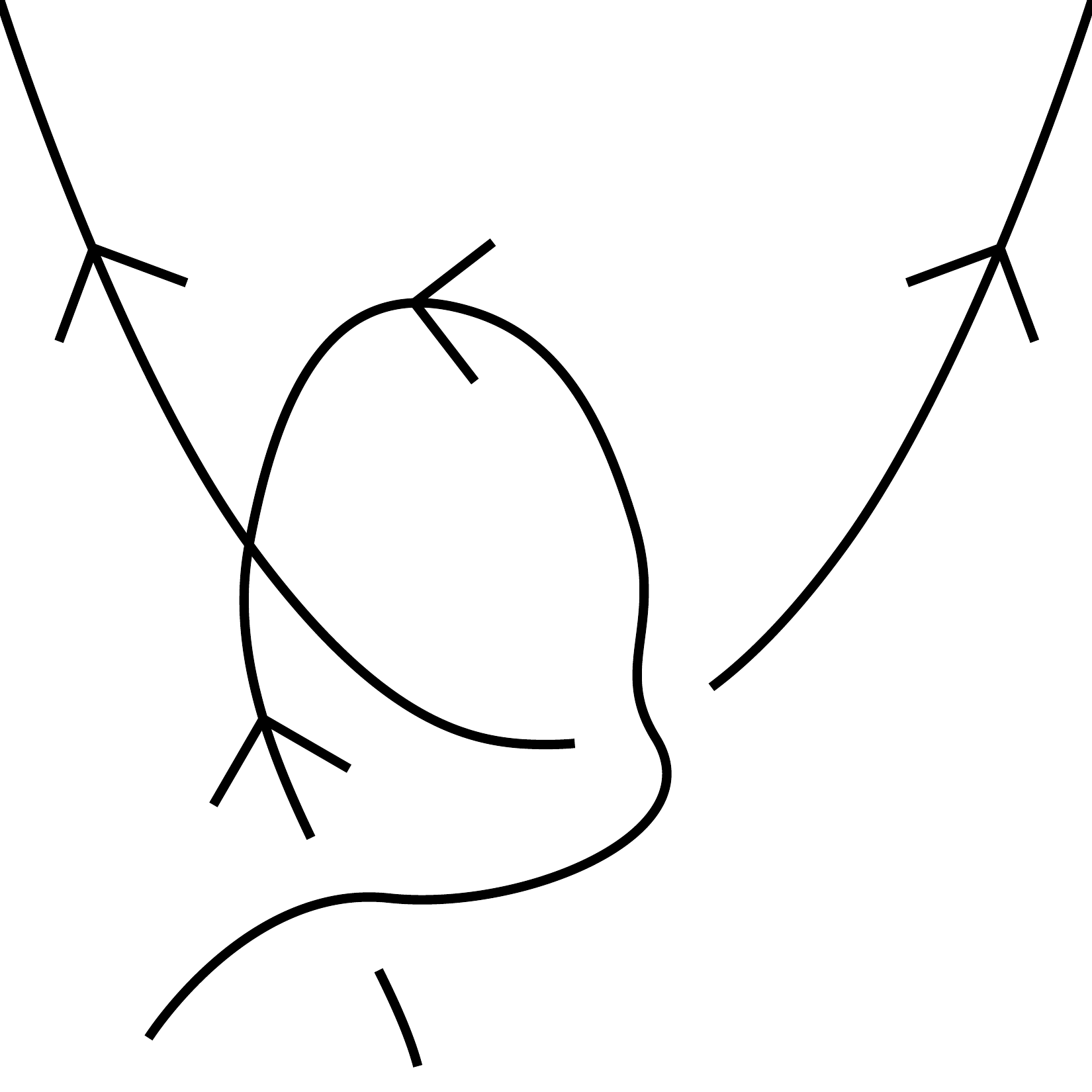}} \\
    	&& \raisebox{-17pt}{\includegraphics[width=0.5in]{Generating-Sets/O5h-2.pdf}} &
        \raisebox{-17pt}{\includegraphics[width=0.5in]{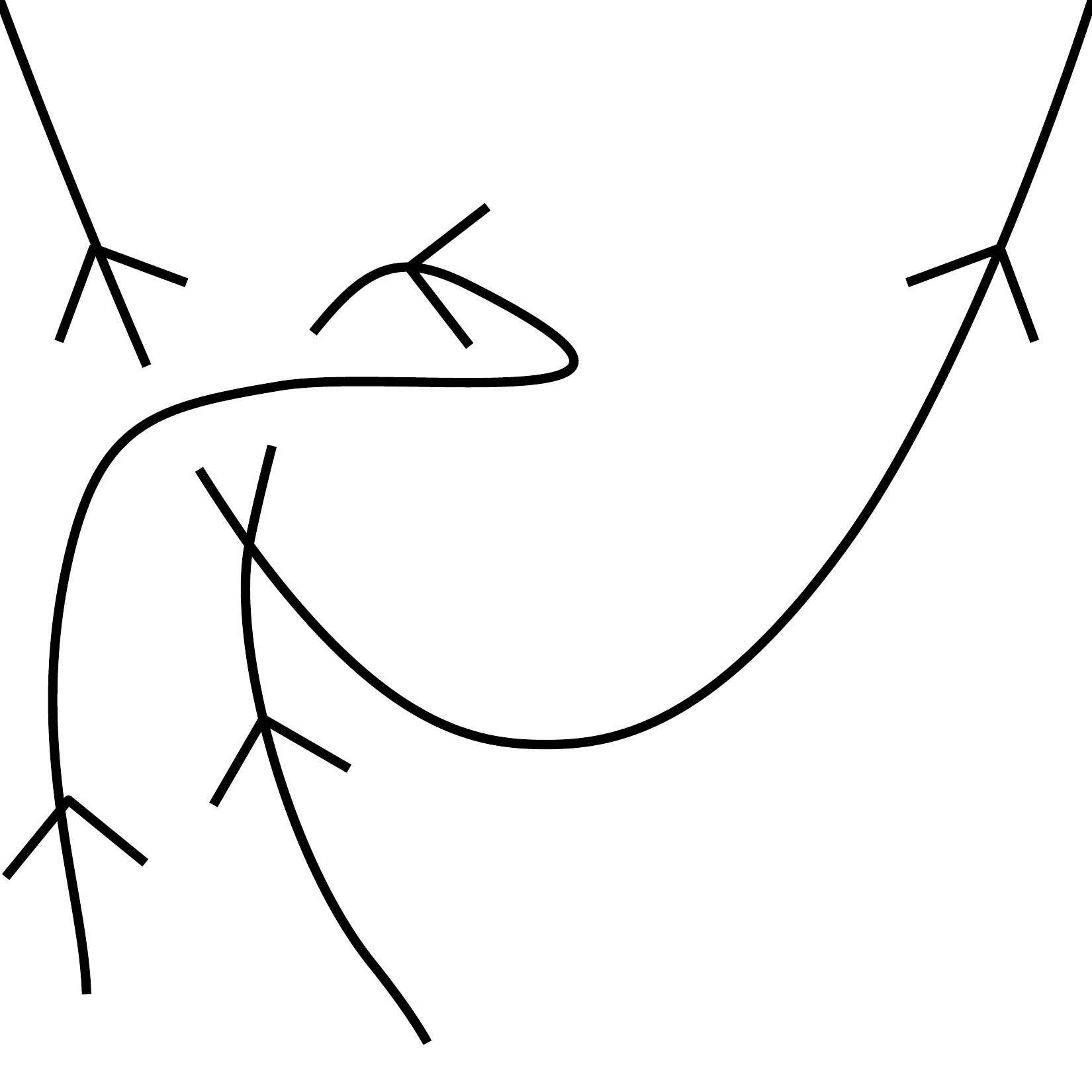}}
    	\arrow["\Omega1c", from=1-1, to=1-2]
    	\arrow["\Omega4j", from=1-2, to=1-3]
    	\arrow["\Omega5g", from=1-3, to=1-4]
    	\arrow["\Omega4k", from=1-4, to=2-4]
    	\arrow["\Omega1d", from=2-4, to=2-3]
    \end{tikzcd}}\]
\end{proof}

 Thus, the set $\{\Omega1a,\Omega1b,\Omega2a,\Omega3a,\Omega4a,\Omega4e,\Omega5a,\Omega4j,\Omega4l,\Omega5g\}$ is a generating set of Reidemeister-type moves for diagrams of knotted, balanced-oriented, 4-valent graphs with rigid vertices.

 \begin{remark}
    Note that since the move $\Omega4j$ involves sliding a strand under a vertex while the move $\Omega4l$ involves sliding a strand over a vertex, there is no way to derive one move from the other. Additionally, we know it is necessary to have at least one $\Omega5$ move for the In-Out-In-Out vertices. Hence, we conclude that our generating set is minimal for In-Out-In-Out vertices.   
 \end{remark}


\begin{theorem}\label{min-gen-set}
    The set $\{\Omega1a,\Omega1b,\Omega2a,\Omega3a,\Omega4a,\Omega4e,\Omega5a,\Omega4j,\Omega4l,\Omega5g\}$ is a minimal generating set of Reidemeister-type moves for diagrams of balanced-oriented, knotted 4-valent graphs with rigid vertices.
\end{theorem}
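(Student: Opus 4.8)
The statement has two halves: that the listed ten moves generate every oriented Reidemeister-type move, and that none of the ten is redundant. The plan is to dispose of the generating half by assembling the results already in hand, and then to spend the real effort on minimality, where the governing idea is that the ten moves split into three families that cannot interact.

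For generation I would simply collect the pieces. Polyak's theorem gives that $\{\Omega1a,\Omega1b,\Omega2a,\Omega3a\}$ generates every oriented $\Omega1,\Omega2,\Omega3$ move. Bataineh et al.\ give that $\{\Omega4a,\Omega4e,\Omega5a\}$ generates every $\Omega4$ and $\Omega5$ move on In-In-Out-Out vertices. The three lemmas above reduce $\Omega4i,\Omega4k,\Omega5h$ to $\Omega4j,\Omega4l,\Omega5g$ together with $\Omega1$ and $\Omega2$ moves, and the latter already lie in the closure of Polyak's set; hence $\{\Omega4j,\Omega4l,\Omega5g\}$ together with the classical moves generates every $\Omega4,\Omega5$ move on In-Out-In-Out vertices. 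Since the In-Out-In-Out moves not in the set are exactly $\Omega4i,\Omega4k,\Omega5h$ and the In-In-Out-Out moves are exhausted by Bataineh's reduction, these three lists cover all oriented Reidemeister-type moves, so the union generates everything.

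For minimality I would first isolate the structural fact that decouples the three families: no Reidemeister-type move creates, destroys, or changes the orientation type of a vertex, so the number of In-In-Out-Out vertices and the number of In-Out-In-Out vertices are invariants of every diagram, and a move from one family can be invoked only on a diagram already carrying the corresponding vertex. Consequently I can test each move against a restricted class on which only the relevant moves apply: vertex-free knot diagrams admit only $\Omega1,\Omega2,\Omega3$ moves; singular link diagrams (In-In-Out-Out vertices only) admit those together with the In-In-Out-Out $\Omega4,\Omega5$ moves; and the In-Out-In-Out case is symmetric. With this in place, minimality follows family by family. Removing any of $\Omega1a,\Omega1b,\Omega2a,\Omega3a$ and restricting to knot diagrams leaves only the remaining classical moves, so Polyak's minimality shows the deleted move is not recoverable. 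Removing any of $\Omega4j,\Omega4l,\Omega5g$ and restricting to In-Out-In-Out diagrams, I would invoke the obstruction recorded in the remark above: $\Omega4j$ slides a strand under a vertex and $\Omega4l$ slides it over, so neither is derivable from the other together with the classical moves, while at least one $\Omega5$-type move is needed, forcing $\Omega5g$. Removing any of $\Omega4a,\Omega4e,\Omega5a$ and restricting to singular links, I would argue by the same method---$\Omega4a$ and $\Omega4e$ are distinguished by the over/under direction of the sliding strand, and $\Omega5a$ is the sole $\Omega5$-type move available for In-In-Out-Out vertices.

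The main obstacle is precisely the justification of family independence. I must argue carefully that a move acting near a vertex cannot be simulated by the other families---in particular, that one cannot introduce, manipulate, and then cancel auxiliary crossings or vertices so as to route around a missing move---which reduces to the invariance of both vertex count and vertex type under every Reidemeister-type move, together with the observation that the purely-crossing moves $\Omega1,\Omega2,\Omega3$ never act in a neighborhood of a vertex. Once that decoupling is secured, the classical minimality of Polyak and the over/under and $\Omega5$-necessity arguments for the two vertex families combine with no residual interaction, and the minimality of the full ten-element set follows.
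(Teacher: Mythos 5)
Your proposal is correct and follows essentially the same route as the paper: generation is assembled from Polyak's set, Bataineh et al.'s set, and Lemmas 3.1--3.3, while minimality rests on Polyak's minimality, Bataineh et al.'s results for the In-In-Out-Out family, and the over/under and $\Omega5$-necessity observations recorded in Remark 3.4. The only difference is one of exposition: you make explicit the family-decoupling argument (no move changes the number or orientation type of vertices, so each family can be tested on its own class of diagrams), which the paper's one-line proof leaves implicit.
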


\begin{proof}
The proof follows from results in~\cite{Bataineh, Polyak_2010} together with our Lemmas 3.1--3.3 and Remark 3.4.
\end{proof}

\section{The Proof of Invariance of $P$} \label{sec:proofinv}

We are ready to prove that the polynomial $P$ is a rigid-vertex isotopy invariant for balanced-oriented, 4-valent knotted graphs. For this, we need to prove that $P$ is independent of the diagram $D$ representing a  knotted graph with balanced-oriented 4-valent right vertices.
Using Theorem~\ref{min-gen-set}, it suffices to show that $P$ is invariant under the moves in our minimal generating set of Reidemeister-type moves for 4-valent knotted graphs with balanced-oriented vertices.

We show first that $P$ is invariant under the move $\Omega 1a$:
\begin{align*}
    P\Biggl(\raisebox{-10pt}{\includegraphics[height = .35in]{Generating-Sets/O1a-1.pdf}}\Biggr)&= q^{n-1}P\Biggl(\raisebox{-10pt}{\includegraphics[height = .35in]{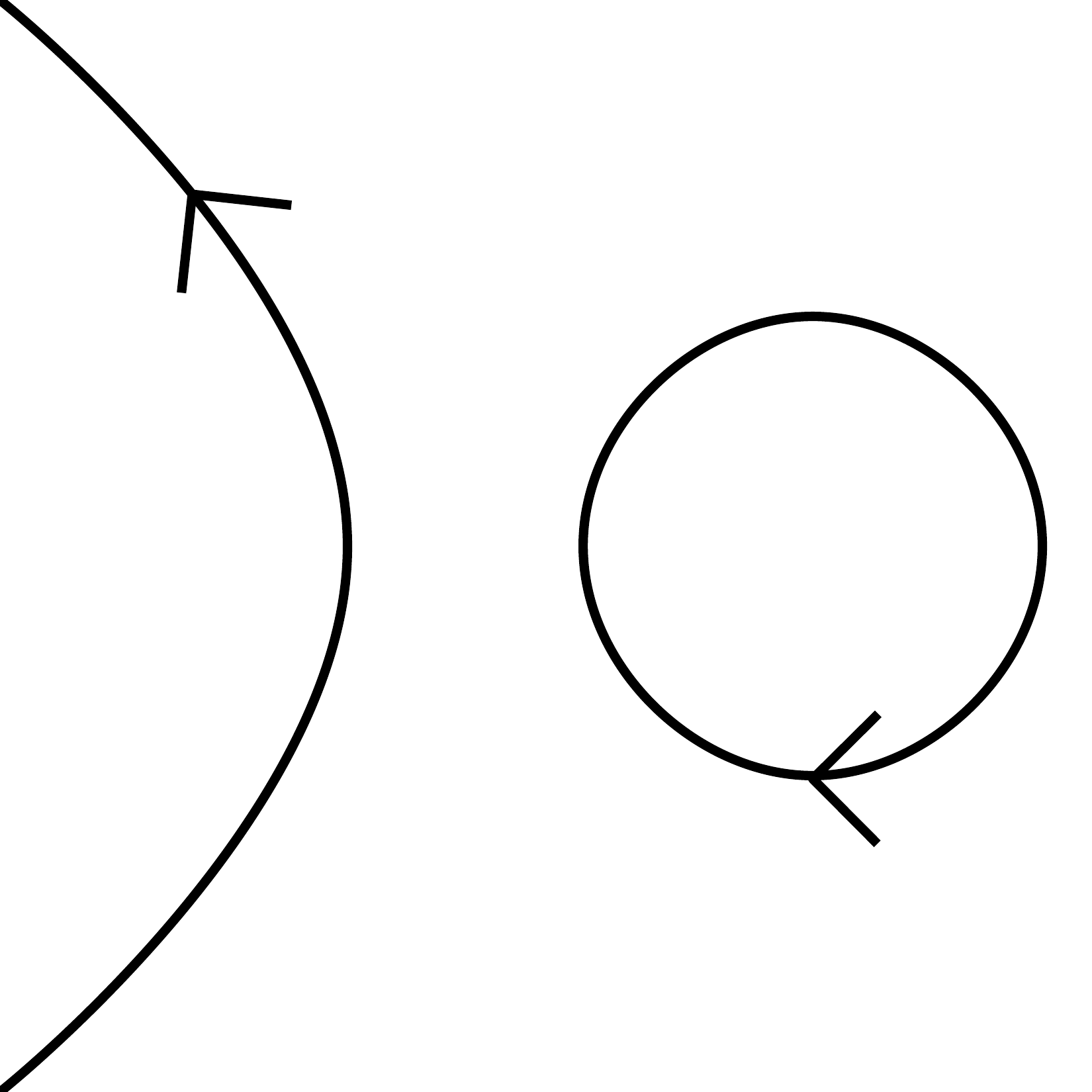}}\Biggr)-q^{n}P\Biggl(\raisebox{-10pt}{\includegraphics[height = .35in]{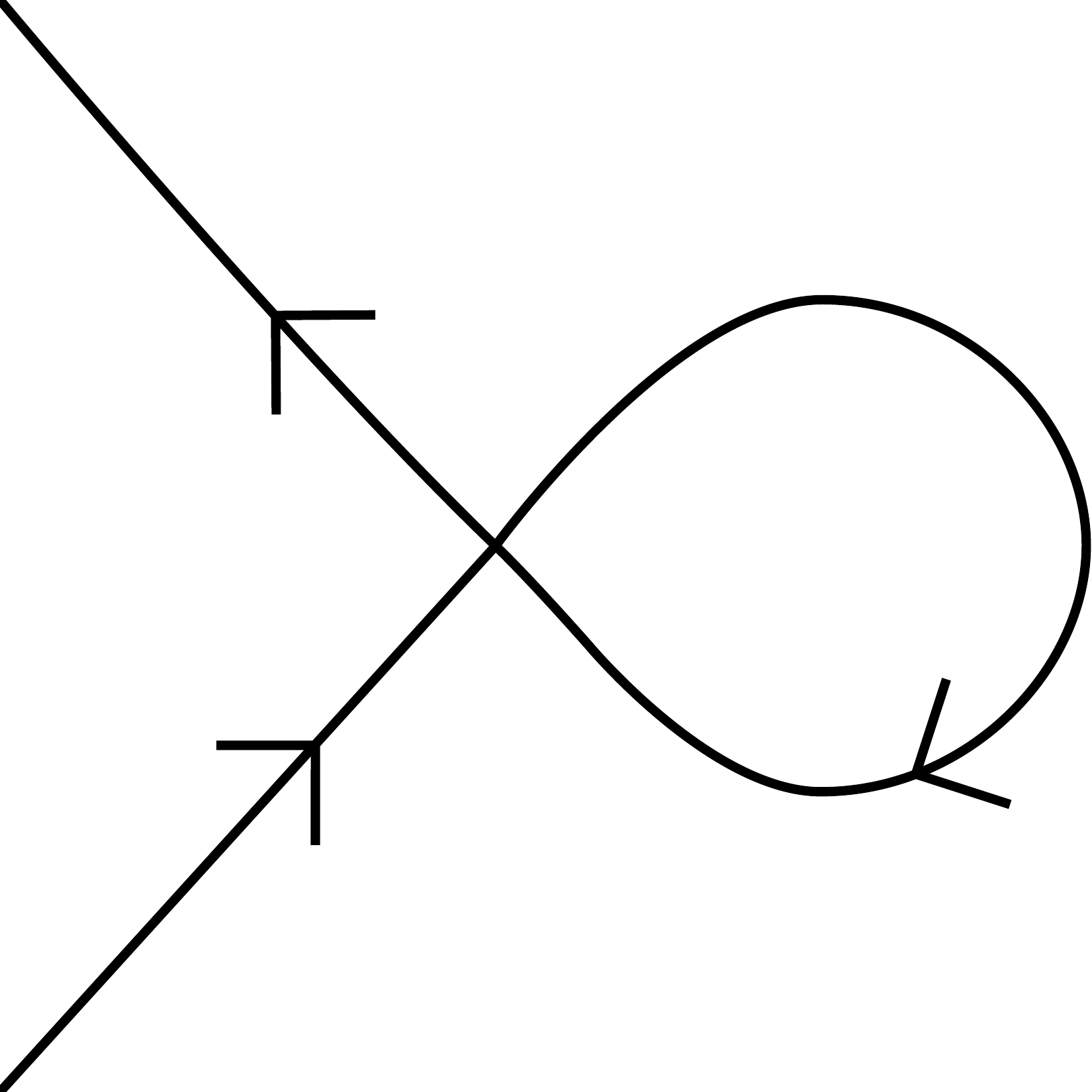}}\Biggr)\\
    &=q^{n-1}[n]P\Biggl(\raisebox{-10pt}{\includegraphics[height = .35in]{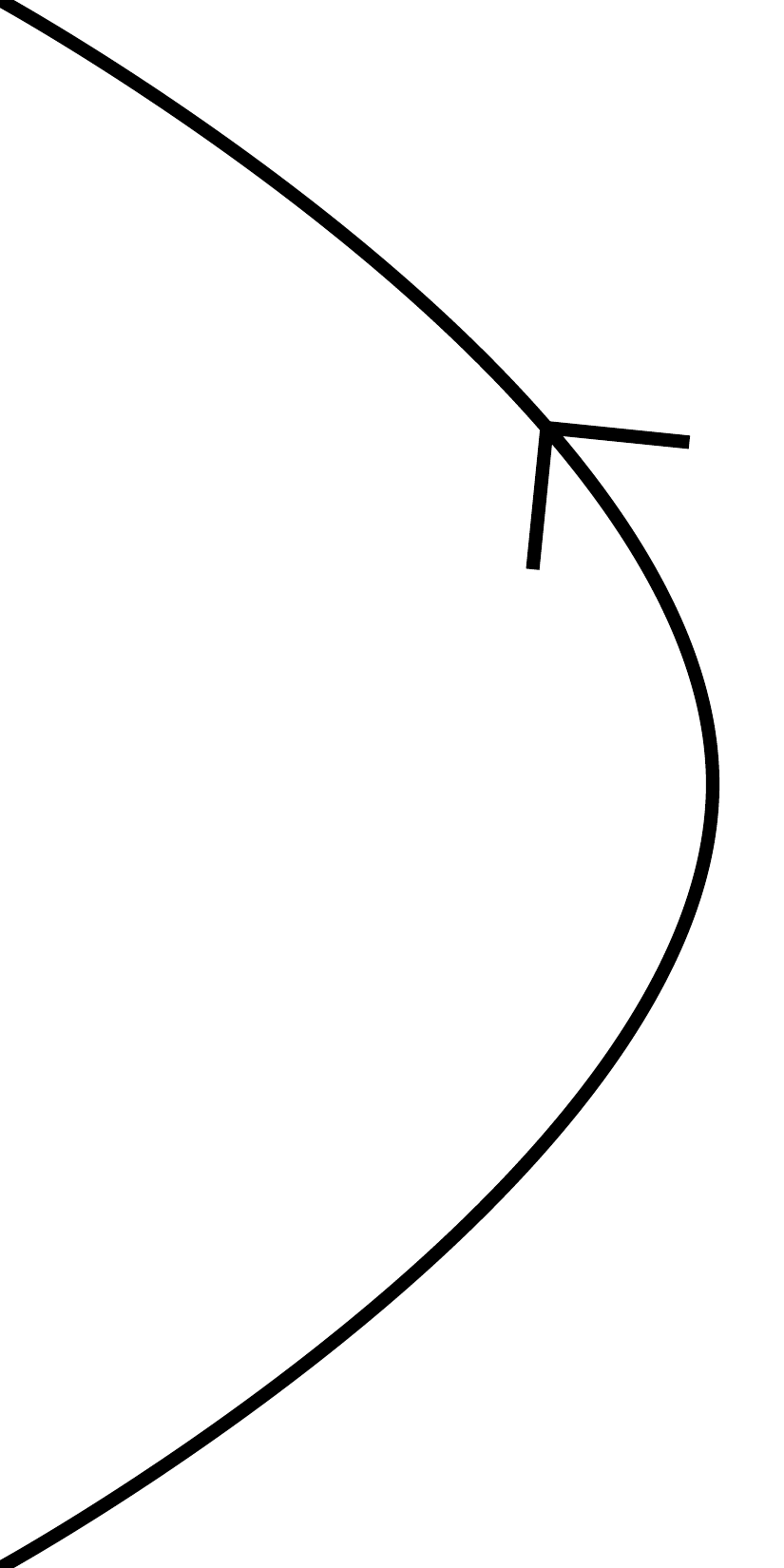}}\Biggr)-q^{n}[n-1]P\Biggl(\raisebox{-10pt}{\includegraphics[height = .35in]{O1ab-Proof/oriented-strand-up-half.pdf}}\Biggr)\\
    &=(q^{n-1}[n] - q^n[n-1])P\Biggl(\raisebox{-10pt}{\includegraphics[height = .35in]{O1ab-Proof/oriented-strand-up-half.pdf}}\Biggr)\\ 
    &=P\Biggl(\raisebox{-10pt}{\includegraphics[height = .35in]{O1ab-Proof/oriented-strand-up-half.pdf}}\Biggr).
    \end{align*}
Above, we applied the skein relation to resolve the positive crossing, followed by the application of the graphical relation involving a loop. We also used that
\[
q^{n-1}[n] - q^n[n-1] 
= \frac{q^{n-1}(q^n-q^{-n})}{q-q^{-1}} - \frac{q^{n}(q^{n-1}-q^{1-n})}{q-q^{-1}}=1.
\]  
We now show that $P$ is invariant under the move $\Omega 1b$. By applying the skein relation for the crossing followed by the graphical relation to resolve the loop, we arrive at the following:
\begin{align*}
    P\Biggl(\raisebox{-10pt}{\includegraphics[height = .35in]{Generating-Sets/O1b-1.pdf}}\Biggr)&= q^{n-1}P\Biggl(\raisebox{-10pt}{\includegraphics[height = .35in]{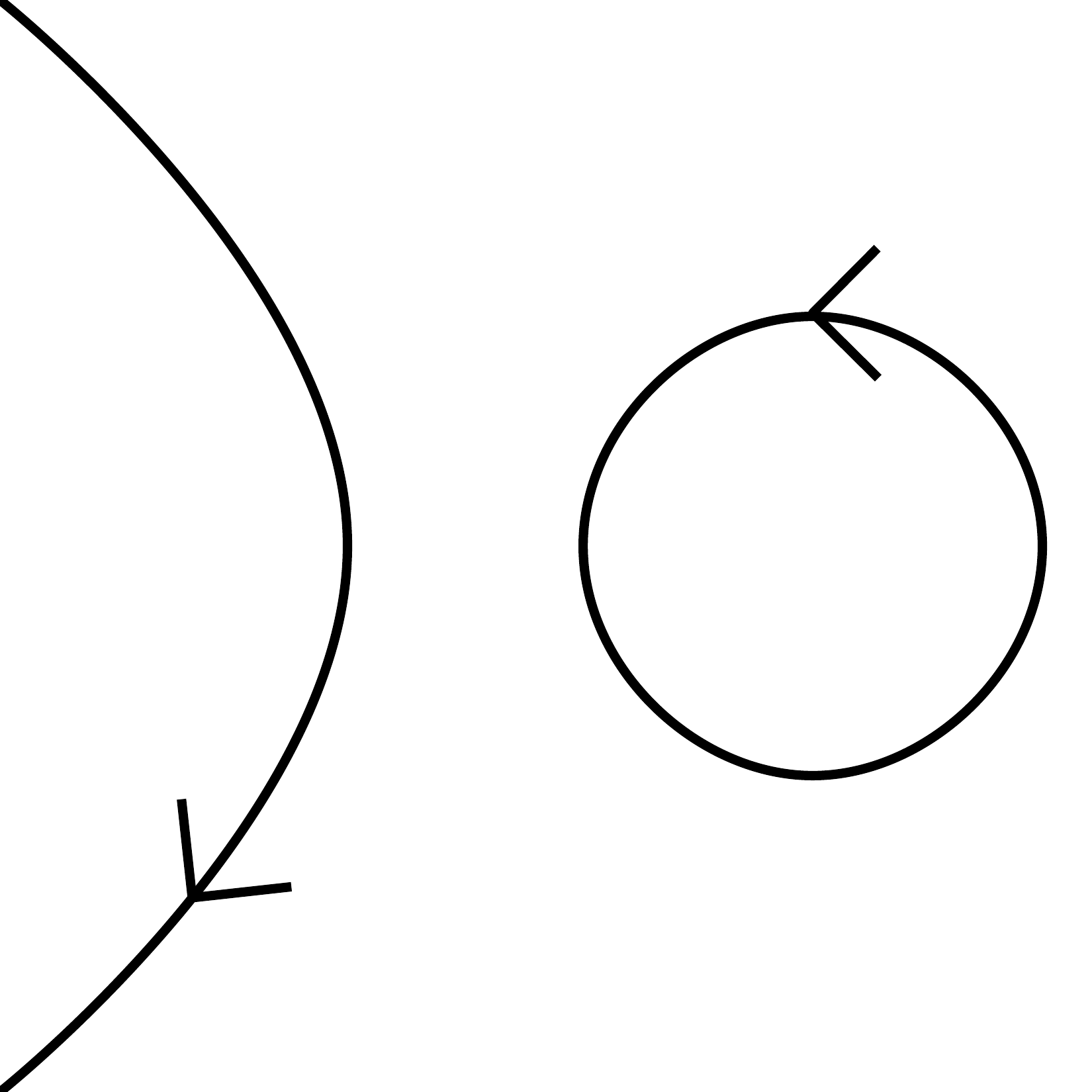}}\Biggr)-q^nP\Biggl(\raisebox{-10pt}{\includegraphics[height = .35in]{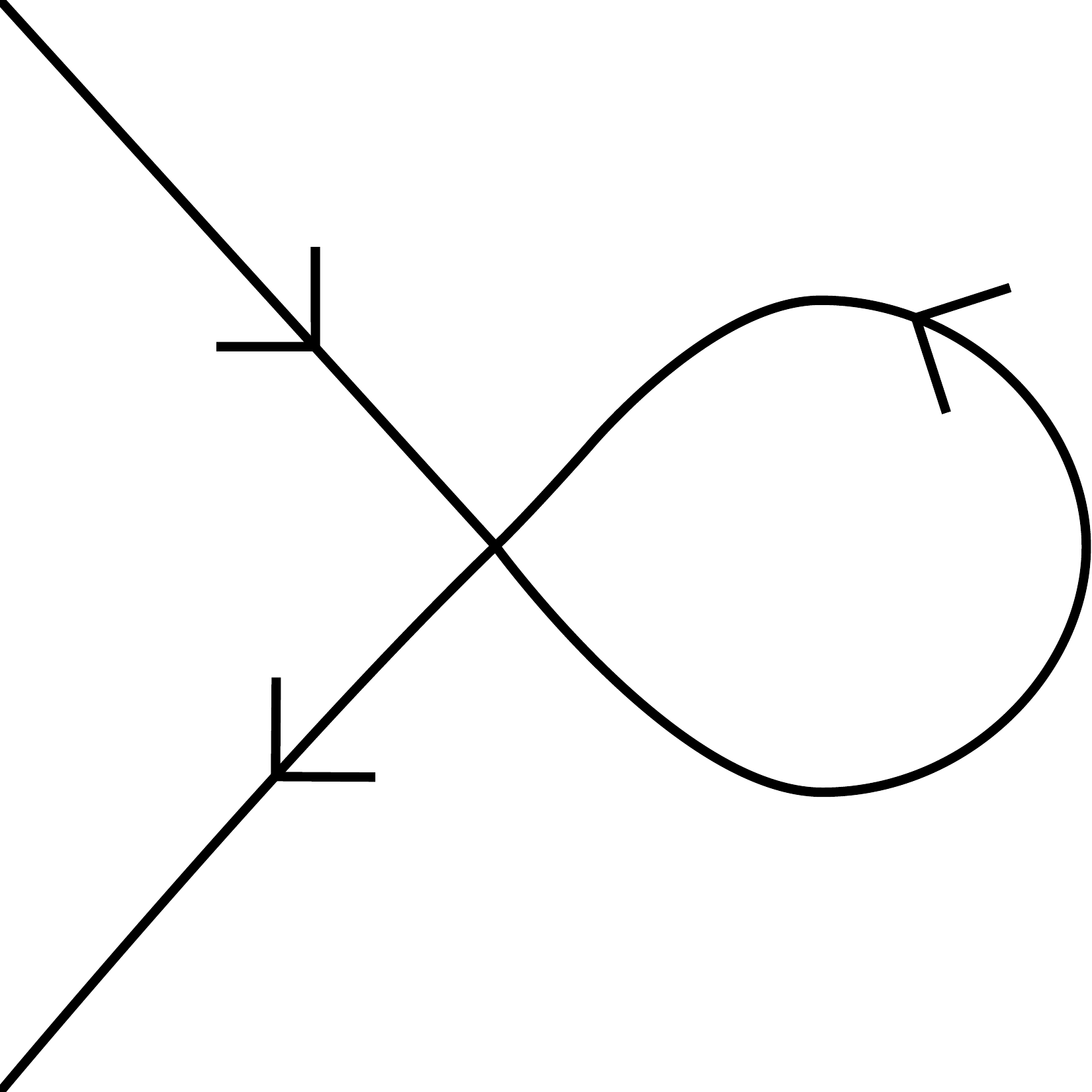}}\Biggr)\\
    &=q^{n-1}[n]P\Biggl(\raisebox{-10pt}{\includegraphics[height = .35in]{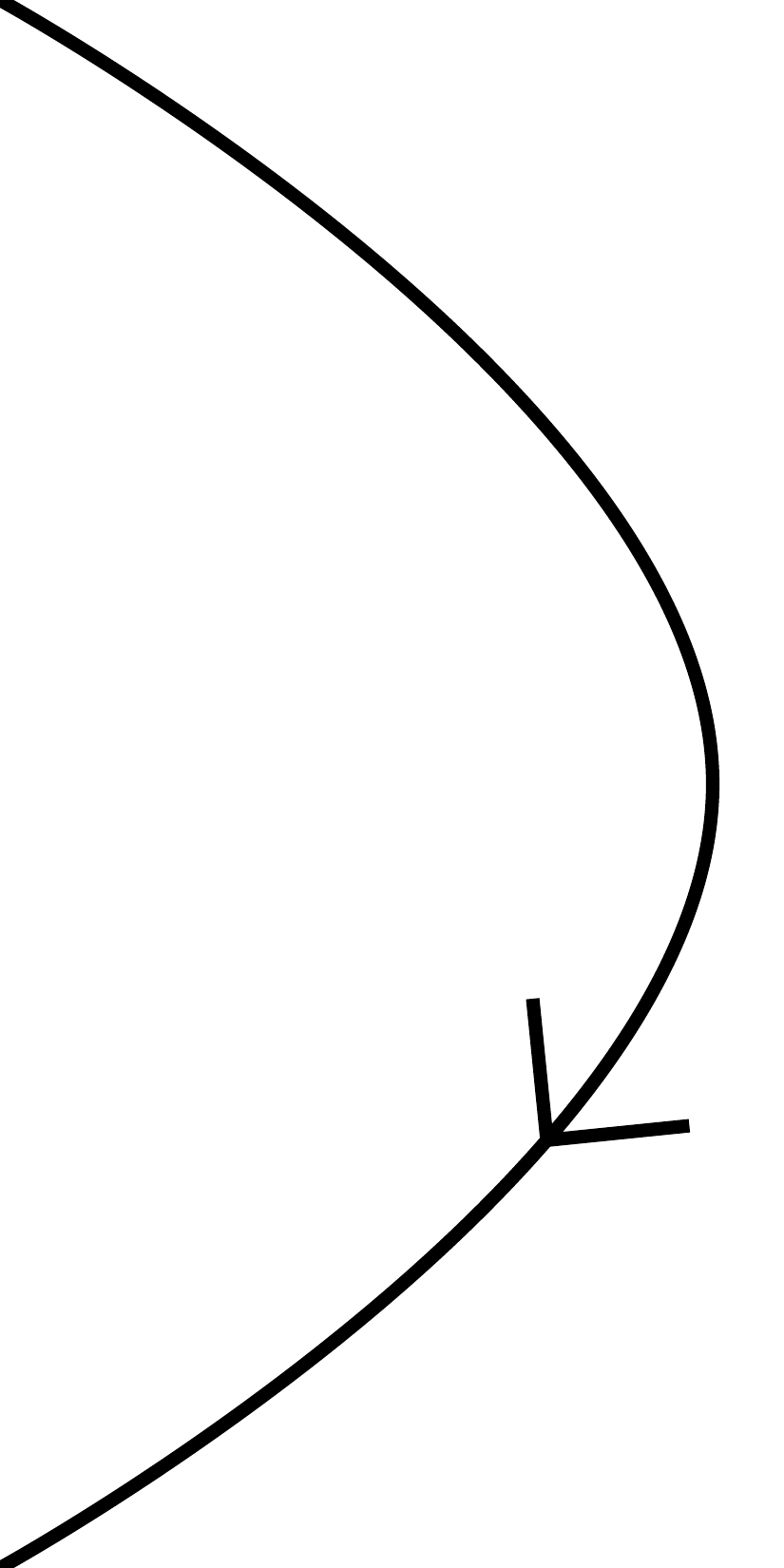}}\Biggr)-q^n[n-1]P\Biggl(\raisebox{-10pt}{\includegraphics[height = .35in]{O1ab-Proof/oriented-strand-down-half.pdf}}\Biggr)\\
    &=P\Biggl(\raisebox{-10pt}{\includegraphics[height = .35in]{O1ab-Proof/oriented-strand-down-half.pdf}}\Biggr).
\end{align*}

We consider next the move $\Omega 2a$. In the first step below, we apply the skein relation for the negative crossing on the right of the diagram and in the second step we apply the skein relation for the positive crossing. In the third step, we use the first graphical relation involving a bigon, and in the last steps we merely simplify combining like terms and use that $[2] = q+q^{-1}$.
\begin{align*}
    P\Biggl(\raisebox{-10pt}{\includegraphics[height = .35in]{Generating-Sets/O2a-1.pdf}}\Biggr)&= q^{1-n}P\Biggl(\raisebox{-10pt}{\includegraphics[height = .35in]{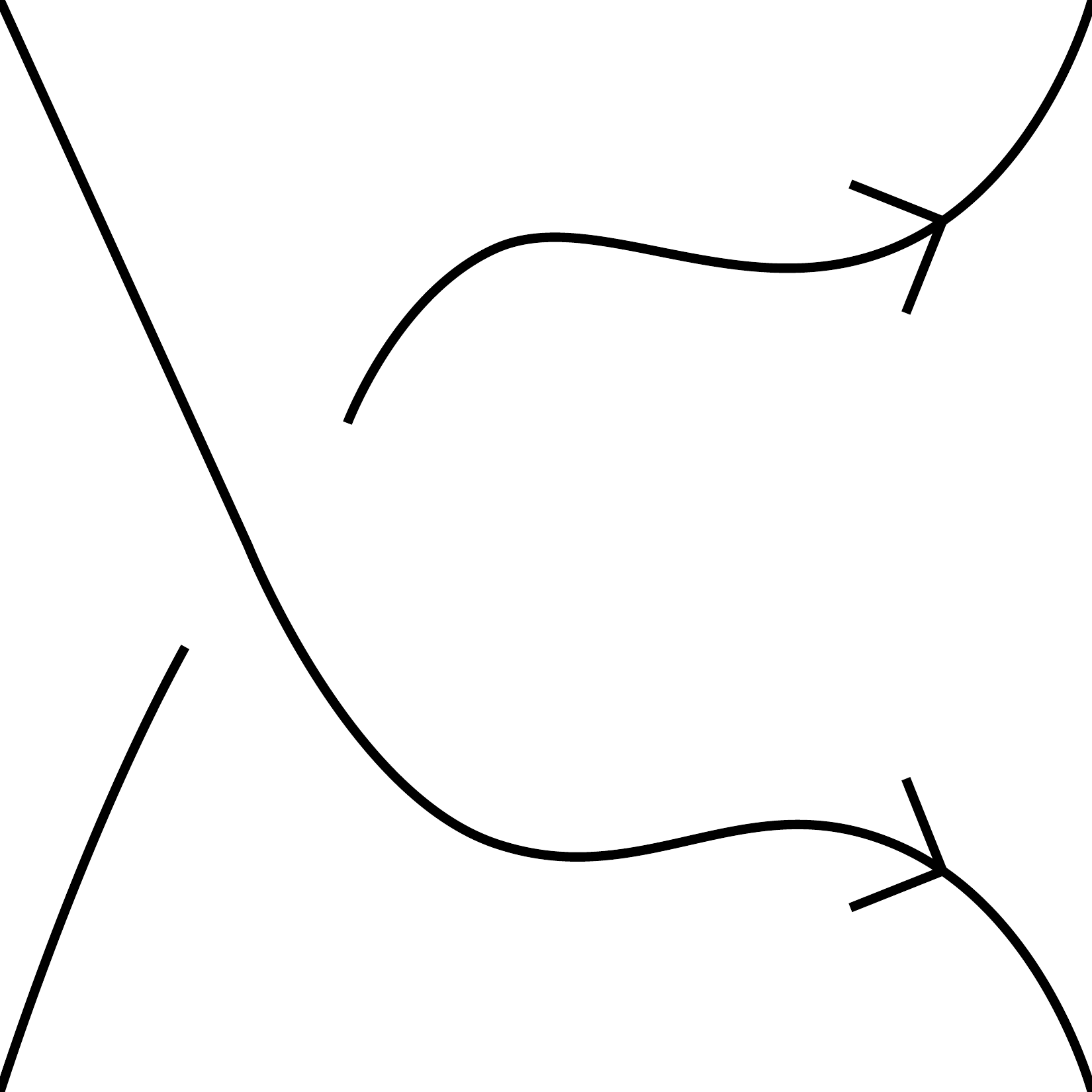}}\Biggr)-q^{-n}P\Biggl(\raisebox{-10pt}{\includegraphics[height = .35in]{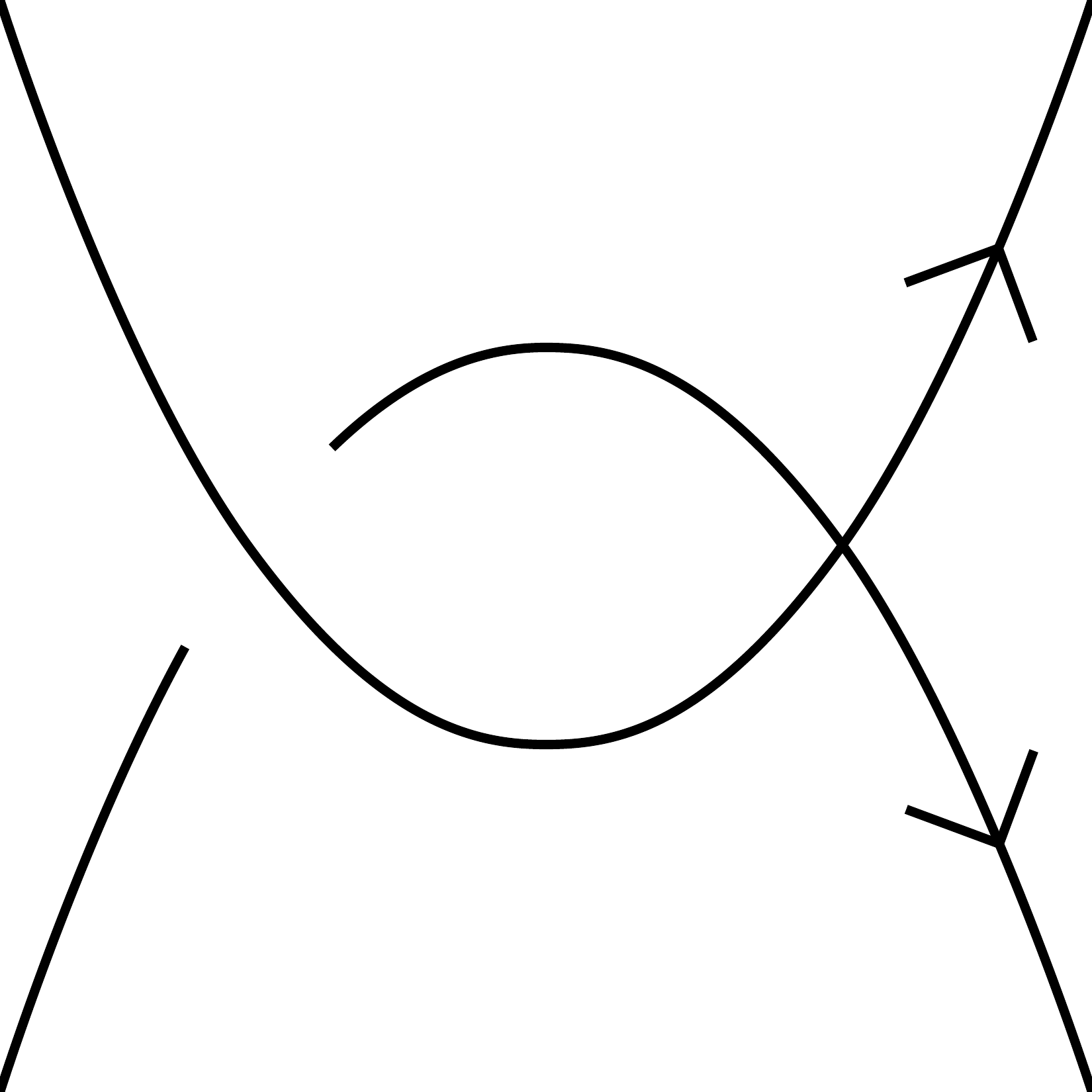}}\Biggr)\\
    &=q^{1-n}\left[q^{n-1}P\Biggl(\raisebox{-10pt}{\includegraphics[height = .35in]{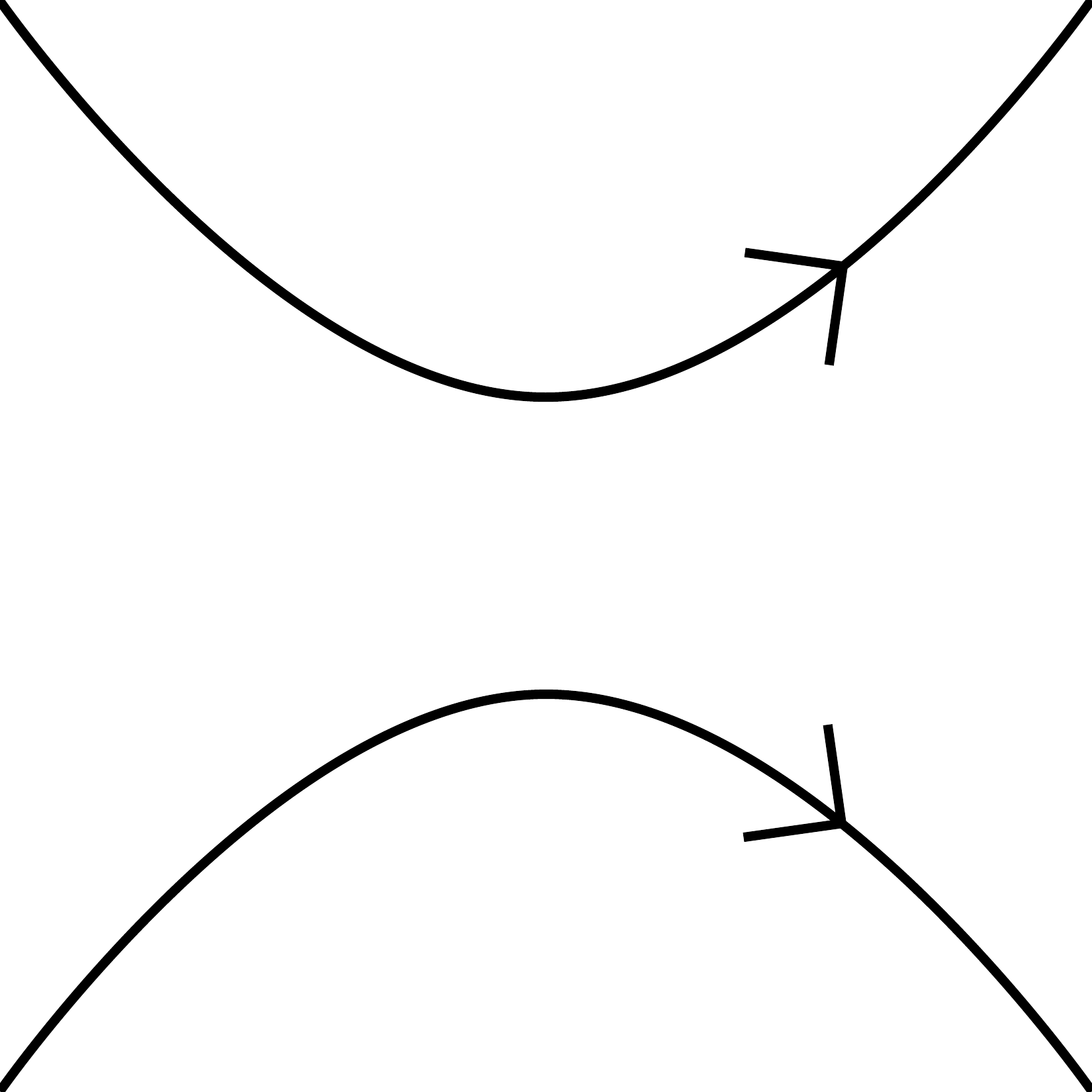}}\Biggr)-q^{n}P\Biggl(\raisebox{-10pt}{\includegraphics[height = .35in]{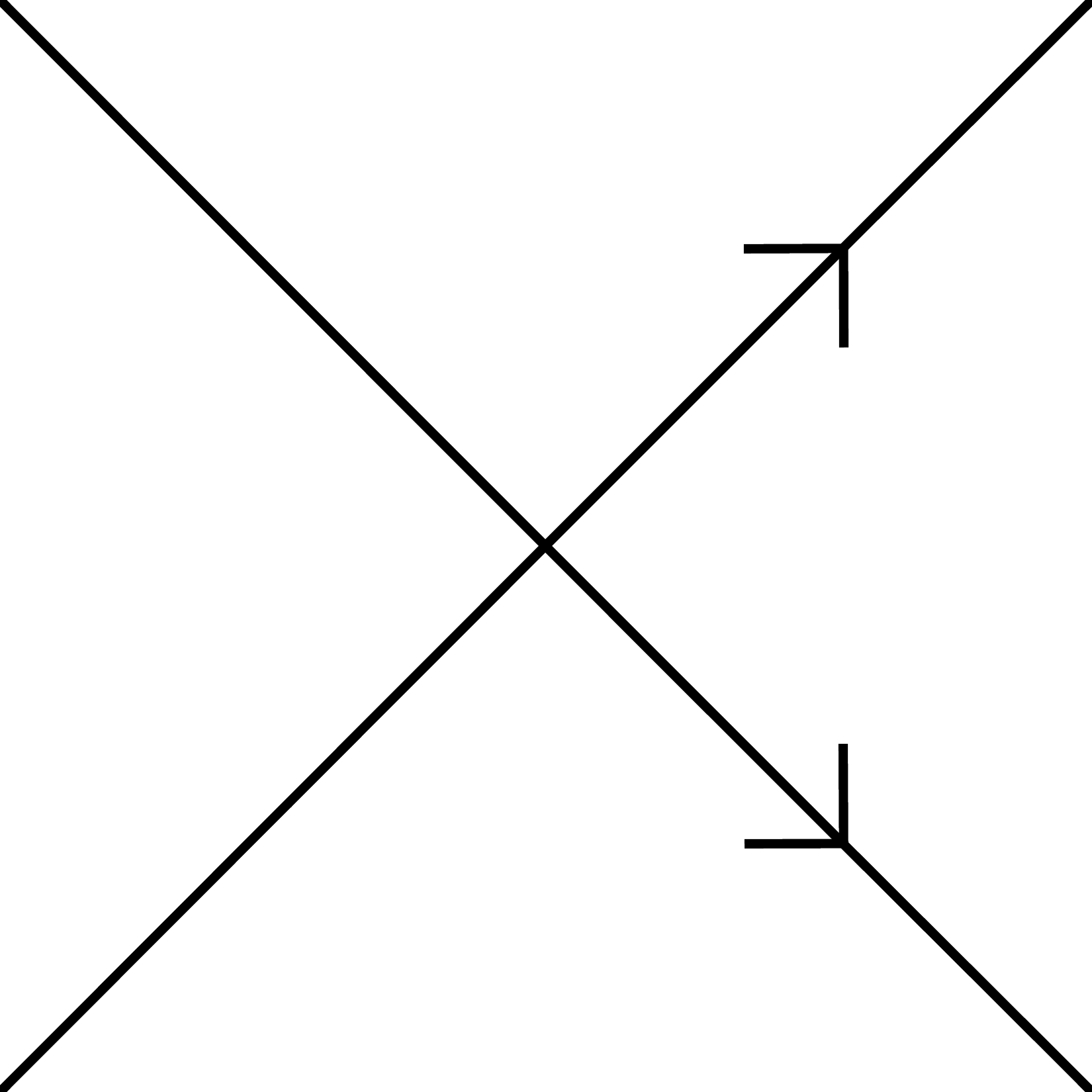}}\Biggr)\right]\\
    & \,\, \,\,  -q^{-n}\left[q^{n-1}P\Biggl(\raisebox{-10pt}{\includegraphics[height = .35in]{Skein-relations/cross-right.pdf}}\Biggr)-q^{n}P\Biggl(\raisebox{-10pt}{\includegraphics[height = .35in]{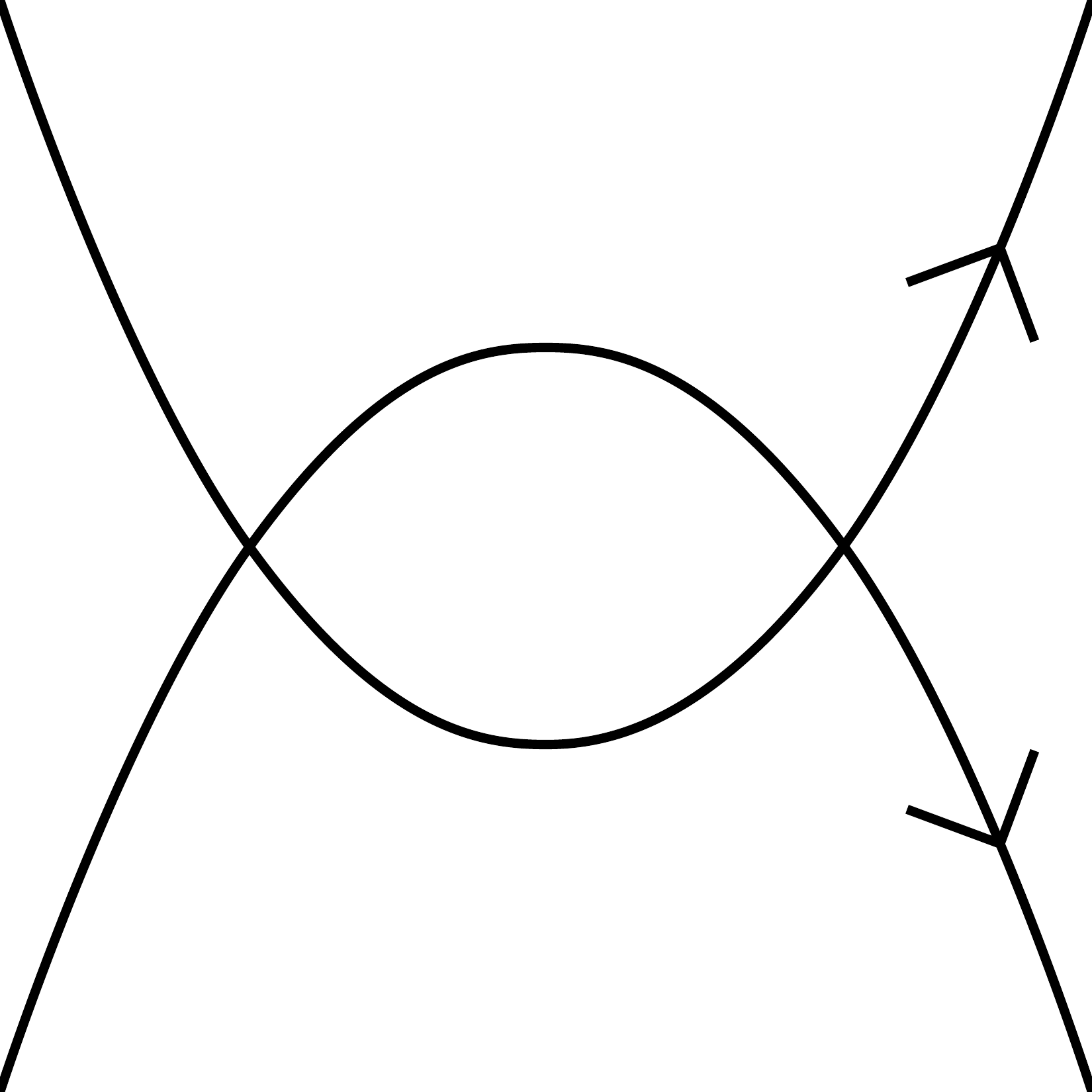}}\Biggr)\right]\\
    &=P\Biggl(\raisebox{-10pt}{\includegraphics[height = .35in]{Skein-relations/oriented-strands-right.pdf}}\Biggr)- (q + q^{-1}) P\Biggl(\raisebox{-10pt}{\includegraphics[height = .35in]{Skein-relations/cross-right.pdf}}\Biggr)+[2]P\Biggl(\raisebox{-10pt}{\includegraphics[height = .35in]{Skein-relations/cross-right.pdf}}\Biggr)\\
    &=P\Biggl(\raisebox{-10pt}{\includegraphics[height = .35in]{Skein-relations/oriented-strands-right.pdf}}\Biggr).
\end{align*}

Before showing that $P$ is invariant under the move $\Omega3a$, we verify that $P$ is invariant under the move $\Omega4a$, to reduce our computations. For the right hand-side of the move, we have the following:

\begin{align*}
    P\Biggl(\raisebox{-10pt}{\includegraphics[height = .35in]{Generating-Sets/O4a-2.pdf}}\Biggr)&=q^{n-1}P\Biggl(\raisebox{-10pt}{\includegraphics[height = .35in]{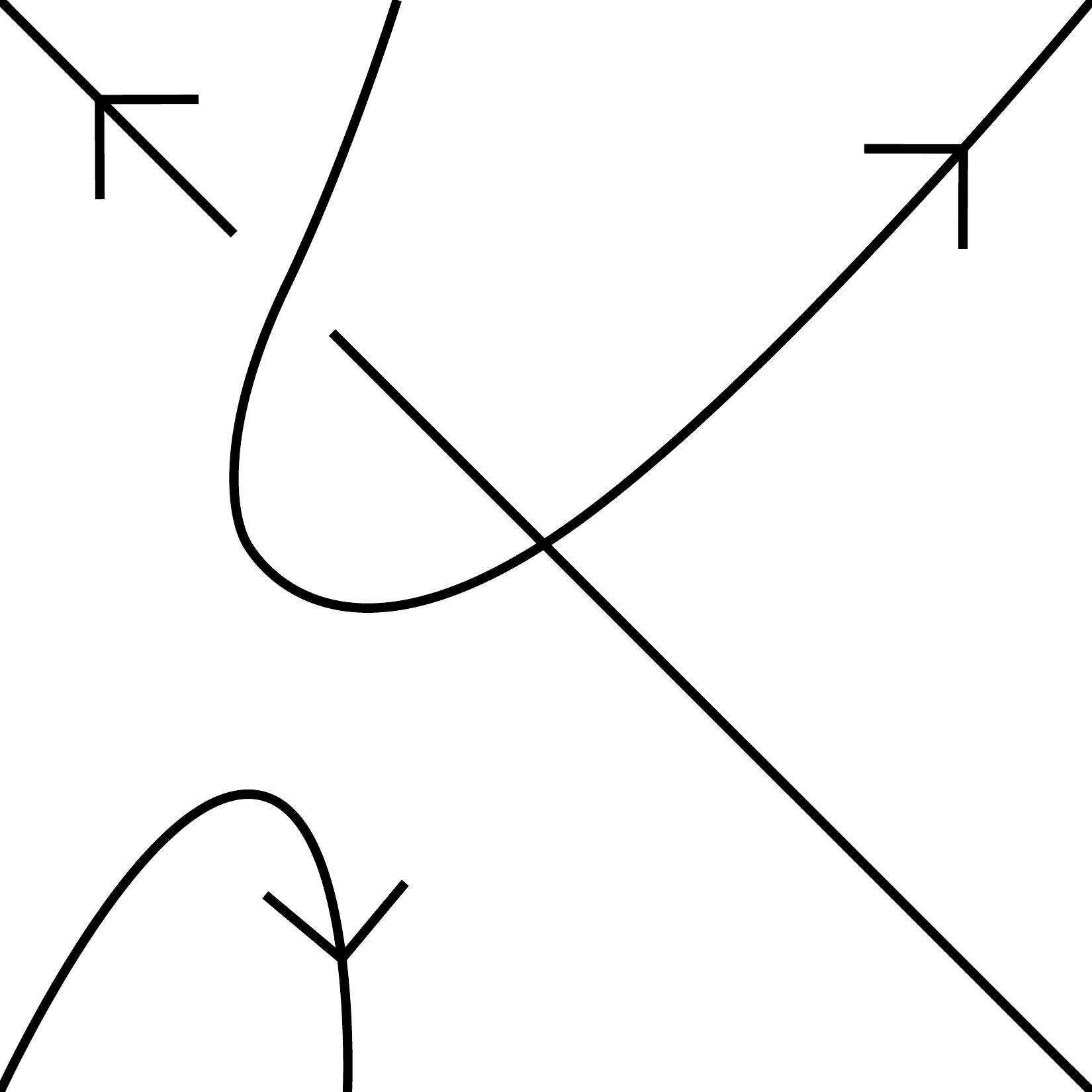}}\Biggr)-q^nP\Biggl(\raisebox{-10pt}{\includegraphics[height = .35in]{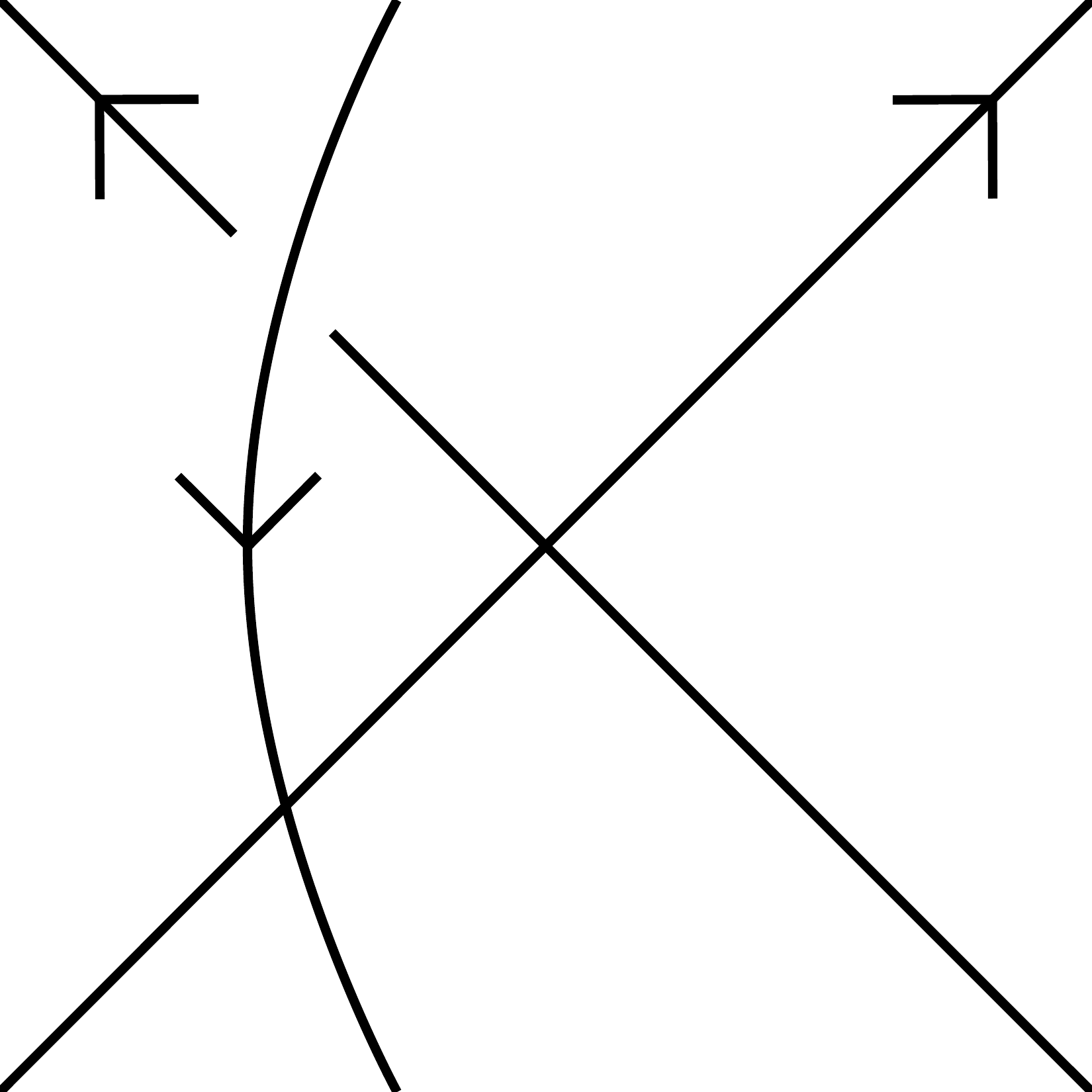}}\Biggr)\\
    &=q^{n-1}\left[q^{1-n}P\Biggl(\raisebox{-10pt}{\includegraphics[height = .35in]{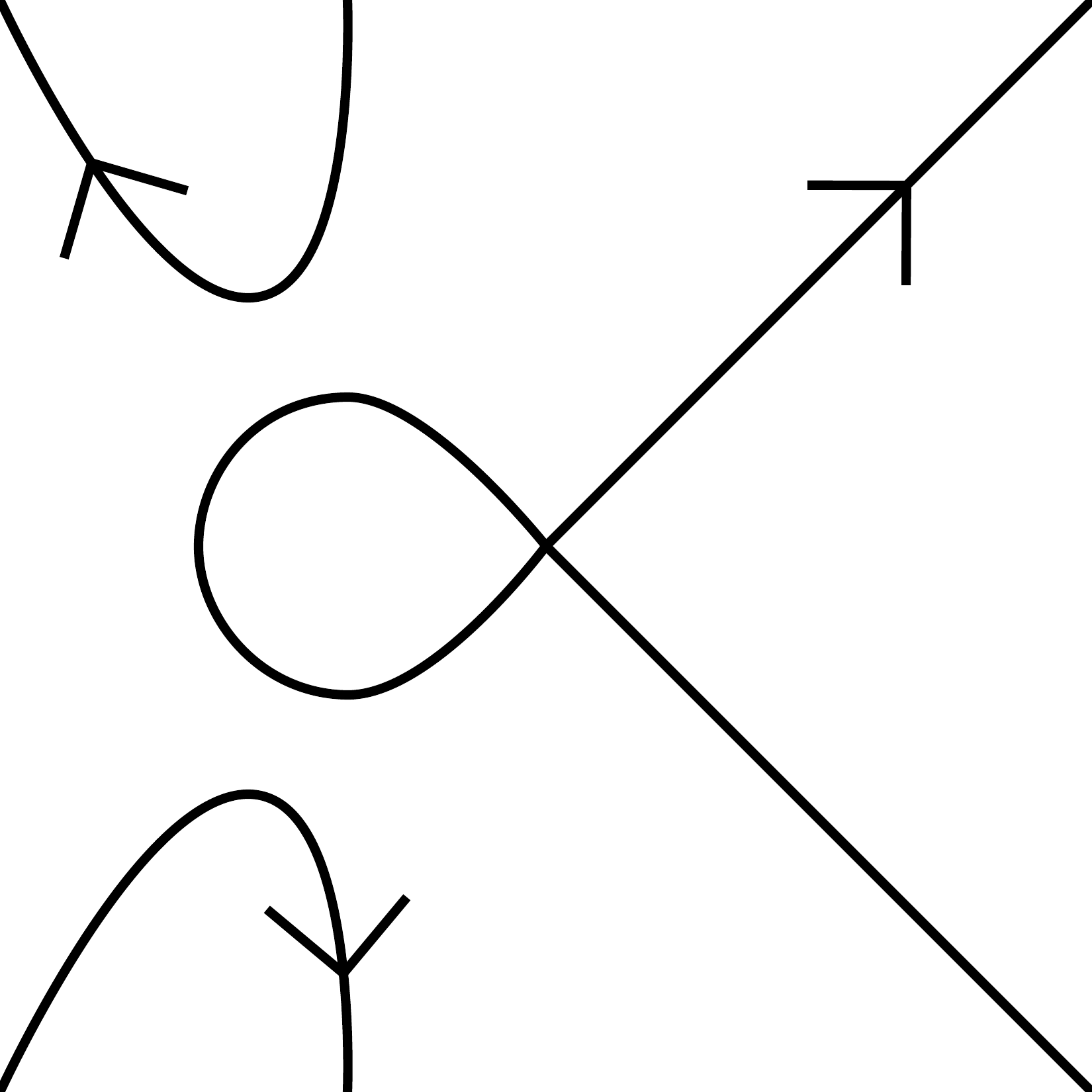}}\Biggr)-q^{-n}P\Biggl(\raisebox{-10pt}{\includegraphics[height = .35in]{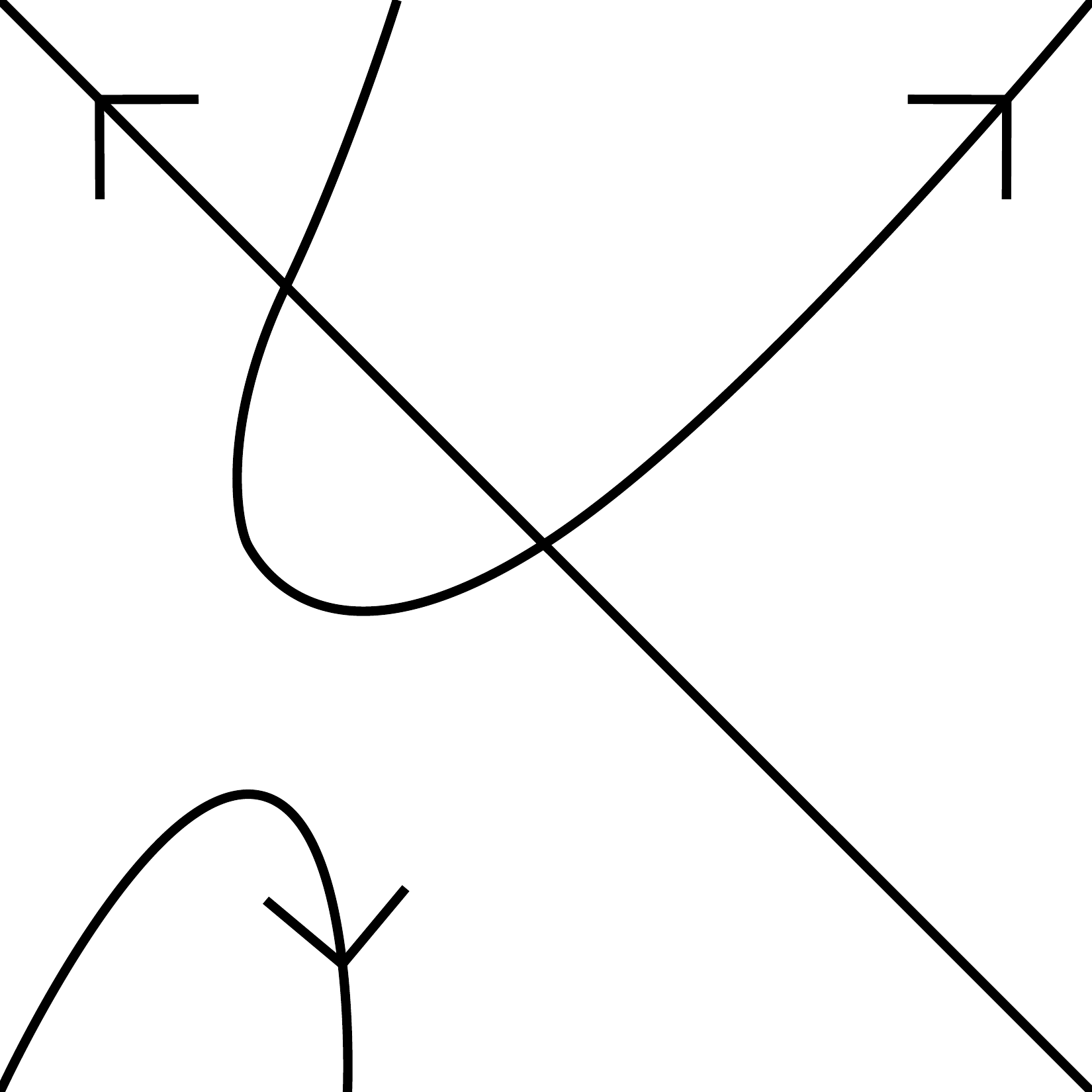}}\Biggr)\right]\\&-q^{n}\left[q^{1-n}P\Biggl(\raisebox{-10pt}{\includegraphics[height = .35in]{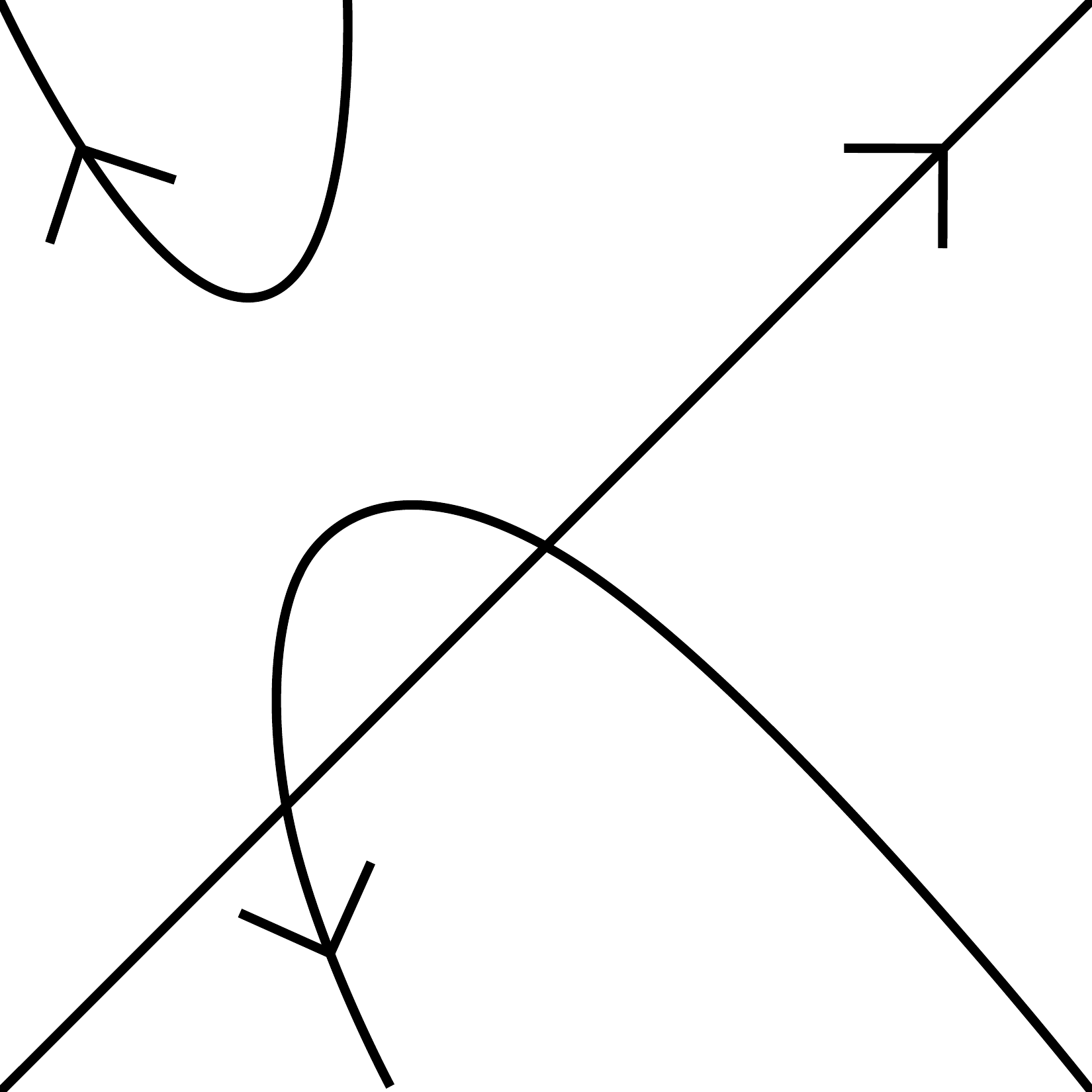}}\Biggr)-q^{-n}P\Biggl(\raisebox{-10pt}{\includegraphics[height = .35in]{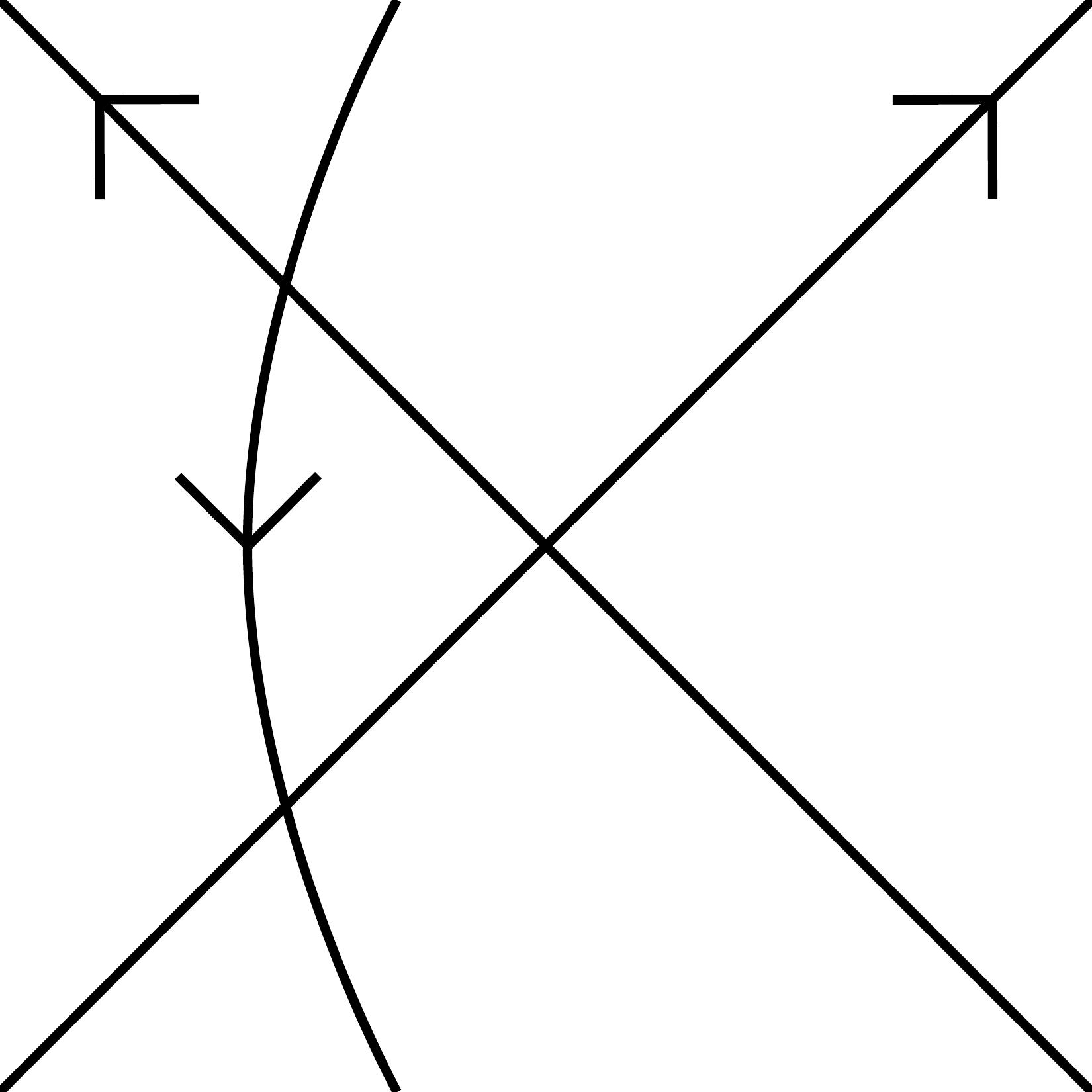}}\Biggr)\right]\\
    &=([n-1]-q^{-1}[n-2]-q[n-2])P\Biggl(\raisebox{-10pt}{\includegraphics[height = .35in]{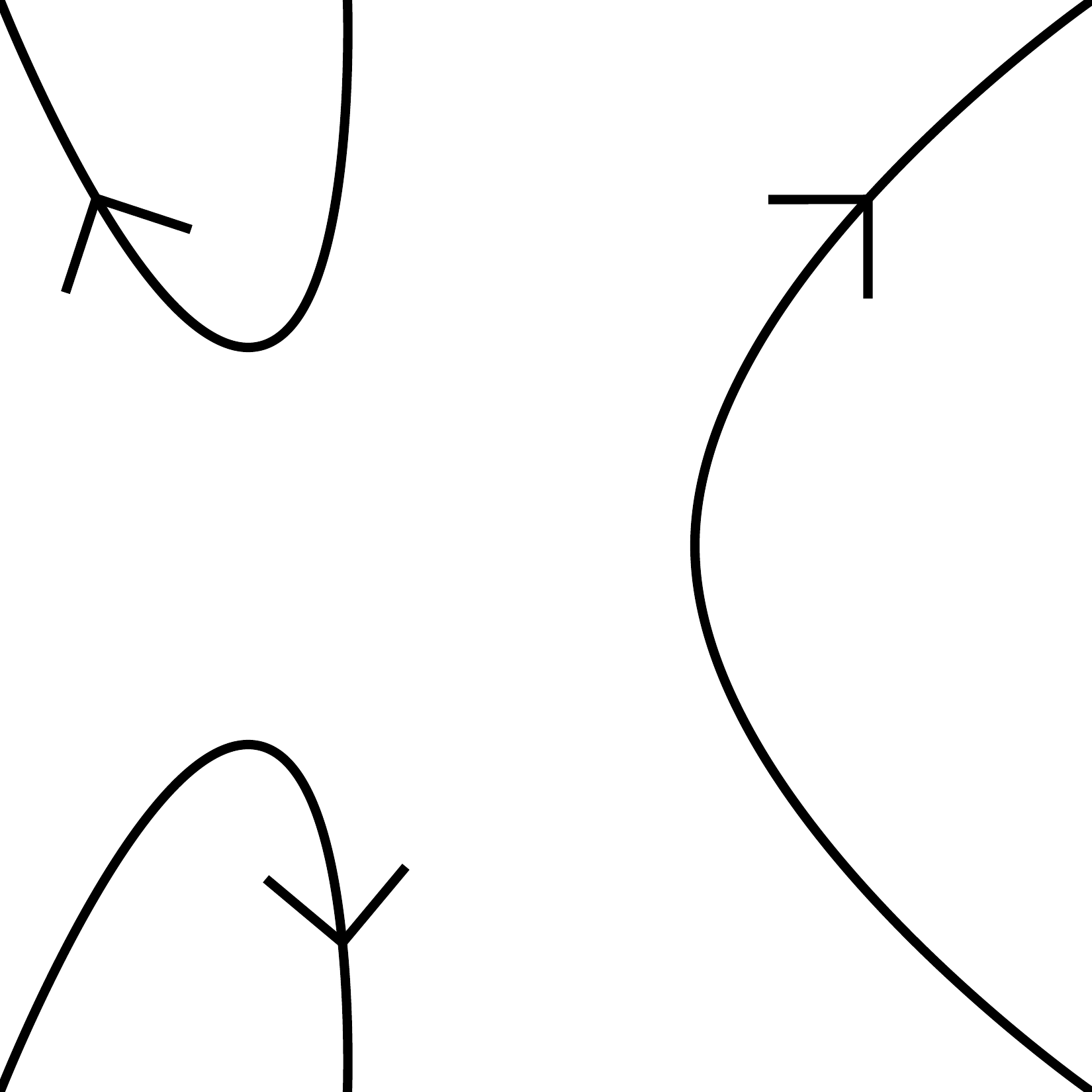}}\Biggr)-q^{-1}P\Biggl(\raisebox{-10pt}{\includegraphics[height = .35in]{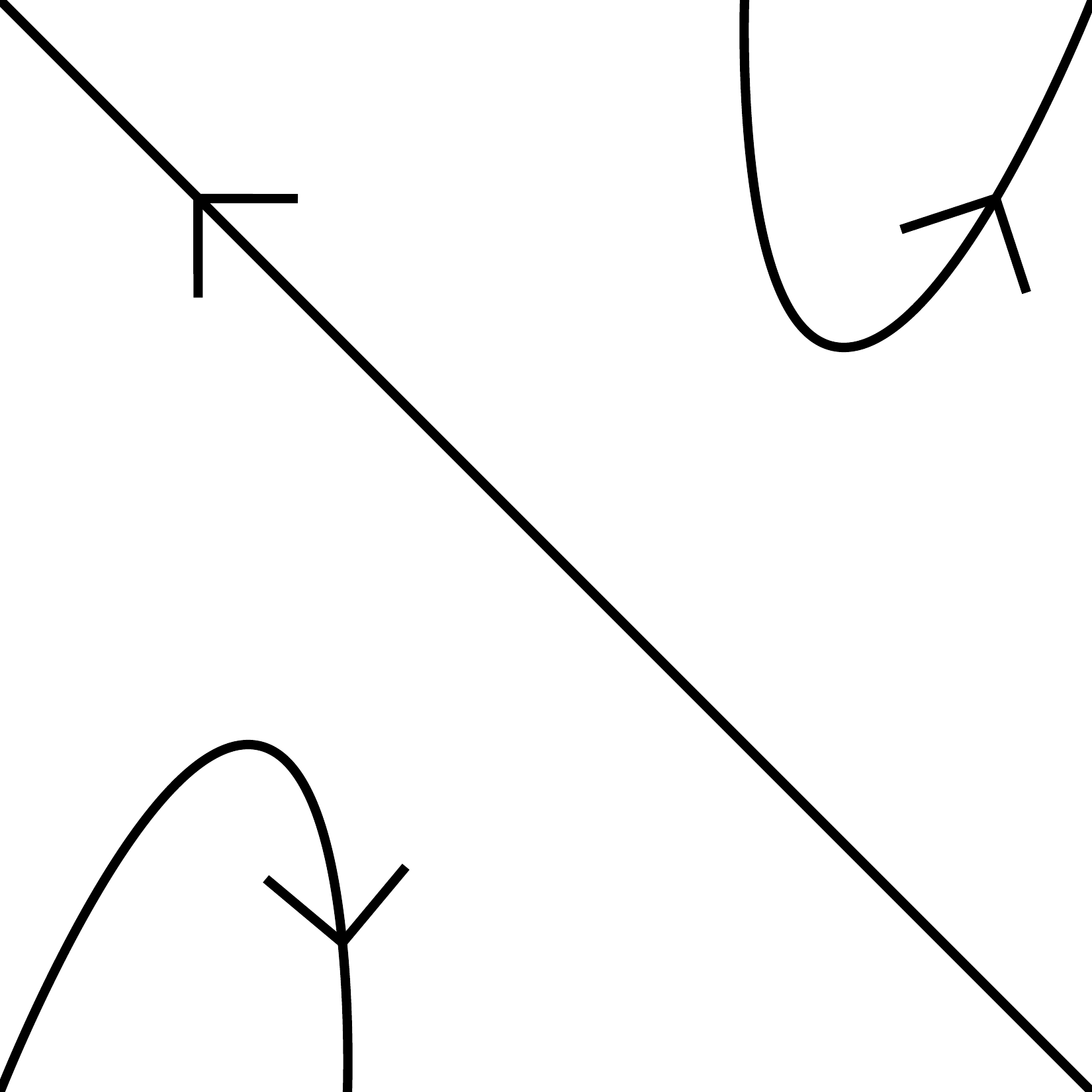}}\Biggr)\\&-qP\Biggl(\raisebox{-10pt}{\includegraphics[height = .35in]{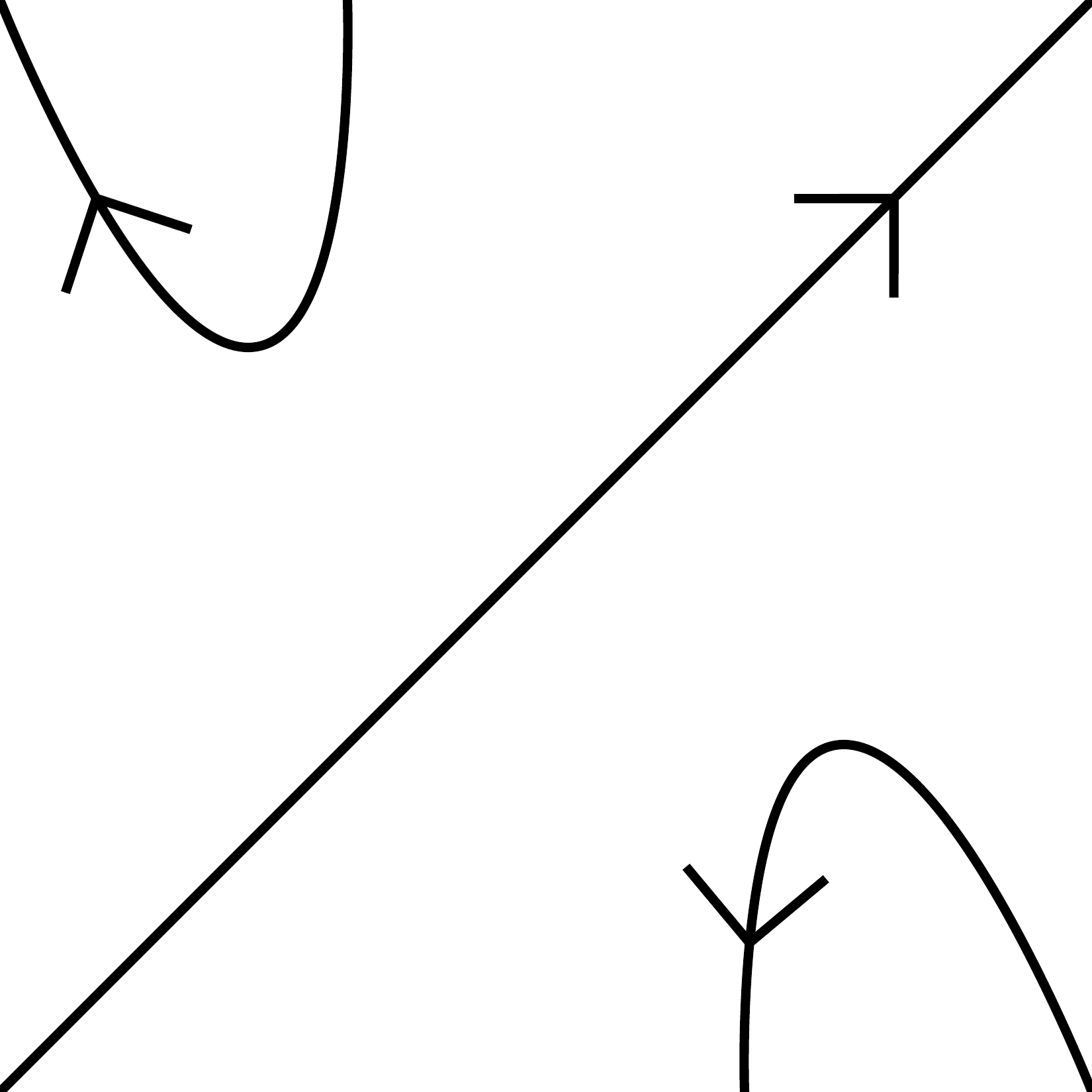}}\Biggr)+P\Biggl(\raisebox{-10pt}{\includegraphics[height = .35in]{O4-Proof/O4a-SR6.pdf}}\Biggr)\\
    &=- [n-3]P\Biggl(\raisebox{-10pt}{\includegraphics[height = .35in]{O4-Proof/O4a-SR7.pdf}}\Biggr)+P\Biggl(\raisebox{-10pt}{\includegraphics[height = .35in]{O4-Proof/O4a-SR6.pdf}}\Biggr)-q^{-1}P\Biggl(\raisebox{-10pt}{\includegraphics[height = .35in]{O4-Proof/O4a-SR8.pdf}}\Biggr)-qP\Biggl(\raisebox{-10pt}{\includegraphics[height = .35in]{O4-Proof/O4a-SR9.pdf}}\Biggr).
\end{align*}
In the third equality above, we employed the graphical relation involving a loop and the graphical relation with an oriented bigon.
Using a similar approach, we obtain the following steps for the evaluation of the left hand-side of the move $\Omega 4a$:

\begin{align*}
    P\Biggl(\raisebox{-10pt}{\includegraphics[height = .35in]{Generating-Sets/O4a-1.pdf}}\Biggr)&=q^{1-n}P\Biggl(\raisebox{-10pt}{\includegraphics[height = .35in]{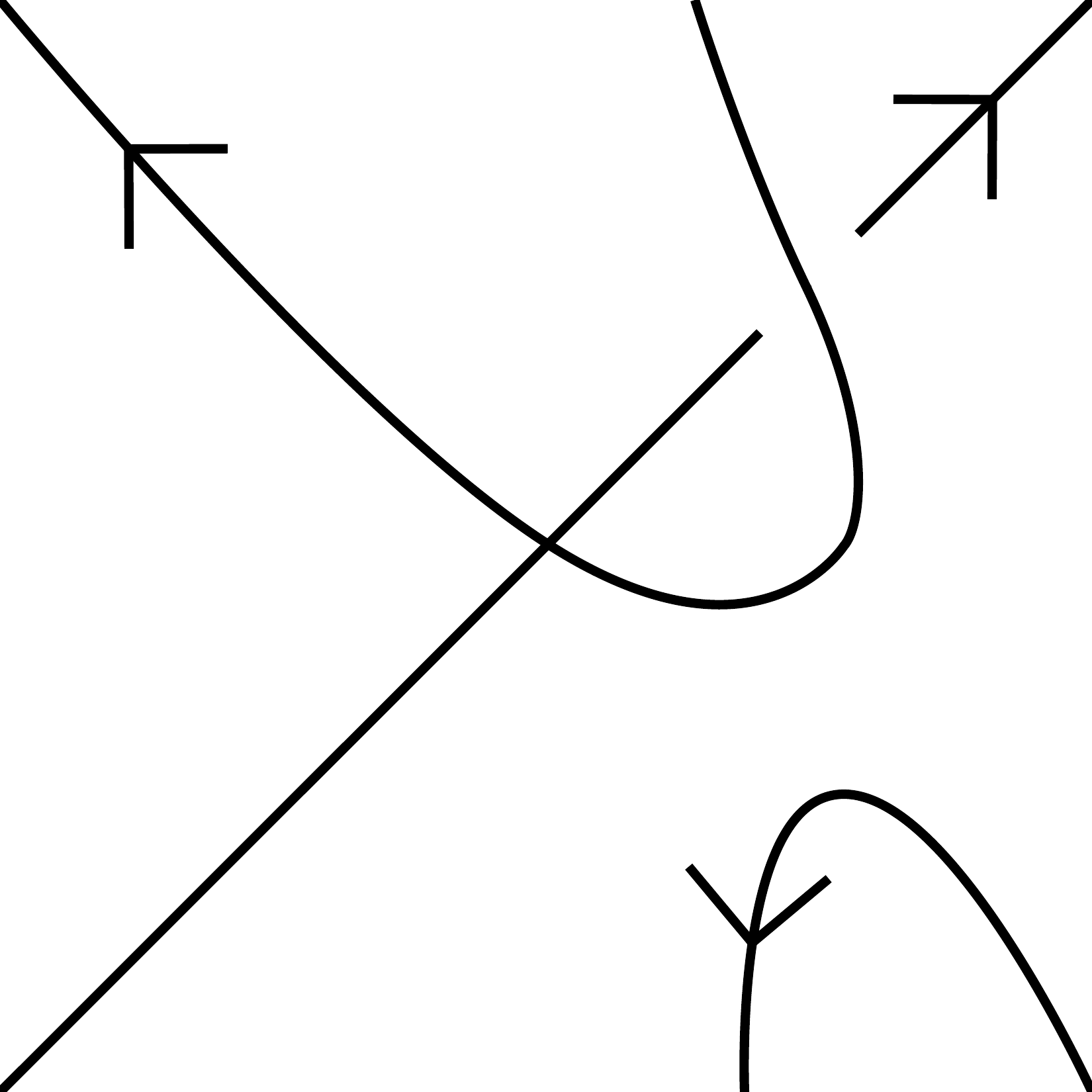}}\Biggr)-q^{-n}P\Biggl(\raisebox{-10pt}{\includegraphics[height = .35in]{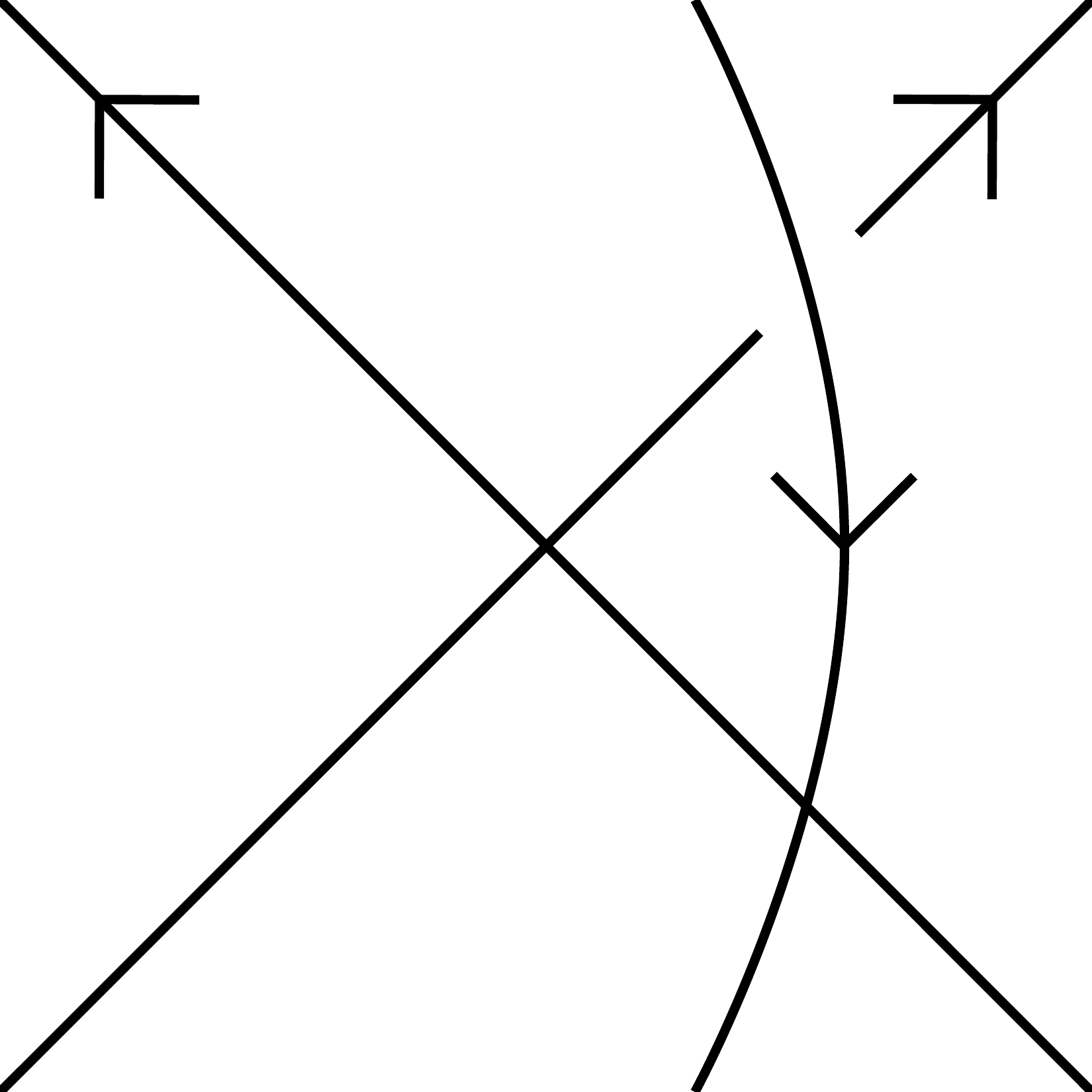}}\Biggr)\\
    &=q^{1-n}\left[q^{n-1}P\Biggl(\raisebox{-10pt}{\includegraphics[height = .35in]{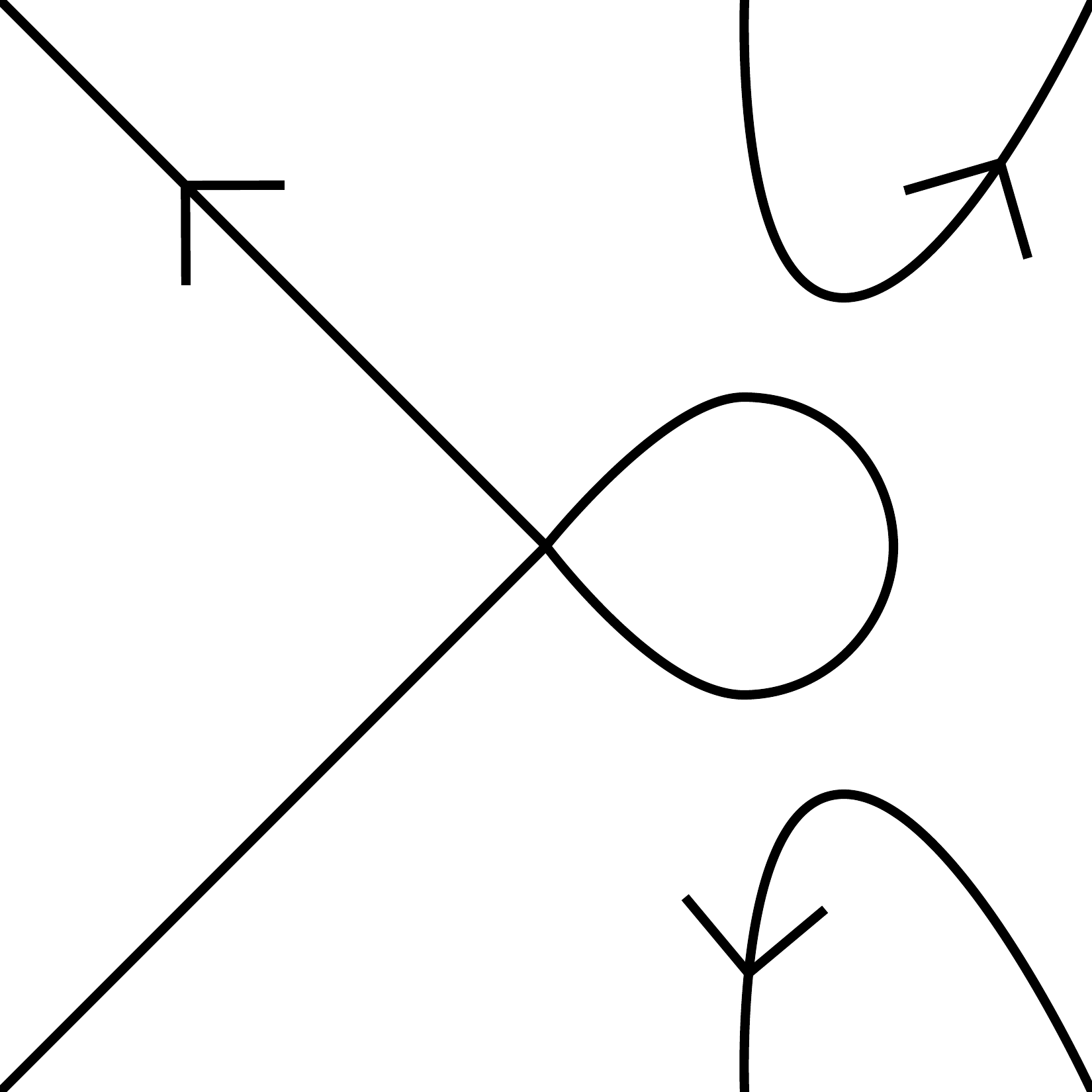}}\Biggr)-q^{n}P\Biggl(\raisebox{-10pt}{\includegraphics[height = .35in]{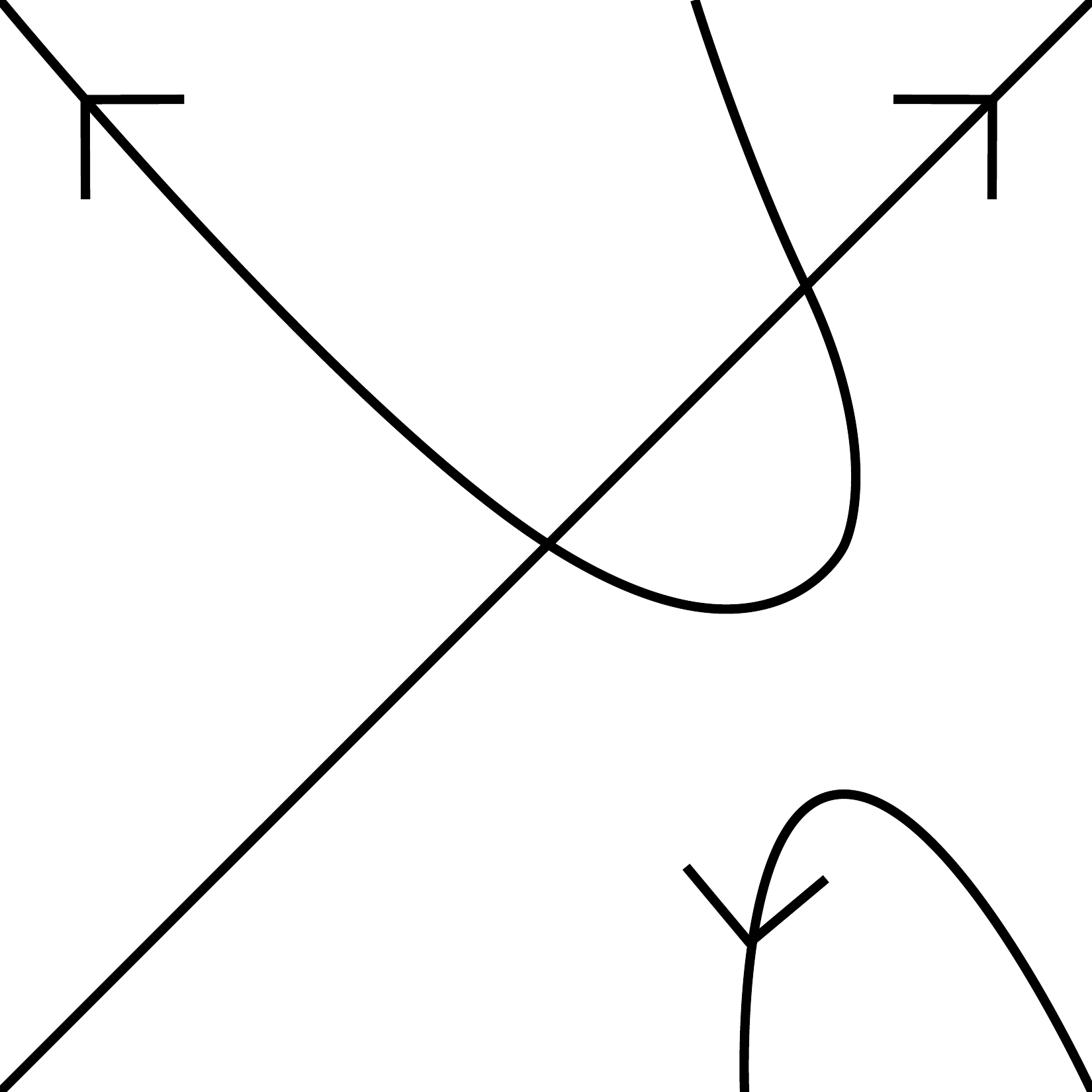}}\Biggr)\right]\\&-q^{-n}\left[q^{n-1}P\Biggl(\raisebox{-10pt}{\includegraphics[height = .35in]{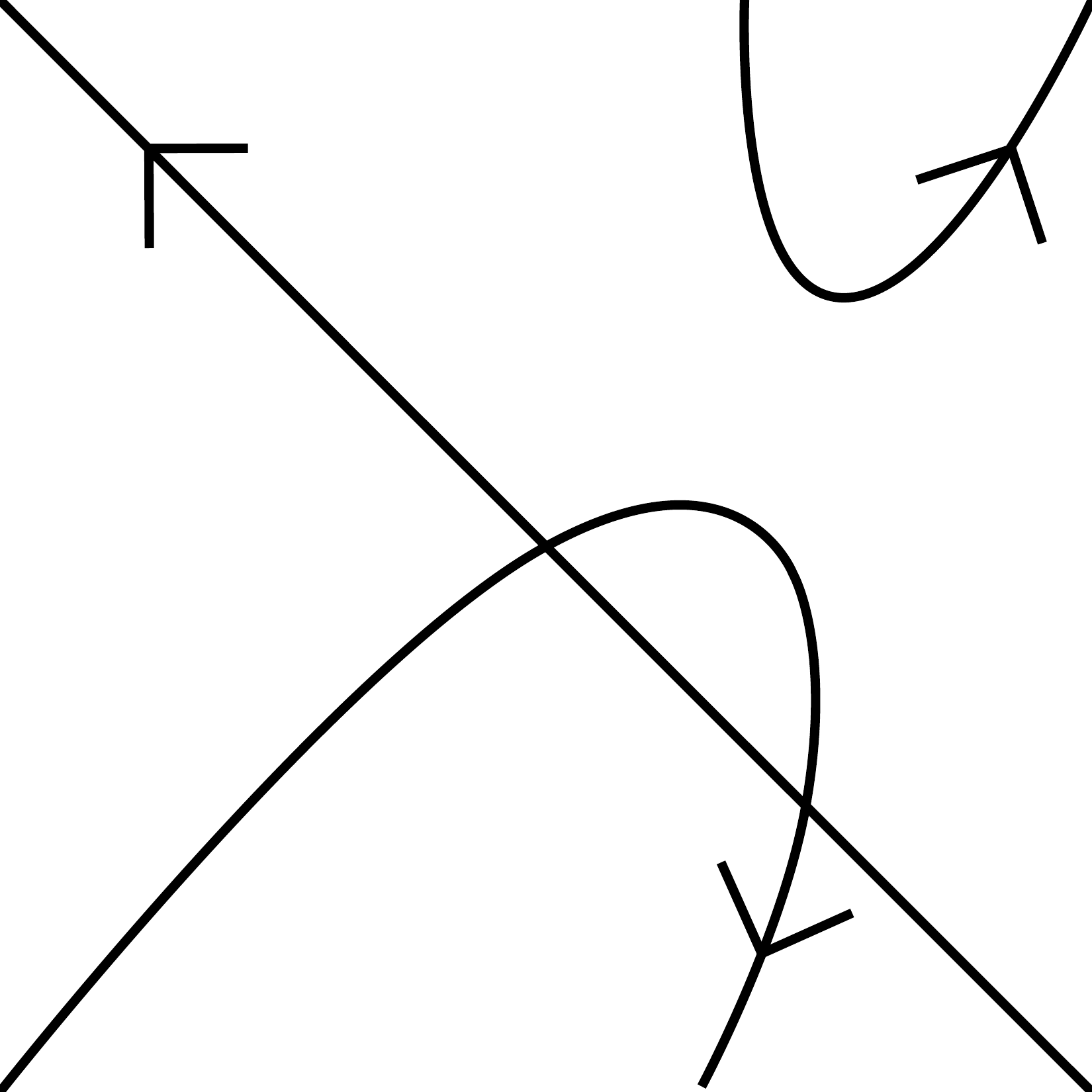}}\Biggr)-q^{n}P\Biggl(\raisebox{-10pt}{\includegraphics[height = .35in]{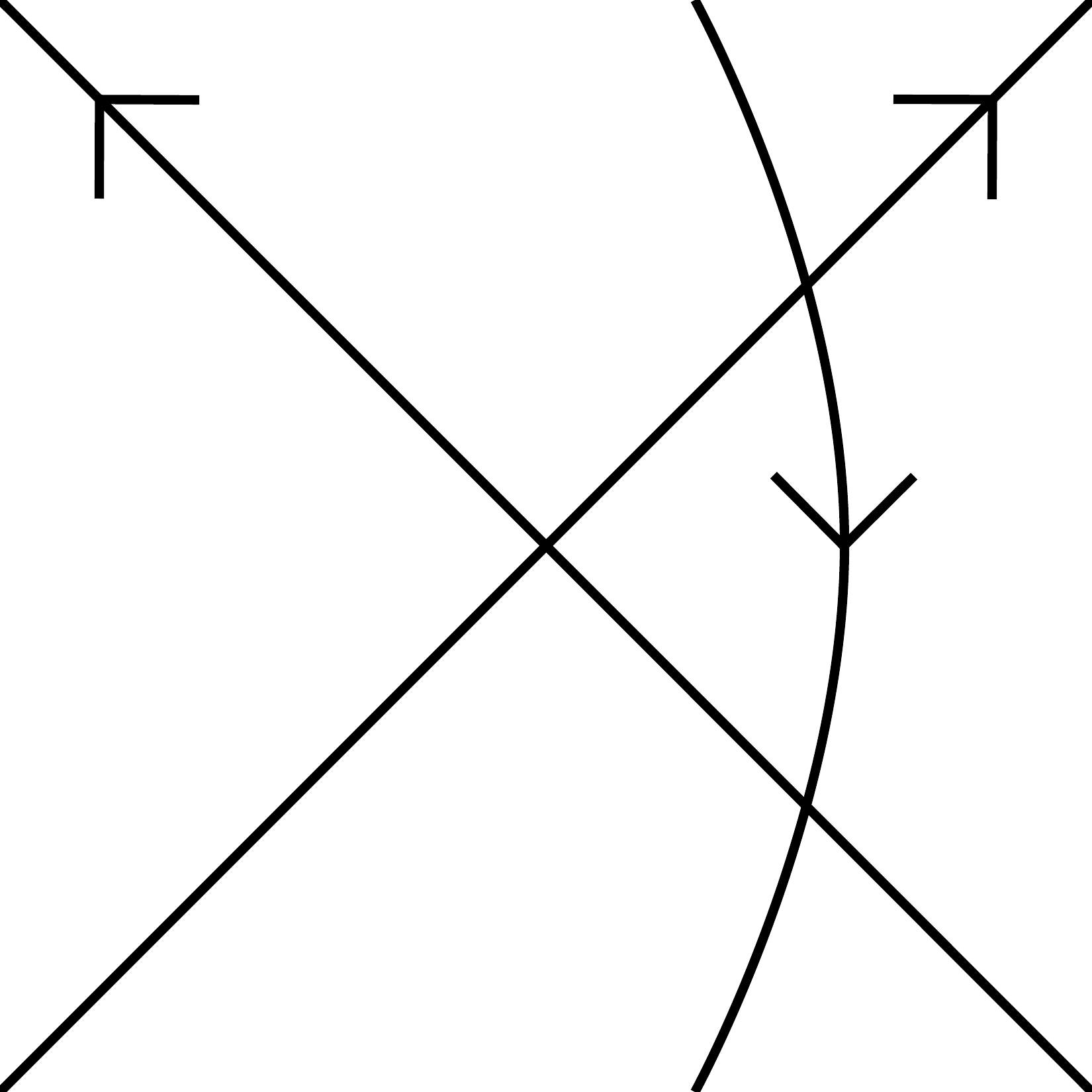}}\Biggr)\right]\\
    &=([n-1]-q[n-2]-q^{-1}[n-2])P\Biggl(\raisebox{-10pt}{\includegraphics[height = .35in]{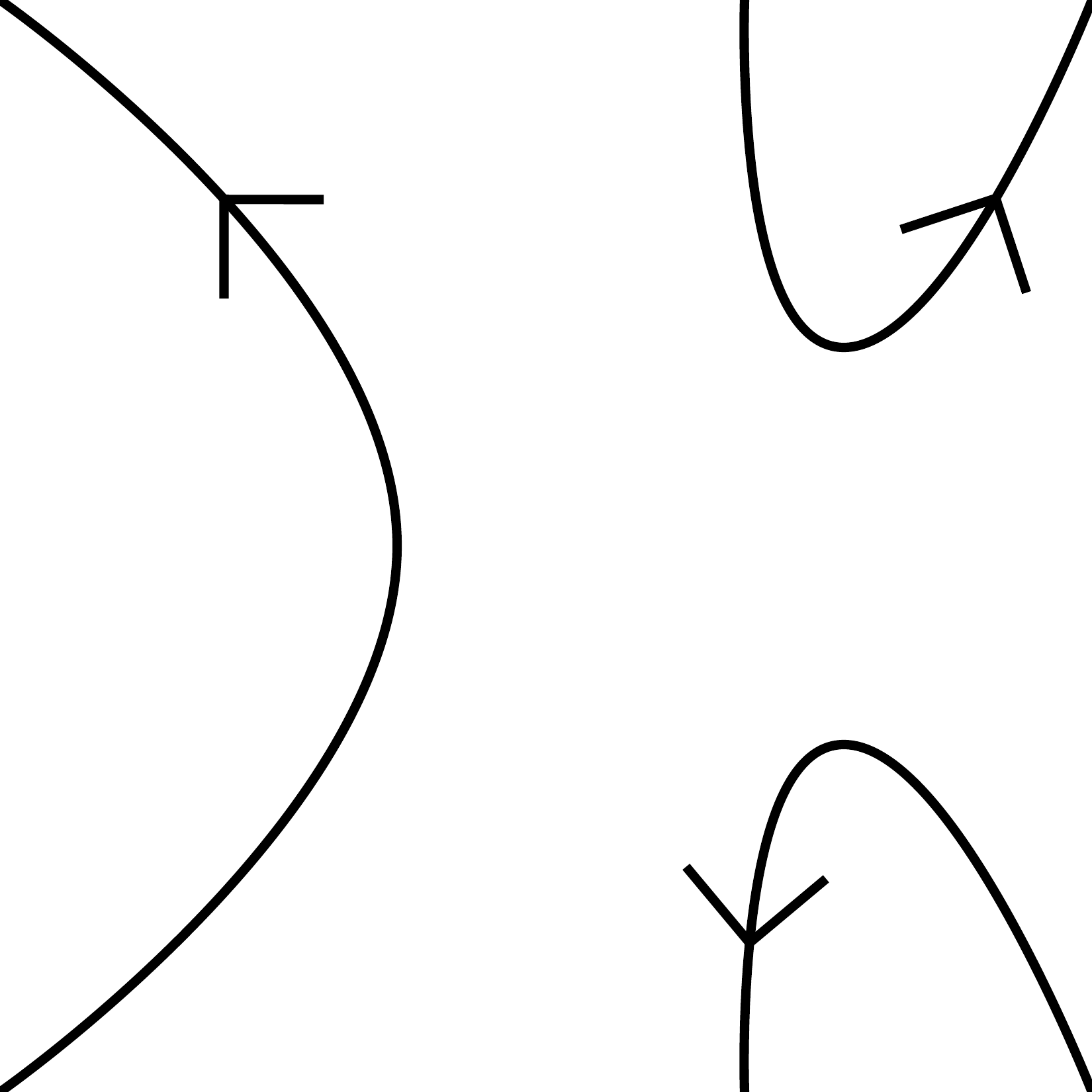}}\Biggr)-qP\Biggl(\raisebox{-10pt}{\includegraphics[height = .35in]{O4-Proof/O4a-SR9.pdf}}\Biggr)\\&-q^{-1}P\Biggl(\raisebox{-10pt}{\includegraphics[height = .35in]{O4-Proof/O4a-SR8.pdf}}\Biggr)+P\Biggl(\raisebox{-10pt}{\includegraphics[height = .35in]{O4-Proof/O4a-SR15.pdf}}\Biggr)\\
     &= -[n-3]P\Biggl(\raisebox{-10pt}{\includegraphics[height = .35in]{O4-Proof/O4a-SR16.pdf}}\Biggr)+P\Biggl(\raisebox{-10pt}{\includegraphics[height = .35in]{O4-Proof/O4a-SR15.pdf}}\Biggr)-q^{-1}P\Biggl(\raisebox{-10pt}{\includegraphics[height = .35in]{O4-Proof/O4a-SR8.pdf}}\Biggr)-qP\Biggl(\raisebox{-10pt}{\includegraphics[height = .35in]{O4-Proof/O4a-SR9.pdf}}\Biggr).
\end{align*}
Since by the last graphical relation in Figure \ref{fig:GSR} we have that 
\[- [n-3]P\Biggl(\raisebox{-10pt}{\includegraphics[height = .35in]{O4-Proof/O4a-SR7.pdf}}\Biggr)+P\Biggl(\raisebox{-10pt}{\includegraphics[height = .35in]{O4-Proof/O4a-SR6.pdf}}\Biggr) = -[n-3]P\Biggl(\raisebox{-10pt}{\includegraphics[height = .35in]{O4-Proof/O4a-SR16.pdf}}\Biggr)+P\Biggl(\raisebox{-10pt}{\includegraphics[height = .35in]{O4-Proof/O4a-SR15.pdf}}\Biggr),\] 
we obtain the following equality:
\[P\Biggl(\raisebox{-10pt}{\includegraphics[height = .35in]{Generating-Sets/O4a-2.pdf}}\Biggr) = P\Biggl(\raisebox{-10pt}{\includegraphics[height = .35in]{Generating-Sets/O4a-1.pdf}}\Biggr),\]
and therefore $P$ is invariant under the move $\Omega 4a$.

We will now use this result to prove that $P$ is invariant under the move $\Omega 3a$. Starting with the left hand-side of the move and applying the skein relation to resolve the middle positive crossing, we obtain:

\begin{align*}
    P\Biggl(\raisebox{-10pt}{\includegraphics[height = .35in]{Generating-Sets/O3a-1.pdf}}\Biggr)&=q^{n-1}P\Biggl(\raisebox{-10pt}{\includegraphics[height = .35in]{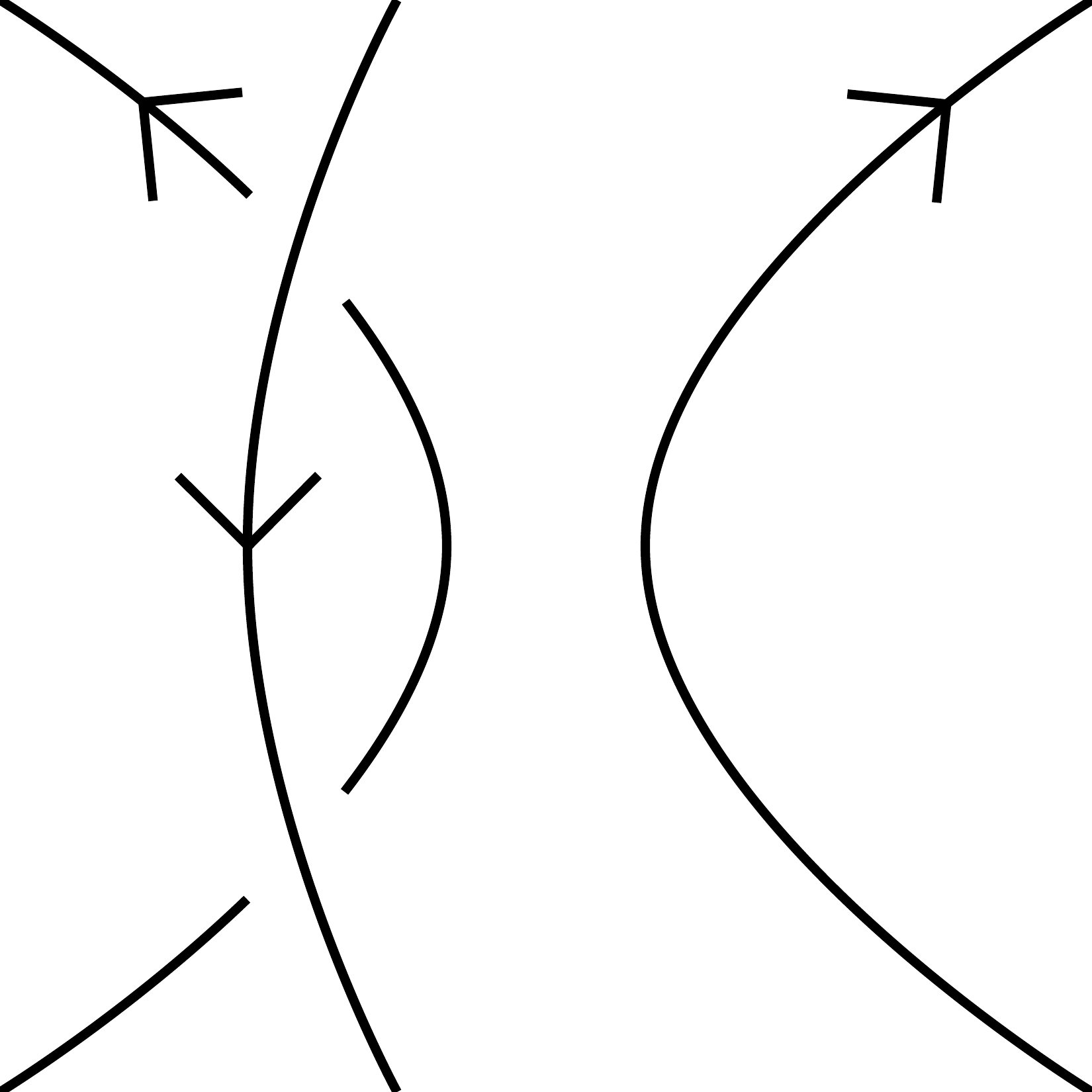}}\Biggr)-q^nP\Biggl(\raisebox{-10pt}{\includegraphics[height = .35in]{Generating-Sets/O4a-2.pdf}}\Biggr)\\
    &=q^{n-1}\left[q^{1-n}P\Biggl(\raisebox{-10pt}{\includegraphics[height = .35in]{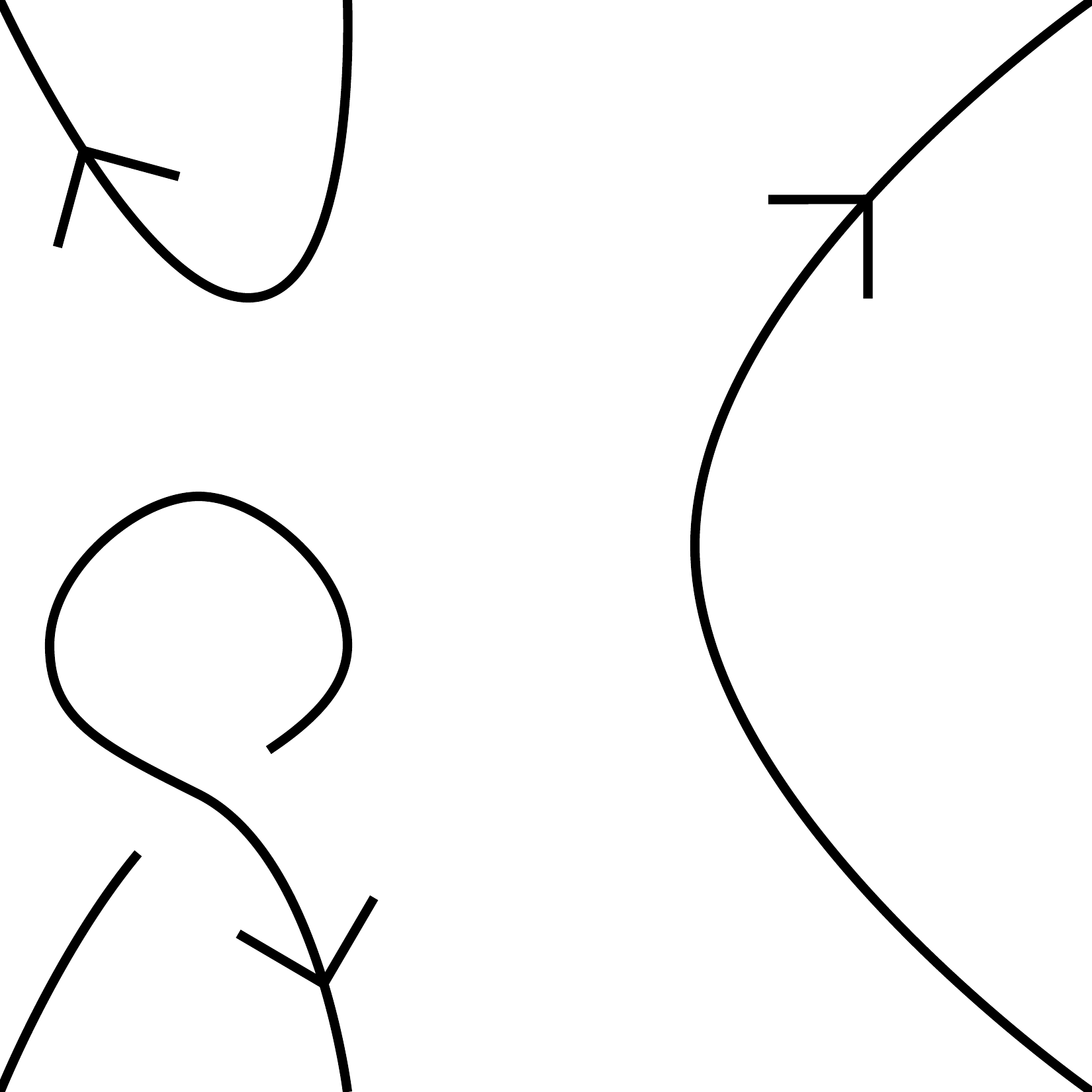}}\Biggr)-q^{-n}P\Biggl(\raisebox{-10pt}{\includegraphics[height = .35in]{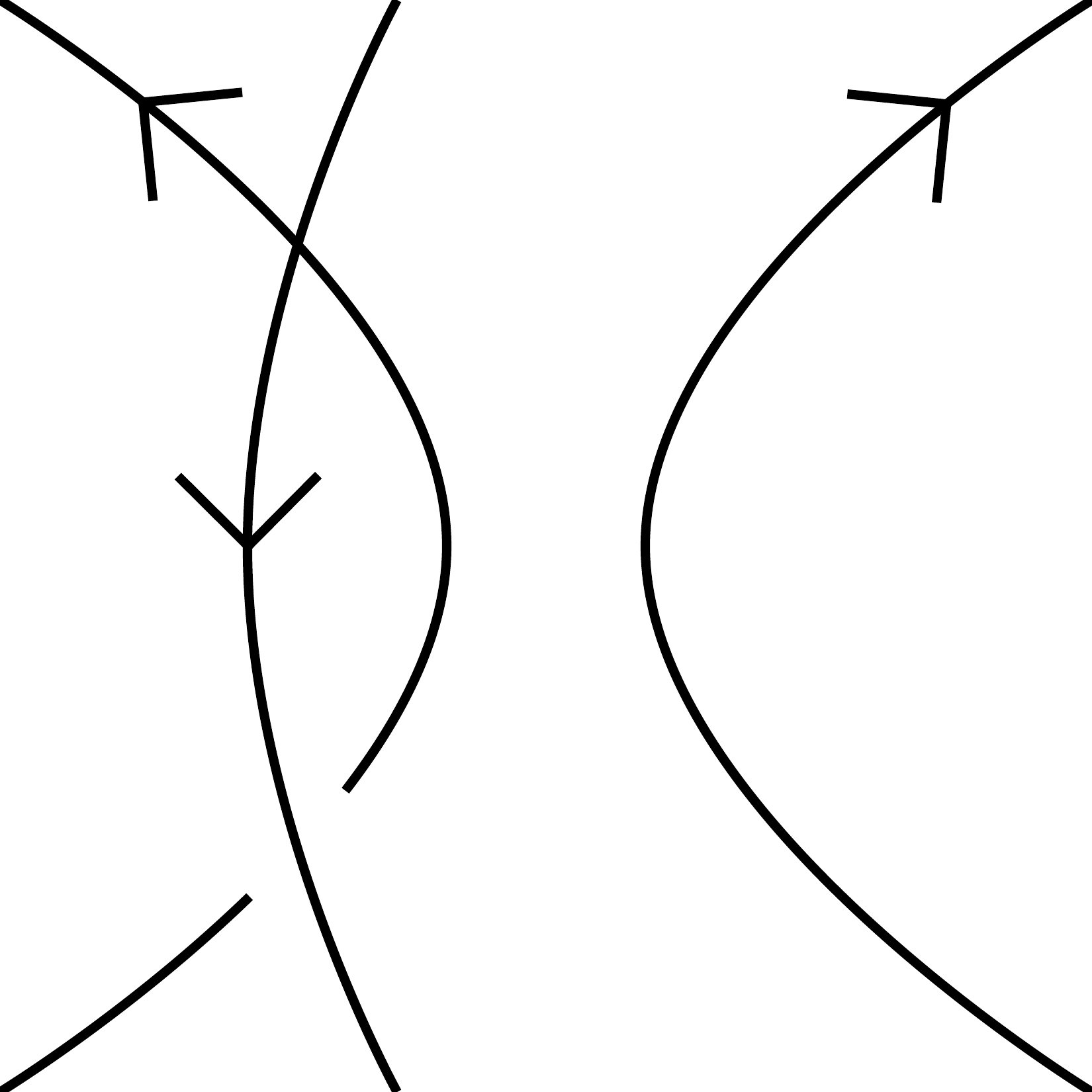}}\Biggr)\right]-q^nP\Biggl(\raisebox{-10pt}{\includegraphics[height = .35in]{Generating-Sets/O4a-2.pdf}}\Biggr)\\
    &=P\Biggl(\raisebox{-10pt}{\includegraphics[height = .35in]{O4-Proof/O4a-SR7.pdf}}\Biggr)-q^{-1}\left[q^{n-1}P\Biggl(\raisebox{-10pt}{\includegraphics[height = .35in]{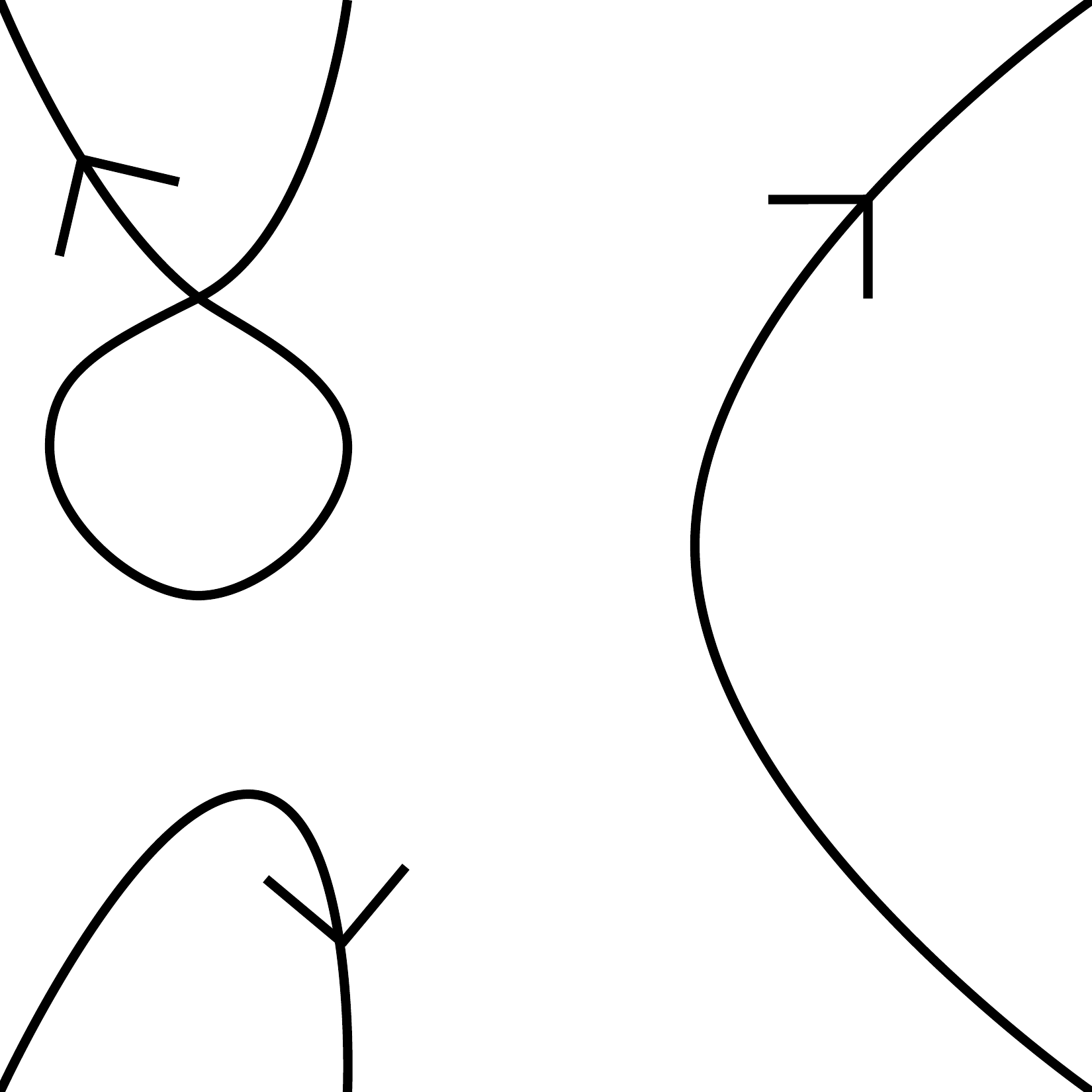}}\Biggr)-q^{n}P\Biggl(\raisebox{-10pt}{\includegraphics[height = .35in]{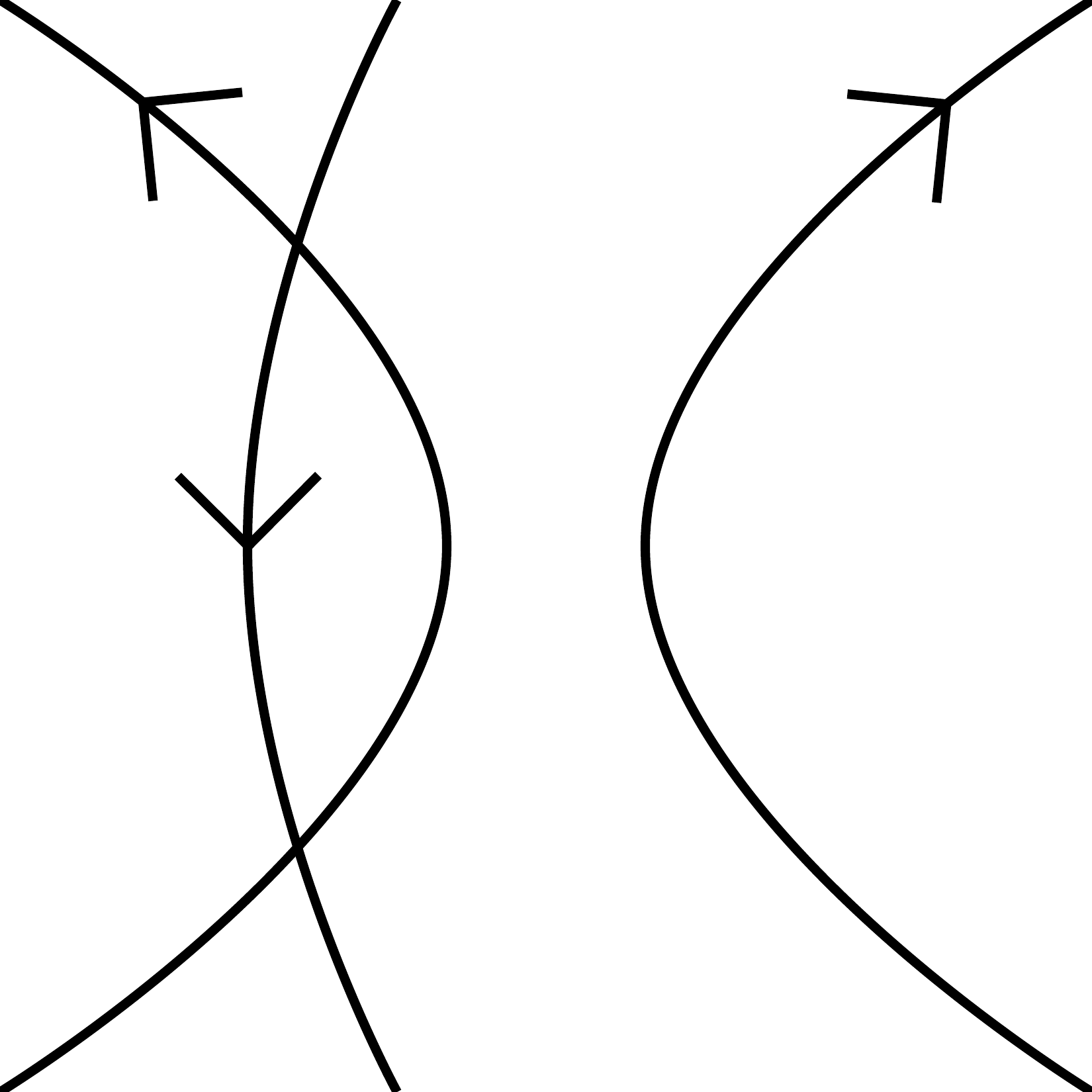}}\Biggr)\right]-q^nP\Biggl(\raisebox{-10pt}{\includegraphics[height = .35in]{Generating-Sets/O4a-2.pdf}}\Biggr)\\
    &=P\Biggl(\raisebox{-10pt}{\includegraphics[height = .35in]{O4-Proof/O4a-SR7.pdf}}\Biggr)-q^{n-2}[n-1]P\Biggl(\raisebox{-10pt}{\includegraphics[height = .35in]{O4-Proof/O4a-SR7.pdf}}\Biggr)+q^{n-1}P\Biggl(\raisebox{-10pt}{\includegraphics[height = .35in]{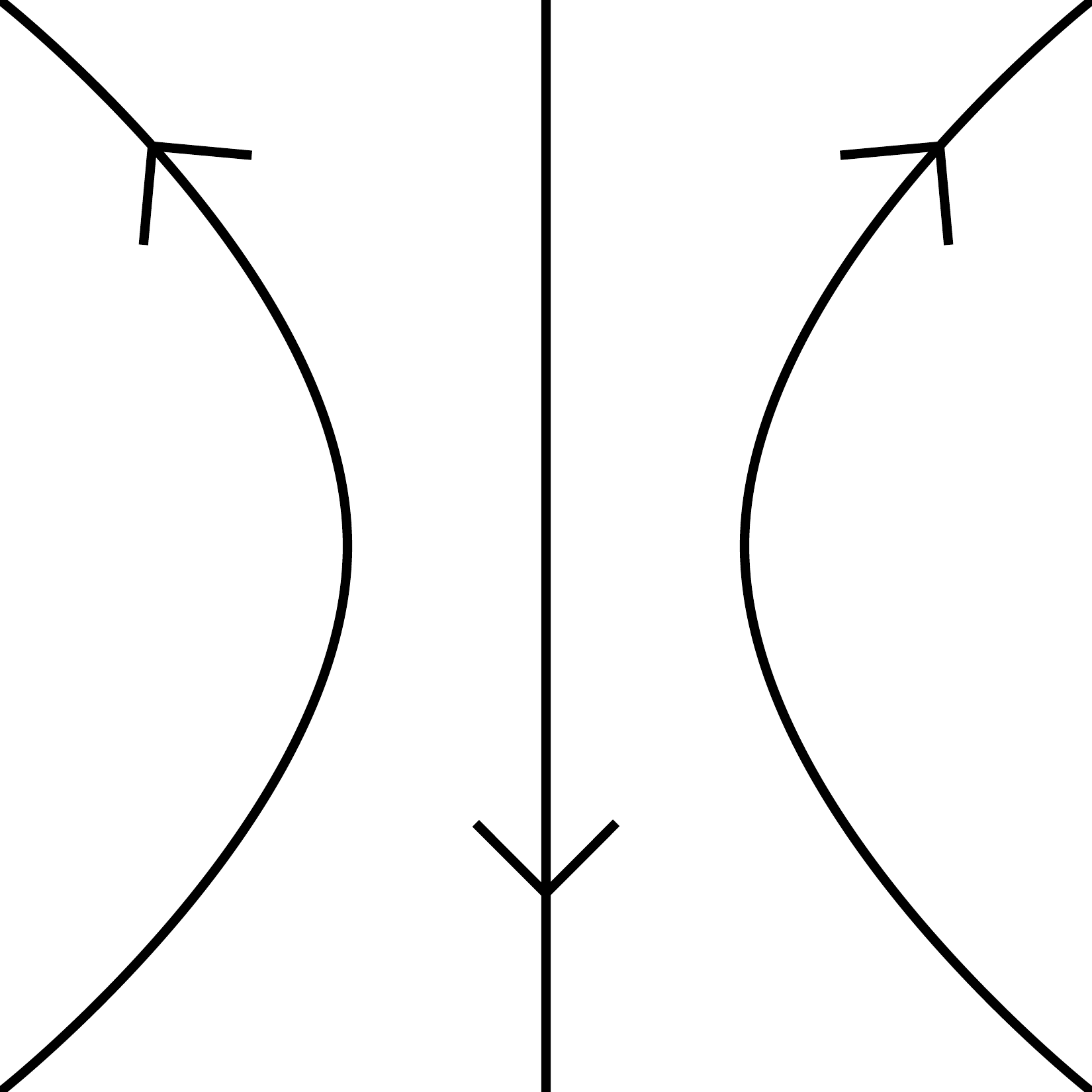}}\Biggr)\\&+q^{n-1}[n-2]P\Biggl(\raisebox{-10pt}{\includegraphics[height = .35in]{O4-Proof/O4a-SR7.pdf}}\Biggr)-q^nP\Biggl(\raisebox{-10pt}{\includegraphics[height = .35in]{Generating-Sets/O4a-2.pdf}}\Biggr)\\
    &=q^{n-1}P\Biggl(\raisebox{-10pt}{\includegraphics[height = .35in]{O3a-Proof/O3a-SR16.pdf}}\Biggr)-q^nP\Biggl(\raisebox{-10pt}{\includegraphics[height = .35in]{Generating-Sets/O4a-2.pdf}}\Biggr).
\end{align*}
In the third equality above, we used that $P$ is invariant under the move $\Omega1b$, while in the fourth equality we used the graphical relation involving a loop and that with an oriented bigon. In the last equality we used that $1-q^{n-2}[n-1]+q^{n-1}[n-2] = 0$.


We apply similar computations for the right hand-side of the move $\Omega 3a$, as shown below:

\begin{align*}
    P\Biggl(\raisebox{-10pt}{\includegraphics[height = .35in]{Generating-Sets/O3a-2.pdf}}\Biggr)&=q^{n-1}P\Biggl(\raisebox{-10pt}{\includegraphics[height = .35in]{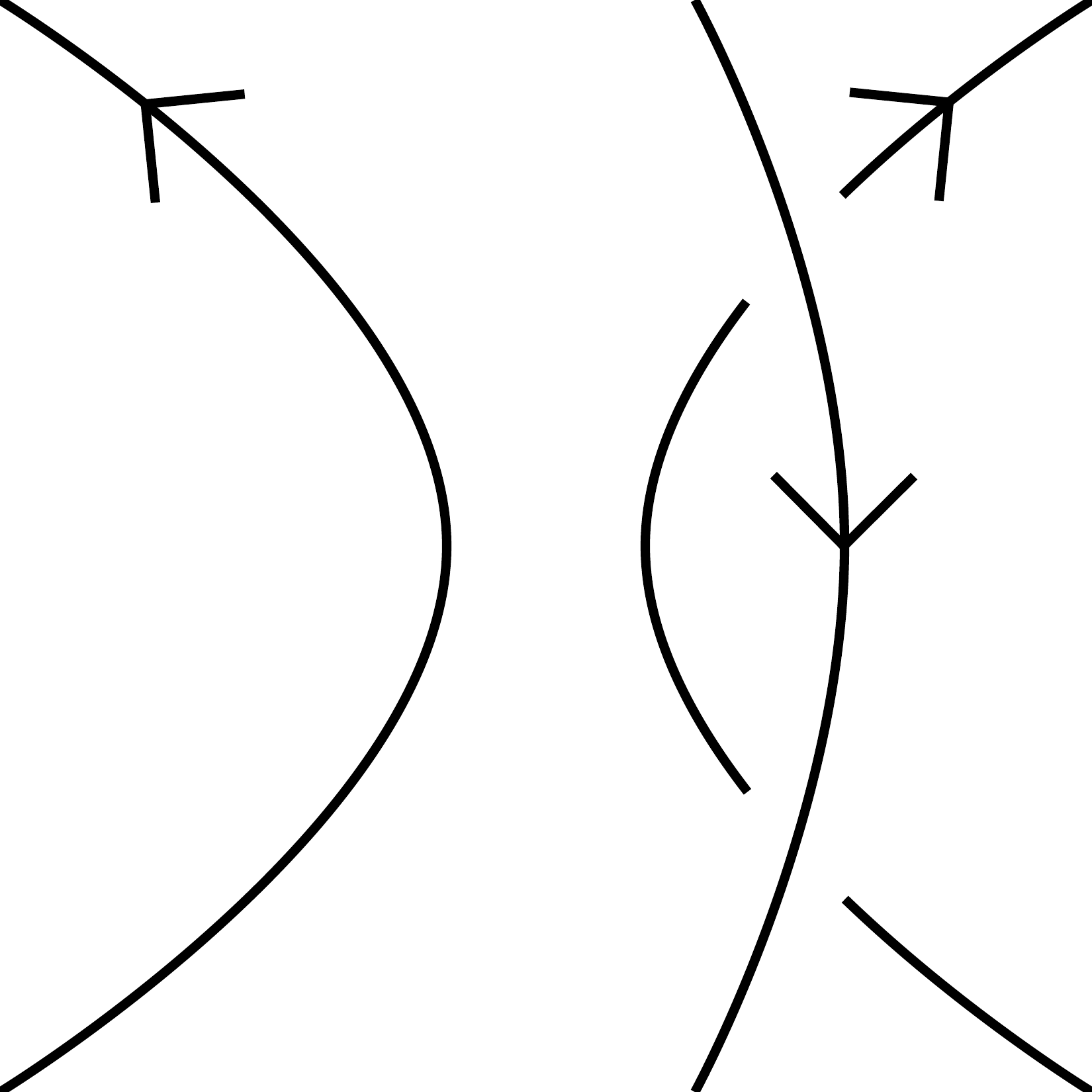}}\Biggr)-q^nP\Biggl(\raisebox{-10pt}{\includegraphics[height = .35in]{Generating-Sets/O4a-1.pdf}}\Biggr)\\
    &=q^{n-1}\left[q^{1-n}P\Biggl(\raisebox{-10pt}{\includegraphics[height = .35in]{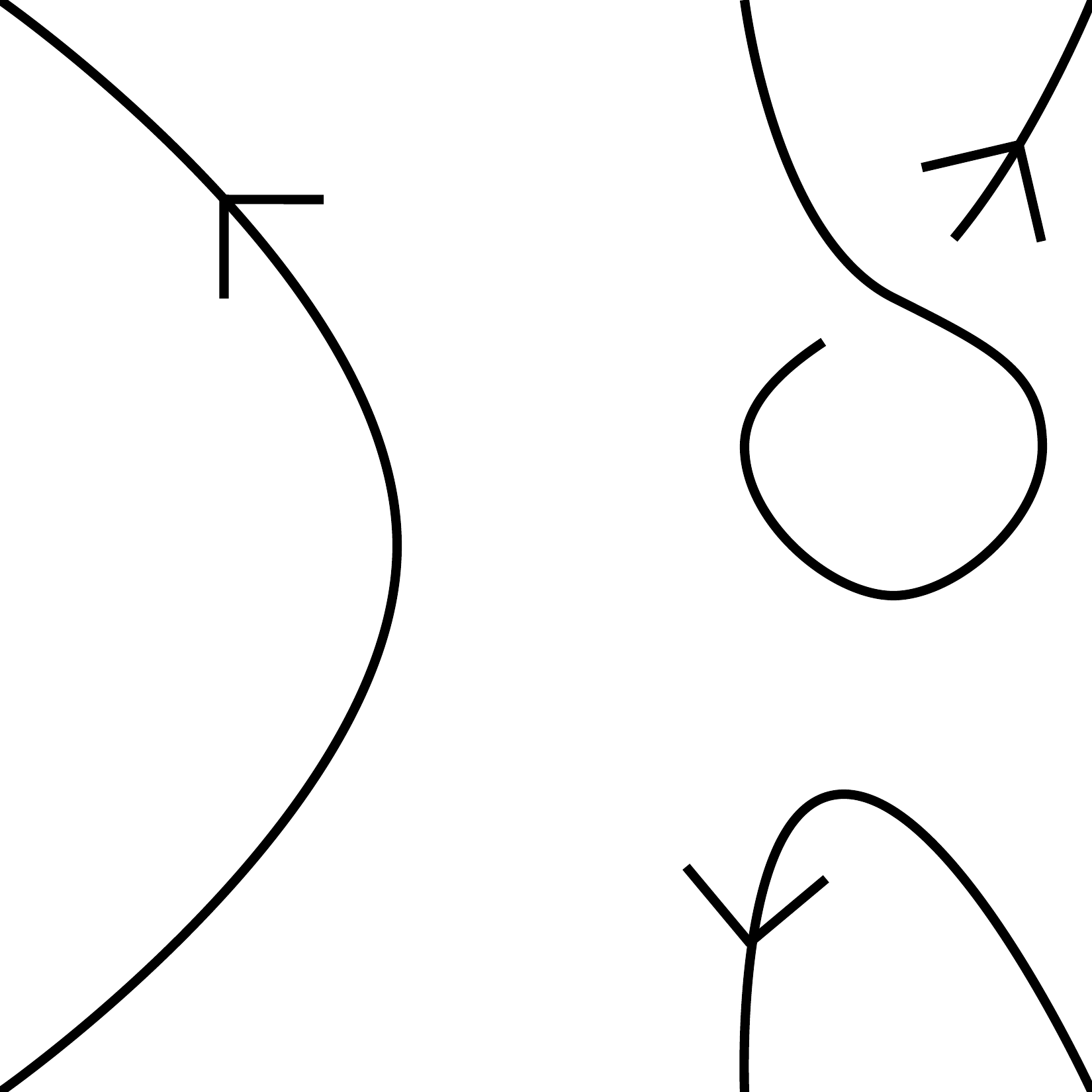}}\Biggr)-q^{-n}P\Biggl(\raisebox{-10pt}{\includegraphics[height = .35in]{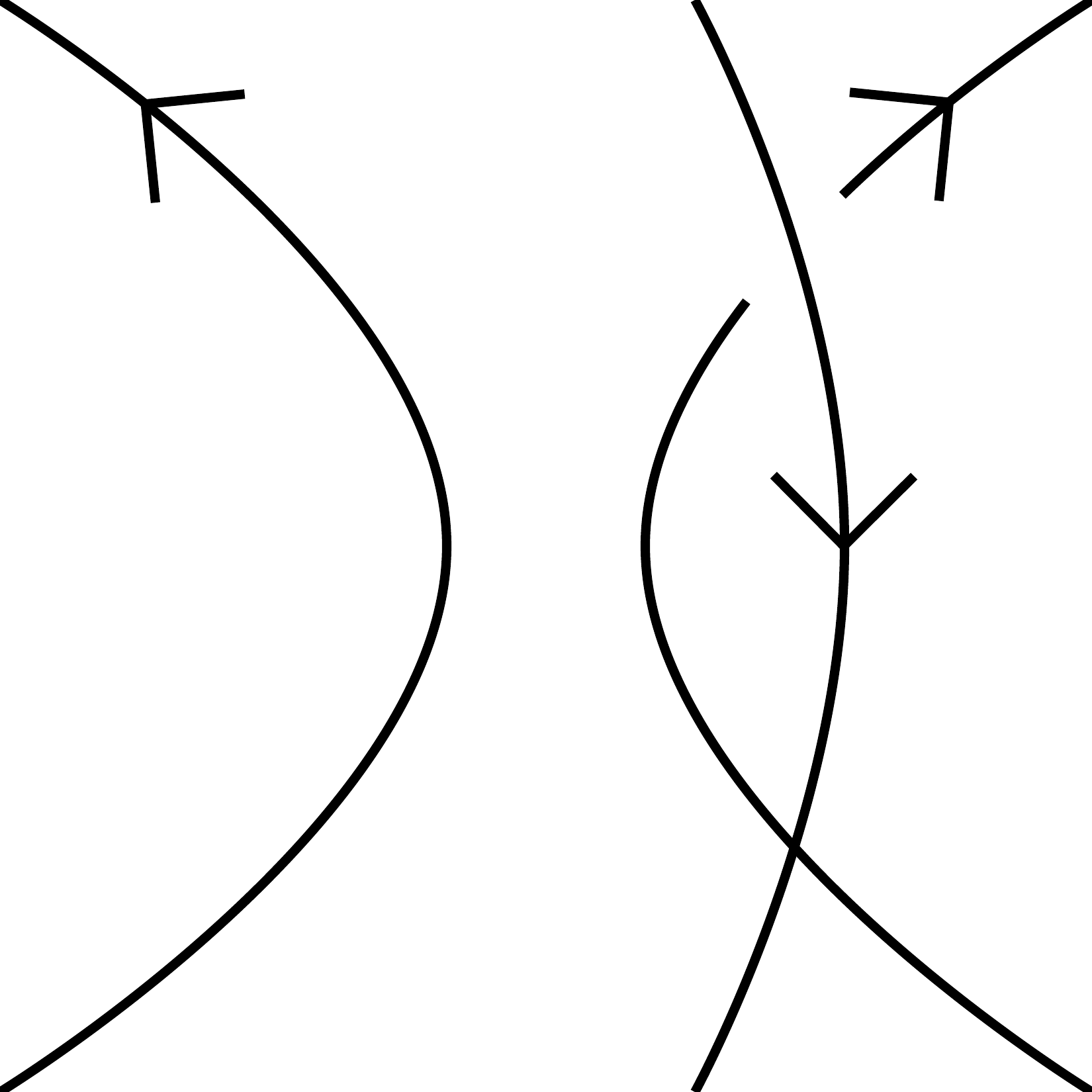}}\Biggr)\right]-q^nP\Biggl(\raisebox{-10pt}{\includegraphics[height = .35in]{Generating-Sets/O4a-1.pdf}}\Biggr)\\
    &=P\Biggl(\raisebox{-10pt}{\includegraphics[height = .35in]{O4-Proof/O4a-SR16.pdf}}\Biggr)-q^{-1}\left[q^{n-1}P\Biggl(\raisebox{-10pt}{\includegraphics[height = .35in]{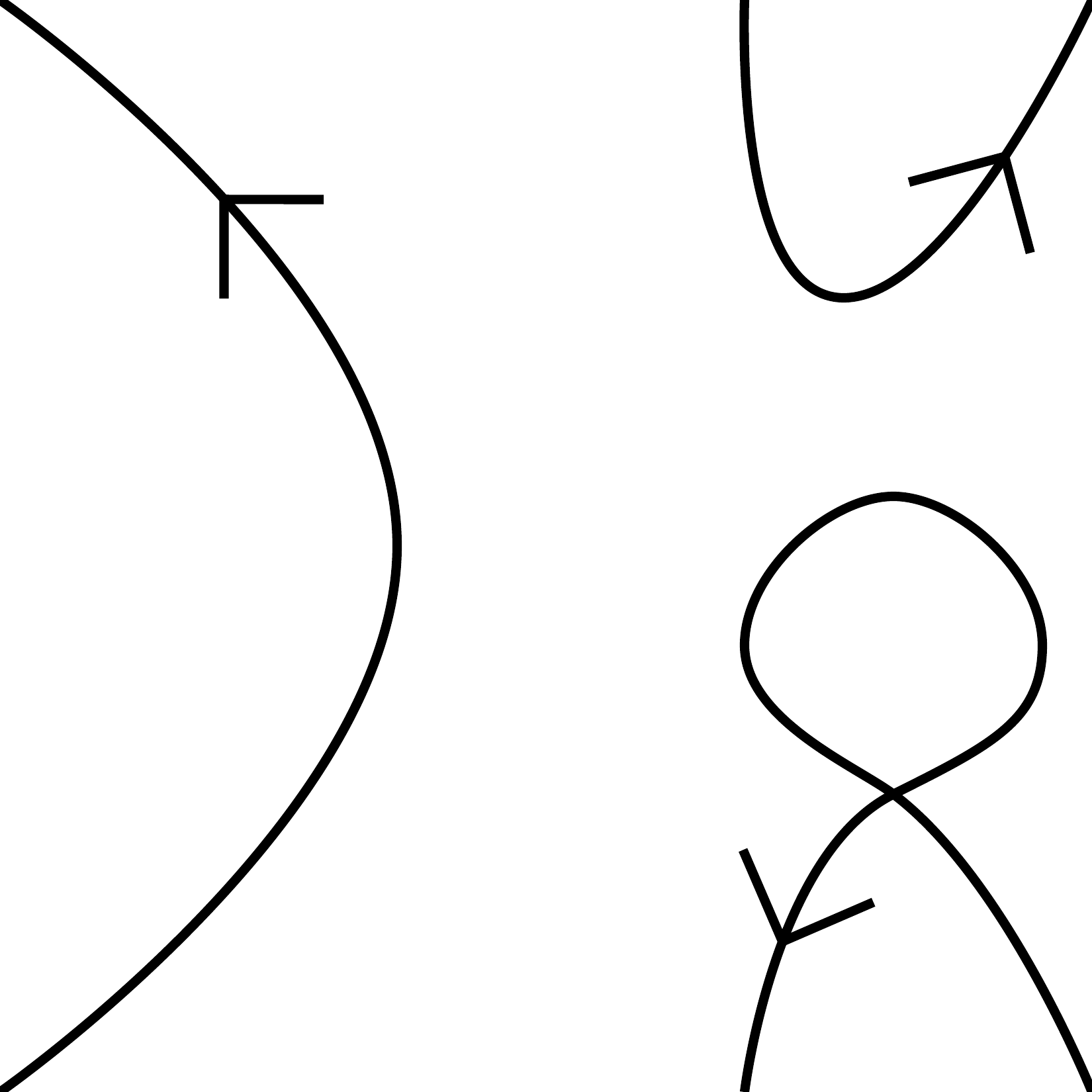}}\Biggr)-q^{n}P\Biggl(\raisebox{-10pt}{\includegraphics[height = .35in]{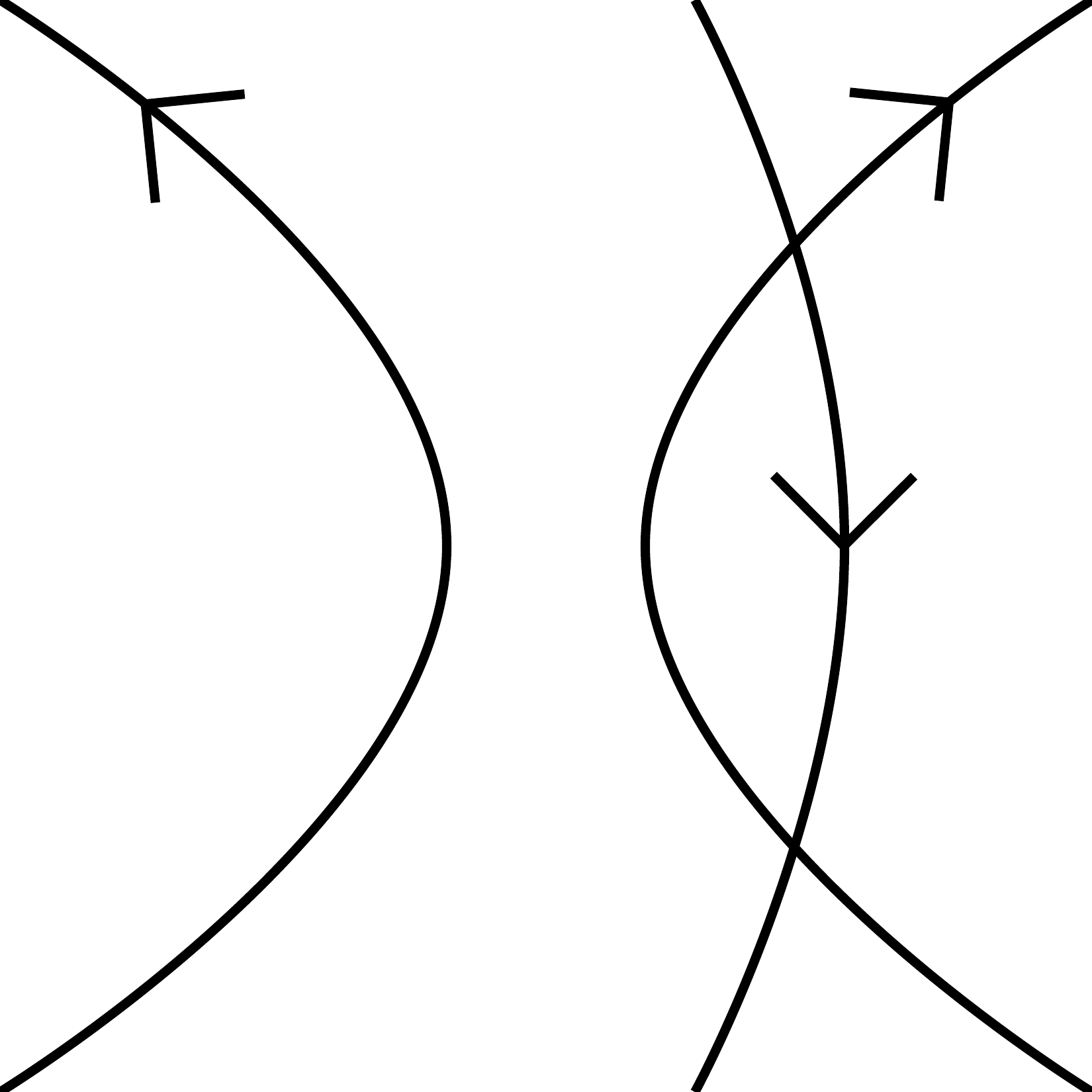}}\Biggr)\right]-q^nP\Biggl(\raisebox{-10pt}{\includegraphics[height = .35in]{Generating-Sets/O4a-1.pdf}}\Biggr).
\end{align*}
Using the graphical relations involving the loop and oriented bigon, we have the following:
\begin{align*}
    P\Biggl(\raisebox{-10pt}{\includegraphics[height = .35in]{Generating-Sets/O3a-2.pdf}}\Biggr)&=P\Biggl(\raisebox{-10pt}{\includegraphics[height = .35in]{O4-Proof/O4a-SR16.pdf}}\Biggr)-q^{n-2}[n-1]P\Biggl(\raisebox{-10pt}{\includegraphics[height = .35in]{O4-Proof/O4a-SR16.pdf}}\Biggr)+q^{n-1}P\Biggl(\raisebox{-10pt}{\includegraphics[height = .35in]{O3a-Proof/O3a-SR16.pdf}}\Biggr)\\&+q^{n-1}[n-2]P\Biggl(\raisebox{-10pt}{\includegraphics[height = .35in]{O4-Proof/O4a-SR16.pdf}}\Biggr)-q^nP\Biggl(\raisebox{-10pt}{\includegraphics[height = .35in]{Generating-Sets/O4a-1.pdf}}\Biggr)\\
    &=q^{n-1}P\Biggl(\raisebox{-10pt}{\includegraphics[height = .35in]{O3a-Proof/O3a-SR16.pdf}}\Biggr)-q^nP\Biggl(\raisebox{-10pt}{\includegraphics[height = .35in]{Generating-Sets/O4a-1.pdf}}\Biggr)\\
    &=q^{n-1}P\Biggl(\raisebox{-10pt}{\includegraphics[height = .35in]{O3a-Proof/O3a-SR16.pdf}}\Biggr)-q^nP\Biggl(\raisebox{-10pt}{\includegraphics[height = .35in]{Generating-Sets/O4a-2.pdf}}\Biggr).
\end{align*}
The last equality above holds, since we have proved that $P$ is invariant under the move $\Omega 4a$. Hence, $P$ is invariant under the move $\Omega3a$.
Next we prove the invariance of $P$ under the move $\Omega 4e$:
\begin{align*}
    P\Biggl(\raisebox{-10pt}{\includegraphics[height = .35in]{Generating-Sets/O4e-2.pdf}}\Biggr)&=q^{1-n}P\Biggl(\raisebox{-10pt}{\includegraphics[height = .35in]{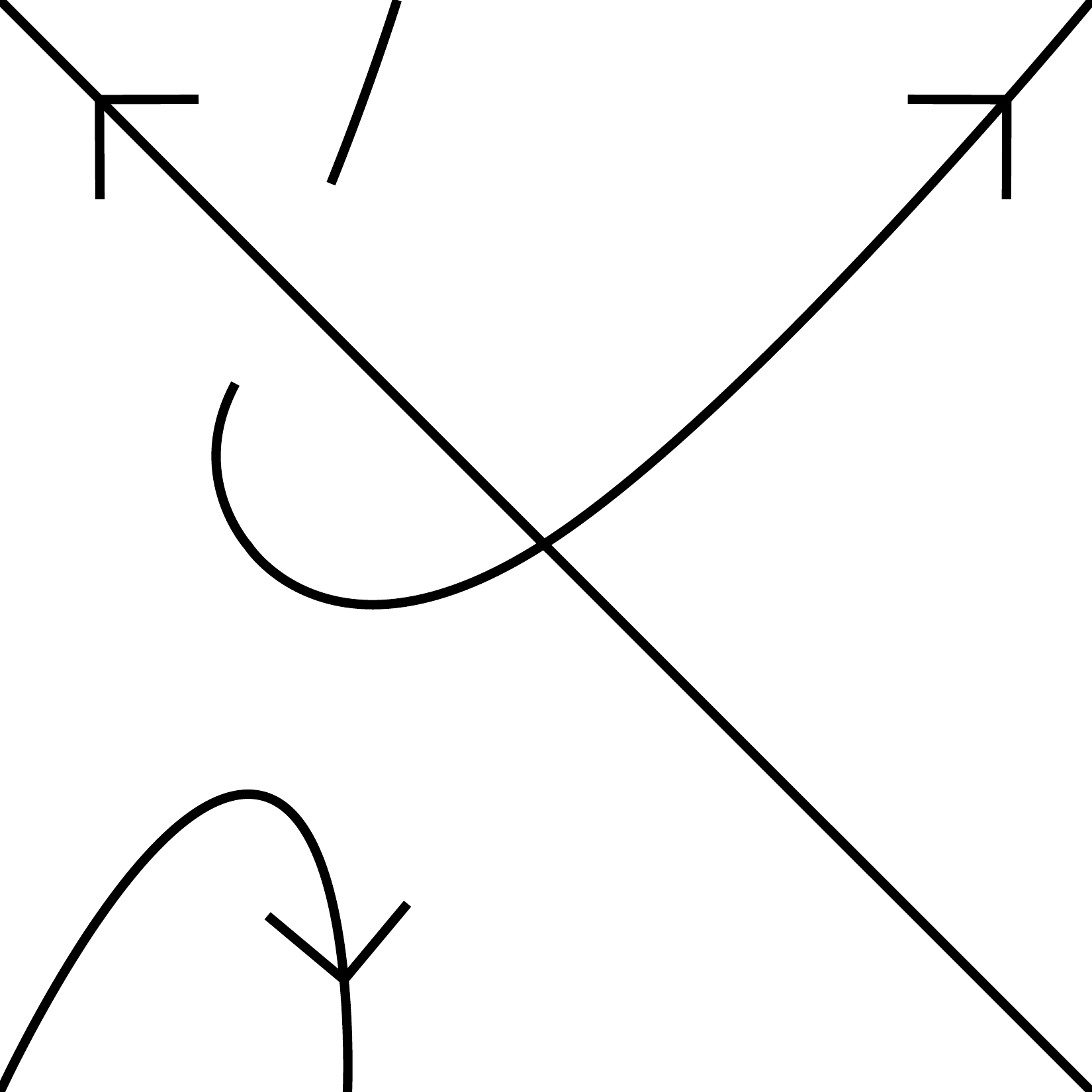}}\Biggr)-q^{-n}P\Biggl(\raisebox{-10pt}{\includegraphics[height = .35in]{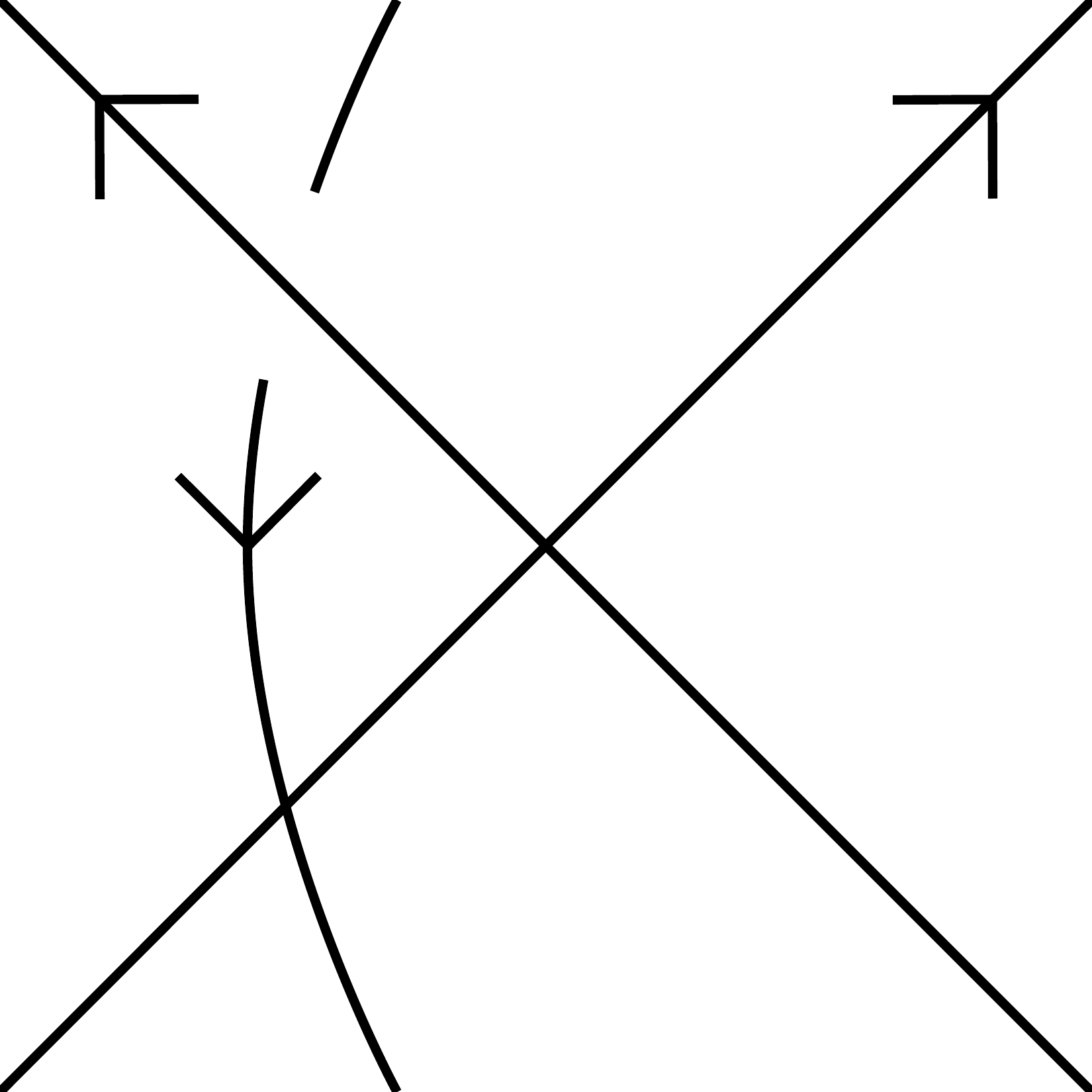}}\Biggr)\\
    &=q^{1-n}\left[q^{n-1}P\Biggl(\raisebox{-10pt}{\includegraphics[height = .35in]{O4-Proof/O4a-SR3-1.pdf}}\Biggr)-q^{n}P\Biggl(\raisebox{-10pt}{\includegraphics[height = .35in]{O4-Proof/O4a-SR4.pdf}}\Biggr)\right]\\&-q^{-n}\left[q^{n-1}P\Biggl(\raisebox{-10pt}{\includegraphics[height = .35in]{O4-Proof/O4a-SR5.pdf}}\Biggr)-q^{n}P\Biggl(\raisebox{-10pt}{\includegraphics[height = .35in]{O4-Proof/O4a-SR6.pdf}}\Biggr)\right]\\
    &=([n-1]-q[n-2]-q^{-1}[n-2])P\Biggl(\raisebox{-10pt}{\includegraphics[height = .35in]{O4-Proof/O4a-SR7.pdf}}\Biggr)-qP\Biggl(\raisebox{-10pt}{\includegraphics[height = .35in]{O4-Proof/O4a-SR8.pdf}}\Biggr)\\&-q^{-1}P\Biggl(\raisebox{-10pt}{\includegraphics[height = .35in]{O4-Proof/O4a-SR9.pdf}}\Biggr)+P\Biggl(\raisebox{-10pt}{\includegraphics[height = .35in]{O4-Proof/O4a-SR6.pdf}}\Biggr)\\
    &= -[n-3]P\Biggl(\raisebox{-10pt}{\includegraphics[height = .35in]{O4-Proof/O4a-SR7.pdf}}\Biggr)+P\Biggl(\raisebox{-10pt}{\includegraphics[height = .35in]{O4-Proof/O4a-SR6.pdf}}\Biggr)-qP\Biggl(\raisebox{-10pt}{\includegraphics[height = .35in]{O4-Proof/O4a-SR8.pdf}}\Biggr)-q^{-1}P\Biggl(\raisebox{-10pt}{\includegraphics[height = .35in]{O4-Proof/O4a-SR9.pdf}}\Biggr).
\end{align*}
Similarly,
\begin{align*}
    P\Biggl(\raisebox{-10pt}{\includegraphics[height = .35in]{Generating-Sets/O4e-1.pdf}}\Biggr)&=q^{n-1}P\Biggl(\raisebox{-10pt}{\includegraphics[height = .35in]{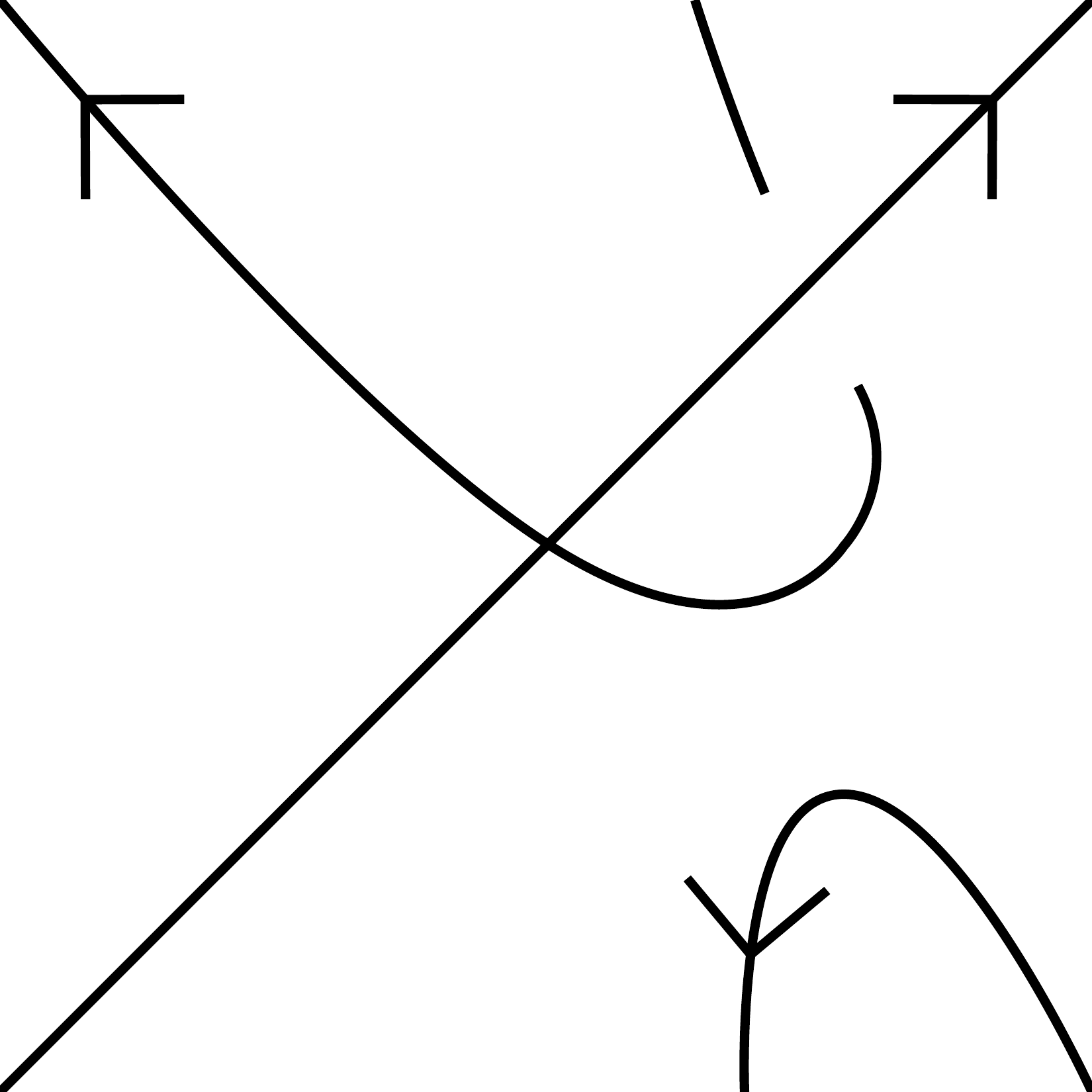}}\Biggr)-q^{n}P\Biggl(\raisebox{-10pt}{\includegraphics[height = .35in]{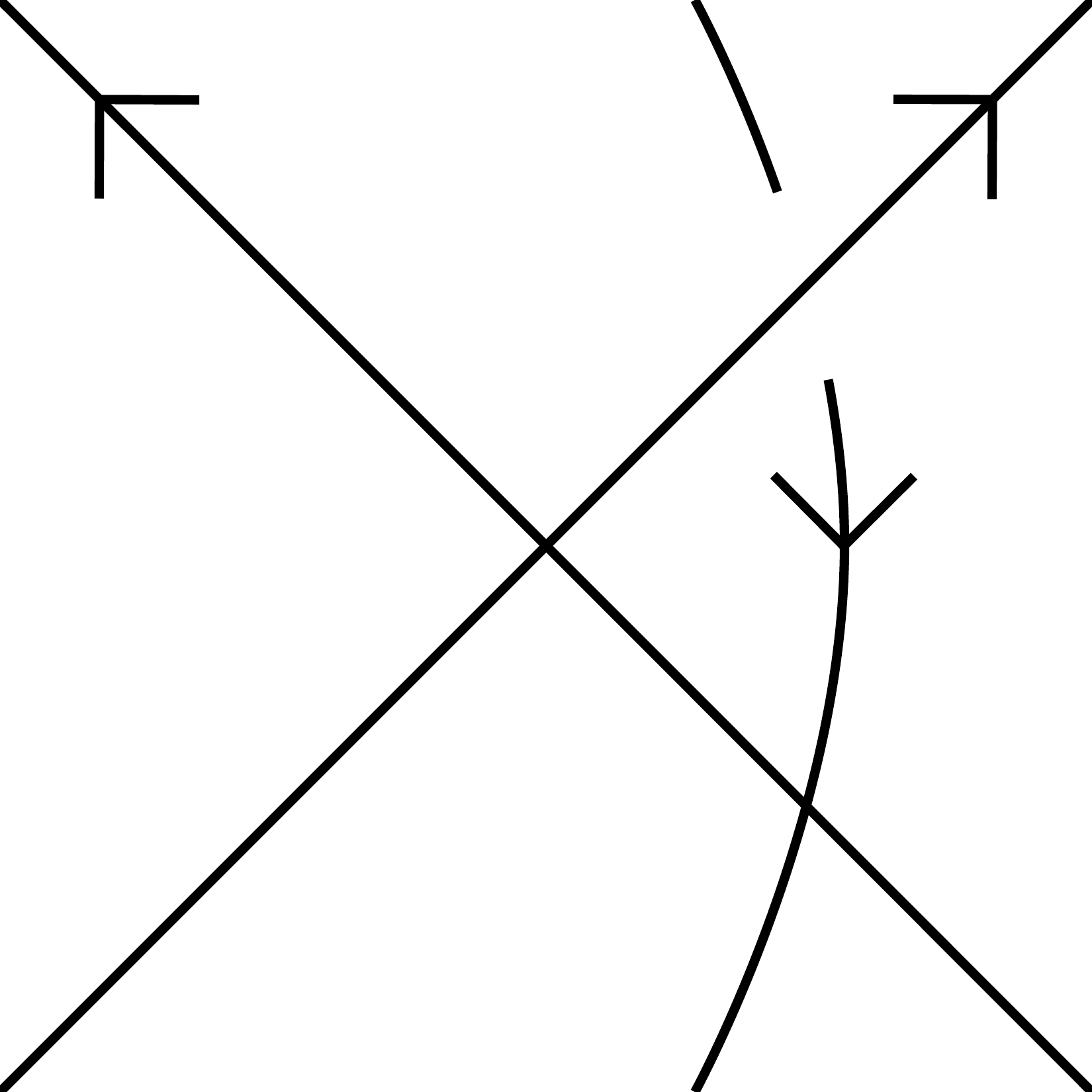}}\Biggr)\\
    &=q^{n-1}\left[q^{1-n}P\Biggl(\raisebox{-10pt}{\includegraphics[height = .35in]{O4-Proof/O4a-SR12-1.pdf}}\Biggr)-q^{-n}P\Biggl(\raisebox{-10pt}{\includegraphics[height = .35in]{O4-Proof/O4a-SR13.pdf}}\Biggr)\right]\\&-q^{n}\left[q^{1-n}P\Biggl(\raisebox{-10pt}{\includegraphics[height = .35in]{O4-Proof/O4a-SR14.pdf}}\Biggr)-q^{-n}P\Biggl(\raisebox{-10pt}{\includegraphics[height = .35in]{O4-Proof/O4a-SR15.pdf}}\Biggr)\right].
    \end{align*}
    Using again the graphical relations for the loop and the oriented bigon, combined with adding like terms, we arrive at the following,
    \begin{align*}
    P\Biggl(\raisebox{-10pt}{\includegraphics[height = .35in]{Generating-Sets/O4e-1.pdf}}\Biggr)&=([n-1]-q^{-1}[n-2]-q[n-2])P\Biggl(\raisebox{-10pt}{\includegraphics[height = .35in]{O4-Proof/O4a-SR16.pdf}}\Biggr)-q^{-1}P\Biggl(\raisebox{-10pt}{\includegraphics[height = .35in]{O4-Proof/O4a-SR9.pdf}}\Biggr)\\&-qP\Biggl(\raisebox{-10pt}{\includegraphics[height = .35in]{O4-Proof/O4a-SR8.pdf}}\Biggr)+P\Biggl(\raisebox{-10pt}{\includegraphics[height = .35in]{O4-Proof/O4a-SR15.pdf}}\Biggr)\\
     &=- [n-3]P\Biggl(\raisebox{-10pt}{\includegraphics[height = .35in]{O4-Proof/O4a-SR16.pdf}}\Biggr)+P\Biggl(\raisebox{-10pt}{\includegraphics[height = .35in]{O4-Proof/O4a-SR15.pdf}}\Biggr)-q^{-1}P\Biggl(\raisebox{-10pt}{\includegraphics[height = .35in]{O4-Proof/O4a-SR9.pdf}}\Biggr)-qP\Biggl(\raisebox{-10pt}{\includegraphics[height = .35in]{O4-Proof/O4a-SR8.pdf}}\Biggr)\\
    &=-[n-3]P\Biggl(\raisebox{-10pt}{\includegraphics[height = .35in]{O4-Proof/O4a-SR7.pdf}}\Biggr)+P\Biggl(\raisebox{-10pt}{\includegraphics[height = .35in]{O4-Proof/O4a-SR6.pdf}}\Biggr)-qP\Biggl(\raisebox{-10pt}{\includegraphics[height = .35in]{O4-Proof/O4a-SR8.pdf}}\Biggr)-q^{-1}P\Biggl(\raisebox{-10pt}{\includegraphics[height = .35in]{O4-Proof/O4a-SR9.pdf}}\Biggr),
\end{align*}
where the last equality above holds due to the last graphical relation in Figure \ref{fig:GSR}. Therefore, 
\[P\Biggl(\raisebox{-10pt}{\includegraphics[height = .35in]{Generating-Sets/O4e-2.pdf}}\Biggr) = P\Biggl(\raisebox{-10pt}{\includegraphics[height = .35in]{Generating-Sets/O4e-1.pdf}}\Biggr). \]

 The invariance of the polynomial $P$ under the move $\Omega5a$ holds, as proved below.
\begin{align*}
    P\Biggl(\raisebox{-10pt}{\includegraphics[height = .35in]{Generating-Sets/O5a-1.pdf}}\Biggr)&=q^{n-1}P\Biggl(\raisebox{-10pt}{\includegraphics[height = .35in]{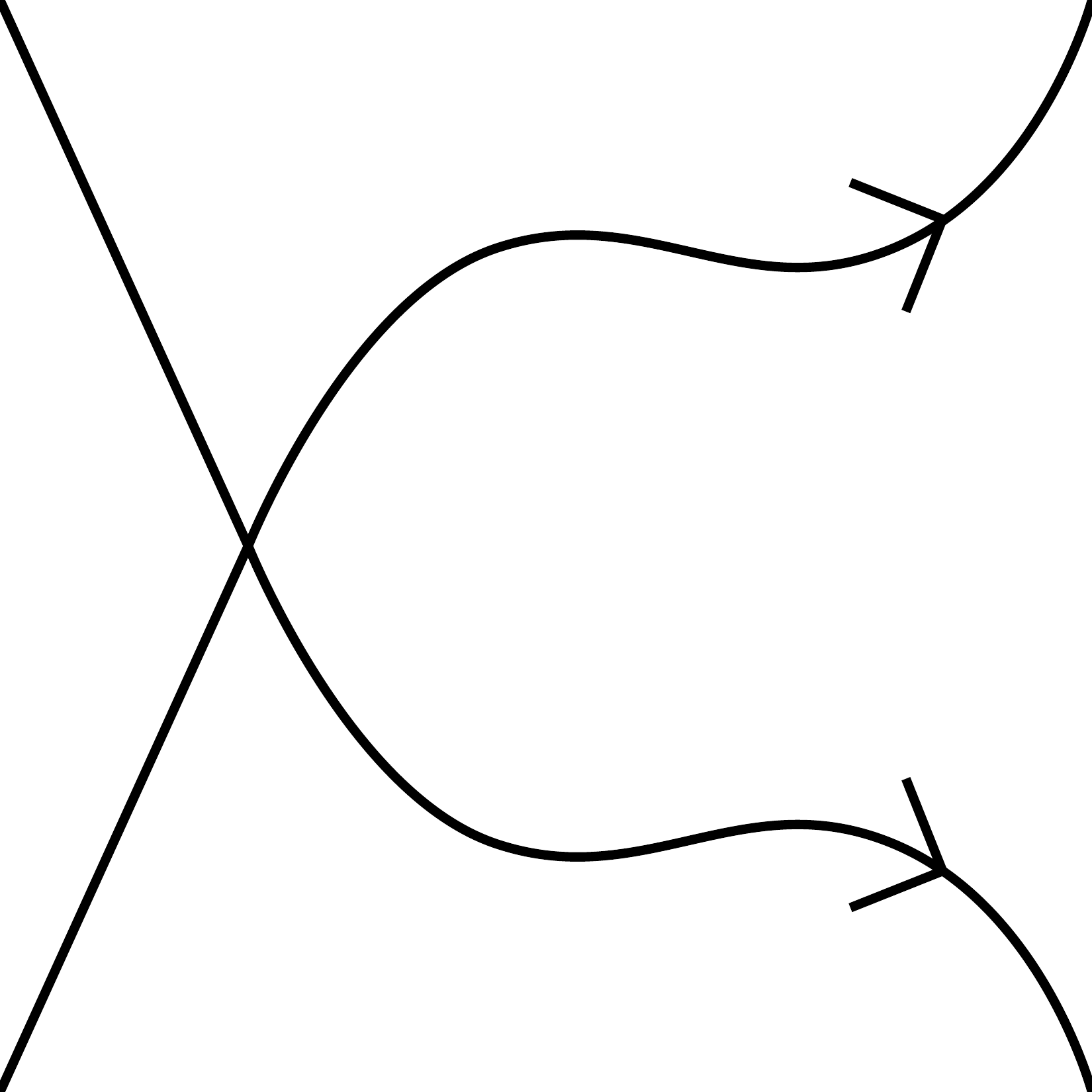}}\Biggr)-q^nP\Biggl(\raisebox{-10pt}{\includegraphics[height = .35in]{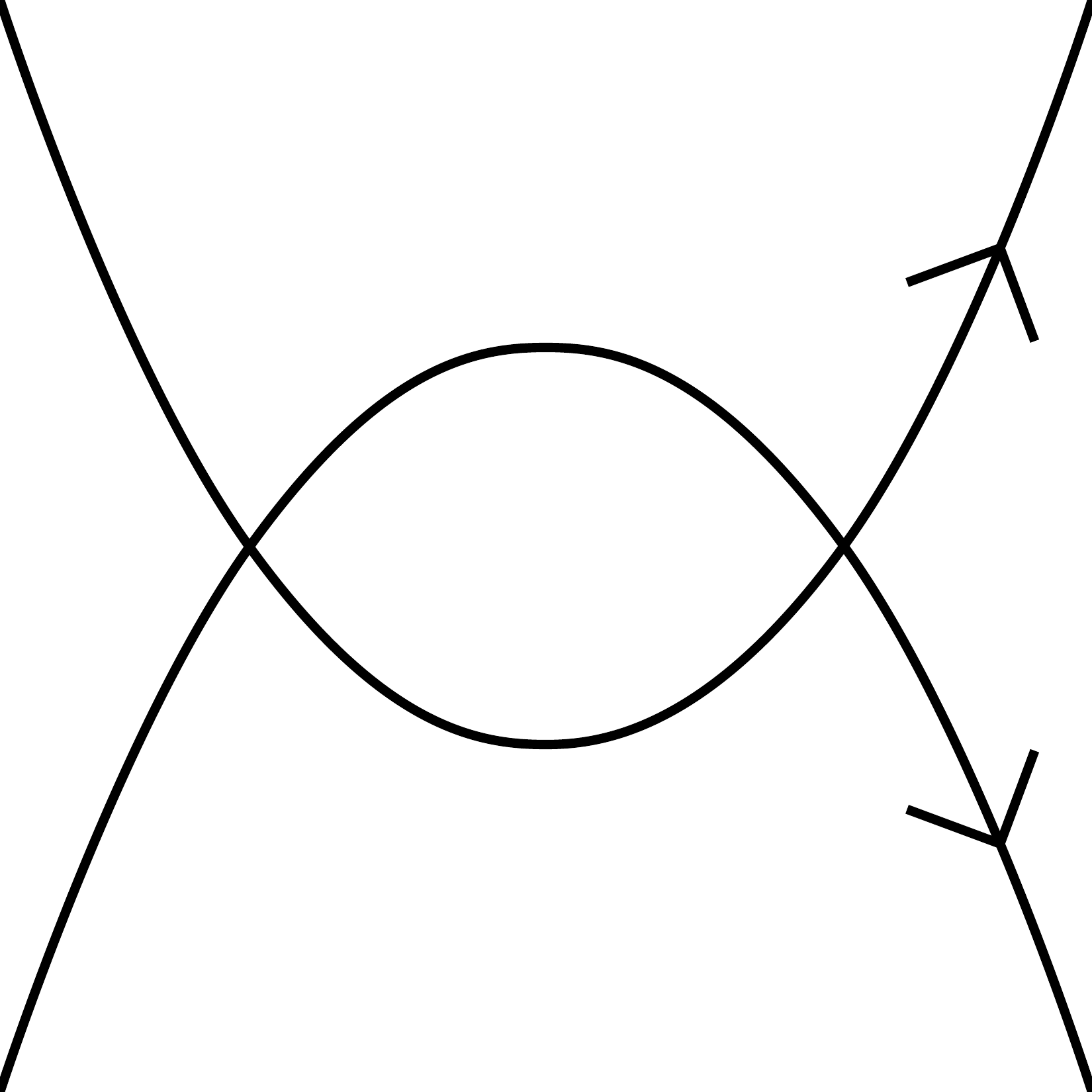}}\Biggr)\\
    &=q^{n-1}P\Biggl(\raisebox{-10pt}{\includegraphics[height = .35in]{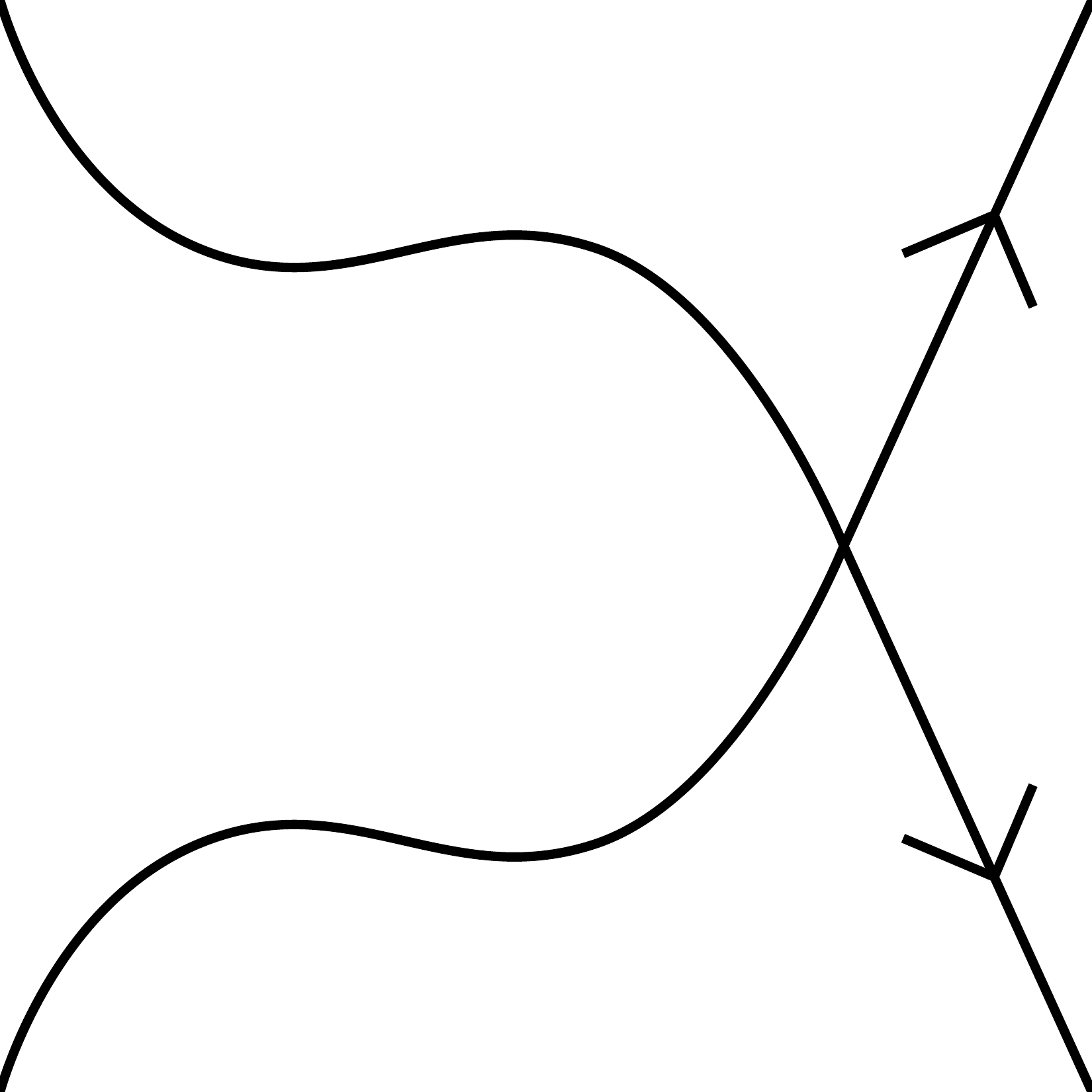}}\Biggr)-q^nP\Biggl(\raisebox{-10pt}{\includegraphics[height = .35in]{O5-Proof/O5a-SR2.pdf}}\Biggr)\\
    &=P\Biggl(\raisebox{-10pt}{\includegraphics[height = .35in]{Generating-Sets/O5a-2.pdf}}\Biggr).
\end{align*}

Since we have proved that $P$ is invariant under the moves $\Omega 1a, \Omega 1b, \Omega 2a$ and $\Omega 3a$, which is a generating set of all oriented versions of the Reidemeister moves, we have that $P$ is invariant under all of the moves $\Omega 1, \Omega 2$ and $\Omega 3$. Moreover, since we also proved that $P$ is invariant under the moves $\Omega 4a, \Omega 4e$ and $\Omega 5a$, we can conclude that $P$ is invariant under all the moves $\Omega 4$ and $\Omega 5$ involving 4-valent vertices of type In-In-Out-Out.

We proceed now to prove the invariance of $P$ under  the moves $\Omega 4$ involving a vertex of type In-Out-In-Out that are in our generating set; these are the moves $\Omega4j$ and $\Omega4l$. We show the proof of the invariance under the move $\Omega 4j$ only, as the proof of invariance under the move $\Omega4l$ follow similarly. In the first step below, we apply the skein relation to resolve the vertex. In the second and third steps we use that $P$ is invariant under the moves $\Omega 2$. In the last equality, we use the skein relation involving a vertex of type In-Out-In-Out, this time reading the relation backwards.
\begin{align*}
    P\Biggl(\raisebox{-10pt}{\includegraphics[height = .35in]{Generating-Sets/O4j-1.pdf}}\Biggr)&=P\Biggl(\raisebox{-10pt}{\includegraphics[height = .35in]{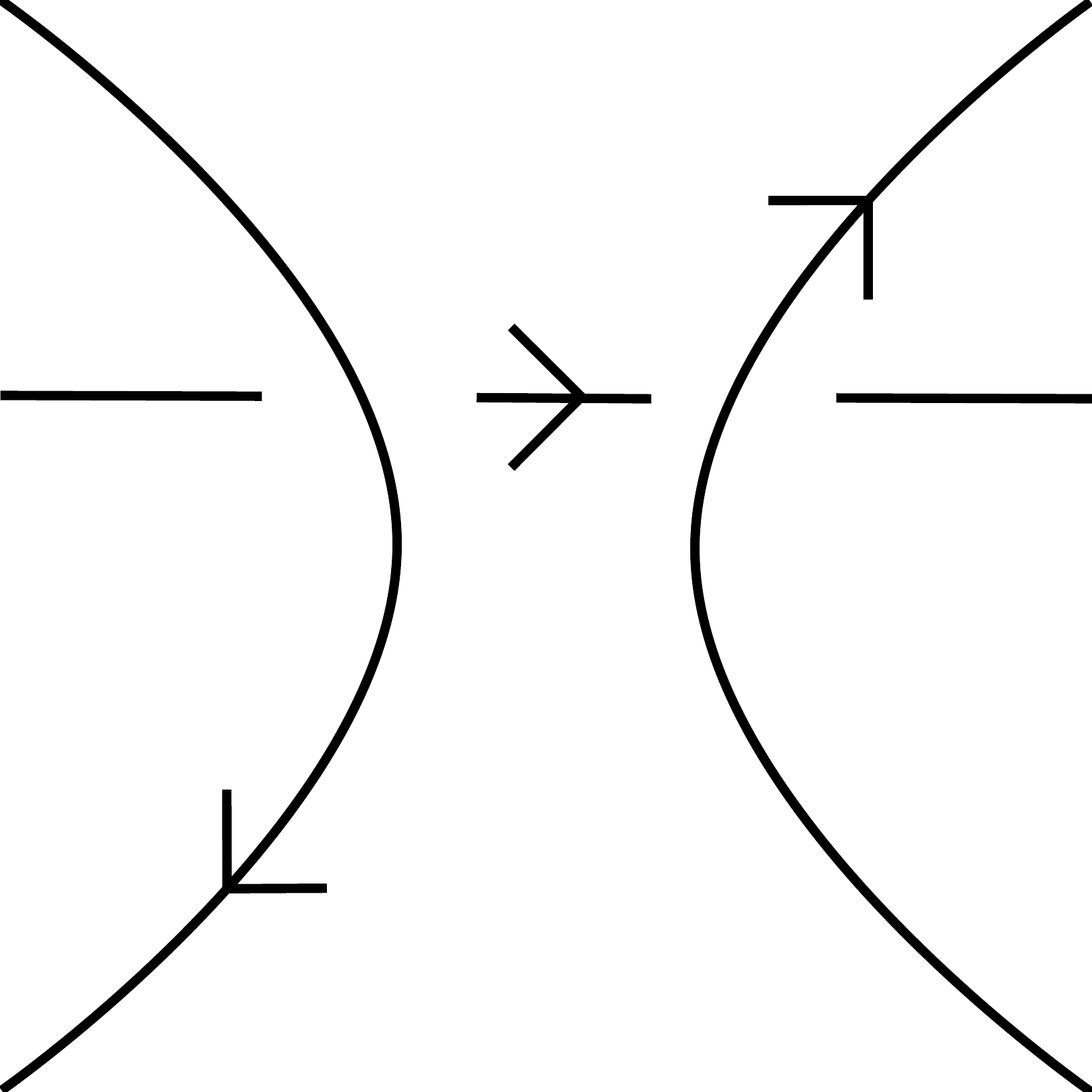}}\Biggr)+P\Biggl(\raisebox{-10pt}{\includegraphics[height = .35in]{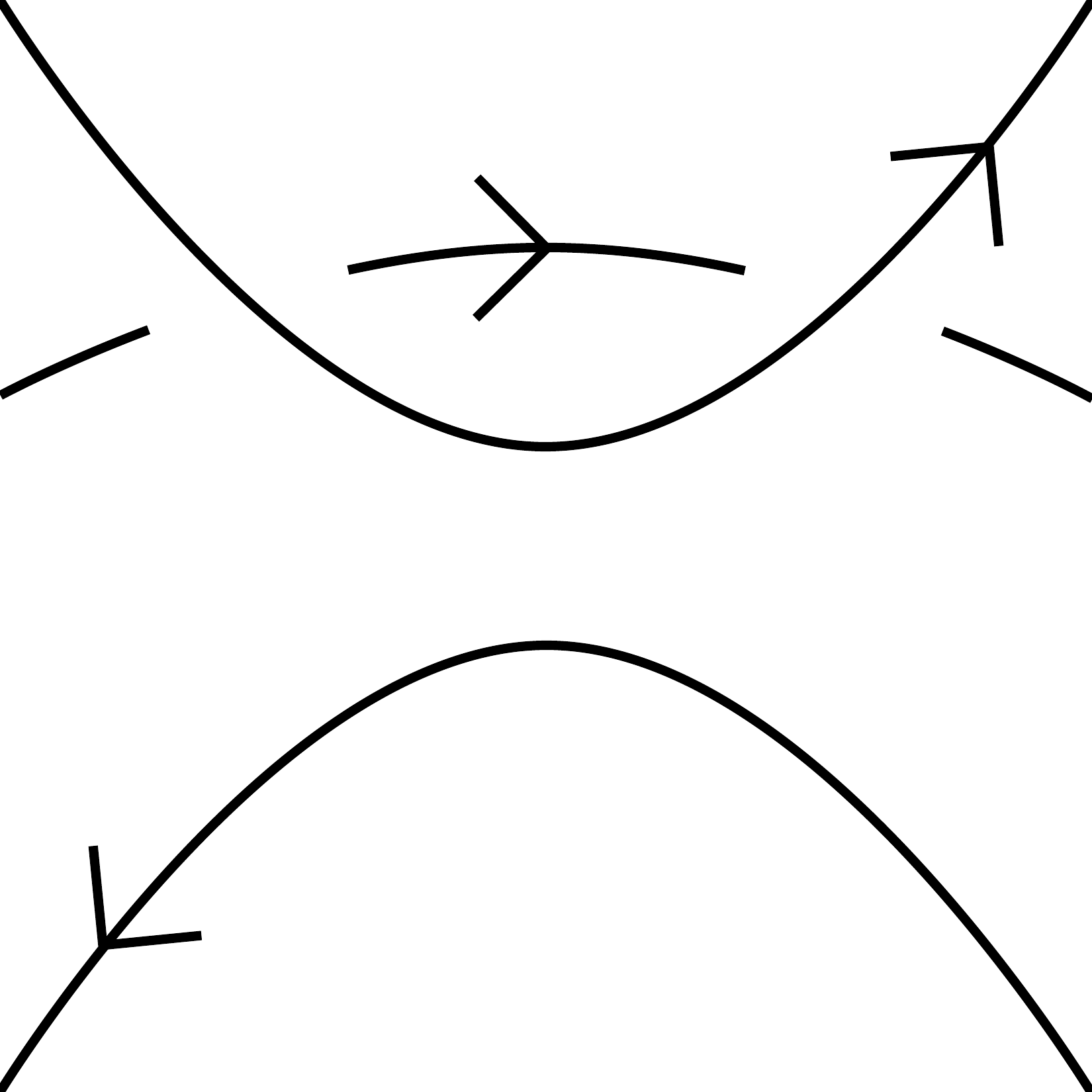}}\Biggr)
    =P\Biggl(\raisebox{-10pt}{\includegraphics[height = .35in]{O4-Proof/O4j-SR1.pdf}}\Biggr)+P\Biggl(\raisebox{-10pt}{\includegraphics[height = .35in]{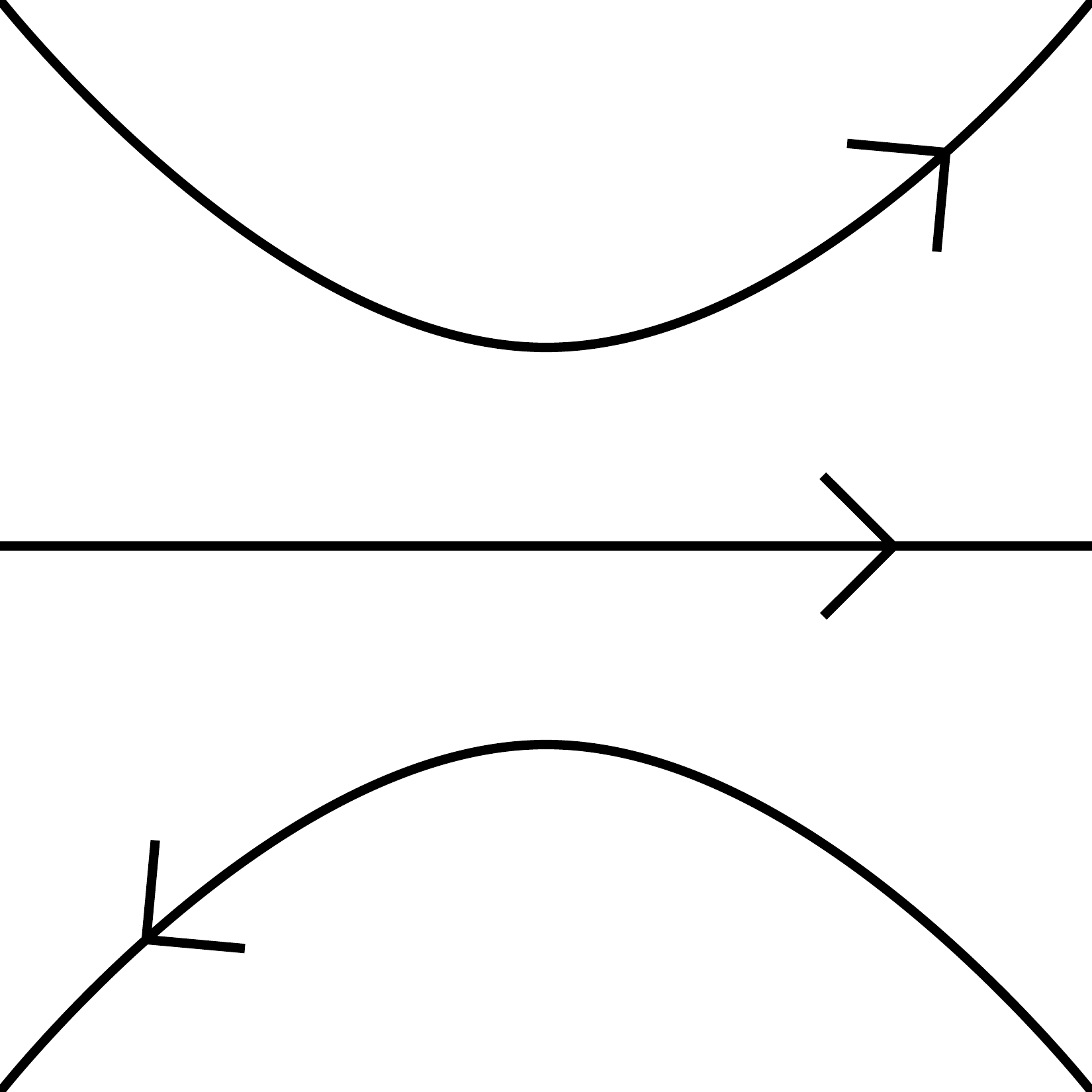}}\Biggr)\\
    &=P\Biggl(\raisebox{-10pt}{\includegraphics[height = .35in]{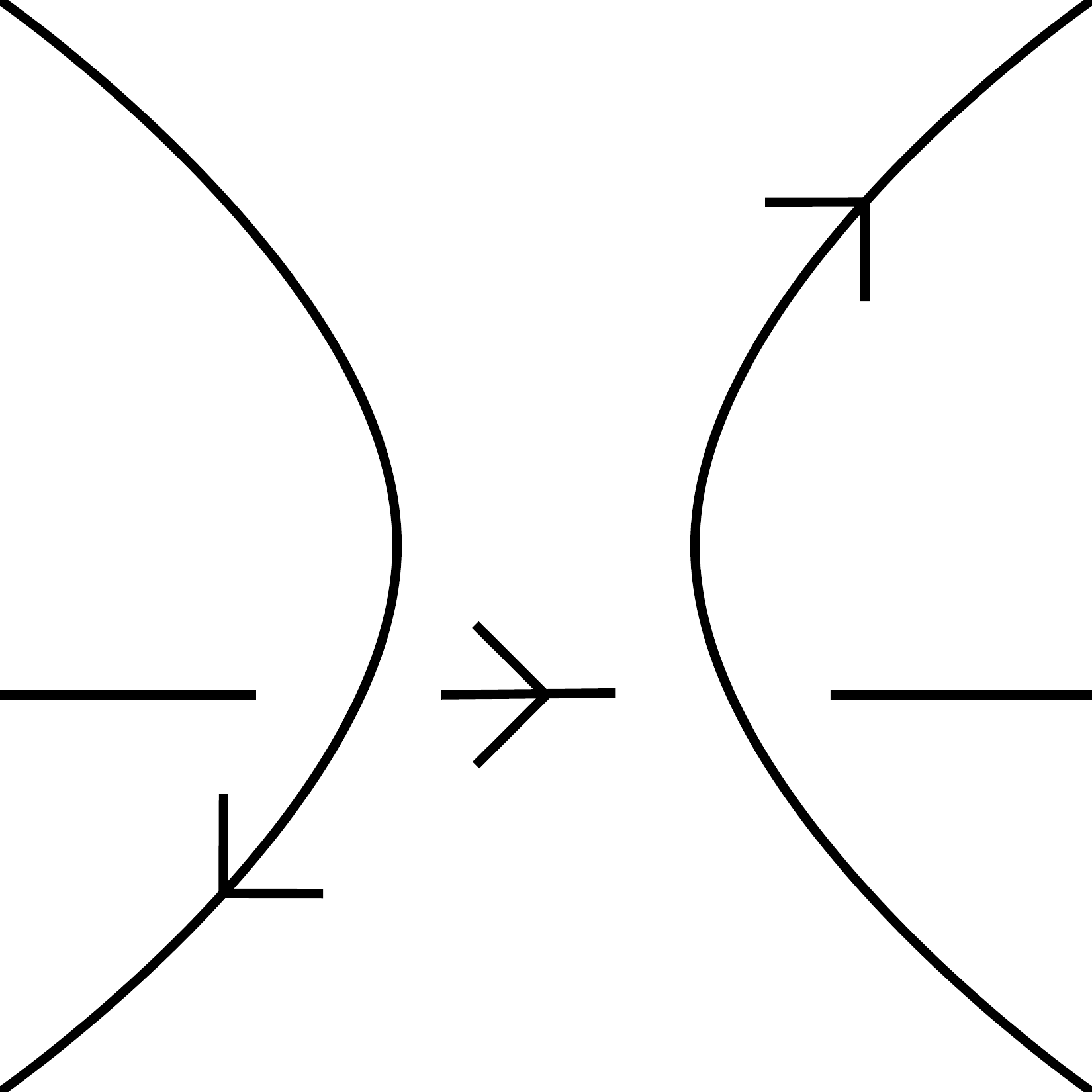}}\Biggr)+P\Biggl(\raisebox{-10pt}{\includegraphics[height = .35in]{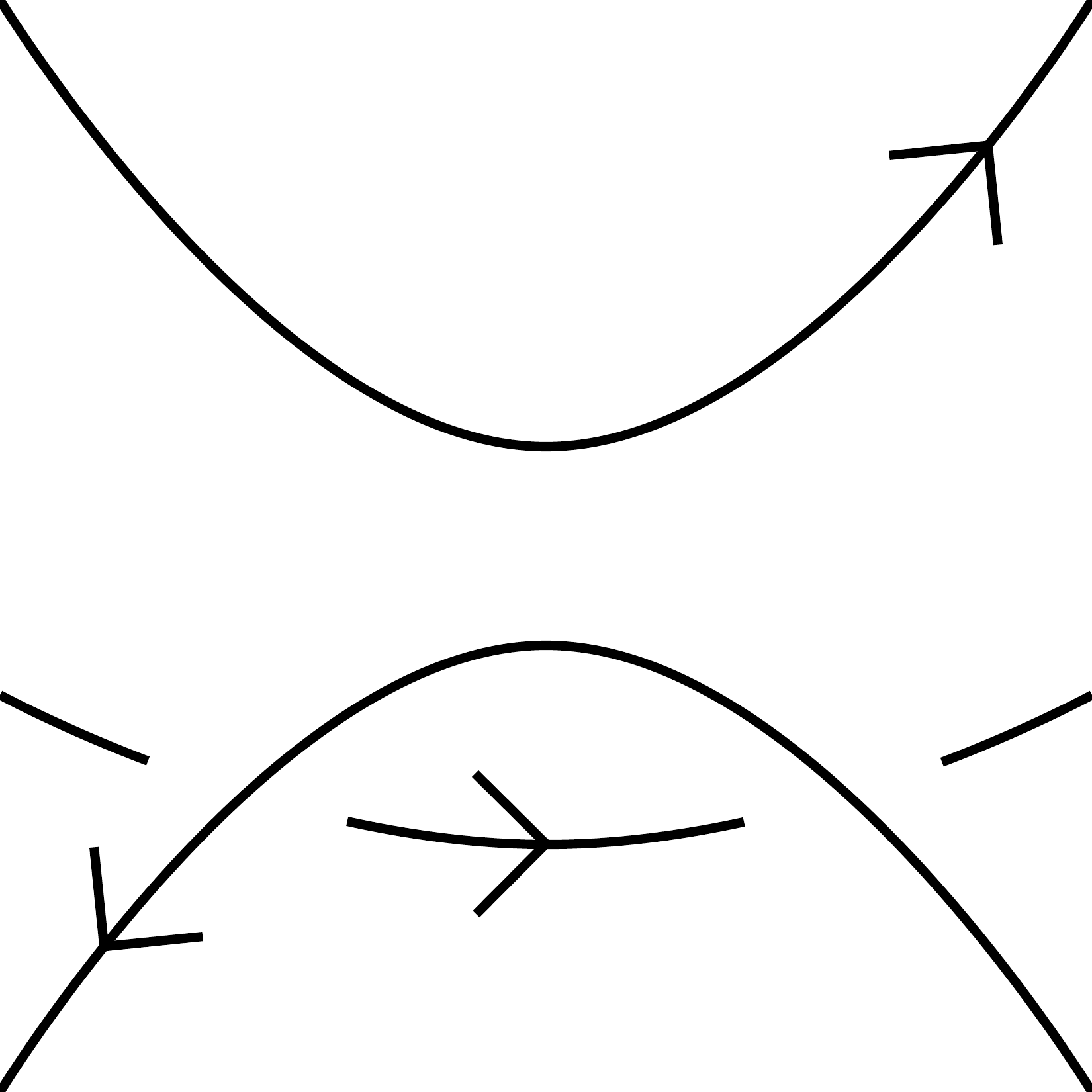}}\Biggr)=P\Biggl(\raisebox{-10pt}{\includegraphics[height = .35in]{Generating-Sets/O4j-2.pdf}}\Biggr).
\end{align*}

It remains to verify the invariance of the polynomial $P$ under the move $\Omega 5g$:

\begin{align*}
    P\Biggl(\raisebox{-10pt}{\includegraphics[height = .35in]{Generating-Sets/O5g-1.pdf}}\Biggr)&=P\Biggl(\raisebox{-10pt}{\includegraphics[height = .35in]{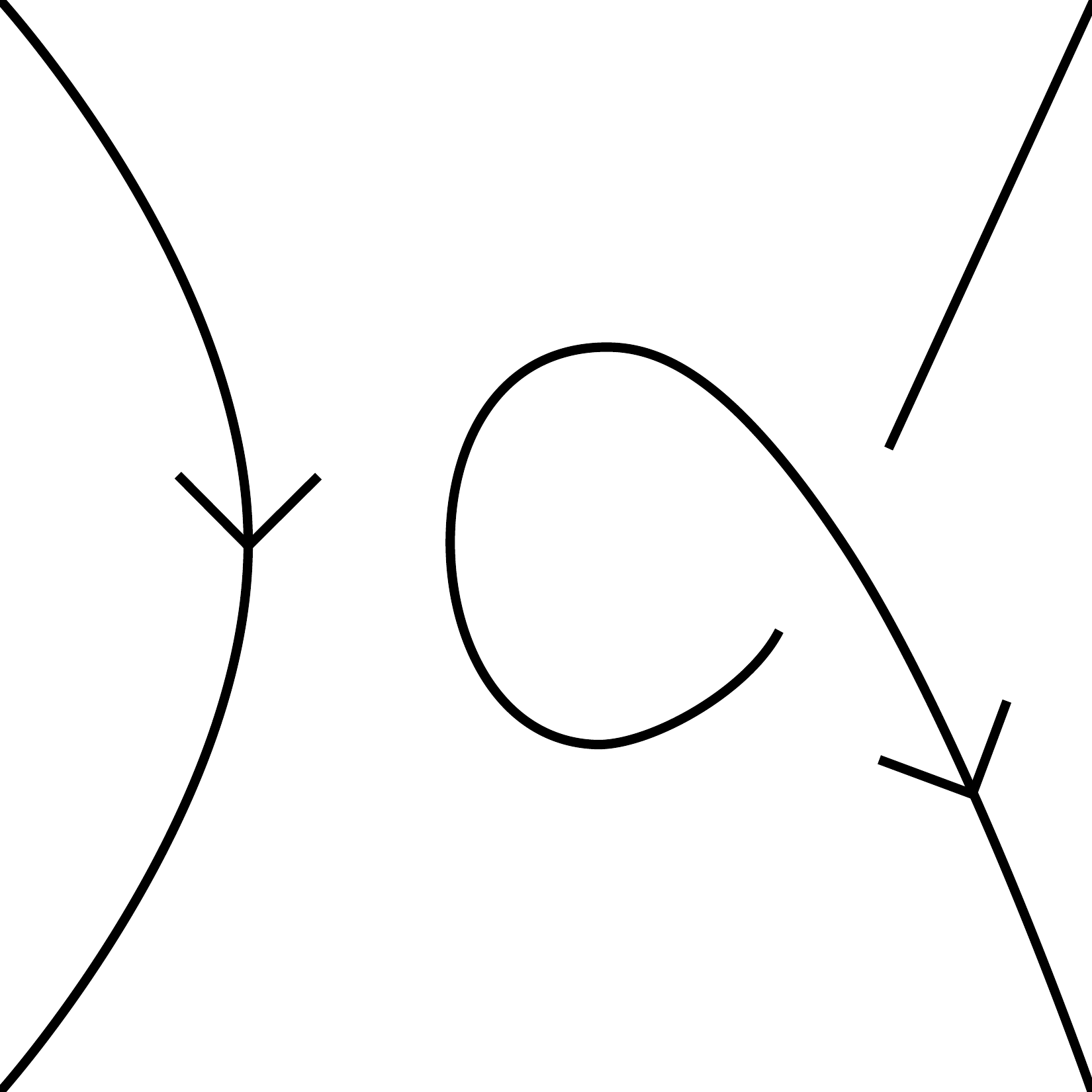}}\Biggr)+P\Biggl(\raisebox{-10pt}{\includegraphics[height = .35in]{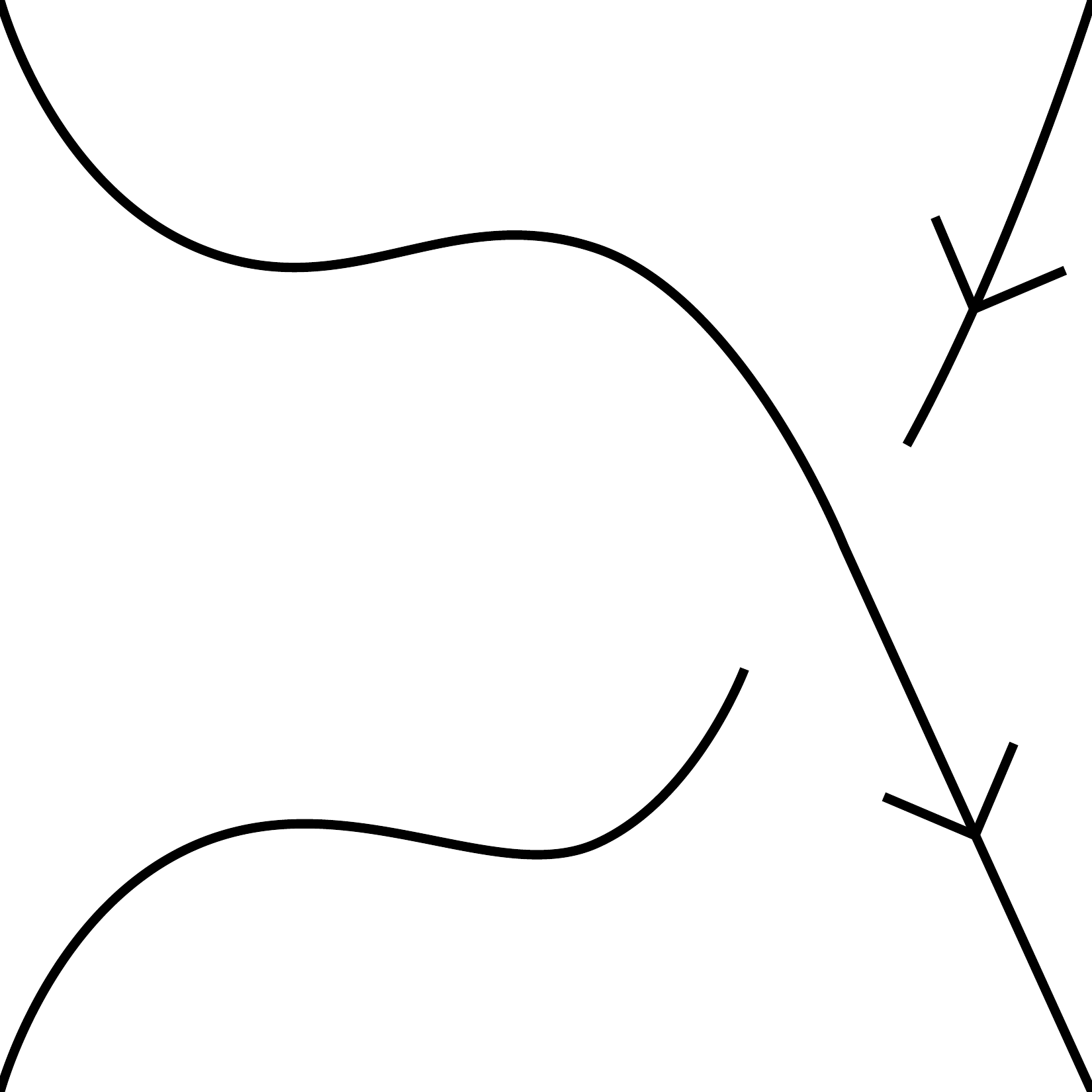}}\Biggr)
    =P\Biggl(\raisebox{-10pt}{\includegraphics[height = .35in]{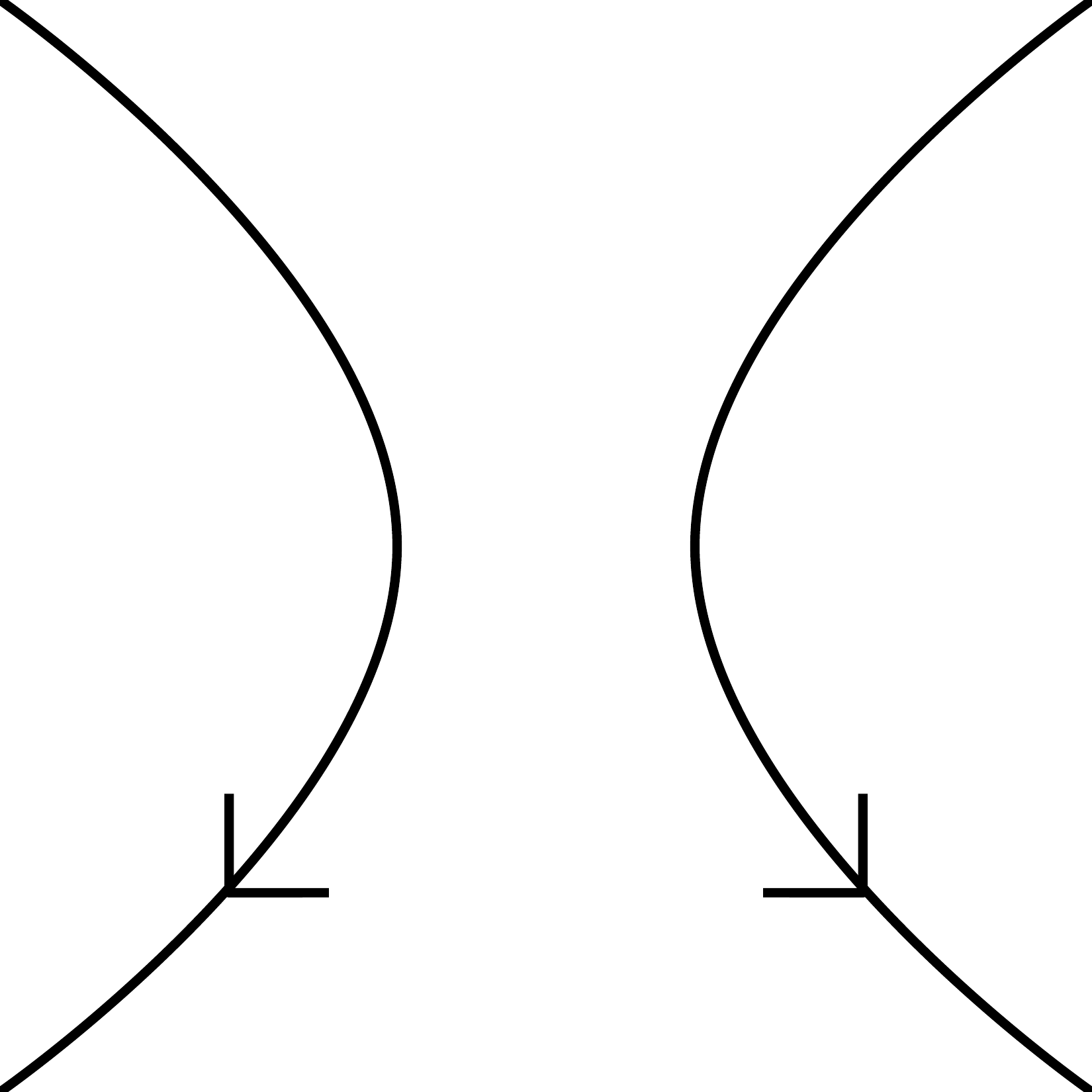}}\Biggr)+P\Biggl(\raisebox{-10pt}{\includegraphics[height = .35in]{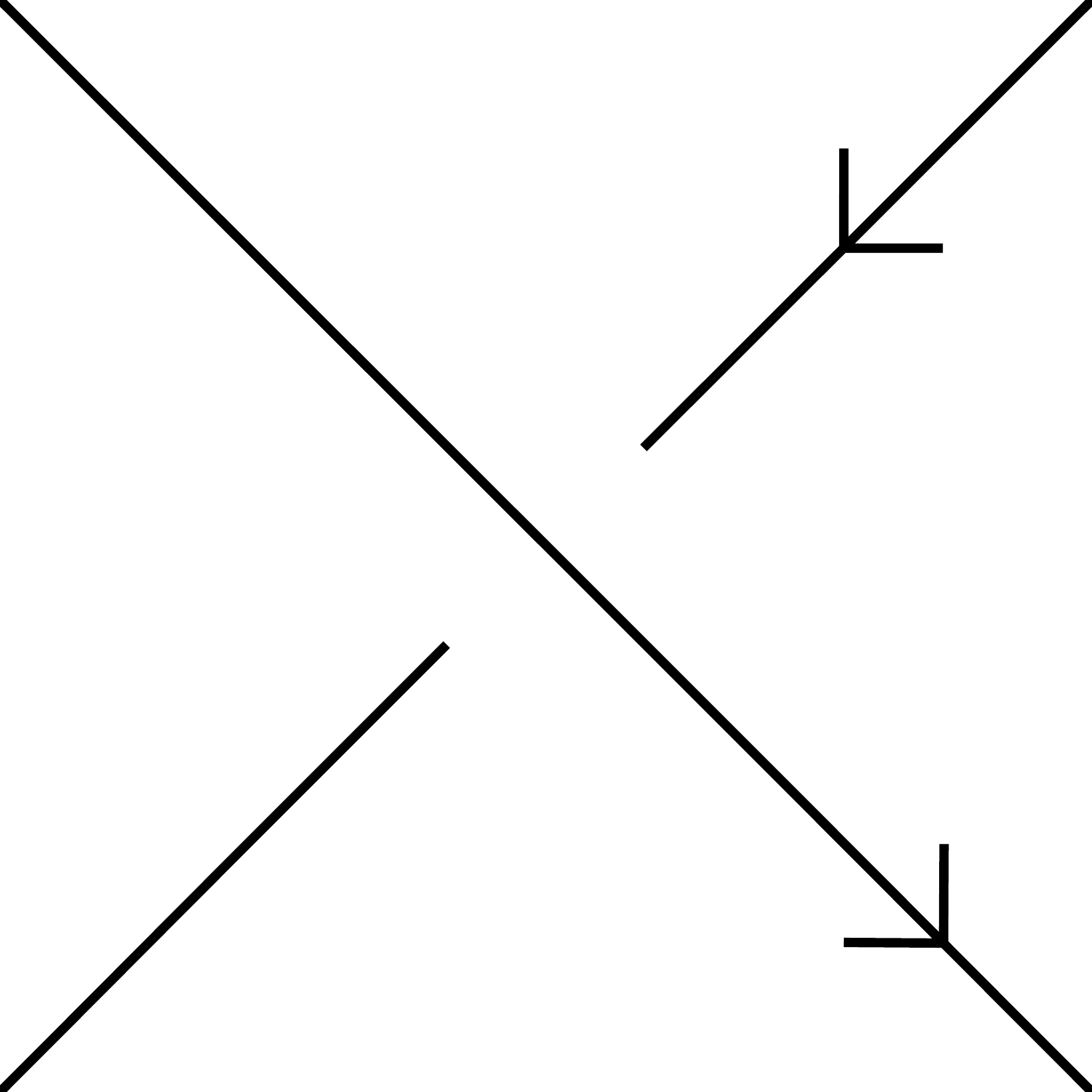}}\Biggr)\\
    &=P\Biggl(\raisebox{-10pt}{\includegraphics[height = .35in]{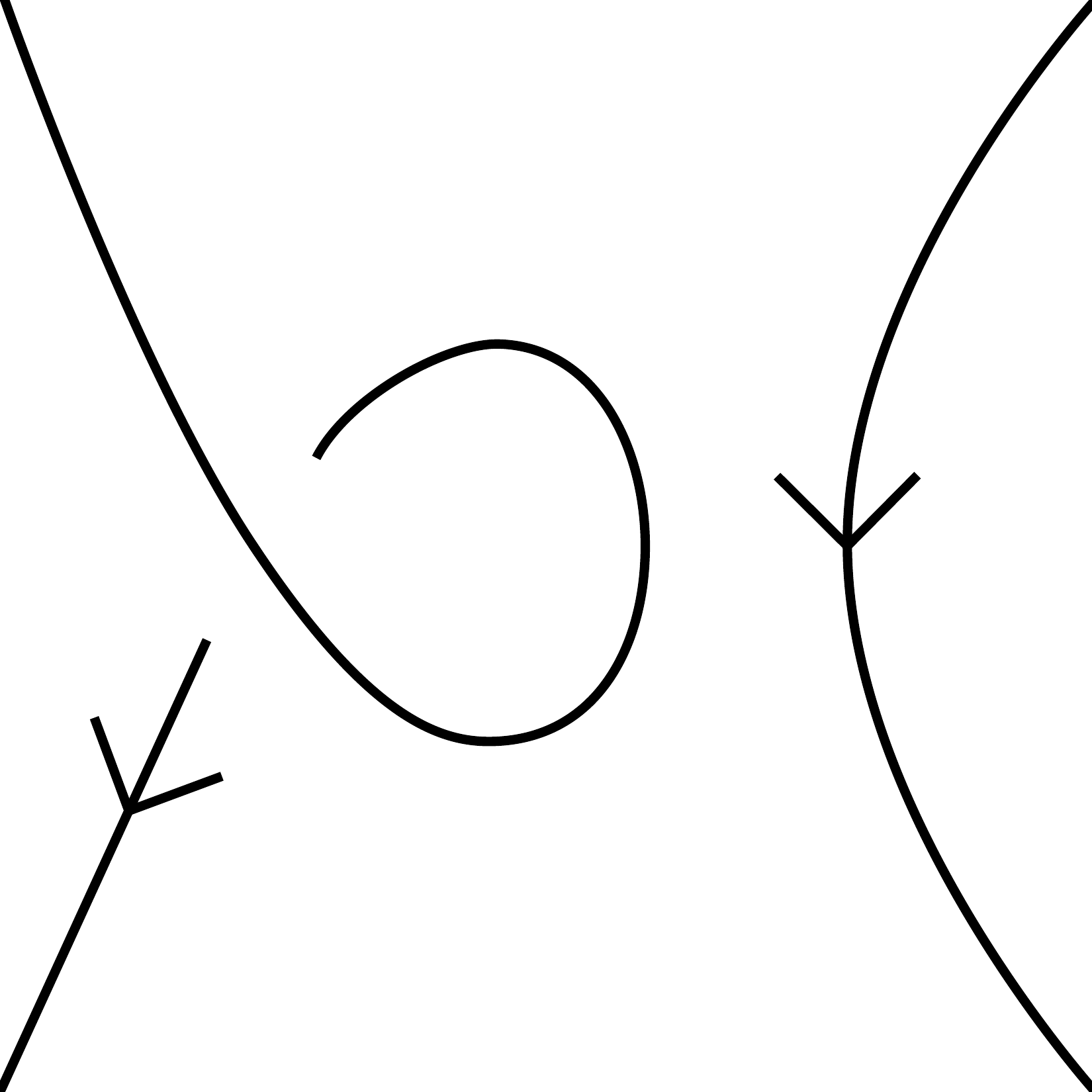}}\Biggr)+P\Biggl(\raisebox{-10pt}{\includegraphics[height = .35in]{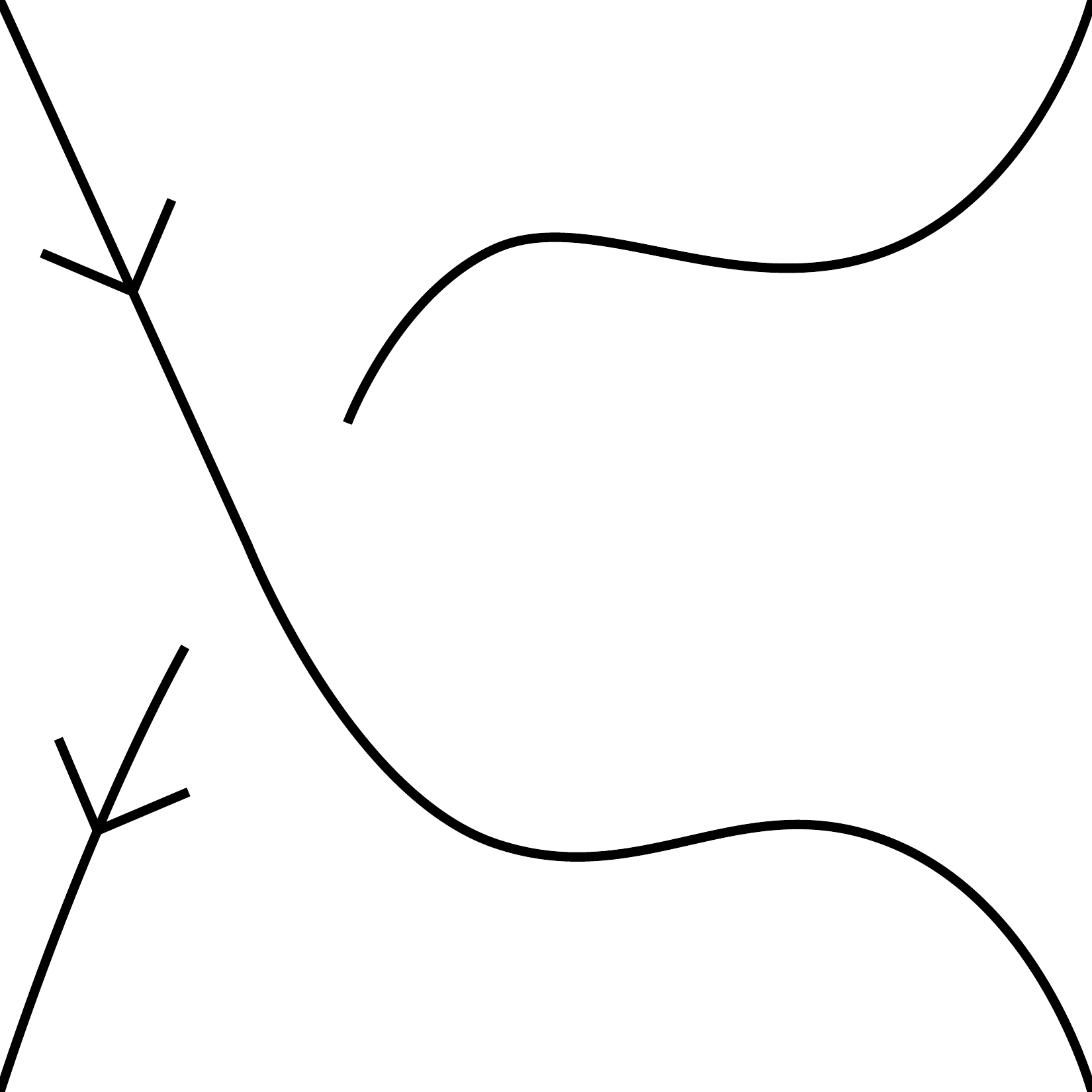}}\Biggr)
    = P\Biggl(\raisebox{-10pt}{\includegraphics[height = .35in]{Generating-Sets/O5g-2.pdf}}\Biggr).
\end{align*}
In the first and fourth equalities above, we used the skein relation for the In-Out-In-Out vertices, while in the second and third equalities, we used that $P$ is invariant under the moves $\Omega 1$.

We have proved the following statement.

\begin{theorem}
If $D$ and $D'$ are 4-valent diagrams representing rigid-vertex isotopic balanced-oriented, knotted 4-valent graphs, then $P(D) = P(D')$. Hence, the polynomial $P$ is a rigid-vertex isotopy invariant for balanced-oriented, knotted 4-valent graphs.
\end{theorem}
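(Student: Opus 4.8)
The plan is to reduce the statement to a finite check by invoking Theorem~\ref{min-gen-set}. Any two diagrams of rigid-vertex isotopic balanced-oriented, knotted $4$-valent graphs differ by a finite sequence of planar isotopies and oriented Reidemeister-type moves, and $\{\Omega1a,\Omega1b,\Omega2a,\Omega3a,\Omega4a,\Omega4e,\Omega5a,\Omega4j,\Omega4l,\Omega5g\}$ generates all such moves; hence it suffices to show that $P$ is unchanged under each of these ten generators. Invariance under planar isotopy is automatic from the graphical nature of the construction, so the entire content of the proof lies in the ten local verifications.

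For each generator I would expand both sides as $\mathbb{Z}[q,q^{-1}]$-linear combinations of planar graph states, first resolving every classical crossing via the skein relations of Figure~\ref{fig:SR Crossings} and then simplifying using the graphical relations of Figure~\ref{fig:GSR}. The $\Omega1$ moves reduce, after one crossing resolution, to the loop relation together with the scalar identity $q^{n-1}[n]-q^n[n-1]=1$, and $\Omega2a$ reduces, after resolving both crossings, to the first bigon relation and the identity $[2]=q+q^{-1}$. The In-Out-In-Out moves $\Omega4j,\Omega4l,\Omega5g$ are treated separately: I would resolve the In-Out-In-Out vertex using the first graphical relation in Figure~\ref{fig:GSR}, thereby reducing each move to a pair of already-verified $\Omega1$ and $\Omega2$ moves, and then re-fold the vertex relation in reverse to recover the target diagram.

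The computations are most delicate for the moves $\Omega3a$, $\Omega4a$, and $\Omega4e$ involving In-In-Out-Out vertices, and here the \emph{order of verification matters}. I would first establish invariance under $\Omega4a$ and $\Omega4e$: in each case both sides, after resolving crossings and applying the loop and bigon relations, reduce to the same expression precisely because of the last graphical relation in Figure~\ref{fig:GSR}, which identifies the two leftover configurations up to terms that cancel. I expect the main obstacle to be $\Omega3a$, since its verification consumes the invariance of $\Omega1b$ and of $\Omega4a$ as inputs, and closing both sides to a common value requires the nontrivial scalar cancellation $1-q^{n-2}[n-1]+q^{n-1}[n-2]=0$. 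Once $\Omega3a$ is settled, invariance under $\Omega5a$ follows from a single crossing resolution, and the proof concludes by assembling all ten local invariances via Theorem~\ref{min-gen-set}.
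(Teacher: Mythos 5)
Your proposal is correct and follows essentially the same route as the paper: reduce to the ten generators of Theorem~\ref{min-gen-set}, verify the $\Omega1$, $\Omega2a$ moves via the loop/bigon relations and the identities $q^{n-1}[n]-q^n[n-1]=1$ and $[2]=q+q^{-1}$, establish $\Omega4a$ (and $\Omega4e$) using the last graphical relation of Figure~\ref{fig:GSR} before tackling $\Omega3a$, whose verification indeed consumes $\Omega1b$, $\Omega4a$, and the cancellation $1-q^{n-2}[n-1]+q^{n-1}[n-2]=0$, and handle the In-Out-In-Out moves $\Omega4j$, $\Omega4l$, $\Omega5g$ by resolving the vertex and invoking previously verified $\Omega1$/$\Omega2$ invariance. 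The dependency structure, key identities, and computational strategy all match the paper's proof.
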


With the above result in hand, we can now define the corresponding polynomial for a knotted, balanced-oriented, 4-valent graph $L$, as follows:
\[P(L): = P(D),\]
where $D$ is any diagram representing $L$.

The \textit{mirror-image} of a knotted graph $L$ is obtained by reflecting $L$ across a plane in 3-space. We denote the mirror image of $L$ by $\tilde{L}$. If $D$ is a diagram of $L$, then by changing the over-crossings in $D$ into under-crossings, and changing the under-crossings in $D$ into over-crossings, we obtain a diagram of $\tilde{L}$. Moreover, if $L$ is an oriented knotted graph, the positive/negative crossings in $D$ become negative/positive crossings in $\tilde{D}$, the diagram of $\tilde{L}$. 

\begin{proposition}
The polynomial $P(\tilde{L})$ of the mirror image $\tilde{L}$ of $L$ can be obtained from $P(L)$ by interchanging $q$ and $q^{-1}$.
\end{proposition}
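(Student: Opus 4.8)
The plan is to show that the mirror operation on diagrams corresponds, at the level of the polynomial, precisely to the substitution $q \leftrightarrow q^{-1}$. Throughout, for a Laurent polynomial $f(q)$ write $\overline{f} := f(q^{-1})$ for the result of this substitution. The strategy is to track how each ingredient in the construction of $P$ -- the planar graph evaluations of Figure~\ref{fig:GSR} and the crossing skein relations of Figure~\ref{fig:SR Crossings} -- transforms under the bar operation.

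First I would observe that planar evaluations are already bar-invariant. Directly from $[k] = (q^k - q^{-k})/(q - q^{-1})$ one has $\overline{[k]} = [k]$ for every $k$, since negating the exponents negates both numerator and denominator. Every coefficient appearing in the graphical relations of Figure~\ref{fig:GSR} is either $1$ or one of $[n], [n-1], [n-2], [n-3], [2]$, so all of those relations are unchanged by the bar operation. Hence the assignment $G \mapsto \overline{P(G)}$ satisfies exactly the same graphical relations as $P$; by the uniqueness of the planar polynomial $P(G)$ established above, $\overline{P(G)} = P(G)$ for every planar, balanced-oriented, $4$-valent graph $G$.

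Next I would compare the two crossing relations in Figure~\ref{fig:SR Crossings}. Both resolve a crossing into the \emph{same} two oriented planar pieces -- the parallel strands and the In-In-Out-Out vertex -- and the positive-crossing coefficient pair $(q^{n-1}, -q^n)$ becomes the negative-crossing pair $(q^{1-n}, -q^{-n})$ under the bar operation, and conversely. To assemble the proof, let $D$ be a diagram of $L$; then $\tilde D$ has the identical underlying shadow with every crossing switched, so resolving all crossings of $D$ and of $\tilde D$ produces the very same family of planar states $S$, the resolution pieces depending only on the orientation and not on the over/under data. Writing $P(D) = \sum_S c_S(q)\,P(S)$, switching every crossing replaces each coefficient factor by its bar, giving $P(\tilde D) = \sum_S \overline{c_S(q)}\,P(S)$. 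Since each $P(S)$ is bar-invariant by the previous step, this equals $\sum_S \overline{c_S(q)\,P(S)} = \overline{P(D)}$, as claimed.

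The main point to handle carefully is the bookkeeping in the last step: one must be certain that $D$ and $\tilde D$ index their state sums by identical planar graphs, so that terms can be matched one-to-one and the entire discrepancy between the two is carried by the scalar coefficients. This is exactly what the first observation about Figure~\ref{fig:SR Crossings} guarantees, since both skein relations resolve a crossing into the same two oriented pieces; the over/under information enters only through the coefficients, precisely where the bar operation acts.
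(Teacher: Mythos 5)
Your proof is correct and follows essentially the same route as the paper's (very terse) proof: the mirror operation swaps the positive- and negative-crossing skein relations of Figure~\ref{fig:SR Crossings}, which are exchanged by $q \leftrightarrow q^{-1}$, while the planar states and their evaluations are unaffected. Your explicit verification that the graphical relations of Figure~\ref{fig:GSR} are invariant under $q \leftrightarrow q^{-1}$ (via bar-invariance of the quantum integers and the uniqueness of $P$ on planar graphs) is a detail the paper leaves implicit, and it is a worthwhile addition.
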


\begin{proof}
The statement follows at once from the skein relations for positive and negative crossings depicted in Figure~\ref{fig:SR Crossings}, and how the diagrams $D$ and $\tilde{D}$ of $L$ and respectively $\tilde{L}$ differ.
\end{proof}

\end{document}